\newcommand{\eps}{\varepsilon}
\newcommand{\R}{\mathbb R}
\newcommand{\NN}{\mathbb{N}}
\newcommand{\ZZ}{\mathbb{Z}}
\newcommand{\var}{\varepsilon}
\newcommand{\beq}{\begin{equation}}
\newcommand{\eeq}{\end{equation}}
\newcommand{\ds}{\displaystyle}
\newcommand{\med}{- \hskip -1em \int}
\newcommand{\Per}{{\rm Per}}
\newcommand{\ts}{\textstyle}
\newcommand{\ol}[1]{\overline{#1}}
\newcommand{\grad}{\nabla}
\newcommand{\Leb}[1]{\mathscr{L}^{#1}}
\newcommand{\restr}[1]{\lfloor_{#1}}
\newcommand{\dist}{{\rm dist}}
\def\geq{\geqslant}
\def\leq{\leqslant}
\begin{document}

\markboth{B. Galv\~ao-Sousa \& V. Millot} {A two-gradient approach for phase transitions in thin films}

\title{A TWO-GRADIENT APPROACH FOR\\ PHASE TRANSITIONS IN THIN FILMS}

\author{\sc Bernardo Galv\~ao-Sousa}

\address{Department of Mathematics\\ University of Toronto\\
Toronto, ON, Canada
\\
\emph{\texttt{beni@math.toronto.edu}}}

\author{\sc Vincent Millot}

\address{Universit\'e Paris Diderot\\ 
Laboratoire J.L. Lions, UMR 7598, CNRS\\ 
F-75205 Paris, France\\
\emph{\texttt{millot@math.jussieu.fr}}}

\maketitle

\begin{abstract}
{\bf Abstract.} 
Motivated by solid-solid phase transitions in elastic thin films, we perform a $\Gamma$-convergence analysis for a singularly perturbed energy describing second order phase transitions in a domain of vanishing thickness. 
Under a two-wells assumption, we derive a sharp interface model with an interfacial energy depending on the asymptotic ratio between the characteristic length scale of the phase transition and the thickness of the film.
In each case, the interfacial energy is determined by an explicit optimal profile problem. This asymptotic problem entails a nontrivial dependance on the thickness direction when the phase transition is created at the same rate 
as the thin film, while it shows a separation of scales if the thin film is created at a faster rate than the phase transition. 
The last regime, when the phase transition is created at a faster rate than the thin film, is more involved. Depending on 
growth conditions of the potential and the compatibility of the two phases, we either obtain a sharp interface model with scale separation, or a trivial situation driven by rigidity effects. 
\end{abstract}

\keywords{phase transitions, thin films, singular perturbations, double-well potential, $\Gamma$-convergence}

\ccode{Mathematics Subject Classification 2000: 35G99, 49J40, 49J45, 49K20, 74K35, 74N99}

%
%

\section{Introduction}

In the last few years, many mathematical efforts have been devoted to variational problems arising in the modelling of 
phase transitions in solids, see {\it e.g.} \cite{BJ,CFL,CS1,CS2}. These problems often involve singularly perturbed 
functionals of the form 
\begin{equation}\label{mod1}
\mathbf{E}_\var(\mathbf{u},\mathscr{B})=\int_{\mathscr{B}} \frac{1}{\var}\, W(\nabla \mathbf{u})+\var|\nabla^2\mathbf{u}|^2\,d\mathbf{x}\,,
\end{equation}
where $\mathbf{u}:\mathscr{B}\subset\R^3\to\R^3$ represents the displacement of an elastic body $\mathscr{B}$, $\var>0$ is  a small parameter, and 
$W$ is a (nonnegative) free energy density with multiple minima  corresponding to martensitic materials. Due to the multiple well structure,  
nucleation of phases in a given configuration may occur without increasing  $\int W(\nabla \mathbf{u})$, so that 
the free energy may admit many (eventually constrained) minimizers. 
In order to select preferred configurations, the Van der Waals-Cahn-Hilliard theory adds  higher order terms leading to functionals of the form \eqref{mod1}.  
In such functionals a competition occurs between  
the two terms: 
the free energy favors gradients 
close to a minimum value of $W$, while  $|\nabla^2\mathbf{u}|^2$ penalizes transitions from one minima to another. 

The $\Gamma$-convergence method provides a suitable framework to study the asymptotic behavior of singularly perturbed energies like $\mathbf{E}_\var$ 
(see {\it e.g.} \cite{BD,DalMaso} for a more detailed overview of this subject). One of the first applications of 
$\Gamma$-convergence was actually obtained in \cite{M, MM,Stern1} in the context of fluid-fluid phase transitions (see {\it e.g.} \cite{G}). Here the authors deal with energy functionals of the form 
$\int \frac{1}{\var}\, W(v)+\var|\nabla v|^2\,dx$ where the potential $W$ has a 
double well structure, {\it i.e.}, $\{W=0\}=\{\alpha,\beta\}$. It is shown that such  family of energies  $\Gamma$-converges (in a suitable topology) as $\eps\to0$ to a functional which calculates the area of the interface   
between the two phases $\alpha$ and $\beta$, for limiting $BV$-functions $v$ with values in $\{\alpha,\beta\}$. Since then this result has been generalized in many different ways   
(see {\it e.g.}~\cite{Amb,Bal,BF,Bou,FT,OS}), in particular in~\cite{FM} for an intermediate situation where the singular perturbation $|\nabla v|^2$ is replaced by the higher order term~ $|\nabla^2v|^2$. 
The first $\Gamma$-convergence result for functionals acting on gradient vector fields has been obtained in~\cite{CFL}. Assuming that $\{W=0\}=\{A,B\}$ for some rank-one connected 
matrices $A$ and $B$ (and some additional constitutive conditions on $W$), the authors prove the $\Gamma$-convergence of $\mathbf{E}_\var$ as $\eps\to 0$. Once again the effective functional returns  
the total area of the interfaces separating the phases $A$ and $B$, for limiting functions ${\bf u}$ satisfying $\nabla {\bf u} \in \{A,B\}$ a.e. and $\nabla {\bf u}\in BV$. 
Here the rank-one connection between the wells $A$ and $B$ turns out to be necessary for the existence of non-affine ${\bf u}$'s  satisfying $\nabla {\bf u} \in \{A,B\}$, and the interfaces must be planar and 
oriented according to the connection, see~\cite{BJ}. 
We also mention recent developments on weakening the condition on the wells of $W$ to allow for frame indifference, {\it i.e.}, assuming the zero level set of $W$ of the form $SO(3)A\cup SO(3)B$ (see \cite{CS1,CS2}). 
\vskip5pt

%

Another topic of increasing interest related to solid mechanics concerns thin elastic films.
It is well known that thin films may have different mechanical properties from bulk materials, specifically for martensitic ones.
Those properties are important for many physical applications (see \cite{KJ}).
In this context, the $\Gamma$-convergence point of view is again suitable
to rigorously derive limiting models starting from 3D nonlinear elasticity.
This has been shown in \cite{LDR,BFF}  for the membrane theory, and more recently in~\cite{FJM,FJM2} for nonlinear plate models.   
In the regime of membranes, several studies have focused their 
attention on the impact of a higher order perturbation on the behavior of thin films. The first variational approach has been addressed 
in \cite{KJ} where the authors add the singular perturbation $\eps^2\int |\nabla^2\mathbf{u}|^2$ to the free energy $\int W(\nabla \mathbf{u})$ for a 
domain of small thickness $h$. They obtain in the limit $h\to 0$ a 2D energy 
density which depends on the deformation gradient of the mid-surface, and the Cosserat vector $b$ which gives an asymptotic description of the out-of-plane deformation. 
An important consequence of the results of  \cite{KJ} is that for many interesting materials the low energy states in the thin film limit
are indeed different from the ones in three dimensional samples. 
However  \cite{KJ} does not treat possible correlations between the thickness $h$ and the parameter~$\varepsilon$. This issue was first conducted  
in \cite{Sh}, and more intensively in \cite{FFL} to keep track of  the Cosserat vector. It is shown in  \cite{FFL}  that the limiting model is determined by 
the asymptotic ratio $h/\varepsilon$ as $h\to 0$ and $\eps\to0$, and it depends whether $h/\eps\sim 0$,  $h/\eps\sim \infty$, or $h/\eps\sim 1$. 
\vskip5pt

The general idea of this paper is to study a simple class of singularly perturbed functionals describing  
phase transitions in thin films. In this direction, some models have been recently analyzed, see \cite{BL}, and also \cite{CM,H} 
for models without  singular perturbation leading to sharp interfaces. Here we want to carry out an analysis in the spirit of \cite{CFL} focusing on possible correlations between the strength 
of an interfacial energy and the thickness of the film. As in \cite{KJ,FFL,Sh} we consider a {\it ``membrane scaling"}, and we introduce the normalized functional ${\bf F}_{\eps}^{h}$ defined for ${\bf u} \in H^{2}(\Omega_{h};\R^3)$ by
\begin{equation*}
{\bf F}_{\eps}^{h}({\bf u}) := \frac{1}{h} \int_{\Omega_{h}} W\left(\nabla {\bf u}\right) + \eps^2|\nabla^2  {\bf u}|^2  \,d{\bf x}\,, 
\end{equation*}
where $\Omega_{h}:= \omega \times h I \subset \mathbb{R}^3$, $I:=(-\frac{1}{2},\frac{1}{2})$, and the mid-surface $\omega\subset \mathbb{R}^2$ is a  bounded {\it convex} open set (here convexity is assumed for simplicity, and we refer to \cite{CFL} for more general geometries). Considering configurations 
$\bf u$  with energy of order $\varepsilon$, that is ${\bf F}_{\eps}^{h}({\bf u})\leq O(\varepsilon)$, and renormalizing by $1/\eps$ we are led to the energy $\frac{1}{h}\mathbf{E}_\var$ in the 
thin domain $\Omega_h$. We are interested in the variational convergence of  the family $\{\frac{1}{h}\mathbf{E}_\var(\cdot,\Omega_h)\}$ as $h\to0$ and $\eps \to 0$. 
To this aim, we introduce  the standard rescaling 
$$u(x) = {\bf u}({\bf x})\,\text{ with }\, 
(x_1,x_2,x_3) = \left({\bf x}_1, {\bf x}_2, \frac{{\bf x}_3}{h}\right)\,,$$ 
which yields functionals 
$\{F_{\eps}^{h}\}$ defined for $u \in H^{2}(\Omega;\R^3)$  by
$$F_{\eps}^{h}(u) := \int_{\Omega} \frac{1}{\eps}\,W(\nabla_h u) + \eps |\nabla_h^2 u|^2 \, dx\,,$$
where $\Omega := \Omega_1$,  and $\nabla_h:= \left(\partial_1, \partial_2,\frac{1}{h}\partial_3\right)$ is the rescaled gradient operator. 
\vskip5pt

Our main goal is to perform the $\Gamma$-convergence as $\eps\to 0$ and $h\to 0$ of the family $\{F_{\eps}^{h}\}$ in the simplest context where we can illustrate a difference between the behavior of  thin films and  bulk materials.
The class of models we have in mind involves double-well potentials $W$ of the type considered in \cite{CFL}. In other words, $W$ should be structurally similar to 
\begin{equation}\label{eq:potential}
W(\xi) 
	\approx \dist\big(\xi,\{A,B\}\big)^p
	= \min\big\{ |\xi-A|^p,|\xi-B|^p\big\}.
\end{equation}
%
A situation where a qualitatively different behavior from \cite{CFL} is expected is when $A$ and $B$ are not rank-one connected, but $A'$ and $B'$ are. Here we denote by $A'$ and $B'$ the $3\times 2$ matrices extracted by taking the first two columns from $A$ and $B$, respectively.
In this case, sequences with bounded ${\bf E}_\eps$ energy, as in \cite{CFL}, converge to affine maps by the results in \cite{BJ}. But, as it will be made precise below, this rigidity effect might not occur for sequences with bounded $F_\eps^h$ energy since $A'$ and $B'$ are compatible on the mid-surface.
Accordingly, 
we assume that $W: \R^{3\times 3} \to [0,\infty)$ is continuous and satisfies the following first set of assumptions:
\begin{itemize}
\item[$(H_1)$] $\{W= 0\} = \{A,B\}$ where $A=(A',A_3)\in\R^{3\times 2} \times \R^3$ and $B=(B',B_3)\in\R^{3\times 2} \times \R^3$ are distinct matrices satisfying 
$A'-B' = 2a \otimes \bar \nu$ for some $a \in \R^3$  and $\bar \nu \in \mathbb{S}^{1}$;
\vskip5pt
\item[$(H_2)$] $\frac{1}{C_1}\, |\xi|^p - C_1 \leq W(\xi) \leq C_1\, |\xi|^p + C_1$,
for  $p\geq 2$ and some constant $C_1 > 0\,$.
\end{itemize}

%
%

Under the conditions $(H_1)$ and $(H_2)$, we shall derive compactness properties for sequences with uniformly bounded energy. In our setting the limiting configurations space turns out to be 
\begin{equation}\label{eq:V}
\mathscr{C}:= \left\{ (u,b) \in W^{1,\infty}(\Omega; \R^3) \times L^{\infty}(\Omega; \R^3) : (\nabla' u, b) \in BV\big(\Omega;\{A,B\}\big)\,,\, \partial_3 u =\partial_3 b = 0 \right\}\,,
\end{equation}
where we write $\nabla':=(\partial_1,\partial_2)$. Throughout the paper we identify pairs $(u,b) \in \mathscr{C}$ with functions defined on the mid-surface $\omega$, that is  
$u(x)=u(x')$, $b(x)=b(x')$ with $x':=(x_1,x_2)$. In particular, for any $(u,b) \in \mathscr{C}$, we can write 
\begin{equation}\label{eq:E}
 (\nabla' u,b)(x') = \bigl(1-\chi_E(x')\bigr)A + \chi_E(x') B \quad\text{for $\mathcal{L}^{2}$-a.e. $x' \in \omega$}\,,
\end{equation}
where $E\subset \omega$ is a set  of finite perimeter in $\omega$, and $\chi_E$ denotes its characteristic function. 
For $A'\neq B'$ the (reduced) boundary of $E$ consists of countably many planar interfaces with 
normal $\bar\nu$, while $E$ is an arbitrary set of finite perimeter in $\omega$ if $A'=B'$ (see Theorem \ref{thm:BJ}, and \cite{BJ}). 
Let us recall that, in our setting, $A$ and $B$ might {\it not} be rank-one connected, so that we shall have to construct recovery sequences substantially different from \cite{CFL}. 
We also emphasize that for $A'=B'$, the arbitrary geometry of the interface is again in sharp contrast with \cite{CFL}, where interfaces must be made by hyperplanes.

\vskip5pt

The general compactness result is formulated in Theorem~\ref{thm:compactness} below. As a matter of fact this theorem  
does not provide optimal compactness (only) in the case $\eps\ll h$. Indeed,  in this regime we may expect  a separation of scales to hold and the film to behave 
like a three dimensional sample. Thus, if the wells are not compatible in the bulk, {\it i.e.}, 
${\rm rank}(A-B)>1$, it is reasonable to believe that sequences with bounded energy should converge to trivial limits. 
This question will be addressed in the last section with {\it positive} results for some particular cases (see Theorems~\ref{rigidrank} and \ref{rigidrank2}). 

\begin{theorem}[Compactness]\label{thm:compactness}
Assume that $(H_1)-(H_2)$ hold. Let $h_n\to0^+$ and $\eps_n \to 0^+$ be arbitrary sequences, 
and let $\{u_{n}\} \subset H^{2}(\Omega;\R^3)$ be such that 
$\sup_{n} F_{\eps_n}^{h_n}(u_n) < \infty$. 
Then there exist a subsequence (not relabeled) and $(u,b) \in \mathscr{C}$ such that 
$ u_n - \med_{\Omega} u_n \, dx \to u$ in $W^{1,p}(\Omega;\R^3)$ and
 $\frac{1}{h_n} \partial_3 u_n \to~b$ in $L^p(\Omega;\R^3)$. 
\end{theorem}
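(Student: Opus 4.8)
\emph{Strategy and Step 1 (weak compactness).} I would follow the classical second-order Modica--Mortola/De~Giorgi scheme (as in \cite{FM,CFL}) adapted to the scaled operator $\nabla_h$, the key new point being that controlling the \emph{full} operator $\nabla_h$ applied to a well-chosen nonlinear quantity yields $BV$-compactness \emph{and} the constraint $\partial_3(\cdot)=0$ in the limit at once. First, from $F_{\eps_n}^{h_n}(u_n)\geq\frac{1}{\eps_n}\int_\Omega W(\nabla_{h_n}u_n)\,dx$ the energy bound gives $\int_\Omega W(\nabla_{h_n}u_n)\,dx\leq C\eps_n\to0$, and the coercivity in $(H_2)$ makes $\{\nabla_{h_n}u_n\}$ bounded in $L^p(\Omega;\R^{3\times3})$. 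In particular $\{\nabla' u_n\}$ and $\{b_n:=\frac1{h_n}\partial_3 u_n\}$ are bounded in $L^p$ and $\partial_3 u_n=h_n b_n\to0$ in $L^p$; subtracting the average and applying the Poincar\'e inequality on the bounded convex set $\Omega$, a (not relabeled) subsequence satisfies $u_n-\med_{\Omega}u_n\,dx\rightharpoonup u$ in $W^{1,p}(\Omega;\R^3)$ and $b_n\rightharpoonup b$ in $L^p(\Omega;\R^3)$, with $\partial_3 u=0$ (the second-order term is not used so far).

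\emph{Step 2 (a $BV$ bound from the interfacial term).} Since $W$ is continuous with $\{W=0\}=\{A,B\}$, combining $\int_\Omega W(\nabla_{h_n}u_n)\to0$ with the $L^p$-bound forces $\dist(\nabla_{h_n}u_n,\{A,B\})\to0$ in measure, hence a.e.\ along a further subsequence. By Young's inequality, $F_{\eps_n}^{h_n}(u_n)\geq2\int_\Omega\sqrt{W(\nabla_h u_n)}\,|\nabla_h^2 u_n|\,dx$. Introduce the degenerate geodesic distance $d_W(A,\xi):=\inf\{\int_0^1\sqrt{W(\gamma)}\,|\dot\gamma|\,dt:\gamma(0)=A,\ \gamma(1)=\xi\}$ (finite, as $W$ is continuous) and set $\Phi:=\min\{d_W(A,\cdot),\,1+d_W(A,B)\}$; then $\Phi:\R^{3\times3}\to\R$ is bounded and Lipschitz with $|\nabla\Phi|\leq\sqrt W$ a.e., $\Phi(A)=0$, and $\Phi(B)=d_W(A,B)>0$ (positivity because any path from $A$ to $B$ crosses a small sphere around $A$ on which $W$ is bounded below). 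The chain rule then gives, for each $n$, $\Phi(\nabla_h u_n)\in W^{1,1}(\Omega)$ with $|\nabla_h(\Phi(\nabla_h u_n))|\leq\sqrt{W(\nabla_h u_n)}\,|\nabla_h^2 u_n|$ a.e., so that $\|\nabla_h(\Phi(\nabla_h u_n))\|_{L^1(\Omega)}\leq\frac12 F_{\eps_n}^{h_n}(u_n)\leq C$. Hence $\{\Phi(\nabla_h u_n)\}$ is bounded in $W^{1,1}(\Omega)$, and in addition $\|\partial_3(\Phi(\nabla_h u_n))\|_{L^1}=h_n\,\|\frac1{h_n}\partial_3(\Phi(\nabla_h u_n))\|_{L^1}\leq C h_n\to0$.

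\emph{Step 3 (identification and strong convergence).} By Rellich's theorem a subsequence gives $\Phi(\nabla_h u_n)\to\phi$ in $L^1(\Omega)$ and a.e., with $\phi\in BV(\Omega)$ and $\partial_3\phi=0$ (from Step 2). From the a.e.\ convergence to $\{A,B\}$ one has $\Phi(\nabla_h u_n)=\Phi(A)\chi_{G_n}+\Phi(B)\,(1-\chi_{G_n})+o(1)$ in $L^1$, where $G_n:=\{|\nabla_h u_n-A|<\frac12|A-B|\}$; since $\Phi(A)\neq\Phi(B)$, $\chi_{G_n}\to1-\chi_E$ a.e.\ and in $L^1$ with $\chi_E:=(\Phi(A)-\phi)/(\Phi(A)-\Phi(B))\in BV(\Omega)$, so $E$ has finite perimeter in $\Omega$, and $\partial_3\phi=0$ forces $\partial_3\chi_E=0$, i.e.\ $E=E'\times I$ with $\Per(E';\omega)<\infty$. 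Thus $\nabla_h u_n\to\xi_\infty:=(1-\chi_E)A+\chi_E B$ a.e., a function of $x'$ only. Finally, $(H_2)$ gives $\int_S|\nabla_h u_n|^p\,dx\leq C_1\int_\Omega W(\nabla_{h_n}u_n)\,dx+C_1^2|S|$ for every measurable $S\subset\Omega$, so $\{|\nabla_h u_n|^p\}$ is uniformly integrable and Vitali's theorem upgrades the convergence to $\nabla_h u_n\to\xi_\infty$ in $L^p(\Omega;\R^{3\times3})$. Matching with the weak limits of Step 1, $\nabla' u=(1-\chi_{E'})A'+\chi_{E'}B'$ and $b=(1-\chi_{E'})A_3+\chi_{E'}B_3$, both independent of $x_3$; hence $u\in W^{1,\infty}(\Omega;\R^3)$, $b\in L^\infty(\Omega;\R^3)$, $(\nabla' u,b)\in BV(\Omega;\{A,B\})$ and $\partial_3 u=\partial_3 b=0$, i.e.\ $(u,b)\in\mathscr C$; moreover $u_n-\med_{\Omega}u_n\,dx\to u$ in $W^{1,p}(\Omega;\R^3)$ and $b_n\to b$ in $L^p(\Omega;\R^3)$.

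\emph{Main obstacle.} The heart of the matter is Step 2: building a bounded Lipschitz $\Phi$ with $|\nabla\Phi|\leq\sqrt W$ that separates the two wells in spite of the merely $p$-growth of $W$ (so that $\sqrt W$ is unbounded), and realizing that bounding the \emph{entire} scaled gradient $\nabla_h(\Phi(\nabla_h u_n))$ in $L^1$ simultaneously produces the compactness of the phase indicator and its $x_3$-independence. Everything else is routine.
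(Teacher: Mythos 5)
Your argument is correct and genuinely different from the paper's, so the comparison is worth spelling out. The paper splits Step~2 into two cases: when $A'\neq B'$ it applies a geodesic‐distance function $\varphi$ built from $W_0(\xi')=\min_z W(\xi',z)$ to $\nabla'u_n$ alone, gets a $W^{1,1}$ bound on $\varphi(\nabla'u_n)$, and then obtains the product structure $F=E\times I$ by combining $\partial_3u=0$ with the rigidity result \cite[Theorem~3.3]{CFL}; when $A'=B'$ it instead applies a $\psi$ built from $W_1(z)=\min_{\xi'}W(\xi',z)$ to $b_n$ and controls $\frac{1}{h_n}\partial_3\psi(b_n)$ to get $x_3$-independence by slicing. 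Both cases also rely on Young measures to pass to the two-valued limit and to upgrade to strong $L^p$ convergence (Steps~1 and~3). You bypass the case split by building a single Lipschitz $\Phi$ on the full matrix space $\R^{3\times3}$ and noticing that estimating the \emph{entire} rescaled gradient $\nabla_{h_n}\bigl(\Phi(\nabla_{h_n}u_n)\bigr)$ in $L^1$ simultaneously yields the $BV$-compactness of the phase indicator (via the $\nabla'$ components) and the vertical rigidity of its limit (via $\frac{1}{h_n}\partial_3$, whose $L^1$ bound forces $\partial_3\Phi(\nabla_{h_n}u_n)\to0$); this removes the appeal to \cite[Theorem~3.3]{CFL} and replaces the Young-measure identification by the elementary a.e.\ dichotomy plus Vitali. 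The one place where your version needs care is the Lipschitz bound on $\Phi$: the paper truncates the \emph{speed}, using $\min(\sqrt{W_0},M)$ in the integrand, which makes $\varphi$ globally $M$-Lipschitz by construction regardless of the growth of $W$; you truncate the \emph{value}, taking $\Phi=\min\{d_W(A,\cdot),1+d_W(A,B)\}$, and then global Lipschitz continuity rests on the coercivity in $(H_2)$ to guarantee that the sublevel set $\{d_W(A,\cdot)\leq1+d_W(A,B)\}$ is bounded (so $\sqrt W$ is bounded there). Under $(H_2)$ this is fine, but the paper's speed truncation is the more robust normalization. On balance your route is cleaner and unified; the paper's is slightly more portable.
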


To describe our  $\Gamma$-convergence results we need some additional assumptions on the potential $W$ 
(these assumptions will be used only in the construction of recovery sequences). First 
of all we assume, without loss of generality, that $A=-B $ and $\bar \nu = e'_1:=(1,0)$, so that 
\begin{equation}\label{orientationwells}
A'=-B'= a \otimes e'_1\ ,\quad A_2=B_2=0\ , \quad\text{and}\quad A_3=-B_3\,.
\end{equation}
Indeed, the general case can be reduced to \eqref{orientationwells} by considering a modified bulk energy density ${W}_{\rm mod}$ defined by 
$W_{\rm mod}(\xi):= W(\xi R +C)$ 
where $C=1/2(A+B)$ and $R={\rm diag}(R',1)$ with $R'\in SO(2)$ satisfying $R'\bar \nu= e'_1$.
This new potential $W_{\rm mod}$ obviously satisfies hypotheses $(H_1)$ and $(H_2)$ with \eqref{orientationwells}. 
Our second set of assumptions requires $W$ to share some structural properties of the prototypical function defined in \eqref{eq:potential}.
More precisely, given \eqref{orientationwells}, we assume that 
\begin{itemize}
\item[$(H_3)$] there exist constants $\varrho>0$ and $C_2>0$ such that
$$
\frac{1}{C_2}\, {\rm dist}\big(\xi,\{A,B\}\big)^p  
  \leq W(\xi) 
  \leq C_2\, {\rm dist}\big(\xi,\{A,B\}\bigr)^p
\; \text{ if } \; {\rm dist}\bigl(\xi,\{A,B\}\big) \leq \varrho\,;
$$
\item[$(H_4)$] $W(\xi_1,0,\xi_3) \leq W(\xi)$ for all $\xi=(\xi_1,\xi_2,\xi_3)\in \R^{3\times 3}$;
\vskip8pt

\item[$(H_5)$]  if $A'=B'=0$, then $W(\xi',\xi_3) = V\bigl( |\xi'|,\xi_3\bigr)$ for some  $V:[0,+\infty)\times \R^3\to[0,\infty)$. 
\end{itemize}
Here $|\cdot|$ stands for the usual Euclidean norm, $\xi':=(\xi_1,\xi_2)\in\mathbb{R}^{3\times 2}$ and $|\xi'|^2=|\xi_1|^2+|\xi_2|^2$.  
Observe that in the case $A'=B'$ (so that \eqref{orientationwells} yields $A'=B'=0$),  assumptions $(H_4)$ and $(H_5)$ require  
 the function $r\mapsto V(r ,z)$ to be nondecreasing for every $z\in\R^3$. 
Taking \eqref{orientationwells} into account, we also notice that these assumptions are clearly satisfied for 
$W(\xi)=\dist\big(\xi,\{A,B\}\big)^p$ with $
V(r,z) := \left( r^2 + \dist^2\big(z,\{A_3,B_3\}\big)\right)^{p/2}$.
Let us finally mention that similar assumptions are already present in~\cite{CFL}.
Condition $(H_3)$ is a standard non-degeneracy condition on $W$ near the wells, while $(H_4)$ allows one to construct lower dimensional optimal profiles connecting the two phases $A$ and $B$.
Hypothesis $(H_5)$ is a more technical isotropy condition, that we assume for simplicity.
\vskip8pt

Let us now consider the family of functionals $\mathcal{F}_\eps^h:[L^1(\Omega;\R^3)]^2\to [0,\infty]$ 
defined by 
$$\mathcal{F}_\eps^h(u,b):=\begin{cases}
F_{\eps}^{h}(u)  & \text{if $u\in H^2(\Omega;\R^3)$ and $b=\frac{1}{h}\partial_3u$}\,,\\
+\infty & \text{otherwise}\,.
\end{cases}$$
We will prove that the behavior of $\mathcal{F}_\eps^h$ depends, as expected, on the asymptotic ratio $ \frac{h}{\eps}\to\gamma\in[0,\infty]$ 
as $h$ and~$\eps$ tend to~$0$, and that   the family  
$\{\mathcal{F}_\eps^h\}$ $\Gamma$-converges to a  functional  $
\mathscr{F}_\gamma:[L^1(\Omega;\R^3)]^2\to [0,\infty]$ given~by 
\begin{equation}\label{defgamlim}
\mathscr{F}_\gamma(u,b):=
\begin{cases}
K_{\gamma}\, \Per_{\omega}(E) & \text{ if } (u,b) \in \mathscr{C}\,, \\
+\infty & \text{ otherwise}\,,
\end{cases}
\end{equation}
where $(\nabla'u,b)(x)=(1-\chi_E(x'))A+\chi_E(x')B$ as in \eqref{eq:E}, and $\Per_{\omega}(E):=\mathcal{H}^1(\partial^*E\cap\omega)$ denotes 
the (measure theoretic) perimeter of $E$ in $\omega$. 
Here the constant $K_\gamma>0$ is determined by an optimal profile problem for connecting phase $A$ to phase $B$.
By assumption $(H_4)$, we will be able to describe $K_\gamma$  through a lower dimensional variational problem. To simplify the notation, we introduce the 2D energy density $\mathcal{W}:\R^{3\times2}\to [0,\infty)$ given by 
$$
\mathcal{W}(\zeta_1,\zeta_2):=W(\zeta_1,0,\zeta_2)\,,
$$
which is a double-well potential with zero level set $\big\{(A_1,A_3),(B_1,B_3)\big\} 
$.
\vskip5pt

Our first convergence result deals with the critical regime where the thickness of the film and the strength  
of the interfacial energy are of the same order, that is $\gamma\in(0,\infty)$.

\begin{theorem}[Critical Regime]\label{thm:gammalim_gamma1} 
Assume that $(H_1) - (H_5)$ hold with \eqref{orientationwells}. Let $h_n\to0^+$ and $\eps_n\to0^+$ be arbitrary sequences such that 
$h_n/\eps_n\to \gamma$ for some $\gamma\in(0,\infty)$. Then the functionals $\{\mathcal{F}_{\eps_n}^{h_n}\}$ $\Gamma$-converge 
for the strong $L^1$-topology to the functional $\mathscr{F}_\gamma$ given by \eqref{defgamlim} with
\begin{multline}\label{formulacrit}
K_{\gamma} :=\inf \bigg\{  \frac{1}{\gamma}\int_{\ell I\times \gamma I}\mathcal{W}(\nabla v)+|\nabla^2 v|^2\,dy \,:\, \ell>0\,,\, v\in C^{2}(\ell  I\times \gamma  I;\R^3)\,,\\
\nabla  v(y)= (A_1,A_3)  \text{ nearby } \big\{y_1= \ell/2\big\} \text{ and } \nabla  v(y)= (B_1,B_3)  \text{ nearby } \big\{y_1= -\ell/2\big\} \bigg\}\,.
\end{multline}
\end{theorem}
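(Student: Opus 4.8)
The plan is to establish the two $\Gamma$-convergence inequalities separately, reducing everything to the one-dimensional-in-$y_3$ optimal profile problem \eqref{formulacrit} via the isotropy and monotonicity hypotheses. For the $\Gamma$-$\liminf$ inequality, I would start from a sequence $(u_n,b_n)\to(u,b)$ in $L^1$ with $\sup_n \mathcal{F}_{\eps_n}^{h_n}(u_n,b_n)<\infty$. By Theorem~\ref{thm:compactness} (after subtracting means, which does not affect the energy) we may assume $u_n\to u$ in $W^{1,p}$, $\tfrac1{h_n}\partial_3 u_n\to b$ in $L^p$, and $(u,b)\in\mathscr{C}$, so that $(\nabla'u,b)=(1-\chi_E)A+\chi_E B$ for a set $E$ of finite perimeter whose reduced boundary consists of segments normal to $\bar\nu=e_1'$. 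The key is a blow-up argument à la Fonseca–Müller localized at $\mathcal{H}^1$-a.e.\ point $x_0\in\partial^*E\cap\omega$: rescaling $u_n$ around $x_0$ at scale $\eps_n$, writing $v_n(y):=\tfrac1{\eps_n}\big(u_n(x_0+\eps_n y')-c_n\big)$ with the third variable $y_3=x_3\in I$ so that $h_n I$ becomes $\gamma_n I$ with $\gamma_n=h_n/\eps_n\to\gamma$, one checks $\nabla_{h_n}u_n$ corresponds to $\nabla v_n$ on the rescaled cylinder and the energy density $\tfrac1{\eps_n}W(\nabla_{h_n}u_n)+\eps_n|\nabla_{h_n}^2u_n|^2$ transforms into exactly the integrand of \eqref{formulacrit}. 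Using $(H_4)$ to drop the second column of $\nabla' v_n$ (replacing $\nabla v_n$ by its projection onto $\R^{3\times 2}$ lowers $\mathcal{W}$ but one must control $|\nabla^2 v_n|^2$ — here one uses that the relevant derivatives in the $e_1'$ and $e_3$ directions survive), one passes to the limit to bound the surface density from below by $K_\gamma$; integrating over $\partial^*E\cap\omega$ gives $\mathscr{F}_\gamma(u,b)\le\liminf_n \mathcal{F}_{\eps_n}^{h_n}(u_n,b_n)$.

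For the $\Gamma$-$\limsup$ inequality, by the usual density and lower-semicontinuity arguments it suffices to build a recovery sequence when $E$ is a polygon (or even a half-space), and by locality it suffices to treat a single planar interface $\{x_1=0\}$. Given any competitor $v\in C^2(\ell I\times\gamma I;\R^3)$ admissible in \eqref{formulacrit} with energy close to $K_\gamma$, I would define $u_n$ on $\Omega$ by setting, away from a boundary layer, $u_n(x):= \eps_n\, v\!\big(x_1/\eps_n,\, x_3\cdot\gamma_n/\gamma\big)/(\text{suitable rescaling})$ interpolated with the affine maps $A x$ and $B x$ outside the transition strip of width $\sim\eps_n\ell$, and extended to be independent of $x_2$. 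The boundary conditions $\nabla v=(A_1,A_3)$ near $y_1=\ell/2$ and $\nabla v=(B_1,B_3)$ near $y_1=-\ell/2$ guarantee $C^2$-matching with the bulk affine pieces, so $\nabla_{h_n}u_n$ jumps correctly and the rescaled energy on the strip converges to $\tfrac1\gamma\int_{\ell I\times\gamma I}\mathcal{W}(\nabla v)+|\nabla^2 v|^2\,dy$ per unit length of interface, i.e.\ to $K_\gamma\,\Per_\omega(E)$; off the strip $W(\nabla_h u_n)=0$ and $\nabla_h^2 u_n=0$. For a general finite-perimeter $E$ one approximates by polygons in the sense of finite perimeter, uses that $\Per_\omega$ is continuous along such approximations, and glues the local constructions near each edge (handling the finitely many corners with a negligible-energy correction, since each contributes a region of measure $O(\eps_n^2)$ and the construction can be made to still land in $H^2$). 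One must also check $u_n\to u$ in $L^1$, which is immediate since $u_n$ differs from the piecewise-affine limit only on a set shrinking to the interface.

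The main obstacle I anticipate is the lower bound, specifically making the blow-up rigorous: one needs (i) to show that the rescaled profiles $v_n$ do attain, in an appropriate averaged/trace sense, the correct affine phases $A$ and $B$ on the two ends of the rescaled cylinder so that they are asymptotically admissible in \eqref{formulacrit} — this requires a careful argument combining the $W^{1,p}$ and $L^p$ convergences with a slicing/Fubini argument in $x'$ and an energy-comparison to truncate $v_n$ into genuinely admissible competitors at the cost of a vanishing error; and (ii) to handle the $x_2$-direction — since the limit satisfies $\partial_3 u=\partial_3 b=0$ but nothing forces $\partial_2$ of the second-gradient terms to vanish, one must invoke $(H_4)$ at the level of the full energy, not just pointwise, possibly together with a further projection reducing to functions of $(y_1,y_3)$ only; controlling the Hessian term under this projection is the delicate point. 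A secondary technical issue, in the regime where $A'=B'=0$ so that $E$ is an arbitrary set of finite perimeter, is that the recovery construction near a curved interface must be done in local coordinates adapted to $\nu_E(x_0)$, using $(H_5)$ (the isotropy of $W$ in $\xi'$) to ensure the optimal profile — which only sees $|\xi'|$ — is insensitive to the direction of the interface, so that the same constant $K_\gamma$ appears; I would flag this as the place where $(H_5)$ is essential.
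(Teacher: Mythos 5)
Your overall strategy is close to the paper's and you flag the right difficulties, but the paper organizes the lower bound differently in a way that makes the ``matching'' issue you raise in (i) tractable, and I want to point out where your version would be strictly harder to execute.

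The paper does not try to prove $\liminf_n \mathcal{F}_{\eps_n}^{h_n} \geq K_\gamma\,\Per_\omega(E)$ directly by blow-up. It introduces an intermediate abstract constant $K_\gamma^\star$, defined as the infimum of $\liminf_n F_{\eps_n}^{h_n}(u_n,Q)$ over all admissible sequences $u_n$ converging in $L^1$ to the reference profile $(u_0,b_0)$, and splits the lower bound into two independent statements: (a) $\liminf_n \mathcal{F}_{\eps_n}^{h_n} \geq K_\gamma^\star\,\Per_\omega(E)$ and (b) $K_\gamma^\star \geq K_\gamma$. Step (a) uses only translation/scaling/subadditivity properties of the set function $\mathcal{E}_\gamma(J,\rho)$ (Lemma~4.2) to make the covering argument work, and — importantly — it does \emph{not} require showing that blow-up profiles are admissible for the pinned problem defining $K_\gamma$. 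Step (b) (Proposition~\ref{prop:matching1}) is exactly where your concern (i) lives: starting from a \emph{single} realizing sequence for $K_\gamma^\star$, one uses Corollary~\ref{cor:concentration} (energy concentration near the interface), Fubini/slicing to pick a good level $t_n$ where all relevant trace quantities are controlled, and two cut-off interpolations to pin the boundary values to $u_0 + \gamma\eps_n x_3 b_0 + c_n$ on $\{x_1 > 1/4\}$ and $\{x_1 < -1/4\}$. Trying instead to pin \emph{every} blow-up limit at $\mathcal{H}^1$-a.e.\ point, as your proposal requires, forces you to rerun the whole matching estimate uniformly along the blow-up, which is significantly more delicate and not obviously feasible without essentially rediscovering $K_\gamma^\star$. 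You also need the intermediate constant anyway to justify Corollary~\ref{cor:concentration}, which underpins the matching.

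Two further points. First, the paper's treatment of $(H_4)$ is not a ``projection onto $\mathbb{R}^{3\times 2}$'' but a genuine slicing in $x_2$: one selects a good level $s_n\in I$ by Fubini and replaces $\tilde u_n$ by its $x_2$-independent slice $\tilde u_n(x_1,s_n,x_3)$; this kills all $\partial_2$-derivatives exactly, so the Hessian term \emph{decreases} rather than needing a separate control — there is no ``delicate point'' there once you slice instead of project. Second, for $A'\neq B'$ the paper does \emph{not} blow up: by Theorem~\ref{thm:BJ} the interfaces are parallel planes with normal $e_1'$, so a covering by cylinders plus Proposition~\ref{gliminfref} suffices; the measure-theoretic blow-up (Besicovitch/Radon--Nikod\'ym) is reserved for the $A'=B'$ case where the interface geometry is arbitrary. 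Finally, for the upper bound in the $A'=B'$ case, the paper approximates $E$ by \emph{smooth} sets and builds the recovery sequence from the signed distance function (using the coarea formula, $|\nabla d_k|=1$, and $(H_5)$ for direction-invariance of the energy density); polygonal approximation as you suggest would require a separate corner-gluing argument in $H^2$ and is harder to close.

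So: the overall skeleton (blow-up/localization + $(H_4)$-slicing for the liminf, explicit rescaled profiles for the limsup, $(H_5)$ for curved interfaces) is the same, but you merge two steps — (a) and (b) above — that the paper deliberately keeps separate via the abstract constant $K_\gamma^\star$, and that merger is where your outline, if pushed to a full proof, would likely get stuck.
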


We observe that the formula for $K_\gamma$ (with $\gamma\in(0,\infty)$) entails a highly nontrivial dependence on the vertical direction in the asymptotic problem. In fact, in the case $A_3=B_3$, one can find potentials $W$ for which a nontrivial dependance on $x_3$ still occurs, see \cite[Section 8]{CFL}.  
Note that in many second order phase transitions problems, optimal profiles usually have an oscillatory behavior along the limiting interface, see \cite{CFL,JK} and references therein 
(see also Theorem~\ref{thm:gammalim_gammasup} below).  
\vskip5pt

In contrast with the critical regime, one may expect the case $\gamma=0$ ({\it i.e.}, $h\ll \eps$) to lead to a simpler behavior with respect to the $x_3$-variable by separation of scales.  
Indeed, the energies formally behave like two dimensional ones,  and optimal transition layers should only  depend on the distance to the interface by assumptions $(H_4)-(H_5)$. 
We will illustrate this fact with more details in Section~\ref{Sectsub} (see Remark \ref{sepscales}).  Our results for this regime give a positive answer to our formal discussion, and they can be summarized in the following theorem.

\begin{theorem}[Subcritical Regime]\label{thm:gammalim_gammasub} 
Assume that  $(H_1)-(H_5)$ hold with \eqref{orientationwells}. Let $h_n\to0^+$ and $\eps_n\to0^+$ be arbitrary sequences such that 
$h_n/\eps_n\to 0$.  Then the functionals $\{\mathcal{F}_{\eps_n}^{h_n}\}$ $\Gamma$-converge 
for the strong $L^1$-topology to the functional $\mathscr{F}_0$ given by \eqref{defgamlim} with
\begin{multline}\label{formulasubcrit}
K_{0}:=  \inf \bigg\{ \int_{-\ell}^\ell \mathcal{W}\big(\phi(t)\big)+|\phi_1^\prime(t)|^2+2|\phi_2^\prime(t)|^2\,dt \, :\, \ell>0\,,\\
\phi=(\phi_1,\phi_2)\in C^{1}\big([-\ell,\ell\,];\R^{3\times2}\big)\,,\,
\phi(\ell)= (A_1,A_3) \text{ and } \phi(-\ell)=(B_1,B_3)\bigg\}\,.
\end{multline}
\end{theorem}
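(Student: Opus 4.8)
\medskip
\noindent\emph{Sketch of proof.} The plan is to establish the two $\Gamma$-convergence inequalities separately, in each case reducing the analysis to the one-dimensional optimal profile problem defining $K_0$, and to handle the two geometrically distinct situations $A'\neq B'$ (interfaces normal to $e_1'$) and $A'=B'=0$ ($u$ constant, arbitrary interface) by slightly different arguments. The key point throughout is that since $h_n/\eps_n\to0$, the vertical/second-order contributions of order $h_n/\eps_n$ are negligible, so that the energy ``decouples'' into a purely interfacial one-dimensional problem.

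\smallskip
\emph{Liminf inequality.} Let $(u_n,b_n)\to(u,b)$ in $L^1$ with $\liminf_n\mathcal{F}_{\eps_n}^{h_n}(u_n,b_n)<\infty$; passing to a subsequence and invoking Theorem~\ref{thm:compactness}, we may assume $(u,b)\in\mathscr{C}$, so that $(\nabla'u,b)=(1-\chi_E)A+\chi_E B$ and $b_n=\tfrac{1}{h_n}\partial_3u_n$. Keeping only the $(1,1)$, $(1,3)$, $(3,1)$ entries of $\nabla_{h_n}^2u_n$ and using $(H_4)$ to discard the second column of $\nabla_{h_n}u_n$, one has the pointwise bound $\tfrac{1}{\eps_n}W(\nabla_{h_n}u_n)+\eps_n|\nabla_{h_n}^2u_n|^2\geq\tfrac{1}{\eps_n}\mathcal{W}(\partial_1u_n,b_n)+\eps_n\bigl(|\partial_{11}u_n|^2+2|\partial_1b_n|^2\bigr)$, since $\tfrac{1}{h_n}\partial_{13}u_n=\partial_1b_n$. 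If $A'\neq B'$, the reduced boundary of $E$ is made of segments normal to $e_1'$, so I would slice $\Omega$ by lines parallel to $e_1$: after the rescaling $t\mapsto t/\eps_n$ the one-dimensional sliced energy becomes exactly the functional in \eqref{formulasubcrit}, whence each transition of $t\mapsto(\partial_1u_n,b_n)(t,x_2,x_3)$ between a neighbourhood of $(B_1,B_3)$ and one of $(A_1,A_3)$ costs at least $K_0-o(1)$; here a short truncation lemma shows that requiring the endpoint values to lie merely close to the wells changes the infimum by $o(1)$. Integrating in $(x_2,x_3)$ and using lower semicontinuity of the transition count together with the convergence of the pair $(\partial_1u_n,b_n)$ to $(\partial_1u,b)$ (which jumps, as $(A_1,A_3)\neq(B_1,B_3)$) yields $\liminf_n\mathcal{F}_{\eps_n}^{h_n}(u_n,b_n)\geq K_0\,|D_1\chi_E|(\omega)=K_0\,\Per_\omega(E)$, the last equality because all normals are $\pm e_1'$. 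If instead $A'=B'=0$, then $u$ is constant, and using in addition $(H_5)$ and the monotonicity of $r\mapsto V(r,\cdot)$ to also discard $\partial_1u_n$, the right-hand side is bounded below by $\int_\Omega\tfrac{1}{\eps_n}\widetilde V(b_n)+2\eps_n|\nabla'b_n|^2\,dx$ with $\widetilde V(z):=W(0,0,z)=V(0,z)$; slicing in $x_3$ and applying on a.e.\ slice the classical vectorial Modica--Mortola lower bound, then integrating over $x_3\in I$ (with $|I|=1$), gives again $\liminf_n\mathcal{F}_{\eps_n}^{h_n}(u_n,b_n)\geq K_0\Per_\omega(E)$. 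Here one uses that the associated optimal-profile constant coincides with $K_0$: when $A_1=B_1=0$, the monotonicity of $V(\cdot,z)$ forces the optimal profile in \eqref{formulasubcrit} to have $\phi_1\equiv0$, so that $K_0=\inf\int\widetilde V(\phi_2)+2|\phi_2'|^2$.

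\smallskip
\emph{Limsup inequality.} By the usual reduction to a dense class of configurations together with a diagonal argument (metrizability of $\Gamma$-convergence), it suffices to construct recovery sequences for a dense subclass of admissible $E$. If $A'\neq B'$, $E$ is a countable union of slabs $\{x_1\in(c_{2j-1},c_{2j})\}$; truncating to finitely many well-separated interfaces changes $\chi_E$ and $\Per_\omega(E)$ only slightly, so assume $u=u(x_1)$ is the corresponding sawtooth with interfaces $c_1<\dots<c_M$. Fix a near-optimal profile $\phi$ for $K_0$, which we may take in $C^2$ and constant (equal to the wells) near $\pm\ell$, and set $u_n(x):=\bar u_n(x_1)+h_nx_3\,\beta_n(x_1)$, where on each layer $\{|x_1-c_j|\leq\eps_n\ell\}$ one takes $\bar u_n'=\phi_1(\tfrac{x_1-c_j}{\eps_n})$ and $\beta_n=\phi_2(\tfrac{x_1-c_j}{\eps_n})$ (the reversed profile where the transition runs $A\to B$), while $\bar u_n'\in\{A_1,B_1\}$, $\beta_n\in\{A_3,B_3\}$ elsewhere, matched so that $u_n\in H^2$. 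Then on the layers $\nabla_{h_n}u_n=\bigl(\phi_1(\tfrac{x_1-c_j}{\eps_n}),0,\phi_2(\tfrac{x_1-c_j}{\eps_n})\bigr)+O(h_n/\eps_n)$ and $\nabla_{h_n}u_n\in\{A,B\}$ elsewhere, while the leading entries of $\nabla_{h_n}^2u_n$ on the layers are $\tfrac{1}{\eps_n}\phi_1'(\tfrac{x_1-c_j}{\eps_n})$ in the $(1,1)$ slot and $\tfrac{1}{\eps_n}\phi_2'(\tfrac{x_1-c_j}{\eps_n})$ in the $(1,3)$ and $(3,1)$ slots (coming from $\tfrac{1}{h_n}\partial_{13}u_n=\partial_1\beta_n$), all other entries and all error terms being negligible in the rescaled energy because $h_n/\eps_n\to0$; a change of variables then gives $\mathcal{F}_{\eps_n}^{h_n}(u_n,b_n)\to\Per_\omega(E_M)\int_{-\ell}^{\ell}\mathcal{W}(\phi)+|\phi_1'|^2+2|\phi_2'|^2\,dt\leq(K_0+o(1))\Per_\omega(E_M)$. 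If $A'=B'=0$ (so $u$ is constant and, as above, the optimal profile reduces to $\phi_1\equiv0$ and $\phi_2$ joining $B_3$ to $A_3$ with energy $\int\widetilde V(\phi_2)+2|\phi_2'|^2$), approximate $E$ by a set $E_\delta$ with smooth boundary, $\chi_{E_\delta}\to\chi_E$ in $L^1$, $\Per_\omega(E_\delta)\to\Per_\omega(E)$; let $d_\delta$ be the signed distance to $\partial E_\delta$ and $\Phi_2$ a near-optimal profile extended by the wells outside $[-\ell,\ell]$, and set $u_n(x):=h_nx_3\,\Phi_2(d_\delta(x')/\eps_n)$. Then $b_n=\Phi_2(d_\delta/\eps_n)\to b$, while $\nabla'u_n=\tfrac{h_nx_3}{\eps_n}\Phi_2'(d_\delta/\eps_n)\otimes\nabla'd_\delta=O(h_n/\eps_n)\to0$, so by $(H_3)$ and continuity $\tfrac{1}{\eps_n}W(\nabla_{h_n}u_n)=\tfrac{1}{\eps_n}\widetilde V(b_n)+o(1)$ in $L^1$, and the leading part of $\eps_n|\nabla_{h_n}^2u_n|^2$ is $\tfrac{2}{\eps_n}|\Phi_2'(d_\delta/\eps_n)|^2$ (from $\tfrac{1}{h_n}\partial_{i3}u_n=\partial_i b_n$, $i=1,2$, using $|\nabla'd_\delta|=1$); the coarea formula applied to $d_\delta$ (tubular neighbourhood of $\partial E_\delta$) gives $\mathcal{F}_{\eps_n}^{h_n}(u_n,b_n)\to\Per_\omega(E_\delta)\int_{-\ell}^{\ell}\widetilde V(\Phi_2)+2|\Phi_2'|^2\,dt\leq(K_0+o(1))\Per_\omega(E_\delta)$. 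Letting first $n\to\infty$ and then removing the truncation/regularization ($M\to\infty$, resp.\ $\delta\to0$), and diagonalizing, produces the desired recovery sequence.

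\smallskip
\emph{Main obstacle.} The crux is the recovery sequence when $A'=B'=0$: one must follow a genuinely curved interface through the signed-distance ansatz and check that the in-plane gradient $\nabla'u_n$ — inevitably generated because $b_n$ now varies over the mid-surface — does not contribute in the limit. This is precisely the point where the hypothesis $h_n/\eps_n\to0$ (separation of scales) enters in an essential way, through the vanishing prefactor $h_n/\eps_n$. A secondary, more bookkeeping-type difficulty is carrying out the approximation and diagonalization steps while keeping every competitor in $H^2$; on the liminf side the only delicate point is the truncation lemma that allows one to pass from ``equal to the wells'' to ``close to the wells'' at the profile endpoints without affecting $K_0$.
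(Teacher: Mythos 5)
Your proposal is correct in outline, and the limsup construction is essentially the paper's (one-dimensional profiles pasted at layered interfaces when $A'\neq B'$, a signed-distance ansatz when $A'=B'$, with truncation/diagonalization for the general configuration). Where you genuinely differ from the paper is the route to the liminf inequality. The paper first slices in $x_3$ (Fubini and Fatou) to reduce to the two-dimensional functional $F^0_\eps$ of \eqref{defF0}, keeping the full planar Hessian $|(\nabla')^2 u|^2$ and $|\nabla' b|^2$; the lower bound for $F^0_\eps$ is then obtained by a self-contained machinery (a scaling/translation lemma for $A'\neq B'$, a blow-up with Young measures for $A'=B'$), introducing an abstract constant $K^\star_0$ and proving $K^\star_0\geq K_0$ separately via the pinning argument of Proposition~\ref{prop:matchingsubcrit} and Corollary~\ref{prop:charKsubcrit}. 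You instead discard more of the Hessian and slice the remaining energy straight down to a first-order two-well Modica--Mortola problem: in the $x_1$-direction in the pair $(\partial_1 u_n,b_n)$ when $A'\neq B'$ (so that the interfaces are crossed transversally, and $\Per_\omega(E)=|D_1\chi_E|(\omega)$), and on $\omega$ in the field $b_n$ alone when $A'=B'=0$, using $(H_4)$, $(H_5)$ and the monotonicity of $r\mapsto V(r,z)$ to drop $\partial_1 u_n$ entirely and to observe that the optimal profile in \eqref{formulasubcrit} has $\phi_1\equiv 0$, so that the scalar profile constant coincides with $K_0$. Both routes lead to the same place; yours is more economical and makes the separation of scales transparent, but it leans on classical first-order Modica--Mortola theory as a black box (the geodesic-distance lower bound and its vectorial, anisotropic variant on $\omega$), whereas the paper's route is self-contained and doubles as the template for the more delicate critical and supercritical regimes. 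Two remarks on the write-up: the endpoint truncation step you flag as a ``short lemma'' is in fact precisely what Proposition~\ref{prop:matchingsubcrit} does and is the main place where $(H_3)$--$(H_4)$ are used, so it deserves more than a parenthetical (unless you carry out the liminf via the Young/geodesic inequality, which avoids it); and your preliminary pointwise bound is stated for the entries $(1,1),(1,3),(3,1)$ of $\nabla^2_{h_n}u_n$, which gives only $2\eps_n|\partial_1 b_n|^2$, so for the $A'=B'$ case you must explicitly switch to retaining $(1,3),(3,1),(2,3),(3,2)$ (which you do implicitly when you pass to $2\eps_n|\nabla' b_n|^2$, but the change of retained entries should be stated).
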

\vskip5pt

In the supercritical case $\gamma=+\infty$ ({\it i.e.}, $\eps\ll h$), one may again expect a separation of scales to hold. In other words, we should be able to recover the limiting functional by 
taking  first the limit $\eps\to 0$, and then  the thin film limit $h\to 0$. Hence, to obtain a nontrivial $\Gamma$-limit, it is natural to ask for $A$ and $B$ to be compatible in the bulk with a 
non vertical connection (see \eqref{orientationwellssc} below).  As already mentioned, we have exhibited rigidity effects  in the other cases, at least for some particular potentials (see Theorems~\ref{rigidrank} and \ref{rigidrank2}). 
For this reason we assume in the supercritical regime that $A-B$ is a rank-one matrix, and that $A'\not=B'$.
Under the structure \eqref{orientationwells}, this assumption is equivalent to the existence of  $\lambda\in\R$ such that $A_3=-B_3=\lambda a$. 
Then the wells $A$ and $B$ can be written as
\begin{equation}\label{orientationwellssc}
A=-B=a\otimes (e_1+\lambda e_3)\qquad \text{($a\not=0$)}\,.
\end{equation}

We have obtained partial results for this regime through lower and upper bounds for the $\Gamma$-$\liminf$ and $\Gamma$-$\limsup$, respectively. Fortunately, our estimates turn out to be nearly optimal in the sense that upper and lower bounds agree whenever $\lambda=0$, $p=2$, and $W$ is symmetric with respect to $\xi_3$ (which is the case for the potential \eqref{eq:potential} assuming \eqref{orientationwellssc}). In this latter case, it follows that the separation of scales is indeed true by \cite[Theorem~1.4]{CFL} (see Remark~\ref{sepscalessc}). 

\begin{theorem}[Supercritical Regime]\label{thm:gammalim_gammasup} 
Assume that  $(H_1)-(H_4)$ and \eqref{orientationwellssc} hold for some $\lambda\in\R$.   
Let $h_n\to0^+$ and $\eps_n\to0^+$ be arbitrary sequences such that 
$h_n/\eps_n\to \infty$.  Then, 
$$ c\, \mathscr{F}_\infty \leq  \Gamma\big(L^1\big)-\liminf_{n\to+\infty}\, \mathcal{F}_{\eps_n}^{h_n} 
\quad \text{and}\quad 
\Gamma\big(L^1\big)-\limsup_{n\to+\infty}\, \mathcal{F}_{\eps_n}^{h_n} \leq \mathscr{F}_\infty 
\,,$$
for a constant $c>0$, where the functional  $\mathscr{F}_\infty$ is given by \eqref{defgamlim} with
\begin{multline*}
K_{\infty}:= (1+\lambda^2)^{\frac{1}{2}} \inf \bigg\{\int_{Q'_{\lambda}} \ell\,\mathcal{W}(\nabla v)+\frac{1}{\ell}|\nabla^2 v|^2\,dy \,:\, \ell>0\,,\,
v\in C^{2}(\R^2;\R^3)\,, \\ 
\nabla  v(y)= (A_1,A_3)  \text{ nearby } \big\{y\cdot \nu_\lambda= \ell/2\big\} \text{, } \nabla  v(y)= (B_1,B_3)  \text{ nearby } \big\{y\cdot \nu_\lambda= -\ell/2\big\},\\
\text{and  $v$ is 1-periodic in the direction orthogonal to  $\nu_\lambda$}\bigg\}\,,
\end{multline*}
where $\nu_\lambda:=\frac{1}{\sqrt{1+\lambda^2}}(1,\lambda)\in\mathbb{S}^1$, and $Q^\prime_{\lambda}$ denotes the unit cube of $\R^2$ centered at the origin 
with two faces orthogonal to $\nu_\lambda$. 
Moreover, if $p=2$ in $(H_1)$, $\lambda=0$ in \eqref{orientationwellssc}, and $W$ satisfies $W(\xi',\xi_3)=W(\xi',-\xi_3)$ for all $\xi\in \mathbb{R}^{3\times 3}$, 
then the functionals $\{\mathcal{F}_{\eps_n}^{h_n}\}$ $\Gamma$-converge for the strong 
$L^1$-topology to $\mathscr{F}_\infty$. 
\end{theorem}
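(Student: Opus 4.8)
The plan is to establish the two one‑sided bounds separately — the $\Gamma$-$\limsup$ via an explicit recovery sequence built from the cell problem defining $K_\infty$, and the $\Gamma$-$\liminf$ via localization and a blow‑up argument — and then to show that in the special case the two bounds coincide by invoking \cite[Theorem~1.4]{CFL}.

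\emph{Upper bound.} By a standard density and diagonalization argument it suffices to build a recovery sequence for $(u,b)\in\mathscr{C}$ whose set $E$ in \eqref{eq:E} is polyhedral, so that $\partial^*E\cap\omega$ is a finite union of segments orthogonal to $\bar\nu=e_1'$; localizing and using a partition of unity, one may even assume there is a single such segment $\Sigma$, with both phases affine away from it. Given $\delta>0$, fix $\ell>0$ and a near‑optimal competitor $v=v_\delta\in C^2(\R^2;\R^3)$ for $K_\infty$: $1$-periodic in the direction orthogonal to $\nu_\lambda$, equal to phase $A$ near $\{y\cdot\nu_\lambda=\ell/2\}$ and to phase $B$ near $\{y\cdot\nu_\lambda=-\ell/2\}$, realizing $K_\infty$ up to $\delta$. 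Transplant it by the anisotropic rescaling $u_n(x):=\eps_n\ell\,v\!\bigl(\tfrac{x_1}{\eps_n\ell},\tfrac{h_n x_3}{\eps_n\ell}\bigr)$, truncated near $\partial\omega$ and glued to the affine phases outside an $O(\eps_n)$-neighbourhood of $\Sigma$. The factor $h_n$ in the second slot is chosen so that $\nabla_h=(\partial_1,\partial_2,\tfrac1{h_n}\partial_3)$ undoes it: one gets $\nabla_h u_n=(\partial_1 v,0,\partial_2 v)(\cdot)$ — the middle column vanishes because $v$ is independent of $x_2$ — and $\nabla_h^2 u_n=\tfrac1{\eps_n\ell}\nabla^2 v(\cdot)$, whence, since $W(\partial_1 v,0,\partial_2 v)=\mathcal{W}(\nabla v)$ by definition of $\mathcal{W}$, $\mathcal{F}_{\eps_n}^{h_n}(u_n)=\int_\Omega\tfrac1{\eps_n}\mathcal{W}(\nabla v)(\cdot)+\tfrac1{\eps_n\ell^2}|\nabla^2 v|^2(\cdot)\,dx$. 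The change of variables $(y_1,y_2)=\bigl(\tfrac{x_1}{\eps_n\ell},\tfrac{h_n x_3}{\eps_n\ell}\bigr)$ stretches the $x_3$-slab into one of length $\sim h_n/(\eps_n\ell)\to\infty$, which, because $\eps_n\ll h_n$, contains $\sim h_n/\eps_n$ periods of $v$ in the direction orthogonal to $\nu_\lambda$; a periodicity/Riemann‑sum computation then gives $\mathcal{F}_{\eps_n}^{h_n}(u_n)\to(1+\lambda^2)^{1/2}\bigl(\int_{Q'_\lambda}\ell\,\mathcal{W}(\nabla v)+\tfrac1\ell|\nabla^2 v|^2\,dy\bigr)\Per_\omega(E)\leq(K_\infty+O(\delta))\Per_\omega(E)$, the tilt $\nu_\lambda$ of the transition hyperplanes producing both the factor $(1+\lambda^2)^{1/2}$ and the weights $\ell\,\mathcal{W}+\tfrac1\ell|\nabla^2 v|^2$. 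Meanwhile $u_n\to u$ and $\tfrac1{h_n}\partial_3 u_n\to b$ in $L^1$, since each phase is affine, the transition layer is $O(\eps_n)$-thin, and $u_n$ has amplitude $O(\eps_n)$ there. Letting $\delta\to0$ (after a further diagonalization) yields $\Gamma(L^1)$-$\limsup_n\mathcal{F}_{\eps_n}^{h_n}\leq\mathscr{F}_\infty$.

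\emph{Lower bound.} Let $\{u_n\}$ satisfy $\sup_n\mathcal{F}_{\eps_n}^{h_n}(u_n)<\infty$; by Theorem~\ref{thm:compactness} and along a subsequence, $u_n\to u$ and $\tfrac1{h_n}\partial_3 u_n\to b$ with $(u,b)\in\mathscr{C}$ and set $E$ as in \eqref{eq:E}. Set $\mu_n:=\bigl(\tfrac1{\eps_n}W(\nabla_h u_n)+\eps_n|\nabla_h^2 u_n|^2\bigr)\mathcal{L}^3\restr{\Omega}$ and, along a further subsequence, assume $\mu_n\to\mu$ weakly-$*$ as measures; it suffices to prove $\mu\geq c\,K_\infty\,\mathcal{H}^1\restr{(\partial^*E\cap\omega)}$ for a universal constant $c>0$. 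By the blow‑up method this reduces to a pointwise lower bound on the density of $\mu$ with respect to $\mathcal{H}^1\restr{\partial^*E}$ at $\mathcal{H}^1$-a.e.\ $x_0'\in\partial^*E\cap\omega$: blowing up in the $x_1$-direction one extracts, on a box $Q\times I$ with $Q\subset\R^2$ a square centered at $x_0'$ having a face orthogonal to $\bar\nu$, competitors carrying comparable energy and forced, because $(u,b)\in\mathscr{C}$, to bridge phases $A$ and $B$ across $\{x_1=x_0'\cdot e_1'\}$. Using $(H_4)$ to discard the $\xi_2$-components reduces the estimate to a two‑dimensional singular perturbation in $(x_1,x_3)$ on a slab of bounded width that is free to oscillate in $x_3$; bounding it below by the periodic cell problem $K_\infty$ — whose $1$-periodicity orthogonal to $\nu_\lambda$ is recovered from a slab competitor only up to an averaging/covering step that costs a fixed multiplicative factor — produces the bound with some universal $c\in(0,1]$. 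Hence $c\,\mathscr{F}_\infty\leq\Gamma(L^1)$-$\liminf_n\mathcal{F}_{\eps_n}^{h_n}$.

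\emph{The equality case, and the main obstacle.} The delicate point — and the reason only $c\,\mathscr{F}_\infty$ is obtained in general — is the sharp lower bound: the genuinely three‑dimensional competitors $u_n$ need not be, even asymptotically, periodic in the thickness variable, whereas $K_\infty$ is defined by a periodic cell problem, and reconciling the two is precisely what forces the loss. When $p=2$, $\lambda=0$ and $W(\xi',\xi_3)=W(\xi',-\xi_3)$ this obstacle is bypassed: at fixed $h$ one has the identity $F_\eps^h=\tfrac1h\mathbf{E}_\eps(\cdot\,,\Omega_h)$, so that by \cite[Theorem~1.4]{CFL} — applicable on the convex cylinder $\Omega_h$, and whose cell constant for $W$ reduces, thanks to $(H_4)$, $p=2$ and the $\xi_3$-symmetry, to the two‑dimensional one for $\mathcal{W}$, i.e.\ to $K_\infty=K^{\mathrm{CFL}}$ when $\lambda=0$ — one gets $\Gamma$-$\lim_{\eps\to0}F_\eps^h=\tfrac1h K^{\mathrm{CFL}}\Per_{\Omega_h}$ on limiting configurations; since with $\lambda=0$ the bulk interface is the vertical plane $\{x_1=\mathrm{const}\}$, one has $\Per_{\Omega_h}=h\,\Per_\omega(E)$, hence $\tfrac1h K^{\mathrm{CFL}}\Per_{\Omega_h}=K_\infty\Per_\omega(E)$, and a careful diagonal/lower‑semicontinuity argument (the separation of scales, Remark~\ref{sepscalessc}) upgrades this to $\mathscr{F}_\infty\leq\Gamma(L^1)$-$\liminf_n\mathcal{F}_{\eps_n}^{h_n}$ for every $\eps_n/h_n\to0$. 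Combined with the upper bound, this gives the full $\Gamma$-convergence to $\mathscr{F}_\infty$ in that case.
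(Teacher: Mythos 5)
Your upper-bound construction is essentially the paper's (Theorem~\ref{thm:RS_A'neqB'sc}): transplant a near-optimal periodic competitor $v$ for $K_\infty$ by the anisotropic rescaling $y=(x_1/(\eps_n\ell),\,h_nx_3/(\eps_n\ell))$, observe that $\nabla_{h_n}u_n=(\partial_1 v,0,\partial_2 v)$, $\nabla^2_{h_n}u_n=\tfrac1{\eps_n\ell}\nabla^2 v$, count the $\sim h_n\sqrt{1+\lambda^2}/(\eps_n\ell)$ periods of $v$ fitting into the stretched slab, and glue. Apart from your misstatement that the transition layer is confined to an $O(\eps_n)$-neighbourhood of $\Sigma$ (for $\lambda\neq0$ it is $O(h_n)$ because the hyperplanes are tilted by the angle $\arctan(\lambda h_n)$, which is why the paper introduces $\nu_n$), this part is fine in spirit.

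The lower bound and the equality case both contain genuine gaps.

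\emph{Lower bound.} You try to prove $\liminf_n\mathcal{F}_{\eps_n}^{h_n}(u_n)\geq c\,K_\infty\,\Per_\omega(E)$ directly, by blowing up, reducing via $(H_4)$ to a slab problem in $(x_1,x_3)$, and then \emph{asserting} that a slab competitor can be compared to the $1$-periodic cell energy $K_\infty$ ``up to an averaging/covering step that costs a fixed multiplicative factor, producing a universal $c$.'' That step is precisely the hard point and is not justified: a competitor on a slab free to oscillate in $x_3$ is not periodic, and periodizing it generically raises the energy by an amount you do not control, so the comparison to $K_\infty$ is exactly what is missing. The paper sidesteps this entirely. It introduces the \emph{non-periodic} constant $K_\infty^\star$ (formula \eqref{defKstarinf}), which is the one the covering argument of Section~4 delivers (only $(H_1)$–$(H_2)$ and $A'\neq B'$ are needed — recall that under \eqref{orientationwellssc} one always has $A'\neq B'$, so the blow-up step requiring $(H_5)$ is never invoked, and $(H_5)$ is in fact not among the hypotheses of the theorem). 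Lemma~\ref{finitekgam} then shows $0<K_\infty^\star<\infty$ by an explicit one-dimensional competitor and a one-dimensional lower bound, and the constant $c$ in the statement is simply $c=K_\infty^\star/K_\infty>0$ — it is not universal, nor does the statement require it to be. You should re-derive the lower bound with $K_\infty^\star$ (Theorem~\ref{thm:gammaliminfsupcrit}) and then invoke Lemma~\ref{finitekgam}.

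\emph{Equality case.} Your argument for $\mathscr{F}_\infty\leq\Gamma$-$\liminf$ when $p=2$, $\lambda=0$, $W$ symmetric in $\xi_3$ relies on taking first $\eps\to0$ at fixed $h$ (via \cite[Theorem~1.4]{CFL}), then $h\to0$, and then asserting that ``a careful diagonal/lower-semicontinuity argument upgrades this'' to the diagonal $\Gamma$-liminf. That is the statement one would like, but it does not follow from the iterated limits: $\Gamma$-limits are not jointly continuous, and a diagonal sequence $\{u_n\}$ with $\eps_n\ll h_n\to0$ is not amenable to the fixed-$h$ result of \cite{CFL}. Remark~\ref{sepscalessc}, which you are gesturing at, is explicitly a \emph{consequence} of Theorem~\ref{thm:gammalim_gammasup} (a consistency check with \cite{CFL}), not the argument that proves it. The actual proof in the paper runs through Proposition~\ref{prop:matching1sc} (pinning at $\{x_1=\pm1/4\}$ using the bound $W(\xi)\geq C_W|\xi_3|^p$ from Lemma~\ref{borninfW}, valid only for $\lambda=0$), and then the delicate vertical-periodicity Proposition~\ref{transperiodvert}, where one finds a thin horizontal strip of small energy, flattens the competitor there, reflects across $\{x_3=\pm\tfrac12\}$ (this is where $W(\xi',\xi_3)=W(\xi',-\xi_3)$ and $p=2$ enter), and extends periodically; the quantitative estimates there use $p=2$ through the Poincar\'e inequalities in \eqref{step4poinc1}--\eqref{step4poinc2}. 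Corollary~\ref{prop:charK_perscvert} then reads off $K_\infty^\star\geq K_\infty$ by rescaling the resulting $1$-periodic competitor into a cell competitor. Without this periodization argument — not merely its conclusion — your proof of the equality case does not close.
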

\vskip5pt

The paper is organized as follows. We start in Section 2 with a structure result for the class $\mathscr{C}$ of limiting maps, and the compactness 
theorem is proved in Section 3. The proofs of Theorems~\ref{thm:gammalim_gamma1}, \ref{thm:gammalim_gammasub}, and \ref{thm:gammalim_gammasup} are 
given in Sections~4, 5, and 6 respectively. We complete Section~6 with the aforementioned rigidity results in the supercritical case.

%
%

\section{Preliminaries}

Throughout the paper, $Q$ and $Q'$ denote the standard open unit cubes centered at the origin of $\R^3$ and~$\R^2$ respectively, 
while $I:=(-\frac{1}{2},\frac{1}{2})$. For simplicity, the differential operators $\frac{\partial}{\partial x_i}$ and  $\frac{\partial^2}{\partial x_i\partial x_j}$ are written  
$\partial_i$ and~$\partial^2_{ij}$ respectively. The rescaled gradient $\nabla_h$ and rescaled Hessian $\nabla_h^2$ operators are given~by 
$$\nabla_h u =\left(\partial_1u,\partial_2u,\frac{1}{h}\partial_3 u\right) \quad \text{and}\quad
\nabla_h^2 u=\left(\begin{array}{ccc}
\partial^2_{11} u\; &  \partial^2_{12} u\; & \frac{1}{h}\partial^2_{13} u\\[4pt]
\partial^2_{12} u\; & \partial^2_{22} u\; & \frac{1}{h}\partial^2_{23} u\\[4pt] 
\frac{1}{h}\partial^2_{13} u \;& \frac{1}{h}\partial^2_{23} u\;& \frac{1}{h^2}\partial^2_{33} u 
\end{array}\right)\,.$$
For a Borel set $B\subset \R^3$ and an admissible map $u$ we write 
$$F^h_\eps(u,B):= \int_{B} \frac{1}{\eps}\,W(\nabla_h u) + \eps |\nabla_h^2 u|^2 \, dx\,,$$
In the sequel, it will be useful to consider the two reference maps, $u_0$ and $b_0$, defined for $x\in\R^3$ by
\begin{equation}\label{defiu0b0}
u_0(x):=\bar u_0(x_1)\quad\text{and}\quad b_0(x):= \bar b_0(x_1)\,,
\end{equation}
where $\bar u_0$ and $\bar b_0$ are given by 
\begin{equation}\label{defu0}
\bar u_0(t) := |t|a
\quad \text{and}\quad
\bar b_0(t) := \begin{cases}
  A_3  &  \text{ if } t \geq 0\,, \\
  B_3  &  \text{ if } t < 0\,.
\end{cases}
\end{equation}
\vskip8pt 

We shall follow \cite{AFP} for the standard results and notations on functions of bounded variation. We only recall that, given an open set $\mathscr{O}\subset \R^N$, a 
Borel set $E\subset \mathscr{O}$ is said to be of finite perimeter in $\mathscr{O}$ if its characteristic function $\chi_E$ belongs to $BV(\mathscr{O})$. In such a case, 
the perimeter of $E$ in $\mathscr{O}$, that we write $\Per_{\mathscr{O}}(E)$, is the total variation~$|D\chi_E|(\mathscr{O})$, and it is equal 
to $\mathcal{H}^{N-1}(\partial^*E\cap \mathscr{O})$ where $\partial^*E$ denotes the reduced boundary of $E$, and $\mathcal{H}^{N-1}$ is the $(N-1)$-dimensional Hausdorff measure. 
\vskip5pt

We now state a structure result for the class $\mathscr{C}$ of limiting configurations (see \eqref{eq:V}). To this purpose, let us~define
\begin{equation}\label{defalpha}
\alpha_{\rm min}:=\inf\big\{x_1\in\R : x=(x_1,x_2)\in\omega\big\}\quad\text{and}\quad\alpha_{\rm max}:=\sup\big\{x_1\in\R : x=(x_1,x_2)\in\omega\big\}\,.
\end{equation}
We have the following theorem as a consequence of \cite{BJ,CFL}. 

\begin{theorem}\label{thm:BJ}
Assume that \eqref{orientationwells} holds. Then for every pair $(u,b) \in \mathscr{C}$, $(\nabla'u,b)$ is of the form 
$$\big(\nabla'u(x),b(x)\big)=(1-\chi_E(x'))A+\chi_E(x')B \,,$$
where  $E \subset \omega $ is a set of finite perimeter in $\omega$. Moreover, 
if $A'\not= B'$ then  $u$ is of the form 
\begin{equation}\label{fctionofx1}
u(x)= c_0 + x_1a - 2\psi(x_1)a \,,
\end{equation}
where $c_0 \in \R^3$, $c_0 \cdot a = 0$, $\psi\in W^{1,\infty}((\alpha_{\rm min},\alpha_{\rm max});\R)$ and $\psi'\in BV_{\rm loc}((\alpha_{\rm min},\alpha_{\rm max});\{0,1\})$. 
In particular, if $A'\not= B'$ then 
 $E$ is layered perpendicularly to $e'_1$, i.e.,
\begin{equation}\label{structredbdAdiffB}
\partial^{\ast} E \cap \omega = \bigcup_{i\in\mathscr{I}} \{\alpha_i\} \times J_i\,, 
\end{equation}
where $\mathscr{I}\subset\ZZ$ is made by successive integers, $\{\alpha_i\}\subset (\alpha_{\rm min},\alpha_{\rm max})$ is  
locally finite in $(\alpha_{\rm min},\alpha_{\rm max})$, $\alpha_i<\alpha_{i+1}$, and the sets $J_i :=\{t\in\R:(\alpha_i,t)\in\omega\}$ are open bounded intervals. 
\end{theorem}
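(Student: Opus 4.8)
The plan is to establish the two assertions separately. The representation of $(\nabla'u,b)$ is almost definitional, while the finer description when $A'\neq B'$ is where the rigidity encoded in \eqref{orientationwells} enters.

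First I would prove the representation $(\nabla'u,b)=(1-\chi_E)A+\chi_E B$ with $E$ of finite perimeter in $\omega$. By definition of $\mathscr{C}$, the map $(\nabla'u,b)$ belongs to $BV(\Omega;\{A,B\})$, so the level set $\widetilde E:=\{x\in\Omega:(\nabla'u,b)(x)=B\}$ has finite perimeter in $\Omega$ (its characteristic function is an affine image of $(\nabla'u,b)$). The constraint $\partial_3u=\partial_3b=0$ — which, since $u\in W^{1,\infty}$, also yields $\partial_3(\nabla'u)=0$ — forces $(\nabla'u,b)$ to be independent of $x_3$, so $\widetilde E=E\times I$ with $E:=\{x'\in\omega:(\nabla'u,b)(x')=B\}$. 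Since $\Per_\Omega(E\times I)=\Per_\omega(E)$, the set $E$ has finite perimeter in $\omega$, which is the first claim and gives \eqref{eq:E}.

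Next I would treat the case $A'\neq B'$, which under \eqref{orientationwells} is exactly the case $a\neq0$, and in which both $A'$ and $B'$ have vanishing second column. Since $\nabla'u\in\{A',B'\}$ a.e., this immediately forces $\partial_2u=0$ a.e.; combined with $\partial_3u=0$ and the fact that $\Omega$ is connected with $x_1$-projection $(\alpha_{\rm min},\alpha_{\rm max})$ (here convexity of $\omega$ ensures every $x_1$-slice is an interval), one gets $u(x)=\bar u(x_1)$ for some $\bar u\in W^{1,\infty}((\alpha_{\rm min},\alpha_{\rm max});\R^3)$. Its derivative $\bar u'=\partial_1u$ takes a.e. values in $\{A_1,B_1\}=\{a,-a\}$; setting $\psi'(t):=0$ where $\bar u'(t)=a$ and $\psi'(t):=1$ where $\bar u'(t)=-a$, one has $\bar u'=(1-2\psi')a$. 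Integrating and splitting the integration constant into its components parallel and orthogonal to $a$ (absorbing the parallel part into an additive constant for $\psi$) yields \eqref{fctionofx1} with $c_0\cdot a=0$, $\psi\in W^{1,\infty}$, and $\psi'$ valued in $\{0,1\}$. (Alternatively, this part of the structure may be quoted directly from \cite{BJ}.)

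Finally I would establish the $BV_{\rm loc}$ regularity of $\psi'$ and the layered structure \eqref{structredbdAdiffB}. Since $\nabla'u=\big((1-2\psi'(x_1))a,0\big)\in BV(\Omega)$ and $a\neq0$, the function $x\mapsto\psi'(x_1)$ lies in $BV(\Omega)$; writing $S:=\{t:\psi'(t)=1\}$, so that $\{\psi'=1\}=(S\times\R^2)\cap\Omega$ and $E=(S\times\R)\cap\omega$, one computes $\Per_\Omega(\{\psi'=1\})=\sum_{\alpha\in\partial^*S}|J(\alpha)|$ with $J(\alpha):=\{t:(\alpha,t)\in\omega\}$. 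The main obstacle — indeed the only delicate point — is the reason the conclusion is merely local: the section lengths $|J(\alpha)|$ degenerate to $0$ at $\alpha_{\rm min}$ and $\alpha_{\rm max}$, so finite perimeter in $\Omega$ allows infinitely many jumps of $\psi'$ accumulating at the endpoints. By convexity of $\omega$, however, $\alpha\mapsto|J(\alpha)|$ is concave and strictly positive on $(\alpha_{\rm min},\alpha_{\rm max})$, hence bounded below by a positive constant on each compact subinterval; finiteness of the sum then forces $\partial^*S$ to be locally finite in $(\alpha_{\rm min},\alpha_{\rm max})$, i.e. $\psi'=\chi_S\in BV_{\rm loc}(\{0,1\})$. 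Ordering $\partial^*S=\{\alpha_i\}_{i\in\mathscr{I}}$ increasingly along successive integers and setting $J_i:=J(\alpha_i)$ gives \eqref{structredbdAdiffB}, completing the proof.
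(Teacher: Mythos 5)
Your proof is correct, and its skeleton matches the paper's: reduce to independence of $x_3$, then to a one-variable profile when $A'\neq B'$, then read off the layered structure of $\partial^*E$. The differences are in how the two key steps are justified. Where the paper invokes \cite[Theorem~3.3]{CFL} (using the convexity of $\omega$) to obtain \eqref{fctionofx1}, you give a direct elementary derivation; this shortcut is legitimate precisely because the theorem assumes the normalization \eqref{orientationwells}, under which $A_2=B_2=0$, so that $\partial_2u=\partial_3u=0$ and $\partial_1u\in\{a,-a\}$ make the problem genuinely one-dimensional and no two-gradient rigidity result is needed. Conversely, you make explicit what the paper compresses into ``it readily follows'', namely the local --- but in general not global --- finiteness of the jump set of $\psi'$, which you obtain from the concavity and positivity of $\alpha\mapsto\mathcal{H}^1(J(\alpha))$ on $(\alpha_{\rm min},\alpha_{\rm max})$ together with the finiteness of $\Per_\omega(E)$; this is the right argument and worth recording. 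The one place where you are terser than the paper is the case $A'=B'$: your assertion that $\partial_3 b=0$ forces $b$ to coincide a.e.\ with a function of $x'$ alone is exactly what the paper's Step~2 establishes by slicing $b$ along vertical lines and selecting a good representative, so that one-line claim should be backed by that slicing (or an equivalent mollification) argument, but it is a standard fact and not a gap.
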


\begin{proof} {\it Step 1.} We start with the case where $A'\not =B'$. Given $(u,b) \in \mathscr{C}$, we can write 
\begin{equation}\label{presetF}
(\nabla'u,b)=(1-\chi_F) A +\chi_F B\,,
\end{equation}
for some set $F\subset \Omega$ of finite perimeter in $\Omega$. Since $\partial_3 u=0$, we have $\nabla u \in BV(\Omega;\{(A',0),(B',0)\})$. Then we observe that \eqref{orientationwells} 
yields $(A',0)-(B',0)=a\otimes e_1$. Thanks to the convexity of $\omega$,  we can apply \cite[Theorem 3.3]{CFL} to deduce that $u$ is of the form \eqref{fctionofx1}. From \eqref{fctionofx1}, 
\eqref{presetF}, and the convexity of~$\omega$, it readily follows that $\chi_{F}=\chi_{E\times I}$ $\mathcal{L}^3$-a.e. in $\Omega$ for some set $E\subset \omega$ of finite perimeter in $\omega$ satisfying~\eqref{structredbdAdiffB}. 
\vskip5pt

\noindent{\it Step 2.} We now consider the case $A'=B'$.  Given $(u,b) \in \mathscr{C}$, we have $b\in BV(\Omega;\{A_3,B_3\})$, so that $b=(1- \chi_F)A _3+\chi_F B_3$ 
for some set $F\subset \Omega$ of finite perimeter in $\Omega$. By standard slicing results (see\cite[Section 3.11]{AFP}), $b^{x'}:= b(x',\cdot)$ belongs to $BV(I;\R^3)$ for $\mathcal{L}^{2}$-a.e. $x'\in\omega$, 
and $\mathcal{L}^2 \restr{}\omega \otimes D b^{x'} = \partial_3 b$. Since $\partial_3 b=0$, we deduce that $D b^{x'} =0$ for $\mathcal{L}^2$-a.e. $x'\in\omega$. On the other hand, 
we can find a representative $b^*$ of $b$ such that  $(b^*)^{x'}:=b^*(x',\cdot)$ is a good representative of  $b^{x'}$ for $\mathcal{L}^{2}$-a.e. $x'\in\omega$. Since $D b^{x'} =0$, we conclude 
that $(b^*)^{x'}$ is constant for $\mathcal{L}^2$-a.e. $x'\in\omega$, that is $b^*(x)=b^*(x')$. Then it  follows that $\chi_{F}=\chi_{E\times I}$  $\mathcal{L}^3$-a.e. in $\Omega$ for some set $E\subset \omega$ of finite perimeter in $\omega$. 

\end{proof}

%
%

\section{Compactness}

This section is devoted to the proof of Theorem \ref{thm:compactness}, and we assume that $(H_1)$ and $(H_2)$ hold. 
We consider arbitrary sequences  $h_n\to0^+$, $\eps_n\to0^+$ as $n\to\infty$,  and $\{u_n\}_{n\in\NN}\subset H^{2}(\Omega;\R^3)$ such that 
 $\sup_n F_{\eps_n}^{h_n}(u_n) < \infty$. Throughout this section we write $b_n:=\frac{1}{h_n}\partial_3 u_n$. 
\vskip5pt 

\noindent{\bf Proof of Theorem  \ref{thm:compactness}.} {\it Step 1.} We claim that there exist a subsequence $\{\eps_n\}$ (not relabeled), 
a pair $(u,b)\in W^{1,\infty}(\Omega;\R^3)\times L^\infty(\Omega;\R^3)$ satisfying $\partial_3 u=0$, and 
$\theta\in L^\infty(\Omega;[0,1])$ such that 
\begin{equation}\label{weakconv}
\text{$u_n-\,\med_{\Omega}u_n\,\rightharpoonup u$ weakly in $W^{1,p}(\Omega;\R^3)\,$},\;\text{$b_n\rightharpoonup b$ weakly in $L^p(\Omega;\R^3)$ as $n\to\infty$}\,, 
\end{equation}
and 
\begin{equation}\label{eq:gradu_YM}
(\nabla' u, b)(x) = \bigl( 1- \theta(x)\bigr) A + \theta(x) B\quad\text{for $\mathcal{L}^3$-a.e.  $x\in\Omega$}\,.
\end{equation}
Indeed, we first deduce from the growth assumption $(H_2)$ that
$$
\int_{\Omega} \big(|\nabla' u_n|^p + |b_n|^p \big) \, dx \leq C \bigg(\int_{\Omega} W(\nabla' u_n, b_n) \, dx + 1\bigg)\leq C\big(\eps_n F_{\eps_n}^{h_n}(u_n) +1\big)\leq C\,.
$$
Therefore $\{b_n\}$ is uniformly bounded in $L^p(\Omega;\R^3)$, and $\{u_n-\,\med_{\Omega}u_n\}$ is uniformly bounded in $W^{1,p}(\Omega;\R^3)$, thanks to the 
Poincar\'e-Wirtinger Inequality. Hence we may extract a subsequence such that \eqref{weakconv} holds for some pair $(u,b)\in W^{1,p}(\Omega;\R^3)\times L^p(\Omega;\R^3)$.  
Since  $\|\partial_3 u_n\|_{L^p(\Omega)}\leq Ch_n$, we  deduce that $\partial_3 u \equiv 0$.

Next we observe that the sequence $\{(\nabla' u_{n}, b_{n})\}$ also  generates a Young measure $\{ \nu_x \}_{x \in \Omega}$.  
From the fundamental theorem on Young measures (see {\it e.g.} \cite[Theorem 6.11]{Pedregal}), we derive 
$$
\int_{\Omega} \int_{\R^{3\times 3}} W(\xi) \, d\nu_x(\xi) \, dx \leq  \lim_{n\to+\infty} \int_{\Omega} W\big( \nabla' u_{n} , b_{n} \big) \, dx =0 \,,
$$
so that ${\rm supp}\, \nu_x \subset \{A, B\}$ for $\mathcal{L}^3$-a.e. $x \in \Omega$. 
Hence there exists $\theta \in L^1\bigl(\Omega;[0,1]\bigr)$ such that
$$
\nu_x = \bigl( 1-\theta(x)\bigr) \delta_{\xi=A} + \theta(x) \delta_{\xi=B}
\quad \text{ for $\mathcal{L}^3$-a.e. } x \in \Omega\,.
$$
Multiplying this last equality by $\xi$ and integrating with respect to $\xi$ yields \eqref{eq:gradu_YM}, 
which completes the proof of the claim. 
\vskip5pt

\noindent{\it Step 2.}   We claim that 
$(u,b) \in \mathscr{C}$. We shall distinguish two distinct cases.
\vskip5pt

\noindent\emph{Case a).}  We first assume that $A' \neq B'$.  For $M>0$ and $\xi' \in \R^{3\times 2}$, we define
\begin{multline*}
\varphi(\xi') := \inf \bigg\{ \int_0^1 \min\Big(\sqrt{W_0\bigl(g(s)\bigr)},M \Big) | g'(s)| \, ds \,:\,
 g\in W^{1,\infty}([0,1];\R^{3\times 2})\,,\\
  g(0) =A' \text{ and } g(1) =  \xi'  \bigg\}\,,
\end{multline*}
where $\ds W_0(\xi') := \min\{W(\xi' , z) : z \in \R^3\}$
is a continuous function of $\xi'$. One may easily check that 
$\varphi$ is Lipchitz continuous, $\varphi(\xi')=0$ if and only if $\xi' = A'$, and that 
\begin{equation}\label{eq:phi}
|\nabla \varphi(\xi')|  \leq \min\big\{\sqrt{W_0(\xi')}, M\big\} 
  \quad \text{for $\mathcal{L}^{3\times 2}$-a.e. $\xi' \in \R^{3 \times 2}\,$.}
\end{equation}
We claim that $\big\{ \varphi(\nabla'u_n)\big\}$ is uniformly bounded in $W^{1,1}(\Omega;\R)$. Indeed, estimate first 
$$
\int_{\Omega} \big|\nabla \big(\varphi(\nabla' u_n)\big)\big| \, dx
\leq \int_{\Omega} \sqrt{W_0\bigl(\nabla' u_n(x)\bigr)}\, \big| \nabla (\nabla'u_n) \big| \, dx \leq \frac{1}{2}\, F_{\eps_n}^{h_n}(u_n) \leq C\,,
$$
and by \eqref{eq:phi},
$$\int_{\Omega} \left| \varphi\bigl(\nabla' u_n(x)\bigr) \right| \, dx
  \leq M \int_{\Omega}  \left| \nabla'u_n(x) \right| \, dx + \varphi(0) \mathcal{L}^3(\Omega)\,.
$$
Hence, up to a further subsequence (not relabeled), 
\begin{equation}\label{eq:convH}
\varphi(\nabla'u_n) \to H\quad 
\text{ in $L^1(\Omega)$ as $n\to\infty$}\,, 
\end{equation}
for some $H\in BV(\Omega)$. On the other hand, the Young measure $\{\mu_x\}_{x\in\Omega}$ generated by $\big\{\varphi(\nabla' u_n) \big\}$ is given~by
$$
\mu_x = \bigl( 1-\theta(x)\bigr) \delta_{t=\varphi(A')} + \theta(x) \delta_{t=\varphi(B')}\,.
$$
Then the strong convergence in \eqref{eq:convH} yields $\mu_x = \delta_{t=H(x)}$, so that 
$$
\delta_{t=H(x)} = \bigl( 1-\theta(x)\bigr) \delta_{t=\varphi(A')} + \theta(x) \delta_{t=\varphi(B')}\,.
$$
As a consequence $\theta(x) \in \{0,1\}$ for $\mathcal{L}^3$-a.e. $x \in \Omega$, and 
$$
H(x) = \bigl( 1- \theta(x) \bigr) \varphi(A') + \theta(x) \varphi(B') =  \theta(x) \varphi(B')\quad\text{for $\mathcal{L}^3$-a.e. $x\in\Omega\,$}\,.
$$
Since $\varphi(B')\not=0$, it yields  $\theta \in BV\bigl(\Omega; \{0,1\}\bigr)$, and we may now write 
$\theta= \chi_F$ for some set  $F\subset \Omega\,$ of finite perimeter in $\Omega$. 
In view of \eqref{eq:gradu_YM}, we obtain $(\nabla'u,b) = (1-\chi_F) A + \chi_F B$  $\mathcal{L}^3$-a.e. in $\Omega$, and thus $(\nabla' u, b)$ belongs to  $BV\bigl(\Omega;\{A,B\}\bigr)$. 
Since $\partial_3 u=0$, we have $\nabla u\in BV\bigl(\Omega;\{(A',0),(B',0)\}\bigr)$ and it follows from \cite[Theorem 3.3]{CFL} that  $F= E \times I$ for some set $E \subset \omega$ 
of finite perimeter in $\omega$, which in turn implies that $\partial_3 b=0$, and thus~$(u,b)\in  \mathscr{C}$. 
\vskip5pt

\noindent\emph{Case b).}  Let us now assume that  $A' = B'$ and $A_3 \neq B_3$. For $M>0$ and $z\in\R^3$, we define
\begin{multline*}
\psi(z):= \inf \bigg\{ \int_0^1 \min\Bigl(\sqrt{W_1\big(g(s)\big)},M \Bigr) | g'(s) | \, ds\,:\, g\in W^{1,\infty}([0,1];\R^{3})\,,\,
 g(0) = A_3 \text{ and } g(1) = z \bigg\}\,,
\end{multline*}
where $ W_1(z) := \min \{W(\xi' , z): \xi' \in \R^{3\times 2}\}$  
is a continuous function of $z$. 
As previously $\psi$ is Lipschitz continuous, and $\psi(z)=0$ if and only if $z = A_3$. 
Arguing as in Case a),  we obtain that $\{\psi(b_n)\}$ is uniformly bounded in $W^{1,1}(\Omega;\R)$, and 
$$\frac{1}{h_n}\int_{\Omega} \big|\partial_3 \big(\psi(b_n)\big) \big| \, dx
 \leq \frac{1}{h_n}\int_{\Omega} \sqrt{W_1(b_n)}\, | \partial_3 b_n | \, dx 
   \leq \frac{1}{2}\, F^{h_n}_{\eps_n}(u_n) \leq C\,. $$
Therefore, up to a subsequence,  
\begin{equation}\label{eq:convH_case3}
\psi(b_n) \to G \quad\text{in  $L^1(\Omega)$ as $n\to\infty\,$,} 
\end{equation}
for some $G \in BV(\Omega)$ satisfying $\partial_3 G = 0$. 
Arguing again as in Case a), we deduce from 
\eqref{eq:convH_case3} that 
$$
\delta_{t=G(x)} = ( 1-\theta(x)) \delta_{t=\psi(A_3)} + \theta(x) \delta_{t=\psi(B_3)}\,,
$$
which yields $\theta \in BV(\Omega; \{0,1\})$. 
Since $\partial_3 G=0$  we can argue as in  the proof of Theorem~\ref{thm:BJ}, Step~2,  
to deduce that $\theta(x) = \chi_E(x')$ for some set $E \subset \omega$ of finite perimeter in $\omega$.  
In view of \eqref{eq:gradu_YM}, we conclude that $(\nabla' u, b)\in BV\bigl(\Omega;\{A,B\}\bigr)$ 
and $\partial_3 b=0$, and thus $(u,b)\in  \mathscr{C}$. 
\vskip5pt

\noindent{\it Step  3.} In view of the previous steps, we know that  $ (\nabla u_n,b_n) \rightharpoonup (\nabla u,b)$ weakly in $L^p(\Omega)$, and that  
$ \{ (\nabla' u_n, b_n)\}_{n\in\NN}$ generates the Young measure $\{\nu_x\}_{x\in\Omega}$ given by 
$$\nu_x(\xi) = \big(1-\chi_E(x')\big) \delta_{\xi=A} + \chi_E(x') \delta_{\xi=B} = \delta_{\xi = (\nabla' u, b)}\,.$$
By standard results on Young measures (see {\it e.g.} \cite[Proposition 6.12]{Pedregal}), it follows that 
$(\nabla' u_n, b_n) \to (\nabla' u, b)$ strongly in $L^p(\Omega;\R^{3\times 3})$, and the proof Theorem \ref{thm:compactness} is complete.\prbox


%
%


\section{$\Gamma$-convergence in the critical regime}

This section is devoted to the proof of Theorem \ref{thm:gammalim_gamma1}. The $\Gamma$-liminf and $\Gamma$-limsup   
inequalities  are derived in Theorem \ref{thm:gammaliminf} and Theorem \ref{thm:RS_A'neqB'} respectively, while Corollary \ref{prop:charK_per} shows that 
the lower and upper inequalities actually coincide. In proving lower inequalities, we partially adopt the approach of \cite{CFL}, once 
adapted to the dimension reduction setting. Throughout this section  the parameter $\gamma\in(0,\infty)$ is~given.

%
%

\subsection[]{The $\Gamma$-$\liminf$ inequality}\label{gamliminfcrit}

We  introduce the constant  
\begin{multline}\label{eq:defKstar1}
K_\gamma^\star:=\inf \biggl\{\liminf_{n\to\infty}\, F^{h_n}_{\eps_n}(u_n,Q)\,:\, h_n\to0^+ \text{ and } \eps_n \to 0^+  \text{ with } h_n/\eps_n\to \gamma\,,\\ 
\{u_n\} \subset H^{2}(Q;\R^3),\,
\text{$(u_n,\frac{1}{h_n}\partial_3 u_n) \to  (u_0,  b_0)$  in  $[L^1(Q;\R^3)]^2$} \biggr\}\,,
\end{multline}
where  the functions $u_0$ and $b_0$ are given by  \eqref{defiu0b0}. The constant $K^\star_\gamma$ turns out to be finite, as one may 
check by considering an admissible sequence $\{u_n\}$ made of suitable regularizations of $u_0$ and $b_0$ (see also the proof of Theorem~\ref{thm:RS_A'neqB'}).  
In this subsection we shall prove that under assumptions $(H_1)-(H_2)$ and $(H_5)$, the lower $\Gamma$-limit evaluated at any $(u,b)\in\mathscr{C}$ is bounded 
from below by $K^\star_\gamma$ times the length of the  jump set of $(\nabla'u,b)$.  
We first prove this statement in the case of an elementary  jump~set.

\begin{proposition}\label{gliminfref}
Assume that  $(H_1)$, $(H_2)$ and \eqref{orientationwells} hold. Let $h_n\to0^+$ and $\eps_n\to0^+$ be arbitrary sequences such that $h_n/\eps_n \to \gamma$.  
Let $\rho>0$ and $\alpha \in\R$, let $J\subset\R$ be a bounded open interval, and consider the cylinder 
$U:=(\alpha-\rho,\alpha+\rho)\times J\times I$.  
Let $(u,b)\in W^{1,\infty}(U;\R^3)\times L^\infty(U;\R^3)$ satisfying $\partial_3 u=\partial_3 b=0$, and  
\begin{equation}\label{eq:u01} 
 (\nabla'u,b) = \chi_{\{x_1<\alpha\}}\,B + \chi_{\{x_1\geq\alpha\}}\,A \quad\text{or}\quad 
 (\nabla'u,b) = \chi_{\{x_1\leq \alpha\}}\,A + \chi_{\{x_1>\alpha\}}\,B\,.
\end{equation}
Then for any sequence $\{u_n\}\subset H^2(U;\R^3)$ satisfying $(u_n,\frac{1}{h_n}\partial_3 u_n)\to (u,b)$ 
in $[L^1(U;\R^3)]^2$, we have 
$$\liminf_{n\to\infty}\,F^{h_n}_{\eps_n}(u_n,U)\geq K^\star_\gamma\mathcal{H}^1(J)\,.$$
\end{proposition}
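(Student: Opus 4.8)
The plan is to reduce the proposition to the definition of $K^\star_\gamma$ by a slicing-and-localization argument along the $x_2$-variable. Fix a sequence $\{u_n\}\subset H^2(U;\R^3)$ with $(u_n,\frac1{h_n}\partial_3 u_n)\to(u,b)$ in $[L^1(U;\R^3)]^2$, and assume without loss of generality that $\liminf_n F^{h_n}_{\eps_n}(u_n,U)$ is finite and is attained along a (not relabeled) subsequence as an honest limit. The point is that $K^\star_\gamma$ is defined through the fixed profile $(u_0,b_0)$ on the unit cube $Q$, whereas here we have an elementary jump at $\{x_1=\alpha\}$ inside the cylinder $U=(\alpha-\rho,\alpha+\rho)\times J\times I$; so the first step is a normalization of the limiting map. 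Using Theorem~\ref{thm:BJ} (with \eqref{orientationwells}), on each such $U$ the limit $u$ agrees, up to an additive constant and on the relevant slab, with a translate/reflection of $u_0(\cdot-\alpha)$ and $b$ with the corresponding translate of $b_0$; the two alternatives in \eqref{eq:u01} are interchanged by the reflection $x_1\mapsto 2\alpha-x_1$, which leaves $F^{h_n}_{\eps_n}$ invariant, so it suffices to treat one of them.

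Next I would localize in $x_2$. For a.e. $s\in J$ and a.e. small $\delta>0$, consider the subcylinder $U_{s,\delta}:=(\alpha-\rho,\alpha+\rho)\times(s-\delta,s+\delta)\times I$. Rescaling the $x_1$ and $x_2$ variables by an affine change of coordinates that maps $(\alpha-\rho,\alpha+\rho)\times(s-\delta,s+\delta)$ onto $Q'$ (and keeping $x_3$) transforms $U_{s,\delta}$ into $Q$; one must check how the gradient and Hessian terms, hence the parameters $h_n,\eps_n$, transform under this anisotropic rescaling. Because the scaling in $x_1$ and $x_2$ is by the same factor $\rho$ (resp.\ $\delta$) up to a harmless mismatch that can be absorbed by choosing $\delta=\rho$ or by a further comparison, the rescaled energy is again of the form $F^{\tilde h_n}_{\tilde\eps_n}$ with $\tilde h_n/\tilde\eps_n\to\gamma$, and the rescaled maps converge in $L^1(Q)$ to $(u_0,b_0)$; hence by definition of $K^\star_\gamma$,
$$\liminf_{n\to\infty} F^{h_n}_{\eps_n}(u_n,U_{s,\delta})\geq K^\star_\gamma\,\mathcal{H}^1\big((s-\delta,s+\delta)\big).$$
Finally, a standard covering/superadditivity argument lets one add up these lower bounds. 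Since $F^{h_n}_{\eps_n}(u_n,\cdot)$ is the restriction of a positive measure, choosing finitely many disjoint subcylinders $U_{s_i,\delta_i}$ with $\sum_i 2\delta_i$ close to $\mathcal{H}^1(J)$ and using superadditivity of $\liminf$ over disjoint sets (via Fatou, passing to a common subsequence along which the energies on each piece converge) yields
$$\liminf_{n\to\infty} F^{h_n}_{\eps_n}(u_n,U)\geq \sum_i \liminf_{n\to\infty} F^{h_n}_{\eps_n}(u_n,U_{s_i,\delta_i})\geq K^\star_\gamma\sum_i 2\delta_i\,,$$
and letting $\sum_i 2\delta_i\uparrow\mathcal{H}^1(J)$ finishes the proof.

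The main obstacle I anticipate is the rescaling bookkeeping in the second step: the energy $F^h_\eps$ involves the anisotropic operators $\nabla_h$ and $\nabla^2_h$ (with the $\frac1h$ and $\frac1{h^2}$ factors on the $x_3$-derivatives), so an affine rescaling of $(x_1,x_2)$ interacts nontrivially with the $\eps$- and $h$-weights, and one must verify that the transformed problem still has ratio $\to\gamma$ and is comparable (up to constants tending to $1$, or absorbable errors) to a genuine $F^{\tilde h_n}_{\tilde\eps_n}$ on $Q$. A clean way around the mismatch between the $\rho$-scaling and the $\delta$-scaling is to first dispose of the $x_1$-width: since the limiting map is constant in $x_1$ outside a neighborhood of $\alpha$, one may enlarge or shrink the $x_1$-interval freely at the cost of a vanishing energy contribution (the profile is flat there), reducing to the case $\rho$ comparable to $\delta$. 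The remaining subtlety — handling the convergence $(u_n,\frac1{h_n}\partial_3u_n)\to(u_0,b_0)$ only after subtracting slice-dependent constants from $u_n$ — is routine, since adding constants does not change the energy and the mean over $Q$ can be normalized.
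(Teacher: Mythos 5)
Your overall plan --- translate to put the jump at $x_1=0$, use a symmetry to reduce the two alternatives in \eqref{eq:u01} to one, slice $J$ into thin $x_2$-intervals, rescale each thin subcylinder isotropically down to the unit cube so that the definition of $K^\star_\gamma$ applies, then sum by superadditivity --- is exactly the paper's route. The paper simply packages the slicing/rescaling/superadditivity steps into the set function $\mathcal{E}_\gamma(J,\rho)$ and Lemma~\ref{lem:scales} (whose parts (i)--(vii) are precisely your translation, monotonicity, superadditivity, isotropic rescaling, and the two reductions you describe), so that Remark~\ref{compegam} gives $\mathcal{E}_\gamma(J,\rho)=K^\star_\gamma\mathcal{H}^1(J)$ and the proof of the proposition itself is a three-line normalization. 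You reconstruct the content of that lemma inline, which is fine; in particular your scaling bookkeeping in the ``main obstacle'' paragraph is what part (iv) of the lemma records.

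Two steps are, however, mis-stated and would not survive as written. First, the reflection $x_1\mapsto 2\alpha-x_1$ does \emph{not} leave $F^{h_n}_{\eps_n}$ invariant: reflecting the domain in $x_1$ alone replaces the first column $\xi_1$ of $\nabla_h u$ by $-\xi_1$ and leaves $\xi_2,\xi_3$ unchanged, and $W$ is not assumed to satisfy $W(-\xi_1,\xi_2,\xi_3)=W(\xi)$; moreover it does not even map the second alternative in \eqref{eq:u01} to the first, because $\nabla'u$ and $b$ would get reflected in incompatible ways. The correct symmetry (used in the paper) is the point reflection combined with negation, $v_n(x):=-\tau u_n(-x)-c_0$: then $\nabla_{h_n}v_n(x)=\nabla_{h_n}\tau u_n(-x)$ and $|\nabla^2_{h_n}v_n(x)|=|\nabla^2_{h_n}\tau u_n(-x)|$ identically, which does preserve the energy and does send the second alternative to the reference pair $(u_0,b_0)$. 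Second, the fallback ``choose $\delta=\rho$'' does not work: $\rho$ is fixed by the hypothesis, so tiling $J$ by intervals of length $2\rho$ only yields $K^\star_\gamma\cdot 2\rho\lfloor\mathcal{H}^1(J)/(2\rho)\rfloor$, which can be far below $K^\star_\gamma\mathcal{H}^1(J)$. Your other fix --- shrink the $x_1$-interval to width comparable to $\delta$ before rescaling, with $\delta\to 0$ --- is the right one, but the justification is not that ``the profile is flat there'' (you cannot enlarge the cylinder at zero cost, since $u_n$ is not defined outside $U$); shrinking is legitimate simply because it can only decrease $F^{h_n}_{\eps_n}(u_n,\cdot)$, which is Lemma~\ref{lem:scales}(ii). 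With these two points corrected, the rest of your argument goes through.
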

\vskip5pt

The proof of Proposition \ref{gliminfref}  relies on  scaling properties and the translation invariance 
of the energy functional $F^{h}_{\eps}$. To determine the corresponding properties in the limit 
 we introduce the following set function. For  an open bounded set $J\subset \R$ and $\rho>0$, we write $J_{\rho} := \rho I \times J \times I$, and we define 
\begin{multline*}
\mathcal{E}_\gamma(J,\rho):=\inf \biggl\{\liminf_{n\to\infty}\, F^{h_n}_{\eps_n}(u_n,J_\rho)\,:\, h_n\to0^+ \text{ and } \eps_n \to 0^+  \text{ with } h_n/\eps_n\to \gamma,\\ 
\{u_n\} \subset H^{2}(J_\rho;\R^3),\,
\text{$(u_n,\frac{1}{h_n}\partial_3 u_n) \to  (u_0,b_0)$   in  $[L^1(J_\rho;\R^3)]^2$} \biggr\}\,.
\end{multline*}
Noticing that $K_\gamma^\star=\mathcal{E}_\gamma(I,1)$ we now state the following lemma.

\begin{lemma}\label{lem:scales}
Assume that $(H_1)$, $(H_2)$ and \eqref{orientationwells} hold.
Then
\begin{itemize}
\item[(i)] $\mathcal{E}_\gamma(t + J,\rho) = \mathcal{E}_\gamma(J,\rho)$ for all $t \in \R\,$;
\vskip5pt
\item[(ii)] if $J_1 \subset J_2$ and $\rho_1\leq \rho_2$, then $\mathcal{E}_\gamma(J_1,\rho_1) \leq \mathcal{E}_\gamma(J_2,\rho_2)\,$;
\vskip5pt
\item[(iii)] if $J_1 \cap J_2 = \emptyset$, then $\mathcal{E}_\gamma(J_1 \cup J_2,\rho) \geq \mathcal{E}_\gamma(J_1,\rho) + \mathcal{E}_\gamma(J_2,\rho)\,$;
\vskip5pt
\item[(iv)] if $\alpha>0$, then $\mathcal{E}_\gamma(\alpha J, \alpha \rho) = \alpha \mathcal{E}_\gamma(J,\rho)\,$; 
\vskip5pt

\item[(v)] if $0<\alpha < 1$, then  $\mathcal{E}_\gamma(\alpha J, \rho) \geq \alpha \mathcal{E}_\gamma(J,\rho)\,$;
\vskip5pt
\item[(vi)] if $J$ is an interval then $\mathcal{E}_\gamma(J, \rho) = \mathcal{H}^{1}(J)\, \mathcal{E}_\gamma(I,\rho)\,$;
\vskip5pt
\item[(vii)] if $J$ is an interval then $\mathcal{E}_\gamma(J,\rho) = \mathcal{E}_\gamma(J, \delta)$ for all $\delta >0\,$.
\end{itemize}
\end{lemma}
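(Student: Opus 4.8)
The plan is to prove the seven properties of the set function $\mathcal{E}_\gamma(J,\rho)$ by exploiting the translation and scaling invariances of $F^h_\eps$ together with a careful bookkeeping of admissible competitors. First I would record the two basic transformations of the energy: if $u_n$ is defined on $J_\rho$, then for $t\in\R$ the translate $v_n(y):=u_n(y_1,y_2-t,y_3)$ is admissible on $(t+J)_\rho$ with $F^{h_n}_{\eps_n}(v_n,(t+J)_\rho)=F^{h_n}_{\eps_n}(u_n,J_\rho)$, and it still converges to $(u_0,b_0)$ since $u_0,b_0$ depend only on $y_1$; this gives (i) immediately (both inequalities, by symmetry of the argument). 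For (ii), monotonicity in $J$ and $\rho$, I would take a recovery-type sequence $\{u_n\}$ on $(J_2)_{\rho_2}$ nearly achieving $\mathcal{E}_\gamma(J_2,\rho_2)$ and simply restrict it to $(J_1)_{\rho_1}\subset (J_2)_{\rho_2}$, using that the energy density is nonnegative so restriction does not increase the energy, and noting the restricted sequence still converges to $(u_0,b_0)$ in $L^1$ of the smaller set.

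For the superadditivity (iii), given $J_1\cap J_2=\emptyset$, any sequence $\{u_n\}$ on $(J_1\cup J_2)_\rho$ converging to $(u_0,b_0)$ restricts to sequences on $(J_1)_\rho$ and $(J_2)_\rho$ which are separately admissible in the respective infima; then $F^{h_n}_{\eps_n}(u_n,(J_1\cup J_2)_\rho)\geq F^{h_n}_{\eps_n}(u_n,(J_1)_\rho)+F^{h_n}_{\eps_n}(u_n,(J_2)_\rho)$ (again nonnegativity and disjointness, up to the measure-zero overlap), and passing to the liminf with the elementary inequality $\liminf(a_n+b_n)\geq\liminf a_n+\liminf b_n$ yields the claim after taking the infimum over sequences. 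Property (iv) is the exact scaling: given $\alpha>0$ and $\{u_n\}$ on $J_\rho$, set $v_n(y):=\alpha\, u_n(y/\alpha)$ on $(\alpha J)_{\alpha\rho}$; one checks $\nabla_{h_n} v_n(y)=\nabla_{h_n}u_n(y/\alpha)$ and $\nabla^2_{h_n}v_n(y)=\alpha^{-1}\nabla^2_{h_n}u_n(y/\alpha)$, so after the change of variables $F^{h_n}_{\eps_n}(v_n,(\alpha J)_{\alpha\rho})=\alpha^3\cdot\alpha^{-1}F^{h_n}_{\eps_n}(u_n,J_\rho)$ — wait, the volume factor is $\alpha^3$ and the Hessian term carries $\alpha^{-2}$ giving $\alpha$, while the $W$ term carries $\alpha^3$; this does not scale cleanly unless one also rescales... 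Actually the correct scaling that works is $v_n(y):=\alpha u_n(y/\alpha)$ together with rescaling $\eps_n\mapsto\alpha\eps_n$ and keeping $h_n/\eps_n$ fixed: since the defining class allows arbitrary sequences with $h_n/\eps_n\to\gamma$, one replaces $(h_n,\eps_n)$ by $(\alpha h_n,\alpha\eps_n)$, and then both terms scale by $\alpha^{3}\cdot\alpha^{-1}=\alpha$ after accounting for $\nabla_{h}$ unchanged and $\nabla^2_h$ picking up $\alpha^{-1}$; tracking this gives $\mathcal{E}_\gamma(\alpha J,\alpha\rho)=\alpha\,\mathcal{E}_\gamma(J,\rho)$, with the reverse inequality by applying the inverse transformation.

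For (v), when $0<\alpha<1$, combine (ii) and (iv): $\mathcal{E}_\gamma(\alpha J,\rho)\geq\mathcal{E}_\gamma(\alpha J,\alpha\rho)=\alpha\,\mathcal{E}_\gamma(J,\rho)$, using $\alpha\rho\leq\rho$. Property (vi), that $\mathcal{E}_\gamma(J,\rho)=\mathcal{H}^1(J)\,\mathcal{E}_\gamma(I,\rho)$ for an interval $J$, is the main obstacle and requires a genuine covering/subadditivity argument rather than just restriction: the lower bound $\mathcal{E}_\gamma(J,\rho)\geq\mathcal{H}^1(J)\mathcal{E}_\gamma(I,\rho)$ follows by translating $I$ to fit disjoint copies inside $J$ and using (i), (iii), and a continuity/approximation in the length (only rational multiples covered exactly, then pass to the sup), while the matching upper bound $\mathcal{E}_\gamma(J,\rho)\leq\mathcal{H}^1(J)\mathcal{E}_\gamma(I,\rho)$ needs one to glue together near-optimal competitors on translated unit-length cylinders into a single competitor on $J_\rho$; the delicate point is that these competitors must agree (and be $H^2$) across the gluing interfaces, which is why one uses the boundary behavior built into $K^\star_\gamma$-type sequences — however here the competitors only converge to $(u_0,b_0)$ without prescribed boundary values, so the gluing must be done with cut-off functions interpolating between adjacent pieces, and one must check the error terms from the cut-offs are negligible in the limit (this uses $h_n/\eps_n\to\gamma$ bounded and the $L^1$ convergence to the common limit $(u_0,b_0)$ on overlaps). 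I would handle this by a Fubini-type slicing in $y_1$ to place the interfaces where the competitors are $L^1$-close to $(u_0,b_0)$, then standard cut-off gluing. Finally (vii), independence of $\rho$ for an interval $J$: by (ii) $\mathcal{E}_\gamma(J,\rho)$ is monotone in $\rho$, so $\mathcal{E}_\gamma(J,\delta)\leq\mathcal{E}_\gamma(J,\rho)$ for $\delta\leq\rho$; for the reverse, use (vi) to reduce to $J=I$, then use (iv) with a scaling in the $y_1$-direction only — or more precisely, subdivide $I_\rho$ into finitely many thin sub-cylinders of width $\delta$ in the $y_1$-direction, apply (iii) and translation invariance in $y_1$ (which also holds, by the same argument as (i) since $u_0,b_0$ are even in $y_1$ only near the jump — actually translation in $y_1$ changes the location of the jump, so one instead uses that $\mathcal{E}_\gamma(I,\delta)$ with the jump at any interior point equals $\mathcal{E}_\gamma(I,\delta)$), obtaining $\mathcal{E}_\gamma(I,\rho)\geq\lceil\rho/\delta\rceil^{-1}$-type bounds that, combined with (vi) applied in the horizontal variable, force equality. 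I expect step (vi), and within it the upper-bound gluing construction, to be where essentially all the work lies.
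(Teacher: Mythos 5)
Your handling of (i)--(iii), (v), and (vii) is essentially correct and matches the paper. But there are two substantive problems, one computational and one structural.

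For (iv), your isotropic scaling $v_n(y):=\alpha\,u_n(y/\alpha)$ is ill-defined: the cylinder $J_\rho=\rho I\times J\times I$ has \emph{fixed} vertical extent $I$, so $(\alpha J)_{\alpha\rho}=\alpha\rho I\times\alpha J\times I$ is not $\alpha\cdot J_\rho$, and the point $y/\alpha$ leaves the domain of $u_n$ when $y_3\in I$. Your remark that ``the volume factor is $\alpha^3$'' confirms you are thinking of a full 3D dilation, which does not respect the thin-film geometry. The correct move, and the one the paper makes, is an \emph{anisotropic} scaling $v_n(x):=\tfrac{1}{\alpha}u_n(\alpha x',x_3)$ (horizontal variables only), compensated by rescaling $(\tilde h_n,\tilde\eps_n)=(h_n/\alpha,\eps_n/\alpha)$ so that $\tilde h_n/\tilde\eps_n\to\gamma$ is preserved and $\nabla_{\tilde h_n}v_n(x)=\nabla_{h_n}u_n(\alpha x',x_3)$; the change of variables is then 2D and gives the clean factor $\alpha$. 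Your approach can be repaired, but as written the exponent bookkeeping and domain identification are both off.

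The more serious gap is in (vi). Your upper bound ``glue together near-optimal competitors on translated unit-length cylinders'' is not needed and would in fact be very hard to carry out at this stage: a gluing argument across the interpolation regions requires matching-type estimates (of the kind developed later in Proposition \ref{prop:matching1}), and those estimates use hypotheses $(H_3)$--$(H_4)$ which the present lemma does \emph{not} assume --- only $(H_1)$, $(H_2)$ are available here. The paper avoids gluing entirely: it proves $\mathcal{E}_\gamma(J,\rho)\geq\mathcal{H}^1(J)\,\mathcal{E}_\gamma(I,\rho)$ by decomposing $J$ (up to a finite set) into disjoint translates $a_k+\alpha_k I$ with $\alpha_k\in(0,1)$ and invoking (i), (iii), (v), and then gets the \emph{reverse} inequality for free by running exactly the same lower-bound argument with $I$ and $J$ interchanged (decomposing $I$ into scaled translates of $J$). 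That is, both inequalities come from the same superadditivity-plus-scaling argument; there is no upper-bound construction at all. You should replace the gluing paragraph by this symmetric argument.

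As a minor point on (vii), once (iv) and (vi) are in place the argument is a two-line computation (apply (vi) twice and (iv) once to see that $\mathcal{E}_\gamma(I,\cdot)$ is constant), so the cutting-and-covering you sketch there is superfluous.
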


\begin{proof} We first observe that {\it (i)}  follows from the translation invariance of the functional $F^h_{\eps}$. Then {\it (ii)} is due to  
the fact that any admissible sequence for $\mathcal{E}_\gamma(J_2,\rho_2)$ yields an 
admissible sequence for $\mathcal{E}_\gamma(J_1,\rho_1)$ whenever $J_1 \subset J_2$ and $\rho_1\leq \rho_2$. 
The proof  of claim {\it (iii)} 
follows  a similar argument.  
\vskip2pt

\noindent{\it Proof of (iv).} Let $h_n\to 0^+$ and $\eps_n\to 0^+$ be such that $h_n/\eps_n\to\gamma$, and let $\{u_n\}$ be  an 
admissible sequence for $\mathcal{E}_\gamma(\alpha J, \alpha \rho)$. 
We define for $x\in J_\rho$, $v_n(x) := \frac{1}{\alpha} u_n(\alpha x',x_3)$, $\tilde h_n:=h_n/\alpha$ and $\tilde \eps_n:=\eps_n/\alpha\,$. 
By homogeneity of $u_0$ and $b_0$, we infer that $v_n(x) \to \frac{1}{\alpha}  u_0(\alpha x',x_3) =  u_0(x)$ and $\frac{1}{\tilde h_n} \partial_3 v_n \to  b_0$ 
in  $L^1(J_\rho;\R^3)$ as $n\to\infty$. 
In particular, $\{v_n\}$ with $\{(\tilde h_n,\tilde \eps_n)\}$ is admissible for $\mathcal{E}_\gamma(J,\rho)$. 
Changing variables then yields
$F^{\tilde h_n}_{\tilde \eps_n}(v_n,J_{\rho})= \frac{1}{\alpha} F^{h_n}_{\eps_n}\big(u_n, (\alpha J)_{\alpha \rho}\big)$. 
Letting $n\to\infty$ we deduce that
$\liminf_{n} \,F^{h_n}_{\eps_n}\big(u_n,(\alpha J)_{\alpha \rho}\big) \geq \alpha\mathcal{E}_\gamma(J,\rho)$. 
In view of the arbitrariness of $\{(h_n,\eps_n)\}$ and $\{u_n\}$, we conclude that $\mathcal{E}_\gamma(\alpha J, \alpha \rho)\geq \alpha\mathcal{E}_\gamma(J,\rho)$. 
The reverse inequality follows from the arbitrariness of $\alpha>0$.  
\vskip2pt

\noindent{\it Proof of (v).} If $0< \alpha< 1$ 
 then $(\alpha J)_{\alpha \rho} \subset (\alpha J)_{\rho}$, and we derive from {\it (ii)} and {\it (iv)} that $
\mathcal{E}_\gamma(\alpha J, \rho) \geq \mathcal{E}_\gamma(\alpha J, \alpha \rho) = \alpha \mathcal{E}_\gamma(J,\rho)$. 
\vskip2pt

\noindent{\it Proof of (vi).}  Write $J = Z\cup(\bigcup_{k =1}^N J_k)$ 
for some finite set $Z$ and some family $\{J_k\}_{k=1}^N$ of mutually disjoint open intervals of the form 
$J_k = a_k + \alpha_k I$ with $0<\alpha_k<1$. In particular, 
$\mathcal{H}^{1}(J) = \sum_{k=1}^N \alpha_k$. We infer from {\it (i)}, {\it (iii)} and {\it (v)} that
$\mathcal{E}_\gamma(J,\rho)  \geq (\sum_{k}\alpha_k ) \mathcal{E}_\gamma(I,\rho)$, 
leading to $\mathcal{E}_\gamma(J,\rho) 
  \geq \mathcal{H}^{1}(J) \mathcal{E}_\gamma(I,\rho)$. The reverse inequality can be  obtained in the same way inverting the roles of $I$ and $J$. 
\vskip2pt

\noindent{\it Proof of (vii).}  Claim {\it (vii)} is a straightforward consequence of {\it (iv)} and {\it (vi)}. 
\end{proof}

\begin{remark}\label{compegam}
As a consequence of {\it (vi)} and {\it (vii)} in Lemma~\ref{lem:scales}, we have 
$\mathcal{E}_\gamma(J,\rho)=K_\gamma^\star   \mathcal{H}^{1}(J)$ 
for every $\rho>0$ and every  bounded open interval $J\subset \R$. 
\end{remark}

An important consequence of Lemma \ref{lem:scales} is that the energy 
of optimal sequences for $\mathcal{E}_\gamma(J,\rho)$ is 
concentrated near the limiting interface. We shall make use of Corollary \ref{cor:concentration} 
in the next subsection in order to compare the constants $K_\gamma^\star$ and $K_\gamma$. 

\begin{corollary}\label{cor:concentration}
Assume that $(H_1)$, $(H_2)$ and \eqref{orientationwells} hold. 
Let $0<\delta<\rho$ and let $J\subset\R$ be a bounded open interval.  For any sequences $h_n\to0^+$, $\eps_n\to 0^+$ and $\{u_n\}\subset H^{2}(J_\rho;\R^3)$ such that 
$h_n/\eps_n\to \gamma$, $(u_n,\frac{1}{h_n}\partial_3 u_n)\to  (u_0,  b_0)$ 
in $[L^1(J_\rho;\R^3)]^2$, and $ \lim_{n} F^{h_n}_{\eps_n}(u_n,J_\rho)=K_\gamma^\star   \mathcal{H}^{1}(J)$, we have 
$$\lim_{n\to\infty} F^{h_n}_{\eps_n}(u_n,J_\rho\setminus J_{\delta}) = 0\,.$$
\end{corollary}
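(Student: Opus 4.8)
The plan is to exploit the superadditivity estimate (iii) from Lemma~\ref{lem:scales} together with the fact that the total energy of the sequence converges to exactly $K_\gamma^\star\mathcal{H}^1(J)$, so that no energy can ``escape'' to the lateral regions $J_\rho\setminus J_\delta$ without violating the lower bound on the central part. Concretely, fix $0<\delta<\rho$ and write $J_\rho\setminus J_\delta$ as the disjoint union of the two cylinders $U^-:=(-\rho/2,-\delta/2)\times J\times I$ and $U^+:=(\delta/2,\rho/2)\times J\times I$, together with the two thin slabs $J_\rho\setminus(U^-\cup U^+\cup J_\delta)$ which are $\mathcal{L}^3$-null, hence irrelevant. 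By nonnegativity of the integrand,
$$F^{h_n}_{\eps_n}(u_n,J_\rho)\;\geq\; F^{h_n}_{\eps_n}(u_n,J_\delta)\;+\;F^{h_n}_{\eps_n}(u_n,U^-)\;+\;F^{h_n}_{\eps_n}(u_n,U^+)\,.$$

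The key point is to bound each of the three pieces from below by the appropriate $\mathcal{E}_\gamma$-constant. Since $(u_n,\frac1{h_n}\partial_3u_n)\to(u_0,b_0)$ in $[L^1(J_\rho;\R^3)]^2$, the restrictions also converge in $L^1$ on $J_\delta$, on $U^-$ (after the translation $x_1\mapsto x_1+\delta/4$ that centers it, using translation invariance of $u_0,b_0$ about... — more carefully, $u_0,b_0$ depend only on $x_1$ and the relevant interface $\{x_1=0\}$ is not inside $U^\pm$, so $\nabla u_n$ converges to a \emph{constant} gradient on $U^\pm$), and similarly on $U^+$. Thus $\{u_n|_{J_\delta}\}$ is admissible for $\mathcal{E}_\gamma(J,\delta)$, giving $\liminf_n F^{h_n}_{\eps_n}(u_n,J_\delta)\geq\mathcal{E}_\gamma(J,\delta)=K_\gamma^\star\mathcal{H}^1(J)$ by Remark~\ref{compegam}. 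For $U^\pm$, since the limit is a pure phase (say $A$ or $B$), the liminf of the energy there is $\geq 0$ trivially; combining,
$$K_\gamma^\star\mathcal{H}^1(J)=\lim_n F^{h_n}_{\eps_n}(u_n,J_\rho)\;\geq\;\liminf_n F^{h_n}_{\eps_n}(u_n,J_\delta)+\limsup_n\big(F^{h_n}_{\eps_n}(u_n,U^-)+F^{h_n}_{\eps_n}(u_n,U^+)\big)\,,$$
wait — one must use $\liminf(a_n+b_n)\geq \liminf a_n+\liminf b_n$ and $\liminf(a_n)+\limsup(b_n)\le \limsup(a_n+b_n)$; the clean way is: $\limsup_n F^{h_n}_{\eps_n}(u_n,J_\rho\setminus J_\delta)\le \limsup_n[F^{h_n}_{\eps_n}(u_n,J_\rho)-F^{h_n}_{\eps_n}(u_n,J_\delta)]=K_\gamma^\star\mathcal{H}^1(J)-\liminf_n F^{h_n}_{\eps_n}(u_n,J_\delta)\le K_\gamma^\star\mathcal{H}^1(J)-K_\gamma^\star\mathcal{H}^1(J)=0$. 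Since the energy is nonnegative, $\lim_n F^{h_n}_{\eps_n}(u_n,J_\rho\setminus J_\delta)=0$.

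The main obstacle I anticipate is the bookkeeping at the boundary $\{x_1=\pm\delta/2\}$: to invoke $\mathcal{E}_\gamma(J,\delta)\geq K_\gamma^\star\mathcal{H}^1(J)$ one needs that restricting an admissible sequence to a \emph{smaller} $\rho$-parameter still yields an admissible sequence (which is exactly monotonicity (ii) / the definition, since the $L^1$-convergence to $(u_0,b_0)$ is inherited on subdomains), and one must make sure the interface $\{x_1=0\}$ lies inside $J_\delta$ — which holds because $0\in\delta I$. No subtlety with $U^\pm$ is actually needed beyond nonnegativity, so the only real care is getting the $\limsup$/$\liminf$ inequality $\limsup(a_n-b_n)\le \limsup a_n-\liminf b_n$ stated correctly and applying Remark~\ref{compegam} to identify $\mathcal{E}_\gamma(J,\delta)=K_\gamma^\star\mathcal{H}^1(J)$.
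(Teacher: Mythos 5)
Your proof is correct and follows essentially the same route as the paper: restrict $\{u_n\}$ to $J_\delta$, note this gives an admissible sequence for $\mathcal{E}_\gamma(J,\delta)=K_\gamma^\star\mathcal{H}^1(J)$ (Remark~\ref{compegam}), and then use nonnegativity of the integrand together with the $\limsup(a_n-b_n)\leq\limsup a_n-\liminf b_n$ bookkeeping to squeeze the outer energy to zero. The detour through $U^\pm$ that you start and then abandon is indeed unnecessary, as you yourself observe.
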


\begin{proof}
Since $J_{\delta} \subset J_\rho$, we deduce from Remark~\ref{compegam}  that
$\limsup_{n}\, F^{h_n}_{\eps_n}(u_n,J_{\delta}) 
  \leq   \mathcal{E}_\gamma(J,\rho)<\infty$.
On the other hand the sequence $\{u_n\}$ is admissible for $\mathcal{E}_\gamma(J,\delta)$. In view of Lemma \ref{lem:scales}, we infer that 
$$
\liminf_{n\to\infty} F^{h_n}_{\eps_n}(u_n,J_{\delta}) 
  \geq \mathcal{E}_\gamma(J,\delta) 
  = \mathcal{E}_\gamma(J,\rho)   = \lim_{n\to\infty} F^{h_n}_{\eps_n}(u_n,J_\rho)\,.
$$
Therefore $\lim_{n} F^{h_n}_{\eps_n}(u_n,J_{\delta}) = \lim_{n} F^{h_n}_{\eps_n}(u_n,J_\rho)$ 
which clearly implies the announced result. 
\end{proof}

\noindent{\bf Proof of Proposition \ref{gliminfref}.}
By the translation invariance of $F^{h_n}_{\eps_n}$, we have
$
\liminf_{n} F^{h_n}_{\eps_n}(u_n,U) 
  = \liminf_{n} F^{h_n}_{\eps_n}(\tau u_n,J_\rho)$, 
where $\tau u_n(x) := u_n(x_1+\alpha,x_2,x_3)$. Obviously $(\tau u_n,\frac{1}{h_n}\partial_3\tau u_n) \to (\tau u, \tau b)$ in $[L^1(J_\rho;\R^3)]^2$ 
with $\tau u(x): = u(x_1+\alpha,x_2,x_3)$ and $\tau b(x) := b(x_1+\alpha,x_2,x_3)$. 
If the first case in \eqref{eq:u01} holds, then $(\tau u, \tau b) = (u_0+c_0, b_0)$ for some constant $c_0 \in \R^3$. Subtracting the constant $c_0$, 
we derive from the  definition of $\mathcal{E}_\gamma$ and Remark~\ref{compegam} that
$$\liminf_{n\to \infty} F^{h_n}_{\eps_n}(u_n,U)=\liminf_{n\to \infty} F^{h_n}_{\eps_n}(\tau u_n-c_0,J_\rho) \geq  \mathcal{E}_\gamma(J,\rho)
  = K^{\star}_\gamma \,\mathcal{H}^1(J) \,.
$$
If the alternative case in \eqref{eq:u01} holds, then $(-\tau u, \tau b)(-x) = (u_0+c_0, b_0)(x)$ for some constant $c_0 \in \R^3$.
Observe that $F^{h_n}_{\eps_n}(\tau u_n,J_\rho) = F^{h_n}_{\eps_n}(v_n,J_\rho)$ with $v_n(x) = -\tau u_n(-x)-c_0$. Then $(v_n,\frac{1}{h_n}\partial_3v_n) \to (u_0, b_0)$ 
in $[L^1(J_\rho;\R^3)]^2$. Hence, 
$$\liminf_{n\to \infty} F^{h_n}_{\eps_n}(u_n,J_\rho)
  = \liminf_{n\to \infty} F^{h_n}_{\eps_n}(v_n,J_\rho)\geq  \mathcal{E}_\gamma(J,\rho)
  = K^{\star}_\gamma \,\mathcal{H}^1(J) \,,
$$
and the proof is complete. \prbox

\begin{remark}\label{validKinfty}
Setting $K_\infty^\star$ to be the constant defined by \eqref{eq:defKstar1} with $\gamma=+\infty$ (see \eqref{defKstarinf}), and assuming that $K_\infty^\star<\infty$, one may check that 
Proposition~\ref{gliminfref} and Corollary \ref{cor:concentration} still hold in the case $\gamma=+\infty$ with the same proofs. 
\end{remark}

We are now ready to prove the main result of this subsection which extends Proposition~\ref{gliminfref} to the general case. The proof for $A'\not=B'$ will be a 
direct consequence of Proposition~\ref{gliminfref}, while the case $A'=B'$ will require an additional analysis based on a blow-up argument.

\begin{theorem}\label{thm:gammaliminf}
Assume that  $(H_1)-(H_2)$,  $(H_5)$ and  \eqref{orientationwells} hold.   
Let $h_n\to0^+$ and $\eps_n\to0^+$ be arbitrary sequences such that $h_n/\eps_n \to \gamma$.  
Then, for any $(u,b)\in \mathscr{C}$ and any sequences $\{u_n\}\subset H^2(\Omega;\R^3)$ such that $(u_n,\frac{1}{h_n}\partial_3 u_n)\to (u, b)$ in $[L^1(\Omega;\R^3)]^2$, we have  
$$
\liminf_{n\to\infty}\, F^{h_n}_{\eps_n}(u_n) \geq K_\gamma^{\star}\, {\rm Per}_{\omega}(E)\,,
$$
where $(\nabla' u,b) (x)= \bigl(1-\chi_E(x')\bigr)A + \chi_E(x') B\,$.
\end{theorem}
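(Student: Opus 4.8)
The plan is to reduce the general lower bound to the elementary-interface estimate of Proposition~\ref{gliminfref} by a localization-and-blow-up argument, treating the two geometric cases separately. In both cases I would introduce the localized energies $\mu_n := \frac{1}{\eps_n} W(\nabla_{h_n} u_n)\,\mathcal{L}^3 + \eps_n |\nabla_{h_n}^2 u_n|^2\,\mathcal{L}^3$, viewed as finite measures on $\Omega$ with $\sup_n \mu_n(\Omega) < \infty$, so that (up to a subsequence) $\mu_n \rightharpoonup \mu$ weakly-$*$ for some finite measure $\mu$ on $\Omega$. The goal is then to show $\mu \geq K_\gamma^\star\, \mathcal{H}^1 \restr{}(\partial^*E \cap \omega \times I)$ as measures, or equivalently $\frac{d\mu}{d(\mathcal{H}^1 \restr{}\Sigma)}(x_0) \geq K_\gamma^\star$ for $\mathcal{H}^1$-a.e.\ $x_0$ in the interface $\Sigma := \partial^*E \cap \omega$, identified with a subset of $\Omega$ via $x_3$-independence; integrating this over $\Sigma$ and using $\mu(\Omega) \leq \liminf_n \mu_n(\Omega) = \liminf_n F^{h_n}_{\eps_n}(u_n)$ yields the claim.

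\textbf{Case $A'\neq B'$.} Here Theorem~\ref{thm:BJ} tells us that $E$ is layered perpendicularly to $e_1'$, i.e.\ $\partial^* E \cap \omega = \bigcup_{i\in\mathscr{I}} \{\alpha_i\}\times J_i$ with the $\alpha_i$ locally finite in $(\alpha_{\rm min},\alpha_{\rm max})$. Since the interface is a locally finite union of hyperplane pieces, one can pick, for each $i$ in a finite subcollection and each $\delta>0$ small, a thin cylinder $U_i := (\alpha_i - \rho_i, \alpha_i + \rho_i)\times J_i^\delta \times I$ with $J_i^\delta \Subset J_i$ and $\rho_i$ small enough that the cylinders are pairwise disjoint and that $(\nabla' u, b)$ restricted to $U_i$ has exactly the elementary jump form \eqref{eq:u01} (this is where convexity of $\omega$ and the layered structure are used — each $U_i$ sees only one interface). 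Applying Proposition~\ref{gliminfref} on each $U_i$ gives $\liminf_n F^{h_n}_{\eps_n}(u_n, U_i) \geq K_\gamma^\star \mathcal{H}^1(J_i^\delta)$, and summing over the finite collection and using disjointness yields $\liminf_n F^{h_n}_{\eps_n}(u_n) \geq K_\gamma^\star \sum_i \mathcal{H}^1(J_i^\delta)$; letting $\delta \to 0$ and then exhausting $\mathscr{I}$ gives $\liminf_n F^{h_n}_{\eps_n}(u_n) \geq K_\gamma^\star \sum_{i} \mathcal{H}^1(J_i) = K_\gamma^\star \Per_\omega(E)$.

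\textbf{Case $A'=B'$.} This is the genuinely harder part, because the interface $\partial^* E$ can now be an arbitrary rectifiable set in $\omega$ (still $x_3$-independent), so there is no a priori flatness to exploit and one must linearize the interface by blow-up. I would fix a point $x_0' \in \partial^* E$ where the measure-theoretic normal $n(x_0')$ exists and where the Besicovitch derivative $\frac{d\mu}{d(\mathcal{H}^1\restr{}\Sigma)}(x_0',x_3)$ exists and is finite for a.e.\ $x_3$ (a.e.\ such $x_0'$ works by rectifiability and differentiation of measures). Rescaling $u_n$ around $(x_0',x_3)$ at a rate $r_k \to 0$ chosen along a diagonal subsequence so that the blown-up sequence still satisfies the correlation $h_n/\eps_n \to \gamma$ and converges to the half-space profile $(u_0, b_0)$ (after composing with a rotation $R'\in SO(2)$ taking $n(x_0')$ to $e_1'$, which is legitimate because $(H_5)$ — the isotropy of $W$ in the $\xi'$ variable when $A'=B'=0$ — makes the rescaled energy invariant under such in-plane rotations), I would apply Proposition~\ref{gliminfref} to the blown-up sequence on a fixed reference cylinder to conclude $\frac{d\mu}{d(\mathcal{H}^1\restr{}\Sigma)}(x_0') \geq K_\gamma^\star$. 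The delicate technical points are: (i) making the joint blow-up in $x'$ compatible with the already-rescaled $x_3 \in I$ variable, which forces the rescaling to act only horizontally — but this is exactly what keeps $\nabla_{h}$ in the right form and why one needs Corollary~\ref{cor:concentration} to discard the energy away from the interface after rescaling; (ii) ensuring the rotated potential $W(\cdot R)$ still satisfies the hypotheses needed for Proposition~\ref{gliminfref}, which is where $(H_5)$ enters. Once the pointwise lower bound on the density holds $\mathcal{H}^1$-a.e.\ on $\Sigma$, integrating gives $\mu(\Omega) \geq \mu(\Sigma) \geq K_\gamma^\star \mathcal{H}^1(\Sigma) = K_\gamma^\star \Per_\omega(E)$, and combined with $\liminf_n F^{h_n}_{\eps_n}(u_n) \geq \mu(\Omega)$ this finishes the proof. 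The main obstacle is step (i)–(ii) above: carrying out the blow-up so that the rescaled data genuinely converge to $(u_0, b_0)$ while preserving both the $h/\eps\to\gamma$ scaling and the structural hypotheses on $W$.
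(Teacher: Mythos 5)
Your proposal follows the same two-case decomposition as the paper's proof and identifies all the key ingredients: for $A'\neq B'$, localization into thin disjoint cylinders and Proposition~\ref{gliminfref}; for $A'=B'$, a blow-up at $\mathcal{H}^1$-a.e.\ point of $\partial^*E\cap\omega$, a rotation sending $\nu_0$ to $e'_1$, and the isotropy hypothesis $(H_5)$ to make the rotated energy admissible in the blow-up limit. The first case is essentially identical to the paper's Step~1.

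In the second case there is one structural issue in your setup. You define $\mu_n$ as a measure on the three-dimensional cylinder $\Omega$ and then propose to compare $\mu$ against $\mathcal{H}^1\restr{}(\partial^*E\cap\omega)$, a one-dimensional measure living on the mid-surface (or against $\mathcal{H}^1\restr{}\big((\partial^*E\cap\omega)\times I\big)$, which is infinite). The Besicovitch density of a Radon measure on $\Omega$ with respect to a measure supported on a lower-dimensional copy of $\omega$ is not well defined, and the anisotropic (horizontal-only) blow-up you describe does not produce the isotropic cubes/balls the differentiation theorem requires. The paper sidesteps this cleanly: by Fubini, the $x_3$-integral of the integrand is performed \emph{first}, so $\mu_n$ is a measure on $\omega\subset\R^2$, and then the Besicovitch/De Giorgi differentiation with respect to $\mathcal{H}^1\restr{}(\partial^*E\cap\omega)$ is entirely standard. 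That horizontal Fubini step is exactly what reconciles the $x'$-only rescaling with the unrescaled $x_3$-variable — the awkwardness you flag in your point~(i) disappears, and Corollary~\ref{cor:concentration} is not needed here at all. The remaining vagueness in your argument (the diagonal subsequence so the blown-up data converge \emph{exactly} to $(u_0,b_0)$) is also formalized in the paper by introducing the auxiliary functional $\mathcal{G}_\gamma$ and its $L^1$-lower semicontinuity (Lemma~\ref{lem:lsc}); your diagonalization would also work, but the l.s.c.\ reformulation is cleaner and avoids tracking three indices at once.
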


\noindent{\bf Proof.} {\it Step 1.} We first assume that $A^\prime\not= B^\prime$. 
Assuming that $E$ is non trivial, by Theorem \ref{thm:BJ} we can write $\partial^{\ast}E \cap\omega$ as in \eqref{structredbdAdiffB}. 
Then we have  
$\mathcal{H}^{1}(\partial^{\ast} E\cap \omega )=\sum_{i\in\mathscr{I}} \mathcal{H}^{1}( J_i) < \infty$. 
Consider an arbitrarily small $\delta>0$ and choose $k^- = k^-(\delta) \in \mathscr{I}$ and  $k^+ = k^+(\delta) \in \mathscr{I}$ such that $k^-\leq k^+$, and 
$\mathcal{H}^{1}(\partial^{\ast} E\cap\omega ) 
  \leq \sum_{i=k^-}^{k^+} \mathcal{H}^{1}(J_i) + \delta$. 
For each  $i=k^-,\ldots,k^+$, let $J_i' \subset\!\subset J_i$ be an open interval satisfying
$\mathcal{H}^{1}(J_i) 
  \leq \mathcal{H}^{1}(J_i') + \frac{\delta}{k^+-k^-+1}$. 
Since $\{\alpha_i\} \times J_i' \subset\!\subset \omega$ and $\alpha_i<\alpha_{i+1}$, 
we may find $\rho>0$ small in such a way that the sets $(\alpha_i-\rho,\alpha_i+\rho) \times J_i'$ are still compactly contained in $\omega$, and $\alpha_{i}+\rho<\alpha_{i+1}-\rho$ for $i=k^-,\ldots,k^+$. 
Then we set for each $i=k^-,\ldots,k^+$, $U_i:=(\alpha_i-\rho, \alpha_i+\rho) \times J_i' \times I\subset \Omega$. 
Observe that the $U_i$'s are pairwise disjoint, and that $(\nabla'u,b)$ is of the form \eqref{eq:u01} in each $U_i$.  

Let us now fix an arbitrary sequence 
$\{u_n\} \subset H^{2}\bigl(\Omega; \R^3\bigr)$ such that 
$(u_n,\frac{1}{h_{n}}\partial_3 u_n) \to (u, b)$ in~$[L^1(\Omega;\R^3)]^2$.
Using Proposition \ref{gliminfref},  we estimate
$$\liminf_{n \to \infty} F^{h_{n}}_{\eps_n}(u_n)    \geq \sum_{i=k^-}^{k^+} \liminf_{n \to \infty} F^{h_{n}}_{\eps_n}\bigl(u_n,U_i\bigr) 
   \geq \sum_{i=k^-}^{k^+} K^{\star}_\gamma \mathcal{H}^{1}( J_i') 
   \geq K^{\star}_\gamma \mathcal{H}^{1}(\partial^{\ast}E\cap\omega) - 2\delta K^{\star}_\gamma\,,$$
 and the conclusion follows letting $\delta \to 0$.
\vskip5pt

\noindent{\it Step 2.} We now consider the case $A^\prime=B^\prime$ ($=0$ by \eqref{orientationwells}). Then $u_0= 0$ and $u$ is constant. 
Without loss of generality  we may assume that   $u\equiv 0$. 
In the remaining of this proof, we shall identify any $b \in L^1(Q';\R^3)$ with its extension to $Q$ given by $b(x) = b(x')$. 
With this convention, we introduce for $b \in L^1(Q';\R^3)$, 
\begin{multline*}
{\cal G}_\gamma(b) :=  \inf \Bigl\{ \liminf_{n \to \infty} F^{h_n}_{\eps_n}\bigl(u_n,Q \bigr)\,:\;
  h_n\to 0 \text{ and } \eps_n \to 0 \text{ with }h_n/\eps_n\to \gamma,\\
  \{u_n\} \subset H^{2}\bigr(Q;\R^3\bigr), \,    \text{$(u_n, \frac{1}{h_n} \partial_3 u_n) \to (0, b)$  in  $[L^1\bigl(Q;\R^3\bigr)]^2$} \Bigr\}.
\end{multline*}
Notice that $K^\star_\gamma=\mathcal{G}_\gamma(b_0)$. 
We shall require the sequential $L^1$-lower semicontinuity of the functional $\mathcal{G}_\gamma$ stated below. 
The proof of Lemma \ref{lem:lsc} only involves a standard diagonalization argument, and we shall omit it.

\begin{lemma}\label{lem:lsc}
${\cal G}_\gamma$ is sequentially lower semicontinuous for the strong $L^1$-topology. 
\end{lemma}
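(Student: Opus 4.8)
The claim is the sequential $L^1$-lower semicontinuity of $\mathcal{G}_\gamma$ on $L^1(Q';\R^3)$, which is exactly the kind of statement that follows from a diagonalization argument combined with the definition of $\mathcal{G}_\gamma$ as an infimum of $\liminf$'s over admissible sequences. The plan is as follows.

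First I would fix a sequence $b^k \to b$ in $L^1(Q';\R^3)$ with $\liminf_k \mathcal{G}_\gamma(b^k) < \infty$ (otherwise there is nothing to prove), and pass to a subsequence so that the $\liminf$ becomes an honest limit $\lim_k \mathcal{G}_\gamma(b^k) = L$. For each fixed $k$, by the definition of $\mathcal{G}_\gamma(b^k)$ as an infimum, I can choose sequences $h_n^k \to 0^+$, $\eps_n^k \to 0^+$ with $h_n^k/\eps_n^k \to \gamma$, together with maps $u_n^k \in H^2(Q;\R^3)$ satisfying $(u_n^k, \frac{1}{h_n^k}\partial_3 u_n^k) \to (0,b^k)$ in $[L^1(Q;\R^3)]^2$ as $n \to \infty$, and such that $\liminf_{n\to\infty} F^{h_n^k}_{\eps_n^k}(u_n^k, Q) \le \mathcal{G}_\gamma(b^k) + 1/k$. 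Up to extracting a further subsequence in $n$ for each $k$, I may assume this $\liminf$ is a genuine limit.

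Next comes the diagonal extraction. For each $k$ I pick an index $n(k)$, increasing in $k$, large enough that simultaneously: (a) $|h_{n(k)}^k/\eps_{n(k)}^k - \gamma| \le 1/k$ (or, when $\gamma = \infty$, that $h_{n(k)}^k/\eps_{n(k)}^k \ge k$); (b) $h_{n(k)}^k \le 1/k$ and $\eps_{n(k)}^k \le 1/k$; (c) $\|u_{n(k)}^k\|_{L^1(Q)} + \|\frac{1}{h_{n(k)}^k}\partial_3 u_{n(k)}^k - b^k\|_{L^1(Q)} \le 1/k$; and (d) $|F^{h_{n(k)}^k}_{\eps_{n(k)}^k}(u_{n(k)}^k, Q) - \lim_{n} F^{h_n^k}_{\eps_n^k}(u_n^k,Q)| \le 1/k$. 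Setting $\widetilde h_k := h_{n(k)}^k$, $\widetilde\eps_k := \eps_{n(k)}^k$, $\widetilde u_k := u_{n(k)}^k$, conditions (a)--(b) give $\widetilde h_k \to 0^+$, $\widetilde\eps_k \to 0^+$, $\widetilde h_k/\widetilde\eps_k \to \gamma$; condition (c) together with $b^k \to b$ in $L^1$ gives $(\widetilde u_k, \frac{1}{\widetilde h_k}\partial_3 \widetilde u_k) \to (0,b)$ in $[L^1(Q;\R^3)]^2$; so $\{\widetilde u_k\}$ is an admissible sequence in the definition of $\mathcal{G}_\gamma(b)$. By condition (d), $\liminf_k F^{\widetilde h_k}_{\widetilde\eps_k}(\widetilde u_k,Q) \le \liminf_k \big(\mathcal{G}_\gamma(b^k) + 1/k + 1/k\big) = L$, whence $\mathcal{G}_\gamma(b) \le L = \liminf_k \mathcal{G}_\gamma(b^k)$, which is the desired inequality.

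The only genuinely delicate point is the bookkeeping in the diagonalization — making sure all four requirements (a)--(d) can be met at a single index $n(k)$ for each $k$, and that $n(k)$ can be chosen strictly increasing so that $\{\widetilde u_k\}$ is a legitimate sequence indexed by $k$ with parameters tending to zero at the right relative rate. This is entirely routine (an Attouch-type diagonal lemma would package it), which is why the paper omits it; I would simply verify that each of the finitely many conditions at level $k$ holds for all $n$ beyond some threshold $N_k$, and then take $n(k) > n(k-1)$ with $n(k) \ge N_k$.
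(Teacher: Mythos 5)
Your proof is correct and is precisely the ``standard diagonalization argument'' that the paper invokes and omits: near-optimal admissible sequences for each $b^k$, followed by an Attouch-type diagonal extraction producing an admissible competitor for $\mathcal{G}_\gamma(b)$ with energy $\liminf$ at most $\liminf_k\mathcal{G}_\gamma(b^k)$. The bookkeeping in (a)--(d) is exactly the right set of conditions to enforce at the diagonal index, so nothing is missing.
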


Let $\{u_n\} \subset H^{2}(\Omega;\R^3)$ be an arbitrary sequence  satisfying $(u_n,\frac{1}{h_n} \partial_3 u_n) \to (0,b)$ in~$[L^1(\Omega;\R^3)]^2$.
Without loss of generality, we may assume that $\liminf_{n} F^{h_n}_{\eps_n}(u_n)  = \lim_{n} F^{h_n}_{\eps_n}(u_n)< \infty$. 
By Theorem~\ref{thm:BJ} we have $b(x)=b(x')=\bigl(1-\chi_E(x')\bigr)A_3 + \chi_E(x') B_3$ for a set $E\subset \omega$ 
of finite perimeter in~$\omega$. 

Using Fubini's theorem, we define a finite nonnegative Radon measure $\mu_n$ on $\omega$ by setting 
$$
\mu_n := \left( \int_{-\frac{1}{2}}^{\frac{1}{2}} \frac{1}{\eps_n} W\left(\nabla_{h_n}u_n\right)
+ \eps_n \left|\nabla_{h_n}^2 u_n \right|^2  dx_3 \right) \mathcal{L}^{2} \restr{} \omega\,, 
$$
and $\mu_n(\omega)=F^{h_n}_{\eps_n}(u_n) $.  In particular  $\sup_n \mu_n(\omega) < \infty$, and thus  there is a subsequence (not relabeled) such that
$\mu_n \rightharpoonup \mu $ weakly* in the sense of measures for some  finite nonnegative Radon measure $\mu$ on~$\omega$. 
By lower semicontinuity we have $\mu(\omega) \leq \lim_{n} \mu_n(\omega)= \lim_{n} F^{h_n}_{\eps_n}(u_n)$. It then suffices to prove that $\mu(\omega) \geq K^{\star}_\gamma \mathcal{H}^{1}(\partial^\ast E\cap\omega)$.
By the Radon-Nikod\'ym Theorem, we can decompose $\mu$ as $\mu = \mu_0 + \mu_s$, where $\mu_0$ and  $\mu_s$ are mutually singular nonnegative Radon measures on $\omega$, 
and $\mu_0 \ll \mathcal{H}^{1}\restr{}\partial^\ast E\cap\omega$.
It is enough to show that $\mu_0(\omega) \geq K^{\star}_\gamma \mathcal{H}^{1}(\partial^\ast E\cap\omega)$ which can be obtained by proving that
$$
\frac{d \mu_0}{d \mathcal{H}^{1}\restr{}\partial^\ast E\cap\omega}(x_0') \geq K^{\star}_\gamma \quad
\text{ for } \mathcal{H}^{1}\text{-a.e. } x_0' \in\partial^\ast E\cap\omega\,.
$$

For $\nu \in \mathbb{S}^1$ and $\delta>0$, we denote by $Q'_\nu\subset \R^2$ the unit 
cube centered  at the origin with two sides orthogonal to $\nu$, and $Q'_\nu(x'_0,\delta):=x'_0+\delta Q'_\nu$. By a generalization of the Besicovitch Differentiation 
Theorem (see \cite[Proposition 2.2]{AD}), there exists a Borel  set $Z\subset \omega$ such that $\mathcal{H}^1(Z)=0$, the Radon-Nikod\'ym derivative of $\mu_0$  with respect to 
$\mathcal{H}^{1}\restr{}\partial^\ast E\cap\omega$ exists and is finite at every $x_0'\in(\partial^\ast E\cap\omega)\setminus Z$, and 
$$\frac{d \mu_0}{d \mathcal{H}^{1}\restr{}\partial^\ast E\cap\omega}(x_0')
=\lim_{\delta\to0^+}\frac{\mu_0\big(Q'_\nu(x'_0,\delta)\big)}{\mathcal{H}^1(\partial^\ast E\cap Q'_\nu(x'_0,\delta))}\quad 
\text{for every $x_0'\in(\partial^\ast E\cap\omega)\setminus Z$ and all $\nu\in\mathbb{S}^1$}\,.$$
For $ x_0' \in\partial^\ast E\cap\omega$, let us denote by $\nu_{0}\in \mathbb{S}^1$ the generalized outer normal to $E$ at $x'_0$.  By Theorem~3.59 and Remark 2.82 in \cite{AFP}, we have
\begin{equation}\label{Tgpt}
\lim_{\delta\to0^+}\frac{\mathcal{H}^1(\partial^\ast E\cap Q'_{\nu_0}(x'_0,\delta))}{\delta}=1\quad\text{ for }\mathcal{H}^{1}\text{-a.e. } x_0' \in\partial^\ast E\cap\omega\,.
\end{equation}
Moreover, it is well known (see {\it e.g.} \cite[Example 3.68]{AFP}) that  
\begin{equation}\label{sideslimits}
\lim_{\delta\to 0^+}\frac{1}{\delta^2}\int_{Q'_{\nu_0}(x'_0,\delta)} |b(x')-\ol{b}_{x_0}(x')|\,dx'=0\quad\text{ for }\mathcal{H}^{1}\text{-a.e. } x_0' \in\partial^\ast E\cap\omega\,,
\end{equation}
where 
$\ol{b}_{x_0}(x') := \chi_{\{(x'-x_0')\cdot \nu_0 > 0\}}(x')A_3 + \chi_{\{(x'-x_0')\cdot \nu_0 < 0\}}(x')B_3$. 

Let us now fix a point $x_0' \in(\partial^\ast E\cap\omega)\setminus Z$ satisfying \eqref{Tgpt}-\eqref{sideslimits}. We 
choose a sequence $\delta_k \to 0^+$ such that $Q'_{\nu_0}(x_0',\delta_k)\subset \omega$ and $\mu\bigl( \partial Q'_{\nu_0}(x_0',\delta_k)\bigr) = 0$  for all $k \in \NN$.
Then 
\begin{multline*}
\frac{d \mu_0}{d \mathcal{H}^{1}\restr{}\partial^\ast E\cap\omega}(x_0')
  = \lim_{k\to \infty} \frac{1}{\delta_k} \mu\bigl(Q'_{\nu_0}(x_0',\delta_k)\bigr) 
    = \lim_{k\to \infty} \lim_{n\to \infty} \frac{1}{\delta_k} \mu_n\bigl(Q'_{\nu_0}(x_0',\delta_k)\bigr) \\
    = \lim_{k\to \infty} \lim_{n\to \infty} \frac{1}{\delta_k} F^{h_n}_{\eps_n}\bigl(u_n,Q'_{\nu_0}(x_0',\delta_k) \times I\bigr)\,,
\end{multline*}
where we have used Fubini's theorem in the last equality. Let $R \in SO(2)$ be such that $Re'_1 = \nu_0$, and define for $x \in Q$,
$v_{n,k}(x) := \frac{1}{\delta_k} u_n\bigl(x_0' + \delta_k Rx',x_3\bigr)$. 
Observe that 
$\ol{b}_{x_0}(x_0'+Rx')= b_0(x')$. 
We also have $h_{n,k}:=\frac{h_n}{\delta_k}\to 0$, $\eps_{n,k} := \frac{\eps_n}{\delta_k} \to 0$, $h_{n,k}/\eps_{n,k}\to \gamma$, and 
$(v_{n,k},\frac{1}{h_{n,k}} \partial_3 v_{n,k})  \to (0, b_k)$ in $[L^1(Q;\R^3)]^2$ where $b_k(x') := b\bigl( x_0'+\delta_k R'x'\bigr)$.
Changing variables, we derive from assumption $(H_5)$ that
$$
F_{\eps_n}^{h_n}\bigl(u_n,Q'_{\nu_0}(x_0',\delta_k) \times I \bigr)= \delta_k F^{h_{n,k}}_{\eps_{n,k}}(v_{n,k},Q)\,.
$$
Then it follows from the definition of $\mathcal{G}_\gamma$ that 
\begin{equation*}
\frac{d \mu_0}{d \mathcal{H}^{1}\restr{}\partial^\ast E\cap\omega}(x_0')
  = \lim_{k\to\infty} \lim_{n\to\infty} F^{h_{n,k}}_{\eps_{n,k}}\bigl(v_{n,k},Q\bigr)
  \geq \liminf_{k\to\infty}\, {\cal G}_\gamma(b_k)\,.
\end{equation*}
On the other hand, by \eqref{sideslimits}  we have $b_k \to b_0$ in $L^1(Q';\R^3)$. In view of Lemma \ref{lem:lsc}, we deduce that
$$
\frac{d \mu_0}{d \mathcal{H}^{1}\restr{}\partial^\ast E\cap\omega}(x_0')\geq \liminf_{k\to\infty} \,{\cal G}_\gamma(b_k)
  \geq {\cal G}_\gamma( b_0) = K^{\star}_\gamma\,,
$$
which completes the proof.
\prbox

%
%

\subsection[]{Lower bound on $K^\star_\gamma$}

In order to compare the constant $K^\star_\gamma$ with $K_\gamma$, we prove in this subsection that under the additional assumptions $(H_3)-(H_4)$, 
sequences realizing $K^\star_\gamma$ can be  prescribed near  
the two sides $\{x_1=\pm \frac{1}{2}\}$, and chosen to be independent of the $x_2$-variable. 
This is the object of Proposition~\ref{rempot} below. 
First we state some useful facts on  potentials $W$ satisfying assumptions $(H_1)-(H_3)$ that we shall use throughout the paper. 
The proof of Lemma~\ref{rempot} is elementary and we omit it.

\begin{lemma}\label{rempot}
Assume that $(H_1)$ holds.  Then $W$ satisfies $(H_2)-(H_3)$ if and only if there exists a constant $C_*\geq 1$ such that for every $\xi\in\R^{3\times 3}$, 
\begin{equation}\label{equivpot}
\frac{1}{C_*}\min\left(|\xi-A|^p,|\xi-B|^p\right) \leq W(\xi)\leq C_*\min\left(|\xi-A|^p,|\xi-B|^p\right)\,.
\end{equation}
In particular, if $(H_1)-(H_3)$ hold, then  
$$W(\xi)\leq C^2_{*} 2^{p-1}\big(W(\bar \xi)+|\xi -\bar \xi|^p\big)\qquad \forall\xi,\bar \xi \in\R^{3\times 3} \,. $$
\end{lemma}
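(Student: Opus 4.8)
The statement is an elementary equivalence between the two-sided growth condition $(H_2)$ together with the non-degeneracy near the wells $(H_3)$, on one hand, and the single two-sided estimate \eqref{equivpot} in terms of $\min(|\xi-A|^p,|\xi-B|^p)$, on the other. I would first prove the ``if'' direction: assume \eqref{equivpot} holds for some $C_*\geq 1$. Then $(H_3)$ is immediate since for $\dist(\xi,\{A,B\})\leq\varrho$ (indeed for any $\varrho>0$) we have $\min(|\xi-A|^p,|\xi-B|^p)=\dist(\xi,\{A,B\})^p$, so \eqref{equivpot} is literally $(H_3)$ with $C_2=C_*$. For $(H_2)$, I would use the triangle inequality and convexity of $t\mapsto t^p$: $\min(|\xi-A|^p,|\xi-B|^p)\leq |\xi-A|^p\leq 2^{p-1}(|\xi|^p+|A|^p)$, giving the upper bound $W(\xi)\leq C_* 2^{p-1}|\xi|^p + C_* 2^{p-1}|A|^p$; and for the lower bound, $\min(|\xi-A|^p,|\xi-B|^p)\geq$ something like $2^{1-p}|\xi|^p - \max(|A|^p,|B|^p)$ (again via $|\xi|^p\leq 2^{p-1}(|\xi-A|^p+|A|^p)$), so $W(\xi)\geq \frac{1}{C_*}\big(2^{1-p}|\xi|^p-\max(|A|^p,|B|^p)\big)$. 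Choosing $C_1$ large enough absorbs the constants and yields $(H_2)$.

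\textbf{The ``only if'' direction.} Assume $(H_1)$, $(H_2)$, $(H_3)$. Split $\R^{3\times 3}$ into the region $\dist(\xi,\{A,B\})\leq\varrho$ and its complement. On the near-wells region, $(H_3)$ gives exactly \eqref{equivpot} with constant $C_2$, since again $\min(|\xi-A|^p,|\xi-B|^p)=\dist(\xi,\{A,B\})^p$ there. On the far region $\dist(\xi,\{A,B\})>\varrho$: since $W$ is continuous and strictly positive away from $\{A,B\}$ (by $(H_1)$, as $\{W=0\}=\{A,B\}$), and $W$ has polynomial growth bounds from $(H_2)$, the ratio $W(\xi)/\min(|\xi-A|^p,|\xi-B|^p)$ is a continuous positive function on the set $\{\dist(\xi,\{A,B\})\geq\varrho\}$ which is bounded above and below away from $0$ and $\infty$: the upper bound of $W$ and lower bound $\min(|\xi-A|^p,|\xi-B|^p)\geq\varrho^p$ control it near infinity via $(H_2)$, and on the compact annular region $\{\varrho\leq\dist(\xi,\{A,B\})\leq R\}$ both numerator and denominator are continuous and bounded away from $0$. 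Taking $C_*$ to be the max of $C_2$ and these far-region bounds (and $\geq 1$) gives \eqref{equivpot} globally.

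\textbf{The last assertion.} Given $(H_1)$--$(H_3)$, hence \eqref{equivpot}, fix $\xi,\bar\xi$. Write $W(\xi)\leq C_*\min(|\xi-A|^p,|\xi-B|^p)$. Whichever of $A,B$ is nearest to $\bar\xi$, say it is $A$ (the other case is symmetric), estimate $|\xi-A|^p\leq 2^{p-1}(|\xi-\bar\xi|^p+|\bar\xi-A|^p)$, and then $|\bar\xi-A|^p=\min(|\bar\xi-A|^p,|\bar\xi-B|^p)\leq C_* W(\bar\xi)$ by \eqref{equivpot}. Combining, $W(\xi)\leq C_* 2^{p-1}\big(|\xi-\bar\xi|^p + C_* W(\bar\xi)\big)\leq C_*^2 2^{p-1}\big(W(\bar\xi)+|\xi-\bar\xi|^p\big)$, which is the claimed inequality.

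\textbf{Main obstacle.} There is no serious obstacle; this is a bookkeeping exercise with constants. The only point requiring a touch of care is the compactness/continuity argument in the far-wells region for the ``only if'' direction — one must invoke that $W$ is continuous and vanishes exactly on $\{A,B\}$ to control $W$ from below on $\{\varrho\leq\dist(\xi,\{A,B\})\leq R\}$, and then use $(H_2)$ to handle $|\xi|\to\infty$. Everything else is the triangle inequality and the elementary convexity bound $(s+t)^p\leq 2^{p-1}(s^p+t^p)$.
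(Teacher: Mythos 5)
The paper states explicitly that the proof of this lemma ``is elementary and we omit it,'' so there is no internal proof to compare against. Your argument is correct and is the natural one: $(H_3)$ is exactly \eqref{equivpot} on $\{\dist(\xi,\{A,B\})\leq\varrho\}$ since $\dist(\xi,\{A,B\})^p=\min(|\xi-A|^p,|\xi-B|^p)$, the convexity bound $(s+t)^p\leq 2^{p-1}(s^p+t^p)$ converts between $|\xi|^p$ and $\min(|\xi-A|^p,|\xi-B|^p)$ up to additive constants for the ``if'' direction, and the ``only if'' direction is a compactness argument on the region away from the wells combined with the $p$-growth from $(H_2)$ at infinity. The closing inequality then follows by selecting the well $A$ or $B$ nearest $\bar\xi$, the triangle inequality, and applying \eqref{equivpot} once on each side.

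One small imprecision worth tightening: in the ``only if'' direction you write that ``the upper bound of $W$ and the lower bound $\min(|\xi-A|^p,|\xi-B|^p)\geq\varrho^p$ control it near infinity.'' The constant lower bound $\varrho^p$ alone does \emph{not} bound the ratio $W/\min$ from above as $|\xi|\to\infty$, since $W$ grows like $|\xi|^p$ by $(H_2)$. What you need, and clearly have in mind, is that \emph{both} $W(\xi)$ and $\min(|\xi-A|^p,|\xi-B|^p)$ are comparable to $|\xi|^p$ for $|\xi|$ large: $W$ by $(H_2)$, and $\min(|\xi-A|^p,|\xi-B|^p)$ because, say for $|\xi|\geq 2\max(|A|,|B|)$, one has $2^{-p}|\xi|^p\leq \min(|\xi-A|^p,|\xi-B|^p)\leq (3/2)^p|\xi|^p$. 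Then the ratio is pinched near infinity, the compact middle annulus is handled by continuity and strict positivity of $W$ off $\{A,B\}$, and the near-well region by $(H_3)$. With that rephrasing the proof is complete.
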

\vskip5pt

We now state the pinning condition described above. It parallels \cite[Proposition~6.2]{CFL} in the context of dimension reduction. 

\begin{proposition}\label{prop:matching1}
Assume that $(H_1)-(H_4)$ and  \eqref{orientationwells} hold. Then there exist sequences $\eps_n\to 0$, $\{c_n\} \subset \R^3$, 
$\{g_n\} \subset C^{2}(Q;\R^3)$ such that  $g_n$ is independent of $x_2$ (i.e., $g_n(x)=:\hat g_n(x_1,x_3)$), $c_n \to 0$, 
$g_n \to u_0$ in $W^{1,p}(Q;\R^3)$,  $\frac{1}{\gamma\eps_n}\partial_3 g_n \to  b_0$ in~$L^p(Q;\R^3)$,  
$$ g_n =  u_0 + \gamma\eps_n x_3  b_0\;\text{ in }Q\cap\{x_1 >1/4\}\,,\quad g_n = u_0 + \gamma\eps_n x_3  b_0 + c_n\; \text{ in }Q\cap\{x_1 < -1/4\}\,,$$ 
and $\lim_{n} F^{\gamma\eps_n}_{\eps_n}(g_n,Q) = K_\gamma^{\star}$. 
\end{proposition}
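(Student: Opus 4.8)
The plan is to show that an optimal (or nearly optimal) sequence for $K_\gamma^\star = \mathcal{E}_\gamma(I,1)$ can be modified, at the cost of an arbitrarily small amount of energy, so that it coincides with the affine maps $u_0 + \gamma\eps_n x_3 b_0$ (up to an additive constant on the left) in the slabs $\{x_1 > 1/4\}$ and $\{x_1 < -1/4\}$, and so that it no longer depends on $x_2$. This mirrors \cite[Proposition~6.2]{CFL}, but with the extra $x_3$-variable handled by the thickness rescaling. First I would fix a sequence $\{\eps_n\}$, $h_n = \gamma\eps_n$, and $\{u_n\} \subset H^2(Q;\R^3)$ with $(u_n, \frac{1}{h_n}\partial_3 u_n) \to (u_0, b_0)$ in $[L^1(Q;\R^3)]^2$ and $F^{h_n}_{\eps_n}(u_n,Q) \to K_\gamma^\star$. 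By Corollary~\ref{cor:concentration} (applied with $J = I$, $\rho = 1$, and any $\delta \in (1/2,1)$), the energy of $u_n$ concentrates near $\{x_1 = 0\}$; in particular $F^{h_n}_{\eps_n}(u_n, Q \cap \{|x_1| > \delta/2\}) \to 0$ for $\delta$ close to $1$, which after rescaling back gives room to work in the region $\{|x_1| > 1/4\}$.

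The key steps, in order: (1) By Fubini and the concentration estimate, choose for each $n$ slices $x_1 = s_n^+ \in (1/4, 3/8)$ and $x_1 = s_n^- \in (-3/8, -1/4)$ on which the one-dimensional (in $(x_2,x_3)$) energy density is small, $\|\nabla_{h_n} u_n(s_n^\pm,\cdot,\cdot) - \nabla_{h_n} u_0\|$ is small in an appropriate trace sense, and the traces of $u_n$, $\nabla u_n$ are controlled; a mean-value/pigeonhole argument over a band of slices yields this with quantitative smallness. (2) On $\{x_1 > s_n^+\}$ replace $u_n$ by the affine map $u_0 + h_n x_3 b_0$ (here $u_0 = x_1 a$, $b_0 = A_3$ on this side, so this is just $x_1 a + h_n x_3 A_3$, which is exactly $\nabla_{h_n} = (a, 0, A_3) = A$), and similarly on $\{x_1 < s_n^-\}$ with the constant $c_n$ chosen to match the trace of $u_n$ at $x_1 = s_n^-$; then interpolate in a thin transition layer $\{s_n^+ < x_1 < s_n^+ + \eta_n\}$ (and symmetrically on the left) between $u_n$ and the affine map, using a cutoff in $x_1$ and a careful second-order gluing (matching both $u$ and $\nabla u$) so that the resulting map is $C^2$ (mollify if necessary) and $H^2$. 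Lemma~\ref{rempot} (the $W(\xi) \leq C(W(\bar\xi) + |\xi - \bar\xi|^p)$ bound) together with the smallness of $\nabla u_n - A$ (resp. $\nabla u_n - B$) on the chosen slice — which follows from $(H_3)$ once one knows $W(\nabla_{h_n}u_n)$ is small there — controls the $W/\eps_n$ contribution of the gluing, while the interpolation length $\eta_n$ is chosen so that the $\eps_n|\nabla^2|^2$ contribution is also $o(1)$. (3) Remove the $x_2$-dependence: average $u_n$ (the already-modified map) over $x_2 \in I$, or better, use a slicing/selection argument — by Fubini there is a slice $x_2 = t_n$ on which the energy density is $\leq F^{h_n}_{\eps_n}(u_n,Q)$, and setting $g_n(x) := u_n(x_1, t_n, x_3)$ gives an $x_2$-independent competitor whose energy does not exceed that of $u_n$ (here one uses $(H_4)$: dropping the $\xi_2$-column of the gradient and the corresponding Hessian entries only decreases $W$ and $|\nabla_h^2|^2$); one must check this slice still has the right $L^1$-limit and the right boundary behavior near $\{x_1 = \pm 1/4\}$, which it does by a further pigeonhole to avoid the bad slices. (4) Finally mollify $g_n$ slightly to land in $C^2$ without destroying the affine boundary values (mollify only away from $\{|x_1| = 1/4\}$, or note the affine parts are already smooth), obtain $g_n \to u_0$ in $W^{1,p}$ and $\frac{1}{h_n}\partial_3 g_n \to b_0$ in $L^p$ from the $L^1$-convergence plus the uniform $L^p$-bound (equi-integrability via $(H_2)$ and $F^{h_n}_{\eps_n}(g_n) \leq C$, giving a subsequence, then identify the limit), and conclude $\limsup_n F^{\gamma\eps_n}_{\eps_n}(g_n,Q) \leq K_\gamma^\star$; the reverse inequality $\liminf_n F^{\gamma\eps_n}_{\eps_n}(g_n,Q) \geq K_\gamma^\star$ is immediate since $\{g_n\}$ is admissible in the definition of $K_\gamma^\star$, so in fact $\lim_n F^{\gamma\eps_n}_{\eps_n}(g_n,Q) = K_\gamma^\star$.

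The main obstacle I anticipate is step (2): constructing the $C^2$ (or $H^2$) gluing in the transition layer while keeping both the $\frac{1}{\eps_n}W(\nabla_{h_n}\cdot)$ and $\eps_n|\nabla_{h_n}^2\cdot|^2$ contributions of order $o(1)$. Because the perturbation is second-order, the interpolation must match the value \emph{and} the full gradient of $u_n$ at the slice $x_1 = s_n^\pm$ to the affine map; a naive affine-in-$x_1$ interpolation over a layer of width $\eta_n$ produces a Hessian of size $\sim (\text{trace mismatch})/\eta_n^2$, contributing $\sim \eps_n \eta_n (\text{mismatch})^2/\eta_n^4$ to the energy, so one needs the trace mismatch $\delta_n := \|\nabla u_n - A\|_{(x_1 = s_n^+)}$ to be small enough relative to $\eps_n$ and then optimize $\eta_n$ (roughly $\eta_n \sim (\eps_n \delta_n^2)^{1/3}$ or similar) — this requires the pigeonhole in step (1) to deliver $\delta_n \to 0$ at a controlled rate, which in turn uses that $\int_{\{|x_1|>\delta/2\}} \frac{1}{\eps_n}W(\nabla_{h_n}u_n) \to 0$ forces $W(\nabla_{h_n}u_n)$, hence $\dist(\nabla_{h_n}u_n, \{A,B\})^p$ by $(H_3)$, to be small on a good slice, while the $x_2$- and $x_3$-averaged gradient is close to $A$ (no $B$ component, since on $\{x_1 > 1/4\}$ the limit is pure phase $A$) by the $L^1$-convergence $u_n \to u_0$. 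Carefully bookkeeping that $\delta_n$ controls not just $\nabla' u_n$ but also $\frac{1}{h_n}\partial_3 u_n$ on the slice (so that the full rescaled gradient $\nabla_{h_n}u_n$ is close to $A$, not merely its first two columns) is the delicate point, and is exactly where the structure of the membrane rescaling interacts with the second-order perturbation.
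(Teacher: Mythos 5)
Your plan shares the broad outline with the paper's proof — pigeonhole a good slice in $x_1$, glue to the affine map, remove the $x_2$-dependence via $(H_4)$, diagonalize — but there is a genuine gap in the gluing step, and a structural choice that makes the gap harder to close. The central problem is that the mismatch $\|u_n - u_0 - \gamma\eps_n x_3 b_0\|_{L^p}$ on the good slice $\{x_1 = s_n^+\}$ is only of order $\alpha_n^{1/p}$, where $\alpha_n\to 0$ is the (uncontrolled-rate) energy and norm error delivered by the pigeonhole. A single-step cutoff gluing over a layer of width $\eta_n$ produces a $\partial_1$-contribution $\varphi_n'(u_n-\text{affine})\sim \eta_n^{-1}\alpha_n^{1/p}$, so the renormalized $W$-term is $\sim \alpha_n/(\eps_n\eta_n^{p-1})$. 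No choice of $\eta_n\leq 1/4$ makes this vanish, because nothing in the pigeonhole forces $\alpha_n\ll\eps_n$. Your $\eta_n\sim(\eps_n\delta_n^2)^{1/3}$ heuristic only balances the Hessian contribution and does not touch this $W$-obstruction. Moreover, you prescribe no additive constant on the side $\{x_1>1/4\}$; even with the constant this obstruction persists, and without it one cannot even absorb the average of the trace mismatch.

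The paper circumvents all this with a two-step matching that you would essentially be forced to rediscover. Step one glues $u_n$ (over an $\eps_n$-wide layer) not to the pure affine map but to $u_0+\gamma\eps_n x_3 b_0 + \bar u_n(x_3)$, where $\bar u_n(x_3):=\hat u_n(t_n,x_3)-\bar u_0(t_n)-x_3\int_I\partial_3\hat u_n(t_n,s)\,ds$ is a Cosserat-type correction equal (up to its linear-in-$x_3$ part) to the trace of $u_n-u_0-\gamma\eps_n x_3 b_0$ at the good level $t_n$. Because the value mismatch to \emph{this} target is zero at $x_1=t_n$, Poincar\'e in $x_1$ across the $\eps_n$-wide layer improves the $L^p$-mismatch from $\alpha_n^{1/p}$ to $\alpha_n^{1/p}\eps_n^{(p+1)/p}$ (see (4.18) in the paper), which is exactly the extra factor of $\eps_n$ needed for the cutoff terms to vanish. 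Step two then removes $\bar u_n(x_3)$ over a region of fixed width, which is cheap because $\bar u_n - c_n^+$ has $x_3$-oscillation controlled by $\bar u_n'\lesssim\alpha_n^{1/2}\eps_n^{3/2}$ (a consequence of the $\partial^2_{33}$ bound at the good slice and Poincar\'e in $x_3$); the averages $c_n^\pm$ from the two sides are collected at the end by the global shift $g_n:=g_n^--c_n^+$. Finally, note the paper removes the $x_2$-dependence \emph{first} (slicing and $(H_4)$), so all of this matching is done for functions of $(x_1,x_3)$ alone; performing the $x_2$-slicing afterward, as you propose, is not wrong in principle, but it would force you to carry a two-variable correction $\bar u_n(x_2,x_3)$ through the matching and to pigeonhole once more at the end to find a slice compatible with the boundary layers — extra work for no gain.
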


\begin{proof}{\it Step 1.} 
Let us consider sequences $h_n\to 0$, $\tilde \eps_n\to0$ and $\{\tilde u_n\} \subset H^{2}(Q;\R^3)$ 
such that $h_n/\tilde \eps_n \to \gamma$, 
$(\tilde u_n,\frac{1}{h_n}\partial_3 \tilde u_n) \to  (u_0, b_0)$ in $[L^1(Q;\R^3)]^2$, and
$\lim_{n} F^{h_n}_{\tilde \eps_n}(\tilde u_n,Q) = K^{\star}_\gamma$. 
Applying standard regularization techniques if necessary, we may assume  that $\tilde u_n\in C^{2}(Q;\R^3)$.
By Theorem \ref{thm:compactness}, we have $\tilde u_n \to  u_0$ in $W^{1,p}(Q;\R^3)$, and $\frac{1}{h_n}\partial_3 \tilde u_n \to  b_0$ in $L^p(Q;\R^3)$. 
Setting $\eps_n:= h_n/\gamma$, we claim that 
$\lim_{n} F^{h_n}_{\eps_n}(\tilde u_n,Q) = K^{\star}_\gamma$. 
Indeed, it suffices to notice that 
\begin{equation}\label{changeeps}
\left|F^{h_n}_{\eps_n}(\tilde u_n,Q)-F^{h_n}_{\tilde \eps_n}(\tilde u_n,Q)\right| \leq \left(\big|1-\frac{\gamma\tilde\eps_n}{h_n}\big| 
+ \big|1-\frac{h_n}{\gamma\tilde\eps_n}\big| \right) F^{h_n}_{\tilde \eps_n}(\tilde u_n,Q)\mathop{\longrightarrow}\limits_{n\to\infty} 0\,.
\end{equation}

Extracting a further subsequence if necessary, we can find an exceptional set $Z\subset I$ of 
vanishing $\mathcal{H}^1$-measure such that  for every $x_2\in I\setminus Z$, the slices $\tilde u_n(\cdot,x_2, \cdot)$ 
and $\frac{1}{h_n}\partial_3 \tilde u_n(\cdot,x_2, \cdot)$ converge 
to $u_0$ in $W^{1,p}(I\times\{x_2\}\times I;\R^3)$ and $b_0$ in $L^{p}(I\times\{x_2\}\times I;\R^3)$ respectively. 
We select a level $s_n\in I\setminus Z$ satisfying
\begin{equation}\label{eq:sn}
\int_{I\times\{s_n\} \times I} \frac{1}{\eps_n}W(\nabla_{h_n}\tilde u_n)+\eps_n|\nabla_{h_n}^2\tilde u_n|^2\,d\mathcal{H}^2
 \leq  F^{h_n}_{\eps_n}(\tilde u_n,Q)\,.
 \end{equation} 
From now on we write 
$$u_n(x):= \tilde u_n(x_1,s_n,x_3)\quad\text{and}\quad \hat u_n(x_1,x_3):=\tilde u_n(x_1,s_n,x_3)\,.$$
By our choice of $s_n$, we have $u_n \to  u_0$ in 
$W^{1,p}(Q;\R^3)$, and $\frac{1}{h_n}\partial_3  u_n \to  b_0$ in $L^p(Q;\R^3)$. Since $\partial_2 u_n\equiv 0$, 
assumption $(H_4)$ together with \eqref{eq:sn} yields
$$\limsup_{n\to\infty} F^{h_n}_{\eps_n}(u_n,Q) \leq  \limsup_{n\to\infty} \int_{I\times\{s_n\} \times I} \frac{1}{\eps_n}W(\nabla_{h_n}\tilde u_n)+\eps_n|\nabla_{h_n}^2\tilde u_n|^2\,d\mathcal{H}^2 \leq  K^{\star}_\gamma\,.$$
On the other hand,  $\liminf_n  F^{h_n}_{\eps_n}(u_n,Q)\geq K^{\star}_\gamma$ by definition of $K^{\star}_\gamma$. Hence
$\lim_{n}  F^{h_n}_{\eps_n}(u_n,Q) = K^{\star}_\gamma$. 
\vskip5pt

\noindent{\it Step 2 (first matching).} 
We start partitioning $\bigl(\frac{1}{12},\frac{1}{6}\bigr) \times Q'$ into $M_n := \bigl[ \frac{1}{\eps_n}\bigr]$  layers of width $\frac{1}{12M_n}$ ($[\cdot]$~denotes the integer part). 
By Corollary \ref{cor:concentration}, the energy concentrates near the interface $\{x_1=0\}$. Therefore we can find a layer 
$L_n := \bigl(\theta_n - \frac{1}{12M_n},\theta_n\bigr) \times Q' \subset \bigl(\frac{1}{12},\frac{1}{6}\bigr) \times Q'$ for which
\begin{multline}\label{eq:layer}
M_n \left( \int_{L_n} |u_n- u_0|^p + \big|\frac{1}{h_n}\partial_3 u_n- b_0\big|^p + |\grad u_n -\grad  u_0|^p\, dx + F^{h_n}_{\eps_n}(u_n,L_n)\right)\,\leq  \\
   \int_{\bigl(\frac{1}{12},\frac{1}{6}\bigr) \times Q'}  |u_n- u_0|^p + \big|\frac{1}{h_n}\partial_3 u_n- b_0\big|^p+ |\grad u_n -\grad  u_0|^p\, dx
   + F^{h_n}_{\eps_n}(u_n,\bigl({\ts \frac{1}{12},\frac{1}{6}}\bigr) \times Q')  =: \alpha_n \to 0\,.
\end{multline}
Then  select a level $t_n \in \bigl(\theta_n - \frac{1}{12M_n},\theta_n\bigr)$ satisfying
\begin{multline}\label{eq:tn}
\int_{\{t_n\} \times Q'}  |u_n- u_0|^p + \big|\frac{1}{h_n}\partial_3 u_n- b_0\big|^p + |\grad u_n -\grad  u_0|^p\, d\mathcal{H}^2\\
+\int_{\{t_n\} \times Q'} \frac{1}{\eps_n}W(\nabla_{h_n} u_n)+\eps_n|\nabla_{h_n}^2 u_n|^2\,d\mathcal{H}^2\leq 12\alpha_n\,.
\end{multline}
Let $\varphi_n\in C^\infty(\R)$ be  a cut-off function 
satisfying 
\begin{equation}\label{glutest}
\begin{cases}
0\leq \varphi_n\leq 1\,,\\
\varphi_n(t) = 1  \text{ for } t \leq \theta_n - \frac{1}{12M_n}\,,\\
\varphi_n(t) = 0   \text{ for } t \geq \theta_n\,, \\
\eps_n| \varphi_n'|+\eps_n^2 | \varphi_n''|\leq C\,,
\end{cases}
\end{equation}
for a constant $C$ independent of $n$. For $x \in L_n$, we  set  
$$
v_n(x) := \bigl( 1-\varphi_n(x_1)\bigr) \bigl(u_0(x)+h_n x_3 b_0(x) + \bar u_n(x_3)\bigr) + \varphi_n(x_1) u_n(x) \,,
$$
with 
$$\bar u_n(x_3) := \hat u_n(t_n,x_3) - \bar u_0(t_n)-x_3\int_{I}\partial_3 \hat u_n(t_n,s)\, ds\,.$$
We claim that
\begin{align}
\label{grplim1} &\int_{L_n} |v_n -  u_0|^p\,  dx \to 0\,,\\ 
\label{grplim2} & \frac{1}{\eps_n} \int_{L_n} \big|\frac{1}{h_n} \partial_3 v_n -  b_0\big|^p \,dx \to 0\,,\\
\label{grplim3} & \frac{1}{\eps_n} \int_{L_n} |\grad' v_n - \grad'  u_0|^p\, dx \to 0\,,\\
\label{grplim4} & \frac{1}{\eps_n} \int_{L_n} W(\nabla_{h_n} v_n) \,dx \to 0\,,\\
\label{grplim5}  & \eps_n \int_{L_n}  \left|\nabla_{h_n}^2 v_n \right|^2\, dx \to 0\,.
\end{align}
Applying Jensen's inequality,  we derive from \eqref{eq:tn} that 
\begin{equation}\label{vanlpbaru}
\int_{I}|\bar{u}_n|^p\,dx_3\leq C\int_{\{t_n\}\times Q'}|u_n- u_0|^p+|\partial_3u_n|^p\,d\mathcal{H}^2 \leq C\alpha_n\to 0\,. 
\end{equation}
Then \eqref{grplim1} easily follows from \eqref{eq:layer}, \eqref{eq:tn}, and \eqref{vanlpbaru}. 
To prove \eqref{grplim2}, we first estimate for $x_3\in I$, 
\begin{equation}\label{cossslice}
 |\bar{u}_n^\prime(x_3)|
\leq \int_I |\partial_{33}^2\hat u_n(t_n,s)|\,ds \leq  \left(\int_{\{t_n\}\times Q'}|\partial_{33}^2u_n|^2\,d\mathcal{H}^2\right)^{1/2}\leq C\alpha_n^{1/2}\eps_n^{3/2}\,,
\end{equation}
where we have used Poincar\'e's inequality, H\"older's inequality, and \eqref{eq:tn}.
We may now infer that 
$$\frac{1}{\eps_n}\int_{L_n} \big|\frac{1}{h_n} \partial_3 v_n -  b_0\big|^p\, dx \leq C\left( \frac{1}{\eps_n}\int_{L_n} \big|\frac{1}{h_n} \partial_3 u_n -  b_0\big|^p\,  dx+
\frac{1}{\eps_n^p}\int_{I} |\bar{u}^\prime_n|^p\,dx_3\right)\leq C\alpha_n\to 0\,,$$
thanks to \eqref{eq:layer} and \eqref{cossslice}. 
Observing that 
$$u_n -  u_0-h_nx_3 b_0 - \ol{u}_n=x_3\int_{\{t_n\}\times Q'}\big(\partial_3 u_n-h_n b_0\big)\,d\mathcal{H}^2 \quad \text{on $\{t_n\}\times Q'$}\,, $$
we can apply Poincar\'e's inequality to obtain
\begin{multline}\label{eq:Poincare}
\int_{L_n} |u_n -  u_0 -h_nx_3  b_0- \bar{u}_n |^p \,dx \leq \frac{C}{M_n}\int_{\{t_n\}\times Q'}\big|\partial_3 u_n-h_n b_0\big|^p\,d\mathcal{H}^2\\
+ C \left(\frac{1}{M_n}\right)^p \int_{L_n} | \partial_1 u_n - \partial_1  u_0|^p\,  dx\leq C\alpha_n \eps^{p+1}_n\,.
\end{multline}
Then, using \eqref{eq:layer},  \eqref{eq:tn}, \eqref{glutest}, and \eqref{eq:Poincare} we derive 
$$\frac{1}{\eps_n} \int_{L_n} |\grad' v_n - \grad' u_0|^p\,  dx    \leq  \frac{C}{\eps_n} \int_{L_n} \bigg(\frac{1}{\eps_n^p}| u_n -  u_0-h_nx_3  b_0 - \bar{u}_n|^p +  
         |\grad' u_n - \grad'  u_0|^p \bigg)\, dx  \leq C \alpha_n \to 0\,,$$
and \eqref{grplim3} is proved. 
In view of \eqref{equivpot}, estimate \eqref{grplim4} follows from \eqref{grplim2} and  \eqref{grplim3}, {\it i.e.}, 
\begin{align}
\nonumber \frac{1}{\eps_n} \int_{L_n} W(\nabla_{h_n} v_n) \,dx & \leq  \frac{C_\star}{\eps_n} \int_{L_n} \min\left(|\nabla_{h_n} v_n-A|^p,|\nabla_{h_n} v_n-B|^p\right) \,dx \\
\label{estimatchdist} & \leq  \frac{C_\star}{\eps_n} \int_{L_n} \left|\nabla_{h_n} v_n-(\nabla' u_0, b_0)\right|^p dx \to 0\,.
\end{align}
We prove  \eqref{grplim5} in separate parts. In view of \eqref{glutest} we have 
\begin{multline*}
  \eps_n \int_{L_n} |(\grad')^2 v_n|^2 \, dx
    \leq C\bigg(\eps_n \int_{L_n}  |(\grad')^2 u_n|^2 \,dx +\frac{1}{\eps_n} \int_{L_n} |\grad' u_n - \grad'  u_0 |^2 \,dx \\
          +\frac{1}{\eps^3_n} \int_{L_n}  |u_n - u_0 -h_nx_3  b_0- \bar{u}_n|^2\, dx \bigg)\,,
\end{multline*}
and we shall estimate each term separately. The first  term on the right-hand-side of the inequality converges to 0 by \eqref{eq:layer}.
For the last two terms, we use \eqref{eq:layer} and \eqref{eq:Poincare} together with H\"older's inequality to obtain
$$\frac{1}{\eps_n} \int_{L_n}|\nabla' u_n - \grad' u_0 |^2 \,dx\leq \frac{|L_n|^{\frac{p-2}{p}}}{\eps_n}\left(\int_{L_n} |\grad' u_n - \grad' u_0 |^p \,dx\right)^{2/p} 
\leq C\alpha_n^{2/p}\to 0\,,$$
and
\begin{multline*}
\frac{1}{\eps^3_n} \int_{L_n} |u_n - u_0 -h_nx_3 b_0 - \bar{u}_n|^2  \,dx
   \leq \frac{|L_n|^{\frac{p-2}{p}}}{\eps_n^3}  \left(\int_{L_n} |u_n - u_0 -h_nx_3 b_0- \bar{u}_n|^p \, dx\right)^{2/p} 
   \leq C \alpha_n^{2/p} \to 0\,,
\end{multline*}
and we conclude that $ \eps_n \int_{L_n} |(\grad')^2 v_n|^2 \, dx\to 0$. 
Finally we estimate
\begin{multline*}
\eps_n \int_{L_n} \big| \nabla' \big(\frac{1}{h_n} \partial_3 v_n\big) \big|^2\, dx 
  \leq C\bigg(\eps_n \int_{L_n} \big| \nabla' \big(\frac{1}{h_n} \partial_3 u_n\big) \big|^2 \, dx \\
  + \frac{1}{\eps_n} \int_{L_n} \big| \frac{1}{h_n}\partial_3 u_n- b_0 \big|^2\, dx
+ \frac{1}{\eps_n^2}\int_{I}|\bar{u}'_n|^2\,dx_3\bigg)\leq C\bigg(\alpha_n+\alpha_n^{2/p}\bigg)\to 0\,,
\end{multline*}
where we have  used again \eqref{glutest}, H\"older's inequality, \eqref{eq:layer}, and \eqref{cossslice}. 
Since $\bar{u}^{\prime\prime}_n(x_3)=\partial_{33}^2\hat u_n(t_n,x_3)$,  we infer from  \eqref{eq:layer}, \eqref{eq:tn}, and \eqref{glutest} that 
$$\eps_n  \int_{L_n}\frac{1}{h^4_n}\left| \partial^2_{33} v_n \right|^2  \, dx 
  \leq C\bigg( \frac{\eps_n}{h^4_n}\int_{L_n}\left| \partial^2_{33} u_n \right|^2 \, dx 
  + \frac{\eps^2_n}{h^4_n} \int_{\{t_n\}\times Q'} \left|\partial^2_{33} u_n \right|^2\,  d\mathcal{H}^2 \bigg)\leq C\eps_n\alpha_n\to 0\,,$$
which ends the proof of  \eqref{grplim5}. 
\vskip5pt

\noindent{\it Step 3 (second matching).} Let $\psi_n\in C^\infty(\R)$ be  
such that $0\leq \psi_n\leq 1$, $\psi_n(t) = 1$ if $t \leq \theta_n$, $\psi_n(t) = 0$ if $t \geq 1/4$, and
$| \psi_n'| +| \psi_n''| \leq C$ for a constant $C$ independent of $n$. 
For $x \in \bigl(\theta_n,\frac{1}{4}\bigr) \times Q'$, we set  
$$
w_n(x) := u_0(x) +h_n x_3b_0(x)+ c^+_n + \psi_n(x_1) \bigl( \ol{u}_n(x_3) - c^+_n\bigr)\,,
$$
where $c^+_n := \int_{I} \bar{u}_n(x_3) \, dx_3\to 0$ thanks to \eqref{vanlpbaru}.  We claim that
\begin{align}
\label{grplim1bis} &\int_{(\theta_n,\frac{1}{4}) \times Q'} |w_n -  u_0|^p\, dx \to 0\,,\\
\label{grplim2bis} & \frac{1}{\eps_n} \int_{(\theta_n,\frac{1}{4}) \times Q'} \big|\frac{1}{h_n} \partial_3 w_n -  b_0\big|^p\, dx \to 0\,,\\
\label{grplim4bis} & \frac{1}{\eps_n} \int_{(\theta_n,\frac{1}{4}) \times Q'} W\left(\grad_{h_n} w_n\right) \, dx \to 0\,,\\
\label{grplim5bis}  & \eps_n \int_{(\theta_n,\frac{1}{4}) \times Q'} \left|\nabla_{h_n}^2 w_n \right|^2\, dx \to 0\,.
\end{align}
First, \eqref{grplim1bis} and \eqref{grplim2bis} are easy consequences of \eqref{eq:tn} and \eqref{cossslice} respectively.  
Next we apply Poincar\'e's inequality and  \eqref{cossslice} to derive that 
\begin{equation}\label{grplim3bis} 
\frac{1}{\eps_n}\int_{\bigl(\theta_n,\frac{1}{4}\bigr) \times Q'}|\nabla'w_n-\nabla' u_0|^p\,dx\leq \frac{C}{\eps_n}\int_{I}|\bar{u}_n-c^+_n|^p\,dx_3
 \leq \frac{C}{\eps_n}\int_{I}|\bar{u}'_n |^p\,dx_3 \leq C\eps_n^{\frac{3p-2}{2}}\alpha^{\frac{p}{2}}_n\to 0\,.
\end{equation}
Then, to prove \eqref{grplim4bis} we argue exactly as in \eqref{estimatchdist} using  \eqref{grplim2bis} and \eqref{grplim3bis}. 
We finally obtain in a similar way that
\begin{multline*}
\eps_n \int_{\bigl(\theta_n,\frac{1}{4}\bigr) \times Q'} \left|\nabla_{h_n}^2 w_n \right|^2 \,dx\leq\\
C\bigg(\eps_n\int_{\bigl(\theta_n,\frac{1}{4}\bigr) \times Q'} |\bar{u}_n-c^+_n|^2\,dx
+\frac{1}{\eps_n}\int_{I}|\bar{u}'_n|^2\,dx_3+\frac{\eps_n}{h_n^4}\int_{\{t_n\}\times Q'}|\partial_{33}^2u_n|^2\,d\mathcal{H}^2\bigg)\leq C\alpha_n\to 0\,,
\end{multline*}
and \eqref{grplim5bis} is proved.
\vskip5pt

\noindent{\it Step 4.} To conclude the proof, we first set for $x\in Q$, 
\begin{equation}\label{defgn+}
g^+_n(x):=\begin{cases}
u_n(x) & \text{for $x_1<\theta_n - \frac{1}{12M_n}$}\,,\\[5pt]
v_n(x) & \text{for $\theta_n - \frac{1}{12M_n}\leq x_1< \theta_n$}\,, \\[5pt]
w_n(x) & \text{for $\theta_n \leq x_1< \frac{1}{4}$}\,, \\[5pt]
u_0(x) +h_nx_3b_0(x) +c^+_n  & \text{for $\frac{1}{4}\leq x_1\leq \frac{1}{2}$}\,.
\end{cases}
\end{equation}
Recalling that $h_n=\gamma\eps_n$, it follows from the previous steps and Corollary \ref{cor:concentration} that $g_n^+\in C^{2}(Q;\R^3)$,  
$g_n^+\to u_0$ in $W^{1,p}(Q;\R^3)$, $\frac{1}{\gamma\eps_n}\partial_3g_n^+\to b_0$ in $L^p(Q;\R^3)$, and 
$\lim_{n} F_{\eps_n}^{\gamma\eps_n}(g_n^+,Q)= \lim_{n} F_{\eps_n}^{\gamma\eps_n}(u_n,Q)=K_\gamma^\star$. 
The sequence $\{g_n^+\}$ satisfies the pinning condition $g^+_n= u_0+\gamma\eps_nx_3 b_0+c_n^+$ in $Q\cap \{x_1>1/4\}$. 
Then we repeat construction to modify $g^+_n$ in $(-\frac{1}{2},0)\times Q'$ in order build a new field $g_n^-\in C^2(Q;\R^3)$ satisfying 
$g^-_n= u_0+\gamma\eps_nx_3 b_0+c_n^-$ 
in $Q\cap \{x_1<-1/4\}$ for 
some constants $c_n^-\to 0$. Now it suffices to set $g_n:=g_n^--c_n^+$ and $c_n:=c_n^--c_n^+$. By construction $g_n$ does not dependent on $x_2$, that is 
$g_n(x)=:\hat g_n(x_1,x_3)$. 
\end{proof}

\begin{corollary}\label{prop:charK_per}
Assume that $(H_1)-(H_4)$ and \eqref{orientationwells} hold. 
Then $K^\star_\gamma\geq K_\gamma\,$. 
\end{corollary}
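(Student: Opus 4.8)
The plan is to derive the inequality directly from the pinning construction of Proposition~\ref{prop:matching1}, via an anisotropic rescaling that identifies $F^h_\eps$ evaluated at $x_2$-independent maps with the two dimensional functional appearing in \eqref{formulacrit}.

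First I would apply Proposition~\ref{prop:matching1} to produce sequences $\eps_n\to0$, $c_n\to0$ and $g_n\in C^2(Q;\R^3)$ with $g_n$ independent of $x_2$ (so $g_n(x)=\hat g_n(x_1,x_3)$), such that $g_n=u_0+\gamma\eps_n x_3 b_0$ on $Q\cap\{x_1>\tfrac14\}$, $g_n=u_0+\gamma\eps_n x_3 b_0+c_n$ on $Q\cap\{x_1<-\tfrac14\}$, and $F^{\gamma\eps_n}_{\eps_n}(g_n,Q)\to K^\star_\gamma$. Since the integrand of $F^{\gamma\eps_n}_{\eps_n}(g_n,Q)$ does not depend on $x_2$, integrating over $x_2\in I$ contributes only a factor $1$, leaving an integral over $(x_1,x_3)\in I\times I$. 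Then I would set $\ell_n:=1/\eps_n$ and define $v_n\in C^2(\ell_n I\times\gamma I;\R^3)$ by $v_n(y_1,y_2):=\eps_n^{-1}\,\hat g_n(\eps_n y_1,\,\gamma^{-1}y_2)$.

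A direct chain rule computation gives, at corresponding points, $\nabla v_n=\big(\partial_1\hat g_n,\,\tfrac{1}{\gamma\eps_n}\partial_3\hat g_n\big)$ and $|\nabla^2 v_n|^2=\eps_n^2\,|\nabla_{\gamma\eps_n}^2 g_n|^2$, while the change of variables carries Jacobian $\eps_n/\gamma$. Since $\nabla_{\gamma\eps_n}g_n$ has a vanishing middle column, one has $W(\nabla_{\gamma\eps_n}g_n)=\mathcal{W}(\nabla v_n)$ by definition of $\mathcal{W}$. Substituting these identities into $F^{\gamma\eps_n}_{\eps_n}(g_n,Q)=\int_{I\times I}\eps_n^{-1}W(\nabla_{\gamma\eps_n}g_n)+\eps_n|\nabla_{\gamma\eps_n}^2 g_n|^2\,dx_1\,dx_3$ and collecting powers of $\eps_n$ and $\gamma$ should yield
\[
F^{\gamma\eps_n}_{\eps_n}(g_n,Q)=\frac{1}{\gamma}\int_{\ell_n I\times\gamma I}\mathcal{W}(\nabla v_n)+|\nabla^2 v_n|^2\,dy\,.
\]

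Finally I would check that $v_n$ is admissible in the infimum \eqref{formulacrit} with $\ell=\ell_n$: the face $\{y_1=\ell_n/2\}$ corresponds to $\{x_1=\tfrac12\}$, where $g_n=u_0+\gamma\eps_n x_3 b_0$, so by \eqref{orientationwells} together with \eqref{defu0}--\eqref{defiu0b0} (which give $\bar u_0'=a=A_1$ and $\bar b_0\equiv A_3$ on $(0,\infty)$, with the opposite signs on $(-\infty,0)$, the additive constant $c_n$ playing no role for gradients) one gets $\nabla v_n=(A_1,A_3)$ near $\{y_1=\ell_n/2\}$ and $\nabla v_n=(B_1,B_3)$ near $\{y_1=-\ell_n/2\}$. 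Hence $K_\gamma\le F^{\gamma\eps_n}_{\eps_n}(g_n,Q)$, and letting $n\to\infty$ gives $K_\gamma\le K^\star_\gamma$. The one point of the argument that demands care is the anisotropic rescaling $x_1\mapsto\eps_n y_1$, $x_3\mapsto\gamma^{-1}y_2$: one must make sure the powers of $\eps_n$ (coming from $1/\eps_n$, from the factor $\eps_n$, from the two orders of differentiation in the Hessian term, and from the Jacobian) and the powers of $\gamma$ all cancel precisely into the displayed formula. The genuinely hard part — the gluing that flattens the boundary data to affine, $x_2$-independent maps without increasing the energy in the limit — is already contained in Proposition~\ref{prop:matching1}.
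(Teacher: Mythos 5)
Your proposal is correct and is essentially the paper's own argument: the paper's proof of this corollary is precisely the rescaling $v_n(y):=\eps_n^{-1}\hat g_n(\eps_n y_1, y_2/\gamma)$ applied to the pinned sequence from Proposition~\ref{prop:matching1}, with the identity $F_{\eps_n}^{\gamma\eps_n}(g_n,Q)=\frac{1}{\gamma}\int_{\ell_n I\times\gamma I}\mathcal{W}(\nabla v_n)+|\nabla^2 v_n|^2\,dy$ and the boundary check $\nabla v_n=(A_1,A_3)$ near $\{y_1=\ell_n/2\}$, $\nabla v_n=(B_1,B_3)$ near $\{y_1=-\ell_n/2\}$. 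You have simply written out the chain-rule and Jacobian bookkeeping that the paper labels ``straightforward computations,'' and those computations are correct.
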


\begin{proof} 
We consider the sequences $\{\eps_n\}$ and $\{g_n\}$ given by Proposition \ref{prop:matching1}. 
Remind that $g_n(x)=\hat g_n(x_1,x_3)$. We set $\ell_n:= 1/\eps_n$, and for $y=(y_1,y_2)\in \ell_n  I\times \gamma I$, 
$v_n(y):=\frac{1}{\eps_n}\hat g_n\big(\eps_ny_1,y_2/\gamma \big)$. 
Then straightforward computations yield $\nabla v_n(y)=(\bar u'_0(y_1),\bar b_0(y_1))$ nearby $\{|y_1|=\ell_n/2\}$, 
and 
$$K_\gamma\leq\frac{1}{\gamma}\int_{\ell_n I\times \gamma I}\mathcal{W}(\nabla v_n)+|\nabla^2 v_n|^2\,dy= F_{\eps_n}^{\gamma\eps_n}(g_n,Q)\,.$$ 
By construction of $\{g_n\}$, the conclusion follows letting $n\to \infty$. 
\end{proof}

%
%

\subsection[]{The $\Gamma$-$\limsup$ inequality}

We conclude this section with the construction of a recovery sequence. Then Theorem \ref{thm:RS_A'neqB'} together with Corollary \ref{prop:charK_per} and Theorem \ref{thm:gammaliminf} concludes 
the proof of Theorem \ref{thm:gammalim_gamma1}. 

\begin{theorem}\label{thm:RS_A'neqB'}
Assume that $(H_1)-(H_5)$ and  \eqref{orientationwells} hold. Let $\eps_n\to 0$ and $h_n\to 0$ be arbitrary sequences such that $h_n/\eps_n \to \gamma$. Then, for every $(u,b) \in \mathscr{C}$,  
there exists a sequence $\{u_n\} \subset H^{2}(\Omega;\R^3)$ such that $u_n\to u$ in $W^{1,p}(\Omega;\R^3)$, $\frac{1}{h_n}\partial_3 u_n \to b$ in $L^p(\Omega;\R^3)$ and 
\begin{equation}\label{gamlimsupcrit}
\lim_{n\to\infty} F^{h_n}_{\eps_n}(u_n) = K_{\gamma} \,\Per_{\omega}(E) \,, 
\end{equation}
where $(\nabla' u,b) (x)= \bigl(1-\chi_E(x')\bigr)A + \chi_E(x') B$.
\end{theorem}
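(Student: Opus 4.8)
\noindent\emph{Outline of proof.} The plan is to build $\{u_n\}$ by superimposing, over each flat piece of the limiting interface, a rescaled near-optimal competitor for $K_\gamma$. Fix $\delta>0$ and pick $\ell>0$ and $v\in C^{2}(\ell I\times\gamma I;\R^3)$ with $\nabla v=(A_1,A_3)$ in a neighborhood of $\{y_1=\ell/2\}$, $\nabla v=(B_1,B_3)$ in a neighborhood of $\{y_1=-\ell/2\}$, and $\frac1\gamma\int_{\ell I\times\gamma I}\mathcal{W}(\nabla v)+|\nabla^2 v|^2\,dy\le K_\gamma+\delta$; on the two collars $v$ is affine. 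Given a flat interface piece $S$ with unit normal $\nu'\in\mathbb{S}^1$ through $x_0'$, the building block is $w_n(x):=\eps_n\,v\big(\frac1{\eps_n}(x'-x_0')\cdot\nu',\ \gamma x_3\big)$ on the slab $\{|(x'-x_0')\cdot\nu'|<\eps_n\ell/2\}$, extended by the corresponding affine map outside. Then $\nabla'w_n=\partial_1 v\otimes\nu'$, $\frac1{h_n}\partial_3 w_n=\frac{\gamma\eps_n}{h_n}\partial_2 v\to\partial_2 v$, and $\eps_n|\nabla_{h_n}^2 w_n|^2=\frac1{\eps_n}\big(|\nabla^2 v|^2+o(1)\big)$ uniformly on the slab; since $|\partial_1 v\otimes\nu'|=|\partial_1 v|$, assumption $(H_5)$ (needed only here, and vacuous when $\nu'=e_1'$) gives $W(\nabla_{h_n}w_n)=\mathcal{W}(\nabla v)+o(1)$ uniformly, so the change of variables with Jacobian $\eps_n/\gamma$ shows that the energy of $w_n$ over its slab tends to $\frac1\gamma\,\mathcal{H}^1(S)\int_{\ell I\times\gamma I}\mathcal{W}(\nabla v)+|\nabla^2 v|^2\,dy\le(K_\gamma+\delta)\,\mathcal{H}^1(S)$. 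As $v$ is affine on the collars, $w_n$ matches the ambient affine maps $u+h_nx_3 b$ up to an $O(\eps_n)$ discrepancy in $C^1$, which is mollified away at negligible energy cost and absorbs the fact that $h_n/\eps_n$ only tends to $\gamma$.

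\textbf{Case $A'\neq B'$.} By Theorem~\ref{thm:BJ}, $u(x)=c_0+x_1 a-2\psi(x_1)a$ and $\partial^*E\cap\omega=\bigcup_{i\in\mathscr{I}}\{\alpha_i\}\times J_i$ with $\sum_i\mathcal{H}^1(J_i)=\Per_\omega(E)$, all interfaces being orthogonal to $e_1'$. For a finite truncation $k^-\le i\le k^+$ we place one building block (with $\nu'=e_1'$, after the reflection $y_1\mapsto-y_1$ when the phases $A$ and $B$ are swapped) in the slab $\{|x_1-\alpha_i|<\eps_n\ell/2\}\cap\Omega$, and on each region between two consecutive slabs we set $u_n$ equal to the affine map dictated by $(u,b)$, the additive constant being propagated from left to right so that $u_n\in C^2(\Omega;\R^3)$; the total drift is $O(\eps_n(k^+-k^-))\to0$. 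Since the slabs are pairwise disjoint for $n$ large and collapse onto the interfaces, $u_n\to u$ in $W^{1,p}$, $\frac1{h_n}\partial_3 u_n\to b$ in $L^p$, and $\limsup_n F^{h_n}_{\eps_n}(u_n)\le(K_\gamma+\delta)\sum_{i=k^-}^{k^+}\mathcal{H}^1(J_i)+o(1)\le(K_\gamma+\delta)\Per_\omega(E)$; a diagonal extraction over $k^\pm$ and over $\delta\to0$ gives a sequence with $\limsup_n F^{h_n}_{\eps_n}(u_n)\le K_\gamma\Per_\omega(E)$.

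\textbf{Case $A'=B'$.} Here $u\equiv0$ and $b(x)=(1-\chi_E(x'))A_3+\chi_E(x')B_3$ with $E$ an arbitrary set of finite perimeter in $\omega$. We first approximate $E$ by polyhedral sets $E_k\subset\omega$ with $\chi_{E_k}\to\chi_E$ in $L^1$, $\Per_\omega(E_k)\to\Per_\omega(E)$, and $\partial E_k\cap\omega$ a finite union of segments. For fixed $k$ we glue one building block along each segment---using $(H_5)$ to align the one-dimensional profile with the segment's normal---and set $u_n=h_nx_3 A_3$ on $\omega\setminus E_k$ and $u_n=h_nx_3 B_3$ (up to the fixed $O(\eps_n)$ collar constant) on $E_k$ away from the interface; near the finitely many vertices of $\partial E_k$, where the blocks of adjacent segments overlap, we interpolate on balls of radius $\sim\eps_n$, at a cost $O(\eps_n)$ per vertex. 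This produces $u_n^k\in H^2(\Omega;\R^3)$ with $u_n^k\to0$ in $W^{1,p}$, $\frac1{h_n}\partial_3 u_n^k\to b_k:=(1-\chi_{E_k})A_3+\chi_{E_k}B_3$ in $L^p$, and $\limsup_n F^{h_n}_{\eps_n}(u_n^k)\le(K_\gamma+\delta)\Per_\omega(E_k)$. Since the $\Gamma(L^1)$-$\limsup$ of $\{\mathcal{F}^{h_n}_{\eps_n}\}$ is $L^1$-lower semicontinuous and $(0,b_k)\to(0,b)$ in $L^1$, letting $k\to\infty$ and $\delta\to0$ gives $\Gamma(L^1)-\limsup_n\mathcal{F}^{h_n}_{\eps_n}(0,b)\le K_\gamma\Per_\omega(E)$.

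\textbf{Conclusion and main difficulty.} In both cases we obtain $\{u_n\}\subset H^2(\Omega;\R^3)$ with $u_n\to u$ in $W^{1,p}$, $\frac1{h_n}\partial_3 u_n\to b$ in $L^p$, and $\limsup_n F^{h_n}_{\eps_n}(u_n)\le K_\gamma\Per_\omega(E)$; together with the bound $\liminf_n F^{h_n}_{\eps_n}(u_n)\ge K_\gamma^\star\Per_\omega(E)\ge K_\gamma\Per_\omega(E)$ coming from Theorem~\ref{thm:gammaliminf} and Corollary~\ref{prop:charK_per}, this forces $\lim_n F^{h_n}_{\eps_n}(u_n)=K_\gamma\Per_\omega(E)$ (and, as a by-product, $K_\gamma^\star=K_\gamma$). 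The heart of the proof is the gluing step, and it is of a genuinely different nature in the two cases: for $A'\neq B'$ the interfaces are automatically parallel hyperplanes, so only the bookkeeping of the additive constants across finitely many slabs requires care; for $A'=B'$ the interface has arbitrary geometry, which forces the reduction to finitely many straight segments, the use of the isotropy $(H_5)$ to bend the profile onto each normal, the control of the overlaps near the vertices, and the passage to a general finite-perimeter set via strict approximation together with lower semicontinuity of the $\Gamma$-$\limsup$.
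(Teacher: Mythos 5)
Your overall strategy matches the paper's in spirit, and the case $A'\neq B'$ (building one rescaled profile per slab $\{|x_1-\alpha_i|<\eps_n\ell/2\}$, propagating additive constants across slabs, truncating to finitely many interfaces, then diagonalizing over $k^{\pm}$ and $\delta$) is essentially the paper's Steps 1--2. The genuinely different route is your Step 3 for $A'=B'$, and there you have a real gap.

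For $A'=B'$ the paper does not use polyhedral approximation at all. It takes smooth open sets $E_k$ (from the approximation lemma for finite-perimeter sets of Ambrosio--Fonseca--Marcellini--Tartar), introduces the signed distance $d_k$ to $\mathcal{M}^k=\partial E_k$, which is a single $C^\infty$ function with $|\nabla d_k|=1$ on a tubular neighbourhood, and defines one global profile $u_{n,k}(x)=\eps_n v_k\bigl(d_k(x')/\eps_n,\gamma x_3\bigr)$. Then $(H_5)$ gives $W(\nabla_{h_n}u_{n,k})=\mathcal{W}\bigl(\nabla v_k(d_k/\eps_n,\gamma x_3)\bigr)$ exactly, the extra terms from $\nabla^2 d_k$ are order $O(\eps_n)$, and the energy is computed cleanly by the coarea formula together with upper semicontinuity of $t\mapsto\mathcal{H}^1(\{d_k=t\}\cap\overline\omega)$. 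There are no vertices, no segment ends, and no matching of constants; the whole construction is one formula.

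Your polyhedral construction, by contrast, requires a genuine argument that you have only asserted. The claim ``interpolate on balls of radius $\sim\eps_n$ at a cost $O(\eps_n)$ per vertex'' is exactly where the difficulty lives. Near a vertex of $\partial E_k$, the two adjacent transition slabs overlap on a region of $\mathcal{L}^2$-measure $\sim\eps_n^2$ in which the two building blocks disagree by $O(\eps_n)$; a cutoff at scale $\eps_n$ then produces a Hessian of size $1/\eps_n$, so $\eps_n|\nabla_{h_n}^2 u_n|^2$ integrates to $O(\eps_n)$ over the overlap \emph{only if} the two blocks coincide to order $\eps_n$ in $C^0$ on the whole interpolation ball, which in turn requires (i) that the additive collar constants $\pm\eps_n c_k$ of the profile are matched consistently around every vertex and every closed component of $\partial E_k$, (ii) that the one-sided tails of each slab (the part of the slab that overshoots the segment past the vertex, and the parts that hit $\partial\omega$) are truncated and glued to the ambient affine map without creating jumps, and (iii) that the implicit constants stay uniform in the vertex angles of $E_k$. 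None of these is automatic for second-order energies, and they are precisely what the paper's signed-distance construction is designed to avoid. Your approach can very likely be pushed through (it is the natural analogue of polyhedral approximation arguments in first-order Modica--Mortola type problems), but as written the vertex step is a gap, not a routine remark; you would need to write out the gluing, track the constants around $\partial E_k$, and verify the Hessian bound. Your use of lower semicontinuity of the $\Gamma$-$\limsup$ to pass from $E_k$ to $E$, and the final comparison with $K_\gamma^\star\geq K_\gamma$ to upgrade $\limsup$ to $\lim$, are both correct and in agreement with the paper.
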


\begin{proof}{\it Step 1.}  We first assume that $A'\not=B'$, so that $\partial^\ast E\cap \omega$ is of the form~\eqref{structredbdAdiffB} by  Theorem~\ref{thm:BJ}. We also assume 
that it is made by finitely many interfaces, {\it i.e.}, $\mathscr{I}=\{1,\ldots,m\}$ in  \eqref{structredbdAdiffB}.  
In this case, by  Theorem~\ref{thm:BJ}, we have   
$u(x)=\bar u(x_1) $ and $b(x)=\bar b(x_1)$ where $\bar u \in W^{1,\infty}((\alpha_{\rm min},\alpha_{\rm max});\R^3)$,  
$(\bar u',\bar b)\in BV((\alpha_{\rm min},\alpha_{\rm max});\{(a,A_3),(-a,-A_3)\})$, and $(\alpha_{\rm min},\alpha_{\rm max})$ is  defined by \eqref{defalpha}.  
Without loss of generality we may assume that $\bar u'(x_1)=-a$ for $x_1<\alpha_1$. 
Then $\bar u'(t)=a$ for $\alpha_i<t<\alpha_{i+1}$ if $i$ 
is odd, $\bar u'(t)=-a$ for $\alpha_i<t<\alpha_{i+1}$ if $i$ 
is even, and $\bar u'(t)=a$ or $\bar u'(t)=-a$  for $t>\alpha_{m}$ if $m$ 
is odd or even respectively. 

Let us  consider for each $k\in\NN$, some $\ell_k>0$ and $v_k\in C^{2}(\ell_k  I\times \gamma I;\R^3)$ such that  $\nabla v_k(y)=(\bar u'_0,\bar b_0)(y_1)$ 
nearby $\{|y_1|=\ell_k/2\}$, and 
\begin{equation}\label{envkcritA=B}
\frac{1}{\gamma}\int_{\ell_k I\times \gamma I}\mathcal{W}(\nabla v_k)+|\nabla^2v_k|^2\,dy\leq K_\gamma + 2^{-k}\,. 
\end{equation}
Subtracting a constant to $v_k$ if necessary, we may assume that 
\begin{equation}\label{fixbdvk}
v_k(y)=\begin{cases}
\ds ay_1+A_3y_2+c_k & \text{nearby $\{y_1=\ell_k/2\}$}\,,\\[5pt]
\ds -ay_1+B_3y_2-c_k &\text{ nearby $\{y_1=-\ell_k/2\}$}\,,  
\end{cases}
\end{equation}
for some  $c_k\in\R^3$.

Let $h_n\to 0$ be an arbitrary sequence, and without loss of generality we can choose  $\eps_n:=h_n/\gamma$ (see~\eqref{changeeps}). 
We  fix for each $i=1,\ldots,m$, a  bounded open interval $J'_i\subset \R$ such that 
\begin{equation}\label{intervlimsup}
J_i\subset\!\subset J'_i \quad\text{and}\quad \mathcal{H}^1(J'_i\setminus J_i)\leq 2^{-k}\,,
\end{equation}
and we shall consider integers $n$ large enough in such a way  that $\alpha_i+\ell_k\eps_n/2<\alpha_{i+1}-\ell_k\eps_n/2$ for every $i=1,\ldots,m-1$. We write  for each $i=1,\ldots,m$, 
\begin{equation}\label{notgamsup}
\alpha^n_{i-}:=\alpha_i-\frac{\ell_k\eps_n}{2}\quad\text{and}\quad\alpha^n_{i+}:=\alpha_i+\frac{\ell_k\eps_n}{2}\,. 
\end{equation}
Note that by convexity of $\omega$, 
\begin{equation}\label{bandeconvex}
\left((\alpha^n_{i-},\alpha^n_{i+})\times\R\right) \cap \omega \subset (\alpha^n_{i-},\alpha^n_{i+})\times J'_i 
\end{equation}
whenever $n$ is sufficiently large.  

We define the transition layer near each interface as follows: for each $i=1,\ldots,m$, we set for $x\in (\alpha^n_{i-},\alpha^n_{i+})\times J'_i\times I$, 
$$w^i_{n,k}(x):=(-1)^{i+1}v_k\left((-1)^{i+1}\frac{x_1-\alpha_{i}}{\eps_n}\,, (-1)^{i+1}\gamma x_3\right)+\left(1+(-1)^{i}\right)\left(a\frac{\ell_k}{2}+c_k\right)\,.$$
Observe that \eqref{fixbdvk} yields 
\begin{equation}\label{bdlayer}
w_{k,n}^i(\alpha^n_{i-},x_2,x_3)=
 v_k\left((-1)^i\,\frac{\ell_k}{2},\gamma x_3\right) \,,
 \end{equation}
and
\begin{equation}\label{bdlayer+}
w_{k,n}^i(\alpha^n_{i+},x_2,x_3)= v_k\left((-1)^{i+1}\frac{\ell_k}{2},\gamma x_3\right)+2\left(1+(-1)^i\right)c_k \,.
\end{equation}
Setting
\begin{equation}\label{notgamsupbis}
\beta^n_i:=\sum_{j=1}^i\bar u(\alpha^n_{j+})-\bar u(\alpha^n_{j-})\quad\text{and}\quad\kappa_i:=2\sum_{j=1}^i\left(1+(-1)^j\right)\,,
\end{equation}
with $\beta_0^n:=0$, $\kappa_0:=0$, we finally define for $n$ large enough and $x\in\Omega$, 
$$
u_{n,k}(x):= \begin{cases}
\ds \bar u(x_1) +\eps_nv_k\left(-\frac{\ell_k}{2},\gamma x_3\right) & \text{for } x_1\leq \alpha^n_{1-}\,,\\[10pt]
\ds  
\bar u(\alpha^n_{i-})-\beta^n_{i-1}+\eps_nw^i_{k,n}(x)+\eps_n\kappa_{i-1}c_k 
& \text{for } \alpha^n_{i-}< x_1< \alpha^n_{i+}\,,\\[10pt]
\ds \bar u(x_1)-\beta^n_{i} +\eps_nv_k\left((-1)^{i+1}\frac{\ell_k}{2},\gamma x_3\right)+\eps_n\kappa_{i}c_k&\text{for } \alpha^n_{i+}\leq x_1\leq \alpha^n_{(i+1)-}\,,\\[10pt]
\ds \bar u(x_1)-\beta^n_{m} +\eps_nv_k\left((-1)^{m+1}\frac{\ell_k}{2},\gamma x_3\right)+\eps_n\kappa_{m}c_k&\text{for } x_1\geq \alpha^n_{m+}\,.
\end{cases}
$$
In view of \eqref{bdlayer}-\eqref{bdlayer+} we have $u_{n,k}\in H^2(\Omega;\R^3)$. Moreover,  
$u_{n,k}$ does not depend on the $x_2$-variable, and 
\begin{equation}\label{structgradscaled}
\left(\partial_1 u_{n,k},\frac{1}{h_n}\partial_3 u_{n,k}\right)(x)=\begin{cases}
\ds \nabla v_k \left((-1)^{i+1}\frac{x_1-\alpha_{i}}{\eps_n}\,, (-1)^{i+1}\gamma x_3\right) & \text{if }\alpha^n_{i-}< x_1< \alpha^n_{i+}\,,\\[8pt]
(\bar u',\bar b)(x_1) & \text{otherwise}\,.
\end{cases}
\end{equation}
Since $\bar u$ is Lipschitz continuous, we have $|\beta_i^n|\leq C\eps_n$ for a constant $C$ independent of $n$.  In addition, 
$v_k$ and $\nabla v_k$ are bounded, and we infer that   
$u_{n,k}\to u$ in $W^{1,p}(\Omega;\R^3)$ and $\frac{1}{h_n}\partial_3u_{n,k}\to b$ in $L^p(\Omega;\R^3)$ as $n\to\infty$.  
Using \eqref{bandeconvex}, \eqref{structgradscaled}, and changing variables, we estimate
\begin{align*} 
F^{h_n}_{\eps_n}(u_{n,k})
& \leq \sum_{i=1}^mF^{h_n}_{\eps_n}\big(\eps_n w^i_{n,k},(\alpha^n_{i-},\alpha^n_{i+})\times J'_i\times I\big)  \\
& \leq \sum_{i=1}^m \frac{\mathcal{H}^{1}(J'_i)}{\gamma}\int_{\ell_k I\times\gamma I}\mathcal{W}(\nabla v_k)+|\nabla^2v_k|^2\,dy \\
& \leq K_\gamma \,\Per_{\omega}(E) +C_02^{-k}\,,
\end{align*}
for a constant $C_0$ which only depends on $m$ and $\Per_{\omega}(E)$. 
 
For each $k\in\NN$, we can now  find  $N_k\in\NN$ such that 
$$\|u_{n,k}-u\|_{W^{1,p}(\Omega)}\leq 2^{-k}\,,\quad\|\frac{1}{h_n}\partial_3 u_{n,k}-b\|_{L^p(\Omega)}\leq 2^{-k}\,,\quad F^{h_n}_{\eps_n}(u_{n,k})
\leq K_\gamma \,\Per_{\omega}(E) +C_02^{-k}$$ 
for every $n\geq N_k$. Moreover we can assume that  the resulting sequence $\{N_k\}$ satisfies $N_k<N_{k+1}$ for every $k\in \NN$. Then for every  $n\geq N_0$, 
there exists a unique $k_n$ such that $N_{k_n}\leq n<N_{k_n+1}$, and  $k_n\to+\infty$ as $n\to+\infty$. We define $u_n:=u_{n,k_n}$
and it follows that $u_n\to u$ in $W^{1,p}(\Omega;\R^3)$, $\frac{1}{h_n}\partial_3 u_n\to b$ in $L^p(\Omega;\R^3)$, 
\begin{equation} \label{limsupapprox}
\limsup_{n\to\infty} \, F^{h_n}_{\eps_n}(u_{n})\leq K_\gamma \,\Per_{\omega}(E)\,.
\end{equation}
Finally \eqref{gamlimsupcrit} holds by \eqref{limsupapprox}, Theorem~\ref{thm:gammaliminf}, and Corollary \ref{prop:charK_per}.  
\vskip5pt

\noindent{\it Step 2.}  We now consider the case where $A'\not=B'$ and $\partial^\ast E\cap \omega$ is made by infinitely many interfaces, {\it i.e.}, $\partial^\ast E\cap \omega$ 
is as in \eqref{structredbdAdiffB} with $\mathscr{I}\subset\ZZ$ infinite. We may assume for simplicity that $\mathscr{I}=\NN$. The general case can be recovered 
from the discussion below with the obvious modifications. 

By Theorem \ref{thm:BJ}, we have 
$\lim_{k}\sum_{i=0}^k\mathcal{H}^1(J_i) = \sum_{i\in \NN}\mathcal{H}^1( J_i)=\Per_{\omega}(E)$, 
and $\alpha_{k}$ converges to $\alpha_{\rm max}$.  
For $k\in\NN$ large enough, we define some $u_k\in W^{1,\infty}(\Omega;\R^3)$ in the following way:  we set $u_k(x):=u(x)$  for $x\in\Omega\cap\{x_1<\alpha_{k+1}\}$, 
and we extend $u_k$ to be affine in the remaining of $\Omega$  in such a way that $u_k$ and $\nabla u_k$ are continuous across the interface $\{x_1=\alpha_{k+1}\}$.  
Similarly we define for $x\in\Omega\cap\{x_1<\alpha_{k+1}\}$, $b_k(x):=b(x)$, and we extend $b_k$ by a suitable constant in the remaining of $\Omega$  so that 
it remains continuous across $\{x_1=\alpha_{k+1}\}$. Then one may check that $(u_k,b_k)\in\mathscr{C}$, and that $(\nabla'u_k,b_k)=
\bigl(1-\chi_{E_k}(x')\bigr)A + \chi_{E_k}(x') B$ with $\partial^*E_k\cap\omega=\bigcup_{i=0}^k \{\alpha_i\} \times J_i$. 
Moreover, using the fact that $\alpha_{k}\to \alpha_{\rm max}$, we derive that 
$u_k\to u$ in~$W^{1,p}(\Omega;\R^3)$ and $b_k\to b$ in $L^p(\Omega;\R^3)$. 

Let $h_n\to 0$ and $\eps_n\to 0$ be arbitrary sequences such that $h_n/\eps_n\to\gamma$. 
Since $\partial^*E_k\cap\omega$ is made by finitely many interfaces, by Step 1, we can find $\{u_{n,k}\}\subset H^2(\Omega;\R^3)$ such that $u_{n,k}\to u_k $ 
in$W^{1,p}(\Omega\R^3)$, $\frac{1}{h_n}\partial_3u_{n,k}\to b_k$ in $L^p(\Omega;\R^3)$, and 
$\lim_{n}F_{\eps_n}^{h_n}(u_{n,k})=K_\gamma \sum_{i=0}^k\mathcal{H}^1(J_i)$. 
Then the conclusion follows for a suitable diagonal sequence $u_n:=u_{n,k_n}$ as already pursued in Step 1. 
\vskip5pt

\noindent{\it Step 3.} We finally treat the case $A'=B'$ ($=0$ by \eqref{orientationwells}). Without loss of generality we may assume   that $u= 0$. 
According to Theorem \ref{thm:BJ}, we have $b(x)=(1-\chi_E(x'))A_3+\chi_E(x') B_3$ where $E\subset\omega$  is a set of finite perimeter in $\omega$. 
By Lemma~4.3 in \cite{AFMT}, we can find a sequence $\{E_k\}$ of bounded open sets in $\R^2$ with smooth 
boundary such that $\chi_{E_k}\to\chi_E$ in $L^1(\omega)$, and $\lim_{k}\mathcal{H}^1(\partial E_k\cap \ol{\omega})= \Per_{\omega}(E)$. 
We define for $x\in\Omega$, $b_k(x):= (1-\chi_{E_k}(x'))A_3+\chi_{E_k}(x') B_3$, so that $b_k\to b$ in $L^p(\Omega;\R^3)$. 
Since $\mathcal{M}^k:=\partial  E_k$ is a smooth submanifold of $\R^2$, for every $k\in\NN$ we can find  $\delta_k>0$ such that the nearest 
point projection onto $\mathcal{M}^k$ is well defined and smooth in the tubular $\delta_k$-neighborhood  
$$U_k:=\{x'\in\R^2 : {\rm dist}(x',\mathcal{M}^k)<\delta_k\}\,.$$ 
We define the signed distance to $\mathcal{M}^k$ as the function $d_k:\R^2\to [0,+\infty)$ given~by 
\begin{equation}\label{signdist}
d_k(x'):=\begin{cases}
-{\rm dist}(x',\mathcal{M}^k) & \text{if $x\in E_k$}\,,\\
{\rm dist}(x',\mathcal{M}^k) & \text{otherwise}\,.
\end{cases}
\end{equation}
Then $d_k$ is smooth in $U_k$, 
the level sets $\{d_k=t\}=:\mathcal{M}^k_t$ are smooth for all $t\in(-\delta_k,\delta_k)$, and the function $t\in(-\delta_k,\delta_k)\mapsto \mathcal{H}^1(\mathcal{M}^k_t\cap\overline\omega)$ 
is upper semicontinuous (see {\it e.g.} \cite[Proposition 1.62]{AFP}). In particular,  
\begin{equation}\label{limlenglevs}
\limsup_{t\to 0}\,\mathcal{H}^1(\mathcal{M}^k_t\cap\overline\omega)\leq\mathcal{H}^1(\mathcal{M}^k\cap\ol{\omega})\,.
\end{equation}
Next we consider for each $k\in\NN$, some $\ell_k>0$ and $v_k\in 
C^{2}(\ell_k  I\times \gamma  I;\R^3)$ satisfying  $\nabla v_k(y)=(0,\bar b_0(y_1))$ nearby $\{|y_1|=\ell_k/2\}$, and \eqref{envkcritA=B}. 

Let $h_n\to0$ be an arbitrary sequence. Here again we can choose $\eps_n:=h_n/\gamma$. 
For each $k\in\NN$ and $n\in\NN$ such that  $\eps_n\ell_k<\delta_k$, we define 
for $x\in \Omega$, 
$$u_{n,k}(x):=\begin{cases}
\ds \eps_n v_k\bigg(\frac{d_k(x')}{\eps_n},\gamma x_3\bigg) &\ds  \text{if $|d_k(x')|<\frac{\ell_k\eps_n}{2}$}\,,\\[10pt]
\ds \eps_n v_k\left(\frac{\ell_k}{2},\gamma x_3\right) & \ds \text{if $d_k(x')\geq \frac{\ell_k\eps_n}{2}$}\,,\\[10pt]
\ds \eps_n v_k\left(-\frac{\ell_k}{2},\gamma x_3\right) & \ds \text{if $d_k(x')\leq- \frac{\ell_k\eps_n}{2}$}\,.
\end{cases}$$
Then $u_{n,k}\in H^2(\Omega;\R^3)$, and 
\begin{equation}\label{structgradApr=Bprcrit}
\nabla_{h_n} u_n(x)=\begin{cases}
\ds \left(\partial_1v_k\bigg(\frac{d_k(x')}{\eps_n},\gamma x_3\bigg)\otimes\nabla d_k(x'), \partial_2 v_k\bigg(\frac{d_k(x')}{\eps_n},\gamma x_3\bigg) \right) 
&\ds \text{if $|d_k(x')|<\frac{\ell_k\eps_n}{2}$}\,,\\[10pt]
(0,b_k(x)) & \text{otherwise} \,.
\end{cases}
\end{equation}
From the boundedness of $v_k$ and  $\nabla v_k$  together with the smoothness of $d_k$ in $U_k$, we infer that $u_{n,k}\to 0$ in $W^{1,p}(\Omega;\R^3)$ and 
$\frac{1}{h_n}\partial_3 u_{n,k}\to b_k $ in $L^p(\Omega;\R^3)$ as $n\to\infty$. 
Now it remains to estimate $F_{\eps_n}^{h_n}(u_{n,k})$. First of all, \eqref{structgradApr=Bprcrit}  yields 
$F_{\eps_n}^{h_n}\big(u_{n,k}, \Omega\setminus\{|d_k(x')|<\ell_k\eps_n/2\}\big)=0$. 
Using the fact that $|\nabla d_k|=1$ $\mathcal{L}^2$-a.e. in $\R^2$, we infer from $(H_5)$ that for $x\in \Omega\cap \{|d_k(x')|<\ell_k\eps_n/2\}$, 
$$W\big(\nabla_{h_n}u_{n,k}(x)\big)=V\big(\big|\partial_1 v_k(d_k(x')/\eps_n,\gamma x_3)\big|, \partial_2 v_k(d_k(x')/\eps_n,\gamma x_3)\big)
=\mathcal{W}\big(\nabla v_k(d_{k}(x')/\eps_n,\gamma x_3)\big)\,. $$
Next we compute for $x\in \Omega\cap \{|d_k(x')|<\ell_k\eps_n/2\}$,
\begin{multline*}
\big|\nabla_{h_n}^2u_{n,k}(x)\big|^2=\frac{1}{\eps_n^2}\big|\nabla^2v_k(d_k(x')/\eps_n,\gamma x_3)\big|^2 +
\big|\partial_1 v_k(d_k(x')/\eps_n,\gamma x_3)\big|^2|\nabla^2d_k(x')|^2  \\
+\frac{2}{\eps_n}\bigg(
\partial_1v_k(d_k(x')/\eps_n,\gamma x_3)\cdot \partial^2_1v_k(d_k(x')/\eps_n,\gamma x_3)\bigg)
\bigg(\nabla^2d_k(x')\cdot \big(\nabla d_k(x')\otimes\nabla d_k(x')\big)\bigg)\,,
\end{multline*}
which yields 
$$\left|\nabla_{h_n}^2u_{n,k}(x)\right|^2\leq \frac{1+\eps_n}{\eps_n^2} \big|\nabla^2v_k(d_k(x')/\eps_n,\gamma x_3)\big|^2 +\frac{C_k}{\eps_n}\big|\partial_1 v_k(d_k(x')/\eps_n,\gamma x_3)\big|^2\,,$$
for $x\in \Omega\cap \{|d_k(x')|<\ell_k\eps_n/2\}$ and some constant $C_k$ independent of $n$. 
Therefore, 
\begin{equation}\label{estisupFunk}
F_{\eps_n}^{h_n}(u_{n,k})=F_{\eps_n}^{h_n}\big(u_{n,k},\Omega\cap \{|d_k(x')|<\ell_k\eps_n/2\}\big) 
 \leq I^k_n+II^k_n\,,
 \end{equation}
 with 
$$
I^k_n:= \frac{1}{\eps_n}\int_{\Omega\cap \{|d_k|<\ell_k\eps_n/2\}} \mathcal{W}\big(\nabla v_k(d_k(x')/\eps_n,\gamma x_3)\big)+
\left|\nabla^2v_k\big(d_k(x')/\eps_n,\gamma x_3\big)\right|^2dx\,,$$
 and 
 $$II^k_n:= \int_{\Omega\cap \{|d_k|<\ell_k\eps_n/2\}}  \left|\nabla^2v_k\big(d_k(x')/\eps_n,\gamma x_3\big)\right|^2+C_k \left|\partial_1 v_k\big(d_k(x')/\eps_n,\gamma x_3\big)\right|^2\,dx\,.$$
Using Fubini's theorem, the Coarea Formula, the fact that $|\nabla d_k|=1$, and changing variables  we estimate 
\begin{align*}
I^k_n=&\frac{1}{\eps_n}\int_I\bigg(\int_{\omega\cap\{|d_k|<\ell_k\eps_n/2\} }\mathcal{W}\big(\nabla v_k(d_k(x')/\eps_n,\gamma x_3)\big) +\big|\nabla^2v_k(d_k(x')/\eps_n,\gamma x_3)\big|^2 dx'\bigg)dx_3\\
=&\frac{1}{\eps_n}\int_I \bigg(\int_{\ell_k\eps_n I}\big(\mathcal{W}\big(\nabla v_k(t/\eps_n,\gamma x_3)\big)
+ \big|\nabla^2v_k(t/\eps_n,\gamma x_3)\big|^2\big)\mathcal{H}^1(\mathcal{M}^k_t\cap\omega)\,dt \bigg) dx_3\\
=&\frac{1}{\eps_n}\int_{\ell_k\eps_nI\times I}\left(\mathcal{W}\big(\nabla v_k(t/\eps_n,\gamma x_3)\big)+
\big|\nabla^2v_k(t/\eps_n,\gamma x_3)\big|^2\right)\mathcal{H}^1(\mathcal{M}^k_t\cap\omega) \,dtdx_3\\
\leq&\frac{1}{\gamma}\int_{\ell_k I \times \gamma I}\left(\mathcal{W}\big(\nabla v_k(y)\big)+
\big|\nabla^2v_k(y)\big|^2\right)\mathcal{H}^1(\mathcal{M}^k_{\eps_n y_1}\cap \overline\omega)\,dy\,.
\end{align*}
Then  Fatou's lemma, \eqref{limlenglevs}, and \eqref{envkcritA=B} yield 
\begin{equation}\label{estilimsupImk}
\limsup_{n\to\infty} I_n^k\leq (K_\gamma+2^{-k})\mathcal{H}^1(\mathcal{M}^k\cap\ol{\omega}) \,. 
\end{equation}
Arguing in the same way we infer that  
\begin{equation}\label{estilimsupIImk}
\lim_{n\to\infty} II^k_n= \lim_{n\to+\infty}\frac{\eps_n}{\gamma}\int_{\ell_k I \times \gamma I} \big( \big|\nabla^2v_k(y)\big|^2
+C_k \big|\partial_1 v_k(y)\big|^2\big)\mathcal{H}^1(\mathcal{M}_{\eps_n y_1}\cap\overline\omega )\,dy=0\,.
\end{equation}
Gathering \eqref{estisupFunk},  \eqref{estilimsupImk} and \eqref{estilimsupIImk},  we derive 
$$\limsup_{k\to\infty} \limsup_{n\to\infty}\, F_{\eps_n}^{h_n}(u_{n,k})\leq K_\gamma \Per_{\omega}(E)\,.$$
Since $\lim_k\lim_n\|u_{n,k}\|_{W^{1,p}(\Omega)}=0$, and $\lim_k\lim_n\|\frac{1}{h_n}\partial_3u_{n,k}-b\|_{L^{p}(\Omega)}=\lim_k\|b_k-b\|_{L^p(\Omega)}=0$, 
the conclusion follows for a suitable diagonal sequence $u_n:=u_{n,k_n}$ as in Step 1. 
\end{proof}


%
%


\section{$\Gamma$-convergence in the subcritical regime}\label{Sectsub}

This section is devoted to the proof of Theorem \ref{thm:gammalim_gammasub}. The $\Gamma$-liminf inequality is obtained 
through a slicing argument, and by 
establishing a lower asymptotic inequality for a reduced 2D functional (see Proposition~\ref{gliminfrefsubcrit}) much in the spirit 
of Section~\ref{gamliminfcrit}. 
The $\Gamma$-$\liminf$ and $\Gamma$-$\limsup$  inequalities  are stated in 
Theorem~\ref{gamliminfsubcrit} and Theorem~\ref{thm:RSsubcrit'} respectively, and Corollary~\ref{prop:charKsubcrit} shows 
that lower and upper inequalities~agree.

%
%

\subsection[]{The $\Gamma$-$\liminf$ inequality}

For a bounded open set $A\subset \R^2$ and $\eps>0$, we introduce the localized functional 
$F^{0}_\eps(\cdot,\cdot,A)$  defined 
for a pair $(u,b)\in H^{2}(A;\R^3)\times H^1(A;\R^3)$ by 
\begin{equation}\label{defF0}
F^{0}_\eps(u,b,A)
 : =  \int_{A}  \frac{1}{\eps}\,W(\nabla'u,b)+\eps \big( |(\nabla')^2 u|^2+2|\nabla' b|^2\big)\,dx'\,.
\end{equation}
Then we consider  the constant
\begin{multline}\label{eq:defKstar2D}
K_0^\star:= \inf \bigg\{\liminf_{n\to\infty}\, F^{0}_{\eps_n}(u_n,b_n,Q') \;:\; \eps_n \to 0^+\,,\, \{(u_n,b_n)\} \subset H^{2}(Q';\R^3)\times H^1(Q';\R^3)\,,\\
  (u_n,b_n) \to (u_0,b_0) \text{ in } [L^1(Q';\R^3)]^2  \bigg\}\,.
\end{multline}
Here again the  constant $K^\star_0$ is finite, as one may 
check by considering an admissible sequence $\{(u_n,b_n)\}$ made of suitable (standard) regularizations of $u_0$ and $b_0$. 
As in the previous section, we first  provide a lower bound in terms  of $K^\star_0$ for the lower $\Gamma$-limit of the family $\{F^{0}_\eps\}$ 
in case of an elementary  jump~set.

\begin{proposition}\label{gliminfrefsubcrit}
Assume that assumptions $(H_1)$, $(H_2)$ and \eqref{orientationwells} hold. Let  $\eps_n\to0^+$ be an arbitrary sequence.   
Let $\rho>0$ and $\alpha \in\R$, let $J\subset\R$ be a bounded open interval, and consider the cylinder $U':=(\alpha-\rho,\alpha+\rho)\times J$.  
Let $(u,b)\in W^{1,\infty}(U';\R^3)\times L^\infty(U';\R^3)$ satisfying  \eqref{eq:u01}.  
Then for any sequence $\{(u_n,b_n)\}\subset H^2(U';\R^3)\times  H^1(U';\R^3)$ such that $(u_n,b_n)\to (u,b)$ in $[L^1(U';\R^3)]^2$, we have
$$\liminf_{n\to \infty}\,F^{0}_{\eps_n}(u_n,b_n,U')\geq K^\star_0\mathcal{H}^1(J)\,. $$ 
\end{proposition}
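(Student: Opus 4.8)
\medskip
\noindent{\bf Proof (outline).} The plan is to transpose to the reduced two--dimensional functional $F^0_\eps$ the scheme used for Proposition~\ref{gliminfref}. The three features exploited there are all shared by $F^0_\eps$: invariance under translations in the in--plane variable, additivity over disjoint domains, and an exact scaling law. Accordingly, I would first introduce the companion set function: for a bounded open set $J\subset\R$ and $\rho>0$, setting $J'_\rho:=\rho I\times J$,
\begin{multline*}
\mathcal{E}_0(J,\rho):=\inf\Big\{\liminf_{n\to\infty}\,F^0_{\eps_n}(u_n,b_n,J'_\rho)\;:\;\eps_n\to0^+,\ \{(u_n,b_n)\}\subset H^2(J'_\rho;\R^3)\times H^1(J'_\rho;\R^3),\\
(u_n,b_n)\to(u_0,b_0)\text{ in }[L^1(J'_\rho;\R^3)]^2\Big\}\,,
\end{multline*}
so that $K_0^\star=\mathcal{E}_0(I,1)$.

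Next I would establish, with the same proofs as in Lemma~\ref{lem:scales}, the properties: (i) $\mathcal{E}_0(t+J,\rho)=\mathcal{E}_0(J,\rho)$; (ii) monotonicity in $J$ and in $\rho$; (iii) $\mathcal{E}_0(J_1\cup J_2,\rho)\geq\mathcal{E}_0(J_1,\rho)+\mathcal{E}_0(J_2,\rho)$ when $J_1\cap J_2=\emptyset$; (iv) $\mathcal{E}_0(\alpha J,\alpha\rho)=\alpha\,\mathcal{E}_0(J,\rho)$ for $\alpha>0$; (v) $\mathcal{E}_0(\alpha J,\rho)\geq\alpha\,\mathcal{E}_0(J,\rho)$ for $0<\alpha<1$; and (vi)--(vii) $\mathcal{E}_0(J,\rho)=\mathcal{H}^1(J)\,\mathcal{E}_0(I,\delta)$ for every bounded open interval $J$ and all $\rho,\delta>0$. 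The only nonformal point is the scaling identity underlying (iv): given $\{(u_n,b_n)\}$ admissible for $\mathcal{E}_0(\alpha J,\alpha\rho)$, the rescaled pair $v_n(x'):=\tfrac1\alpha u_n(\alpha x')$, $\beta_n(x'):=b_n(\alpha x')$ with $\tilde\eps_n:=\eps_n/\alpha$ still converges to $(u_0,b_0)$ because $u_0$ is positively $1$--homogeneous and $b_0$ is $0$--homogeneous, and since $\nabla'v_n(x')=(\nabla'u_n)(\alpha x')$ while $(\nabla')^2v_n(x')=\alpha\,((\nabla')^2u_n)(\alpha x')$ and $\nabla'\beta_n(x')=\alpha\,(\nabla'b_n)(\alpha x')$, the change of variables $y'=\alpha x'$ gives $F^0_{\tilde\eps_n}(v_n,\beta_n,J'_\rho)=\tfrac1\alpha F^0_{\eps_n}(u_n,b_n,(\alpha J)'_{\alpha\rho})$. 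Combining (vi)--(vii) as in Remark~\ref{compegam}, this yields $\mathcal{E}_0(J,\rho)=K_0^\star\,\mathcal{H}^1(J)$ for every bounded open interval $J$ and every $\rho>0$.

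Finally, I would deduce Proposition~\ref{gliminfrefsubcrit} exactly as Proposition~\ref{gliminfref} is deduced from its auxiliary lemma. By translation invariance of $F^0_\eps$ in $x_1$ it suffices to treat $\alpha=0$, so that $\tau u_n(x'):=u_n(x_1+\alpha,x_2)$ and $\tau b_n(x'):=b_n(x_1+\alpha,x_2)$ converge in $L^1$ to $(\tau u,\tau b)$. If the first alternative in \eqref{eq:u01} holds then $(\tau u,\tau b)=(u_0+d,b_0)$ for a constant $d\in\R^3$, and subtracting $d$ makes $\{(\tau u_n-d,\tau b_n)\}$ admissible for $\mathcal{E}_0(J,2\rho)$, whence $\liminf_n F^0_{\eps_n}(u_n,b_n,U')\geq\mathcal{E}_0(J,2\rho)=K_0^\star\,\mathcal{H}^1(J)$. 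If the second alternative holds then $\tau u=-u_0+d$, and the reflected pair $v_n(x'):=-\tau u_n(-x')+d$, $\beta_n(x'):=\tau b_n(-x')$ converges to $(u_0,b_0)$ (using that $u_0$ is even in $x_1$ and that $\tau b(-x')=b_0(x')$) and is admissible for $\mathcal{E}_0(-J,2\rho)=K_0^\star\,\mathcal{H}^1(J)$; moreover the point reflection $x'\mapsto-x'$ preserves $F^0_\eps$, since $|(\nabla')^2v_n(x')|^2=|((\nabla')^2\tau u_n)(-x')|^2$, $|\nabla'\beta_n(x')|^2=|(\nabla'\tau b_n)(-x')|^2$, and $\nabla'v_n(x')=(\nabla'\tau u_n)(-x')$ --- the sign flip of the value cancelling the sign flip from the chain rule --- so that $W(\nabla'v_n,\beta_n)(x')=W(\nabla'\tau u_n,\tau b_n)(-x')$ with no spurious sign on the $b$--argument of $W$. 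This again gives $\liminf_n F^0_{\eps_n}(u_n,b_n,U')\geq K_0^\star\,\mathcal{H}^1(J)$, as claimed.

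I do not expect a genuine obstacle: the argument mirrors the critical case line by line. The two points requiring mild care are the bookkeeping of the scaling exponents in $F^0_\eps$ (where $b$ enters $W$ undifferentiated but appears through $\nabla'b$ in the perturbation, unlike $u$, which enters through $\nabla'u$ and $(\nabla')^2u$), and the observation in the reflection step that $W$ gets evaluated at $(\,\cdot\,,\tau b_n(-x'))$ and not at $(\,\cdot\,,-\tau b_n(-x'))$, which is why one reflects $b_n$ without changing its sign.
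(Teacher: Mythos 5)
Your proof is correct and takes exactly the route the paper takes, which is explicitly stated to be a rerun of Proposition~\ref{gliminfref} with $\mathcal{E}_\gamma$ replaced by $\mathcal{E}_0$ and the analogue of Lemma~\ref{lem:scales}. You have simply filled in the "obvious modifications" the paper omits — in particular the scaling identity $F^0_{\eps_n/\alpha}(v_n,\beta_n,J'_\rho)=\frac1\alpha F^0_{\eps_n}(u_n,b_n,(\alpha J)'_{\alpha\rho})$ and the point--reflection step — and your bookkeeping (the role of $b_n$ as an undifferentiated argument of $W$ but a first--order term in the perturbation; the absence of a sign flip on $\beta_n$ under reflection) is accurate.
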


\begin{proof} Here the proof  closely follows the one of Proposition \ref{gliminfref}. The arguments are essentially the same with the obvious modifications 
once we consider 
\begin{multline*}
{\cal E}_{0}(J,\rho):=\inf \bigg\{\liminf_{n\to\infty}\, F^{0}_{\eps_n}(u_n,b_n,J'_\rho) \;:\; \eps_n \to 0^+\,,\, \{(u_n, b_n)\} \subset H^{2}(J'_\rho;\R^3)\times H^1(J'_\rho;\R^3)\,, \\
 (u_n , b_n) \to (u_0, b_0) \text{ in } [L^1(J'_\rho;\R^3)]^2 \bigg\}
\end{multline*}
in place of ${\cal E}_{\gamma}(J,\rho)$ with $J\subset \R$ a bounded open set, $\rho>0$, and $J'_{\rho} := \rho I \times J $.  Then 
one  proves the analogue of Lemma~\ref{lem:scales}, in particular that ${\cal E}_{0}(J,\rho)=K_0^\star  \mathcal{H}^{1}(J) $. 
We omit any further details. 
\end{proof}

\begin{remark}\label{cor:concentrationsubcrit}
As in Corollary \ref{cor:concentration}, the energy of optimal sequences for 
${\cal E}_0(J,\rho)$ is concentrated near the limiting interface, {\it i.e.}, given $0<\delta<\rho$, for any sequences  
$\eps_n\to 0^+$ and $\{(u_n,b_n)\}\subset H^{2}(J'_\rho;\R^3)\times H^{1}(J'_\rho;\R^3)$ such that   
 $(u_n,b_n) \to (u_0,b_0)$ in $[L^1(J'_\rho;\R^3)]^2$ and $\lim_{n} F^{0}_{\eps_n}(u_n,b_n,J'_\rho)={\cal E}_0(J,\rho)$, we have 
$\lim_{n} F^{0}_{\eps_n}(u_n,b_n,J'_\rho\setminus J'_{\delta}) = 0$. 
\end{remark}

We now prove the lower inequality for the $\Gamma$-$\liminf$ of $\{F^{h}_{\eps}\}$ essentially as in Theorem~\ref{thm:gammaliminf} together with a slicing argument 
involving the functionals $\{F^{0}_{\eps}\}$.

\begin{theorem}\label{gamliminfsubcrit}
Assume that assumptions $(H_1)-(H_2)$, $(H_5)$ and \eqref{orientationwells} hold.   
Let $h_n\to0^+$ and $\eps_n\to0^+$ be arbitrary sequences such that $h_n/\eps_n \to 0$.  
Then, for any $(u,b)\in \mathscr{C}$ and any sequence $\{u_n\}\subset H^2(\Omega;\R^3)$ such that $(u_n,\frac{1}{h_n}\partial_3 u_n) \to (u,b)$ in 
$[L^1(\Omega;\R^3)]^2$, we have  
$$
\liminf_{n\to\infty}\, F^{h_n}_{\eps_n}(u_n) \geq K_0^{\star}\, {\rm Per}_{\omega}(E)\,,
$$
where $(\nabla' u,b) (x)= \bigl(1-\chi_E(x')\bigr)A + \chi_E(x') B\,$.
\end{theorem}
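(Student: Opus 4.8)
The plan is to reduce the three-dimensional estimate to the two-dimensional lower bound carried by the reduced functional $F^0_\eps$ of \eqref{defF0}, by slicing in the $x_3$-direction, and then to reproduce on the slices the argument of Theorem~\ref{thm:gammaliminf}.

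First I would record the pointwise identities, valid $\mathcal L^3$-a.e.\ in $\Omega$ with $b_n:=\frac1{h_n}\partial_3u_n$,
\begin{gather*}
W(\nabla_{h_n}u_n)=W(\nabla'u_n,b_n)\,,\\
|\nabla_{h_n}^2u_n|^2=|(\nabla')^2u_n|^2+2|\nabla'b_n|^2+\tfrac1{h_n^4}|\partial^2_{33}u_n|^2\ \ge\ |(\nabla')^2u_n|^2+2|\nabla'b_n|^2\,.
\end{gather*}
Combined with Fubini's theorem, these give $F^{h_n}_{\eps_n}(u_n)\ge\int_I F^0_{\eps_n}\big(u_n(\cdot,\cdot,x_3),b_n(\cdot,\cdot,x_3),\omega\big)\,dx_3$, a bound which in fact holds irrespective of the value of $\lim h_n/\eps_n$, and in which, for a.e.\ $x_3\in I$, the slice $\big(u_n(\cdot,\cdot,x_3),b_n(\cdot,\cdot,x_3)\big)$ belongs to $H^2(\omega;\R^3)\times H^1(\omega;\R^3)$. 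Then I would pass to a subsequence realizing $\liminf_nF^{h_n}_{\eps_n}(u_n)$ as a limit, and then to a further subsequence so that for a.e.\ $x_3\in I$ these slices are admissible and converge to $(u,b)$ in $[L^1(\omega;\R^3)]^2$ (possible since $u$ and $b$ do not depend on $x_3$). Applying Fatou's lemma to the nonnegative integrands then reduces the statement to the planar lower bound: for a.e.\ fixed $x_3$, writing $v_m,c_m,\delta_m$ for the corresponding slices and parameters along the subsequence,
$$\liminf_{m\to\infty}F^0_{\delta_m}(v_m,c_m,\omega)\ \ge\ K_0^\star\,\Per_\omega(E)\qquad\text{whenever }(v_m,c_m)\to(u,b)\text{ in }[L^1(\omega;\R^3)]^2\,.$$

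This planar inequality I would establish exactly along the lines of Theorem~\ref{thm:gammaliminf}. If $A'\ne B'$, then by Theorem~\ref{thm:BJ} the set $E$ is layered, $\partial^\ast E\cap\omega=\bigcup_i\{\alpha_i\}\times J_i$, and $u=\bar u(x_1)$; I would fix finitely many interfaces, localize in thin rectangles $(\alpha_i-\rho,\alpha_i+\rho)\times J_i'$ with $J_i'\subset\subset J_i$ and on which $(\nabla'u,b)$ has the elementary form \eqref{eq:u01}, apply Proposition~\ref{gliminfrefsubcrit} on each to pick up a contribution $K_0^\star\mathcal H^1(J_i')$, sum over $i$, and let the truncation parameters tend to their limits, just as in Step~1 of Theorem~\ref{thm:gammaliminf}. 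If $A'=B'$ (so $u$ is constant, say $u\equiv0$, and $b(x')=(1-\chi_E(x'))A_3+\chi_E(x')B_3$ for an arbitrary finite-perimeter set $E\subset\omega$), I would mimic Step~2 of Theorem~\ref{thm:gammaliminf}: introduce the reduced functional
\begin{multline*}
\mathcal G_0(b):=\inf\Big\{\liminf_mF^0_{\delta_m}(v_m,c_m,Q')\,:\,\delta_m\to0^+,\ \{(v_m,c_m)\}\subset H^2(Q';\R^3)\times H^1(Q';\R^3),\\
(v_m,c_m)\to(0,b)\text{ in }[L^1(Q';\R^3)]^2\Big\}\,,
\end{multline*}
for which $K_0^\star=\mathcal G_0(b_0)$, prove its $L^1$-lower semicontinuity by a diagonal argument (the analogue of Lemma~\ref{lem:lsc}), represent the slice energies by finite Radon measures $\mu_m$ on $\omega$ of total mass $F^0_{\delta_m}(v_m,c_m,\omega)$, extract a weak-$\ast$ limit $\mu=\mu_0+\mu_s$, and show that $\frac{d\mu_0}{d\mathcal H^1\restr{}\partial^\ast E\cap\omega}\ge K_0^\star$ at $\mathcal H^1$-a.e.\ point of $\partial^\ast E\cap\omega$ by a blow-up: rescaling around such a point $x_0'$ and rotating $e_1'$ onto the outer normal $\nu_0$, the isotropy hypothesis $(H_5)$ makes the potential term invariant while the $\eps$-term of $F^0_\eps$ is rotation invariant by construction, so the rescaled energies equal $\frac1\delta\mu_m(Q'_{\nu_0}(x_0',\delta))$ and are controlled from below by $\mathcal G_0$ applied to slices of $b$ which converge to $b_0$ by \eqref{sideslimits}; lower semicontinuity of $\mathcal G_0$ then yields the value $\mathcal G_0(b_0)=K_0^\star$, whence $\mu(\omega)\ge K_0^\star\Per_\omega(E)$ and the conclusion.

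The routine parts are the pointwise slicing inequality and the case $A'\ne B'$, the latter being a direct reduction to Proposition~\ref{gliminfrefsubcrit}. The main obstacle I expect is the blow-up in the case $A'=B'$: it is a transcription of Step~2 of Theorem~\ref{thm:gammaliminf} to the planar functional $F^0_\eps$, the crucial point being that both the Hessian-type term $\delta(|(\nabla')^2v|^2+2|\nabla'c|^2)$ and --- through $(H_5)$ --- the potential term are invariant under the rotation used in the blow-up, so that the rescaled slice energies remain comparable with $\mathcal G_0$. A secondary technical point is to perform the two subsequence extractions --- one for the $\liminf$, one for a.e.-$x_3$ convergence of the slices --- in the right order, so that Fatou's lemma applies.
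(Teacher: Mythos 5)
Your proposal is correct and follows essentially the same route as the paper's proof: Fubini's theorem plus dropping the $\frac{1}{h_n^4}|\partial^2_{33}u_n|^2$ term to slice down to the 2D functional $F^0_{\eps_n}$, Fatou's lemma, Proposition~\ref{gliminfrefsubcrit} with a covering argument when $A'\neq B'$, and a Radon measure/blow-up argument with an auxiliary functional $\mathcal G_0$ and its $L^1$-lower semicontinuity when $A'=B'$. The observation about rotation invariance of both the Hessian terms and, via $(H_5)$, of the potential term is exactly what makes the blow-up step go through, as the paper tacitly relies on.
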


\begin{proof}{\it Step 1.} First we may assume that 
$\liminf_{n} F^{h_n}_{\eps_n}(u_n) =\lim_{n} F^{h_n}_{\eps_n}(u_n) <\infty$. We set $b_n:=\frac{1}{h_n}\partial_3 u_n \in H^1(\Omega;\R^3)$. 
It is well known that for $\mathcal{L}^1$-a.e. $x_3\in I$ the slices $u_n^{x_3}(x'):=u_n(x',x_3)$ and $b^{x_3}_n(x'):=b_n(x',x_3)$ belong 
to $H^2(\omega;\mathbb{R}^3)$ and $H^1(\omega;\mathbb{R}^3)$ respectively, and horizontal weak derivatives coincide $\mathcal{L}^3$-a.e. in $\Omega$ 
(see {\it e.g.} \cite[p. 204]{AFP}). 
Moreover, up to a subsequence, $(u_n^{x_3},b_n^{x_3})\to (u,b)$ in~$[L^1(\omega;\R^3)]^2$ for $\mathcal{L}^1$-a.e. $x_3\in I$.  
Hence, using Fubini's theorem we can estimate
$$F_{\eps_n}^{h_n}(u_n)=
\int_I \left( \int_{\omega\times\{x_3\}} \frac{1}{\eps_n}\,W(\nabla_{h_n} u_n) + \eps_n |\nabla_{h_n}^2 u_n|^2 \, d\mathcal{H}^2\right)\,dx_3
\geq \int_{I} F^0_{\eps_n}\big(u^{x_3}_n,b_n^{x_3},\omega\big)\,dx_3\,, $$
and then infer from Fatou's lemma that 
$$\lim_{n\to\infty} F_{\eps_n}^{h_n}(u_n) \geq \int_{I} \liminf_{n\to+\infty} F^0_{\eps_n}\big(u_n^{x_3},b_n^{x_3},\omega\big)\,dx_3\,.$$
Now it remains to prove that for $\mathcal{L}^1$-a.e. $x_3\in I$,  
\begin{equation}\label{liminffatou}
\liminf_{n\to\infty} F^0_{\eps_n}\big(u_n^{x_3},b_n^{x_3},\omega\big)\geq K_0^\star \, {\rm Per}_{\omega}(E)\,.
\end{equation}
The next steps are devoted to the proof of \eqref{liminffatou}. 
\vskip5pt

\noindent{\it Step 2.} First assume that $A^\prime\not= B^\prime$. We obtain estimate \eqref{liminffatou} by applying Proposition \ref{gliminfrefsubcrit} together with the covering argument used in the proof of Theorem \ref{thm:gammaliminf}, Step 1. 
Further details are left to the~reader.  
\vskip5pt

\noindent{\it Step 3.} We now consider the case $A^\prime=B^\prime$ ($=0$ by \eqref{orientationwells}), and  
we may assume  that   $u\equiv 0$. 
Then consider an arbitrary sequence $\{(u_n,b_n)\} \subset H^{2}(\omega;\R^3) \times H^{1}(\omega;\R^3)$ satisfying $(u_n,b_n) \to (0,b)$ in~$[L^1(\omega;\R^3)]^2$.
We may also assume that $\liminf_{n} F^{0}_{\eps_n}(u_n,b_n)  = \lim_{n} F^{0}_{\eps_n}(u_n,b_n)< \infty$. By Theorem~\ref{thm:BJ} we have  
 $b(x')=\bigl(1-\chi_E(x')\bigr)A_3 + \chi_E(x') B_3$ for a set $E\subset \omega$ of finite perimeter in $\omega$. We prove the announced result 
following the blow-up argument in the proof of Theorem~\ref{thm:gammaliminf}, Step 3. We introduce 
 the finite nonnegative Radon measure $\mu_n$ on $\omega$ given by 
$$
\mu_n := \left( \frac{1}{\eps_n} W\left(\nabla' u_n,  b_n\right)
+ \eps_n \left(\left|(\nabla')^2 u_n \right|^2 + 2\left|\nabla' b_n \right|^2\right) \right) \Leb{2} \restr{} \omega\,. 
$$
Then $\mu_n(\omega)=F^{0}_{\eps_n}(u_n,b_n) $,  $\sup_n \mu_n(\omega) < \infty$, and there is a subsequence (not relabeled) such that
$\mu_n \rightharpoonup \mu $ weakly* in the sense of measures for some  finite nonnegative Radon measure $\mu$ on $\omega$. 
By lower semicontinuity we have $\mu(\omega) \leq  \lim_{n} F^{0}_{\eps_n}(u_n,b_n)$,  
and we have to prove that $\mu(\omega) \geq K^{\star}_0 \mathcal{H}^{1}(\partial^\ast E\cap\omega)$. This estimate can be achieved as  
in the proof of Theorem~\ref{thm:gammaliminf}, Step 3, with minor modifications. 
\end{proof}

\begin{remark}\label{liminf2D}
Let $\eps_n\to 0^+$ be an arbitrary sequence. By the arguments above, for any $(u,b)\in\mathscr{C}$ and any sequence  $\{(u_n,b_n)\}
\subset H^2(\omega;\R^3)\times H^1(\omega;\R^3)$ satisfying $(u_n,b_n)\to (u,b)$ in $[L^1(\omega;\R^3)]^2$, we have 
$\liminf_n F^0_{\eps_n}(u_n,b_n,\omega)\geq  K_0^{\star}\, {\rm Per}_{\omega}(E)$ where $(\nabla' u,b)= \bigl(1-\chi_E\bigr)A + \chi_E B\,$.
\end{remark}

%
%

\subsection{Lower bound on $K^\star_0$}

As in Proposition~\ref{rempot}, 
we now prove that 
sequences realizing $K^\star_0$ can be  prescribed near  
the two sides $\{x_1=\pm \frac{1}{2}\}$, and chosen to be independent of the $x_2$-variable.

\begin{proposition}\label{prop:matchingsubcrit}
Assume that $(H_1)-(H_4)$ and \eqref{orientationwells} hold. Then 
there exist sequences $\eps_n\to 0^+$, $\{c_n\} \subset \R^3$, and $\{(g_n,d_n)\} \subset C^{2}(Q';\R^3)\times C^{1}(Q';\R^3)$ 
such that  $(g_n,d_n)$ is independent of~$x_2$ (i.e., $g_n(x')=:\bar g_n(x_1)$ and $d_n(x')=:\bar d_n(x_1)$), $c_n \to 0$, $g_n \to u_0$ in $W^{1,p}(Q';\R^3)$, $d_n\to b_0$ in $L^p(Q';\R^3)$,  
$$(g_n,d_n)= (u_0,b_0) \;\text{ in } Q'\cap\{x_1 >  1/4\}\,,\quad (g_n,d_n)=(u_0+c_n,b_0) \;\text{ in } Q'\cap\{x_1 < -1/4\}\,,$$  
and $\lim_{n} F^{0}_{\eps_n}(g_n,d_n,Q') = K_0^{\star}$. 
\end{proposition}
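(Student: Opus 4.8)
The plan is to mimic the proof of Proposition~\ref{prop:matching1}, exploiting that in the reduced setting the construction simplifies: there is no vertical variable, hence no Cosserat-type linear correction and no ``second matching'' step. First I would fix sequences $\eps_n\to0^+$ and $\{(\tilde u_n,\tilde b_n)\}\subset H^2(Q';\R^3)\times H^1(Q';\R^3)$ with $(\tilde u_n,\tilde b_n)\to(u_0,b_0)$ in $[L^1(Q';\R^3)]^2$ and $F^0_{\eps_n}(\tilde u_n,\tilde b_n,Q')\to K_0^\star$; after mollification one may assume $(\tilde u_n,\tilde b_n)\in C^2\times C^1$. Arguing exactly as in the proof of Theorem~\ref{thm:compactness} — $(H_2)$ bounds $(\tilde u_n,\tilde b_n)$ in $W^{1,p}\times L^p$, the Young measure generated by $(\nabla'\tilde u_n,\tilde b_n)$ is supported in $\{A,B\}$ since $\int_{Q'}W(\nabla'\tilde u_n,\tilde b_n)\,dx'\to0$, and its barycenter $(\nabla'u_0,b_0)$ takes values in $\{A,B\}$, so the Young measure is a Dirac mass — one obtains the strong convergences $\tilde u_n\to u_0$ in $W^{1,p}(Q';\R^3)$ and $\tilde b_n\to b_0$ in $L^p(Q';\R^3)$. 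Next I would slice in the $x_2$-direction: for $\mathcal{L}^1$-a.e. $s\in I$ (up to a subsequence) the slices $(\tilde u_n(\cdot,s),\tilde b_n(\cdot,s))$ converge strongly to $(\bar u_0,\bar b_0)$, and by Fubini one may pick levels $s_n$ along converging slices with sliced energy $\leq F^0_{\eps_n}(\tilde u_n,\tilde b_n,Q')$. Setting $u_n(x'):=\tilde u_n(x_1,s_n)$, $b_n(x'):=\tilde b_n(x_1,s_n)$, assumption $(H_4)$ (discarding the $\partial_2$-columns in $W$) together with $|\partial^2_{11}u_n|^2\leq|(\nabla')^2\tilde u_n|^2$ and $|\partial_1 b_n|^2\leq|\nabla'\tilde b_n|^2$ on that slice gives $\limsup_n F^0_{\eps_n}(u_n,b_n,Q')\leq K_0^\star$, while the definition of $K_0^\star$ forces the reverse inequality; thus $\{(u_n,b_n)\}$ is an $x_2$-independent optimal sequence still converging strongly to $(u_0,b_0)$.

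For the pinning at, say, the right side I would carry out Steps~2--4 of Proposition~\ref{prop:matching1} in the single variable $x_1$. By Remark~\ref{cor:concentrationsubcrit} (the analogue of Corollary~\ref{cor:concentration}) the energy concentrates near $\{x_1=0\}$, so partitioning $(\tfrac{1}{12},\tfrac{1}{6})$ into $M_n:=[1/\eps_n]$ subintervals of length $\tfrac{1}{12M_n}$ one can select, as in \eqref{eq:layer}, a layer $L_n$ on which $M_n$ times $\int_{L_n}\bigl(|u_n-u_0|^p+|b_n-b_0|^p+|\nabla'u_n-\nabla'u_0|^p\bigr)\,dx'+F^0_{\eps_n}(u_n,b_n,L_n)$ equals some $\alpha_n\to0$, and then a level $t_n\in L_n$ at which all the associated pointwise quantities are $O(\alpha_n)$. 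Let $\varphi_n$ be a cut-off across $L_n$ with $\eps_n|\varphi_n'|+\eps_n^2|\varphi_n''|\leq C$, flat at both endpoints, and put $c_n^+:=u_n(t_n)-u_0(t_n)\to0$. On $L_n$ I set
\[
v_n:=(1-\varphi_n)(u_0+c_n^+)+\varphi_n\,u_n\,,\qquad d_n:=(1-\varphi_n)\,b_0+\varphi_n\,b_n\,,
\]
and I extend by $(v_n,d_n)\equiv(u_0+c_n^+,b_0)$ for $x_1>\theta_n$ — where $\bar u_0$ is affine, so this part carries \emph{exactly zero} energy — and by $(v_n,d_n)\equiv(u_n,b_n)$ to the left of $L_n$; flatness of $\varphi_n$ makes the glued pair lie in $C^2\times C^1$. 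The five limits $\int_{L_n}|v_n-u_0|^p\to0$, $\tfrac{1}{\eps_n}\int_{L_n}|\nabla'v_n-\nabla'u_0|^p\to0$, $\tfrac{1}{\eps_n}\int_{L_n}|d_n-b_0|^p\to0$, $\tfrac{1}{\eps_n}\int_{L_n}W(\nabla'v_n,d_n)\to0$ and $\eps_n\int_{L_n}\bigl(|(\nabla')^2v_n|^2+2|\nabla'd_n|^2\bigr)\to0$ are then established as in \eqref{grplim1}--\eqref{grplim5}: Poincar\'e's inequality on the interval $L_n$ of length $\sim\eps_n$ (using that $u_n-u_0-c_n^+$ vanishes at $t_n$), H\"older's inequality since $p\geq2$, the smallness of $\alpha_n$, and — for the potential term — the bound \eqref{equivpot} of Lemma~\ref{rempot}, which relies on $(H_3)$. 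Hence the resulting pair $(g_n^+,d_n^+)$ satisfies $(g_n^+,d_n^+)=(u_0+c_n^+,b_0)$ on $\{x_1>\tfrac{1}{4}\}$, $F^0_{\eps_n}(g_n^+,d_n^+,Q')\to K_0^\star$, and $(g_n^+,d_n^+)\to(u_0,b_0)$ in $W^{1,p}\times L^p$.

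Finally, repeating the same surgery on $(-\tfrac{1}{2},0)$, which does not disturb the already-pinned region $\{x_1>0\}$ because $\theta_n>\tfrac{1}{12}$, yields $(g_n^-,d_n^-)$ pinned to $(u_0+c_n^-,b_0)$ on $\{x_1<-\tfrac{1}{4}\}$ with the same energy and convergence, and one concludes by taking $g_n:=g_n^--c_n^+$, $d_n:=d_n^-$, $c_n:=c_n^--c_n^+$. The bulk of the work, as in Proposition~\ref{prop:matching1}, lies in verifying the five limits for the interpolant on $L_n$; here it is distinctly lighter than in the three-dimensional case, the only genuine addition being the parallel treatment of the field $d_n$ together with the extra term $2\eps|\nabla'b|^2$ in $F^0_\eps$.
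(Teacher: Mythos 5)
Your proof follows the paper's argument essentially step for step: slice in $x_2$ using $(H_4)$, localize the energy near the interface via the concentration remark, glue to $u_0+c_n^+$ (and $b_0$) across a thin layer using Poincar\'e, H\"older, and \eqref{equivpot}, repeat on the left, and subtract the constant. You also correctly identify the one structural simplification relative to Proposition~\ref{prop:matching1}, namely that the absence of the $x_3$-variable makes $c_n^+$ a constant, so the ``second matching'' step is unnecessary — exactly as in the paper.
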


\begin{proof}{\it Step 1.} 
Consider sequences $\eps_n\to0^+$ and 
$\{(u_n,b_n)\} \subset H^{2}(Q';\R^3)\times H^1(Q';\R^3)$ such that 
$(u_n,b_n) \to  (u_0,  b_0)$ in $[L^1(Q';\R^3)]^2$, and
$\lim_{n} F^{0}_{\eps_n}(u_n,b_n,Q') = K^{\star}_0$.  
Arguing as in the proof of Proposition \ref{prop:matching1}, we may assume 
 that $(u_n,b_n)\in C^{2}(Q';\R^3)\times C^{1}(Q';\R^3)$, and that $(u_n,b_n)$ is independent of 
$x_2$, {\it i.e.}, $(u_n,b_n)(x)=:(\bar u_n(x_1),\bar b_n(x_1))$. 
Moreover, the arguments used in the proof of Theorem~\ref{thm:compactness} (with minor modifications) yield   
$u_n \to  u_0$ in $W^{1,p}(Q';\R^3)$, and $b_n \to  b_0$ in $L^p(Q';\R^3)$. 
\vskip5pt

\noindent{\it Step 2.} 
Here again we consider  a partition of $\bigl(\frac{1}{12},\frac{1}{6}\bigr)$ into $M_n := \bigl[ \frac{1}{\eps_n}\bigr]$ intervals of length $\frac{1}{12M_n}$.   
By Remark~\ref{cor:concentrationsubcrit}, the energy concentrates near the interface $\{x_1=0\}$, and we can find a suitable interval 
$I_n := \bigl(\theta_n - \frac{1}{12M_n},\theta_n\bigr)  \subset \bigl(\frac{1}{12},\frac{1}{6}\bigr) $ for which
\begin{multline}\label{eq:layersub}
M_n \left( \int_{I_n} |\bar u_n- \bar u_0|^p + |\bar b_n- \bar b_0|^p + |\bar u'_n -\bar  u'_0|^p dx_1 + F^{0}_{\eps_n}(u_n,b_n,I_n\times I)\right)  \\
  \leq \int_{\bigl(\frac{1}{12},\frac{1}{6}\bigr)}  |\bar u_n- \bar u_0|^p + |\bar b_n- \bar b_0|^p+ |\bar u'_n -\bar u'_0|^p dx_1
   + F^{0}_{\eps_n}(u_n,b_n,\bigl({\ts \frac{1}{12},\frac{1}{6}}\bigr) \times I)  =: \alpha_n \to 0\,.
\end{multline}
We  select a level $t_n \in \bigl(\theta_n - \frac{1}{12M_n},\theta_n\bigr)$ satisfying
\begin{multline}\label{eq:tnsub}
 |\bar u_n(t_n)- \bar u_0(t_n)|^p + |\bar b_n(t_n)- \bar b_0(t_n)|^p + |\bar u'_n(t_n) -\bar u'_0(t_n)|^p \\
+ \frac{1}{\eps_n}W\big(\bar u'_n(t_n),0,\bar b_n(t_n)\big)+\eps_n|\bar u^{\prime\prime}_n(t_n)|^2+2\eps_n|\bar b'_n(t_n)|^2\leq 12\alpha_n\,.
\end{multline}
Let $\varphi_n\in C^\infty(\R)$ be  a cut-off function as in \eqref{glutest}. 
For $x_1 \in I_n$ we  set  
$$
v_n(x_1) := \bigl( 1-\varphi_n(x_1)\bigr) \bigl(\bar u_0(x_1)+ c^+_n\bigr) + \varphi_n(x_1) \bar u_n(x_1) \,,
$$
with $c^+_n:= \bar u_n(t_n) - \bar u_0(t_n)\to 0$, and 
$$\zeta_n(x_1):= \bigl( 1-\varphi_n(x_1)\bigr) \bar b_0(x_1) + \varphi_n(x_1) \bar b_n(x_1) \,,$$
We claim that
\begin{align}
\label{grplim1sub} &\int_{I_n} |v_n -  \bar u_0|^p  dx_1 \to 0\,,\\
\label{grplim2sub} & \frac{1}{\eps_n} \int_{I_n} \big|\zeta_n -  \bar b_0\big|^p dx_1 \to 0\,,\\
\label{grplim3sub} & \frac{1}{\eps_n} \int_{I_n} |v'_n - \bar u'_0|^p dx_1 \to 0\,,\\
\label{grplim4sub} & \frac{1}{\eps_n} \int_{I_n} W(v'_n,0,\zeta_n) \,dx_1 \to 0\,,\\
\label{grplim5sub}  & \eps_n \int_{I_n}  \left|v^{\prime\prime}_n \right|^2+2|\zeta'_n|^2 \,dx_1 \to 0\,.
\end{align}
Estimates \eqref{grplim1sub} and \eqref{grplim2sub}  come straightforward from  \eqref{eq:layersub}.
We  apply Poincar\'e's inequality to obtain
\begin{equation}\label{eq:Poincaresub}
\int_{I_n} |\bar u_n - \bar u_0 -c^+_n |^p dx_1 \leq C \left(\frac{1}{M_n}\right)^p \int_{I_n} | \bar u'_n - \bar u'_0|^p\,  dx_1\leq C\alpha_n \eps^{p+1}_n\,, 
\end{equation}
and using \eqref{eq:layersub},  \eqref{glutest}, and \eqref{eq:Poincaresub}, we derive 
\begin{align*}
\frac{1}{\eps_n} \int_{I_n} |v'_n - \bar u'_0|^p\,  dx_1    \leq  \frac{C}{\eps_n} \int_{I_n} \bigg(\frac{1}{\eps_n^p}| \bar u_n - \bar u_0-c^+_n|^p +  
         |\bar u'_n - \bar  u'_0|^p \bigg)\, dx_1  \leq C \alpha_n \to 0\,,
  \end{align*}
so that \eqref{grplim3sub} is proved. Now \eqref{grplim4sub} follows from \eqref{grplim2sub} and  \eqref{grplim3sub} exactly as \eqref{estimatchdist}. 
Finally we obtain \eqref{grplim5sub} arguing as in the proof of Proposition \ref{prop:matching1} with minor modifications. We omit further details. 
\vskip5pt
 
\noindent{\it Step 3.}  We conclude as in  the proof of Proposition~\ref{prop:matching1}, Step 4. We first define a sequence $(g_n^+,d_n^+)$ by setting for $x'\in Q'$, 
\begin{equation}\label{defgn+sub}
(g^+_n,d_n^+)(x'):=\begin{cases}
\big(\bar u_n(x_1),\bar b_n(x_1)\big) & \text{for $x_1<\theta_n - \frac{1}{12M_n}$}\,,\\[5pt]
\big(v_n(x_1),\zeta_n(x_1)\big) & \text{for $\theta_n - \frac{1}{12M_n}\leq x_1< \theta_n$}\,, \\[5pt]
\big(\bar u_0(x_1) +c^+_n,\bar b_0(x_1)\big)  & \text{for $\theta_n\leq x_1\leq \frac{1}{2}$}\,.
\end{cases}
\end{equation}
Then we repeat the procedure above to modify $g_n^+$ in $(-\frac{1}{2},0)\times I$. Again, we omit further details. 

\end{proof}

\begin{corollary}\label{prop:charKsubcrit}
Assume that $(H_1)-(H_4)$  and \eqref{orientationwells} hold.
Then $K^\star_0 \geq K_0$. 
\end{corollary}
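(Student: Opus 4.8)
The goal is to show $K_0^\star \ge K_0$, where $K_0^\star$ is the (localized, full-thin-film) constant defined in \eqref{eq:defKstar2D} via sequences converging to $(u_0,b_0)$ in $Q'$, and $K_0$ is the explicit one-dimensional optimal-profile constant in \eqref{formulasubcrit}. The natural strategy, exactly paralleling Corollary \ref{prop:charK_per}, is to start from the good sequences $\{\eps_n\}$, $\{(g_n,d_n)\}$ produced by Proposition \ref{prop:matchingsubcrit}: these are independent of $x_2$ (so $g_n(x')=\bar g_n(x_1)$, $d_n(x')=\bar d_n(x_1)$), they converge to $u_0$ in $W^{1,p}$ and to $b_0$ in $L^p$, they satisfy the pinning conditions $(g_n,d_n)=(u_0,b_0)$ for $x_1>1/4$ and $(g_n,d_n)=(u_0+c_n,b_0)$ for $x_1<-1/4$, and $F^0_{\eps_n}(g_n,d_n,Q')\to K_0^\star$.

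The key step is a change of variables turning this $x_2$-independent competitor into an admissible profile for \eqref{formulasubcrit}. Since everything depends only on $x_1$, Fubini gives $F^0_{\eps_n}(g_n,d_n,Q') = \int_{-1/2}^{1/2}\frac1{\eps_n}W(\bar g_n'(x_1),0,\bar d_n(x_1))+\eps_n\big(|\bar g_n''(x_1)|^2+2|\bar d_n'(x_1)|^2\big)\,dx_1$, which by definition of $\mathcal{W}$ equals $\int_{-1/2}^{1/2}\frac1{\eps_n}\mathcal{W}\big((\bar g_n'(x_1),\bar d_n(x_1))\big)+\eps_n\big(|\bar g_n''(x_1)|^2+2|\bar d_n'(x_1)|^2\big)\,dx_1$. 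Now rescale: set $\ell_n:=1/(2\eps_n)$ and, for $t\in[-\ell_n,\ell_n]$, define $\phi_n(t):=\big(\phi_{n,1}(t),\phi_{n,2}(t)\big):=\big(\tfrac1{\eps_n}\bar g_n'(\eps_n t),\bar d_n(\eps_n t)\big)$ — more precisely one wants $\phi_{n,1}$ to record the "$\xi_1$" slot and $\phi_{n,2}$ the "$\xi_3$" slot, so that $\phi_n$ is the rescaled gradient/Cosserat pair, and one checks the first component should actually be $\bar g_n'(\eps_n t)$ itself (a reparametrized tangent vector), with the derivative $\phi_{n,1}'(t)=\eps_n\bar g_n''(\eps_n t)$. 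A direct computation then gives
$$
\int_{-\ell_n}^{\ell_n}\mathcal{W}\big(\phi_n(t)\big)+|\phi_{n,1}'(t)|^2+2|\phi_{n,2}'(t)|^2\,dt = \int_{-1/2}^{1/2}\frac1{\eps_n}\mathcal{W}\big((\bar g_n'(s),\bar d_n(s))\big)+\eps_n\big(|\bar g_n''(s)|^2+2|\bar d_n'(s)|^2\big)\,ds = F^0_{\eps_n}(g_n,d_n,Q')\,,
$$
using the substitution $s=\eps_n t$, $ds=\eps_n\,dt$.

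The pinning conditions are what make $\phi_n$ admissible in \eqref{formulasubcrit}: for $x_1>1/4$ we have $\bar g_n'(x_1)=\bar u_0'(x_1)=a=A_1$ (recalling \eqref{orientationwells}, $\bar u_0(t)=|t|a$) and $\bar d_n(x_1)=\bar b_0(x_1)=A_3$, so $\phi_n(t)=(A_1,A_3)$ for $t>1/(4\eps_n)$, in particular at $t=\ell_n$; symmetrically $\phi_n(-\ell_n)=(B_1,B_3)$. And $\phi_n\in C^1([-\ell_n,\ell_n];\R^{3\times 2})$ because $g_n\in C^2$, $d_n\in C^1$. Hence $\phi_n$ is admissible for the infimum defining $K_0$ with this particular $\ell=\ell_n>0$, so $K_0\le \int_{-\ell_n}^{\ell_n}\mathcal{W}(\phi_n)+|\phi_{n,1}'|^2+2|\phi_{n,2}'|^2\,dt = F^0_{\eps_n}(g_n,d_n,Q')$. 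Letting $n\to\infty$ and using $F^0_{\eps_n}(g_n,d_n,Q')\to K_0^\star$ yields $K_0\le K_0^\star$, which is the claim.

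The only slightly delicate bookkeeping — and what I would regard as the main point to get right rather than a genuine obstacle — is making sure the two "slots" of $\phi$ are matched to the correct pieces ($\bar g_n'$ to the horizontal-gradient slot, $\bar d_n$ to the Cosserat/third-column slot) so that $\mathcal{W}(\phi_n(t))$ really is $W(\bar g_n'(\eps_n t),0,\bar d_n(\eps_n t))$ and the weights $1$ and $2$ on $|\phi_{n,1}'|^2$ and $|\phi_{n,2}'|^2$ in \eqref{formulasubcrit} line up with the coefficients $1$ and $2$ in \eqref{defF0} after rescaling; the factor $\eps_n$ versus $1/\eps_n$ and the Jacobian $\eps_n$ of the substitution conspire precisely so that no stray powers of $\eps_n$ survive. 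Everything else (admissibility, the limit) is immediate from Proposition \ref{prop:matchingsubcrit}. Together with Theorem \ref{gamliminfsubcrit} (giving $\Gamma\text{-}\liminf \ge K_0^\star\,\Per_\omega(E)$) and the reverse bound $K_0^\star\le K_0$ coming from the recovery sequence (Theorem \ref{thm:RSsubcrit'}), this closes the identification $K_0^\star=K_0$ and hence the $\Gamma$-convergence in the subcritical regime.
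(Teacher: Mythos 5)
Your proof is correct and takes essentially the same route as the paper: start from the $x_2$-independent, laterally pinned competitors $\{(g_n,d_n)\}$ of Proposition \ref{prop:matchingsubcrit}, rescale to the 1D profile $\phi_n(t)=(\bar g_n'(\eps_n t),\bar d_n(\eps_n t))$ on $[-\ell_n,\ell_n]$ with $\ell_n=1/(2\eps_n)$, check admissibility for $K_0$ via the pinning conditions, and identify the profile energy with $F^0_{\eps_n}(g_n,d_n,Q')\to K_0^\star$. Incidentally, your scaling is the one that actually closes the computation — the published proof writes $\ell_n:=\eps_n/2$ and $\phi_n(t):=(\bar g_n'(t/\eps_n),\bar d_n(t/\eps_n))$, which is a misprint (it swaps the $\eps_n$-weights on the potential and curvature terms after the change of variables); your version with $\ell_n=1/(2\eps_n)$ and argument $\eps_n t$ is the correct one, matching the analogous rescaling in Corollary \ref{prop:charK_per}.
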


\begin{proof}
Consider the sequences $\{\eps_n\}$ and $\{(g_n,d_n)\}$ given by Proposition \ref{prop:matchingsubcrit}. 
We set $\ell_n:= \eps_n/2$, and for $t\in[- \ell_n,\ell_n]$, $\phi_n(t):=(\phi_{1,n},\phi_{2,n})(t):=\big(\bar g'_n(t/\eps_n),\bar d_n(t/\eps_n)\big)$. 
Then straightforward computations yield $\phi_n=(\bar u'_0,\bar b_0)$ nearby $\{|t|=\ell_n\}$, 
and 
$$\int_{-\ell_n}^{\ell_n}\mathcal{W}(\phi_{1,n}(t),\phi_{2,n}(t))+|\phi'_{1,n}(t)|^2+2|\phi'_{2,n}(t)|^2\,dt= F_{\eps_n}^{0}(g_n,d_n,Q')\,.$$ 
By definition of $K_0$ and the construction of $\{(g_n,d_n)\}$ we have $K_0\leq  
F_{\eps_n}^{0}(g_n,d_n,Q')\to K_0^\star$ as $n\to\infty$, 
and the proof is complete. 
\end{proof}

%
%

\subsection[]{The $\Gamma$-$\limsup$ inequality}

We now complete the proof of Theorem \ref{thm:gammalim_gammasub} with the construction of recovery sequences. 

\begin{theorem}\label{thm:RSsubcrit'}
Assume that $(H_1)-(H_5)$ and \eqref{orientationwells} hold. Let $\eps_n\to 0^+$ and $h_n\to 0^+$ be arbitrary sequences such that $h_n/\eps_n \to 0$. Then, for every $(u,b) \in \mathscr{C}$,  
there exists a sequence $\{u_n\} \subset H^{2}(\Omega;\R^3)$ such that $u_n \to u$ in $W^{1,p}(\Omega;\R^3)$, $\frac{1}{h_n}\partial_3 u_n \to b$ in $L^p(\Omega;\R^3)$,  
and 
$$
\lim_{n\to\infty} F^{h_n}_{\eps_n}(u_n) = K_{0} \,\Per_{\omega}(E) \,, 
$$
where $(\nabla' u,b) (x)= \bigl(1-\chi_E(x')\bigr)A + \chi_E(x') B\,$.
\end{theorem}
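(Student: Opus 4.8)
The statement is the $\Gamma$-$\limsup$ counterpart of the lower bound already available, so the plan is to construct, for each $(u,b)\in\mathscr C$, a sequence $\{u_n\}\subset H^2(\Omega;\R^3)$ with $u_n\to u$ in $W^{1,p}(\Omega;\R^3)$, $\tfrac1{h_n}\partial_3u_n\to b$ in $L^p(\Omega;\R^3)$ and $\limsup_nF^{h_n}_{\eps_n}(u_n)\le K_0\,\Per_\omega(E)$. Once this is done the full conclusion is automatic: these convergences give $(u_n,\tfrac1{h_n}\partial_3u_n)\to(u,b)$ in $[L^1(\Omega;\R^3)]^2$, so Theorem~\ref{gamliminfsubcrit} yields $\liminf_nF^{h_n}_{\eps_n}(u_n)\ge K^\star_0\,\Per_\omega(E)$, and $K^\star_0\ge K_0$ by Corollary~\ref{prop:charKsubcrit}; hence the limit exists and equals $K_0\,\Per_\omega(E)$ (and, in passing, $K_0=K^\star_0$). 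The construction runs as in Theorem~\ref{thm:RS_A'neqB'}, with the two-dimensional profiles $v_k$ of \eqref{formulacrit} replaced by the one-dimensional profiles of \eqref{formulasubcrit} and the out-of-plane variable replaced by the \emph{affine} ansatz $x_3\mapsto h_nx_3$; a diagonal argument then reduces everything to three cases. As a preliminary, for each $k$ I fix $\ell_k>0$ and $\phi^k=(\phi^k_1,\phi^k_2)$ admissible in \eqref{formulasubcrit} with $\int_{-\ell_k}^{\ell_k}\mathcal W(\phi^k)+|(\phi^k_1)'|^2+2|(\phi^k_2)'|^2\,dt\le K_0+2^{-k}$; using the non-degeneracy $(H_3)$ (through Lemma~\ref{rempot}) one selects near each endpoint a level at which $\phi^k$ is close to the corresponding well and has small derivative, and reconnects by a short $C^2$ Hermite patch, so that, enlarging $\ell_k$ and worsening the energy by at most $2^{-k}$, one may assume $\phi^k\in C^2$ with $\phi^k\equiv(A_1,A_3)$ (resp. $(B_1,B_3)$) near $\ell_k$ (resp. $-\ell_k$), hence $(\phi^k_1)',(\phi^k_2)'$ vanish near $\pm\ell_k$. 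This flatness is exactly what makes the glued competitors lie in $H^2$.

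\emph{Step 1 ($A'\neq B'$, finitely many interfaces).} By Theorem~\ref{thm:BJ}, $u(x)=\bar u(x_1)$, $b(x)=\bar b(x_1)$ with $(\bar u',\bar b)$ piecewise constant, jumping at $\alpha_1<\dots<\alpha_m$. I choose $J_i'\supset\supset J_i$ with $\mathcal H^1(J_i'\setminus J_i)\le2^{-k}$; for $n$ large the sets $(\alpha_i-\ell_k\eps_n,\alpha_i+\ell_k\eps_n)\times J_i'\times I$ are disjoint and contained in $\Omega$ (convexity of $\omega$, cf.~\eqref{bandeconvex}). On the $i$-th one I set, with a suitably oriented primitive $\Phi^k_1$ of $\phi^k_1$ and the alternation $(-1)^{i+1}$ of Theorem~\ref{thm:RS_A'neqB'},
$$u_{n,k}(x)=(\text{const}_i)+\eps_n\Phi^k_1\big((-1)^{i+1}\tfrac{x_1-\alpha_i}{\eps_n}\big)+h_nx_3\,\phi^k_2\big((-1)^{i+1}\tfrac{x_1-\alpha_i}{\eps_n}\big),$$
and $u_{n,k}(x)=\bar u(x_1)+h_nx_3\bar b(x_1)+(\text{accumulated constant})$ elsewhere, the constants chosen so that $u_{n,k}$ and $\nabla u_{n,k}$ are continuous across the layer boundaries (possible by the flatness of $\phi^k$), so $u_{n,k}\in H^2(\Omega;\R^3)$ is independent of $x_2$. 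In the layers $\nabla'u_{n,k}=\phi^k_1(\cdot)+\tfrac{h_n}{\eps_n}x_3(\phi^k_2)'(\cdot)$ and $\tfrac1{h_n}\partial_3u_{n,k}=\phi^k_2(\cdot)$, whence $u_{n,k}\to u$ in $W^{1,p}$ and $\tfrac1{h_n}\partial_3u_{n,k}\to b$ in $L^p$ (the layers shrink, the profiles are bounded). Changing variables, absorbing the $O(h_n/\eps_n)$ perturbation inside $W$ by uniform continuity (or Lemma~\ref{rempot}), and noting that the Hessian terms created by the factor $h_nx_3$ carry a prefactor $h_n/\eps_n$ or $h_n^2/\eps_n^2$ and hence vanish, one gets $F^{h_n}_{\eps_n}(u_{n,k})\le\sum_{i=1}^m\mathcal H^1(J_i')\int_{-\ell_k}^{\ell_k}[\mathcal W(\phi^k)+|(\phi^k_1)'|^2+2|(\phi^k_2)'|^2]\,dt+o(1)$, which is $\le(K_0+2^{-k})(\Per_\omega(E)+m2^{-k})+o(1)$; a diagonal sequence $u_n:=u_{n,k_n}$ finishes this case.

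\emph{Steps 2--3.} The case $A'\neq B'$ with infinitely many interfaces reduces to Step~1 by truncating $E$ after its $k$-th interface, exactly as in Theorem~\ref{thm:RS_A'neqB'}, Step~2. For $A'=B'=0$ ($u$ constant, say $0$, and $b=(1-\chi_E)A_3+\chi_EB_3$ with $E$ an arbitrary finite-perimeter subset of $\omega$) I follow Theorem~\ref{thm:RS_A'neqB'}, Step~3: approximate $E$ by smooth open $E_k$ with $\chi_{E_k}\to\chi_E$ in $L^1$ and $\mathcal H^1(\partial E_k\cap\overline\omega)\to\Per_\omega(E)$, let $d_k$ be the signed distance to $\mathcal M^k:=\partial E_k$ (smooth in a tubular neighbourhood, with $t\mapsto\mathcal H^1(\mathcal M^k_t\cap\overline\omega)$ upper semicontinuous), and set, for $n$ with $\ell_k\eps_n<\delta_k$ and a primitive $\Psi^k_1$ of $\phi^k_1$,
$$u_{n,k}(x):=\eps_n\Psi^k_1\big(\tfrac{d_k(x')}{\eps_n}\big)+h_nx_3\,\phi^k_2\big(\tfrac{d_k(x')}{\eps_n}\big),$$
extended by the constant end-values where $|d_k(x')|\ge\ell_k\eps_n$. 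Since $|\nabla d_k|=1$, assumption $(H_5)$ turns $W(\nabla_{h_n}u_{n,k})$ into $V\big(\big|\phi^k_1+\tfrac{h_n}{\eps_n}x_3(\phi^k_2)'\big|,\phi^k_2\big)$, which equals $\mathcal W(\phi^k)$ up to an $o(1)$ error; the Coarea formula and Fubini give, after $t=\eps_ns$, $F^{h_n}_{\eps_n}(u_{n,k})\le\int_{-\ell_k}^{\ell_k}[\mathcal W(\phi^k)+|(\phi^k_1)'|^2+2|(\phi^k_2)'|^2](s)\,\mathcal H^1(\mathcal M^k_{\eps_ns}\cap\overline\omega)\,ds+o(1)$, and Fatou's lemma together with the upper semicontinuity of $t\mapsto\mathcal H^1(\mathcal M^k_t\cap\overline\omega)$ yields $\limsup_nF^{h_n}_{\eps_n}(u_{n,k})\le(K_0+2^{-k})\,\mathcal H^1(\partial E_k\cap\overline\omega)$; again the Hessian terms from the factor $h_nx_3$ are $O(h_n^2/\eps_n^2)$. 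Letting $k\to\infty$ and diagonalizing concludes.

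\emph{Main difficulty.} The only genuinely new point with respect to the critical regime is the exploitation of the scale separation $h_n/\eps_n\to0$: one must verify that the crude ansatz, affine in $x_3$, is energetically optimal, i.e. that every contribution to $F^{h_n}_{\eps_n}$ coming from $\partial_3$-derivatives beyond the leading $\tfrac1{h_n}\partial_3$ — the perturbation of order $\tfrac{h_n}{\eps_n}x_3$ in $\nabla'u_{n,k}$ and the one of order $\tfrac{h_n}{\eps_n^2}x_3$ in $(\nabla')^2u_{n,k}$ — is negligible. This, and the consequent reduction from the two-dimensional profile problem of the critical case to the one-dimensional problem \eqref{formulasubcrit}, is precisely what $h_n/\eps_n\to0$ buys, through Lemma~\ref{rempot} and crude bounds on the fixed profiles; the rest is the same bookkeeping as in Theorem~\ref{thm:RS_A'neqB'}.
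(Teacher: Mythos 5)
Your construction is essentially identical to the paper's: the same ansatz $\eps_n\Phi^k_1(\cdot)+h_nx_3\phi^k_2(\cdot)$ in each layer (or across the signed-distance level sets when $A'=B'$), the same use of continuity of $\mathcal W$ and $h_n/\eps_n\to0$ to discard the $O(h_n/\eps_n)$ perturbation in $\nabla'$ and the $O(h_n^2/\eps_n^2)$ Hessian terms, and the same truncation/diagonalization for infinitely many interfaces, with the limit then obtained by combining the upper bound with Theorem~\ref{gamliminfsubcrit} and Corollary~\ref{prop:charKsubcrit}. Your explicit justification (Hermite patching near $\pm\ell_k$ using $(H_3)$) of the paper's WLOG that $\phi^k$ is constant near the endpoints is a small but genuine improvement in rigor over the paper's terse ``we may assume.''
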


\begin{proof} The proof parallels the one of Theorem \ref{thm:RS_A'neqB'}, and we shall refer to it for the notation. 
\vskip3pt

\noindent{\it Step 1.} We first assume that $A'\not=B'$, and that $\partial^\ast E\cap \omega$ is made by finitely many interfaces. 
We also assume that the pair $(u,b)$ is given by $(u,b)(x)=(\bar u,\bar b)(x_1) $ as  in the proof  of 
Theorem~\ref{thm:RS_A'neqB'}, Step 1. 

For $k\in\NN$ arbitrary, we choose some $\ell_k>0$ and $(\phi_{1,k},\phi_{2,k}):\R\to\R^{3\times 2}$ of class $C^1$ such that 
 $(\phi_{1,k},\phi_{2,k})=(\bar u'_0,\bar b_0)$ in $\{|t|\geq\ell_k/2\}$, and 
\begin{equation}\label{def_K0_sub}
\int_{-\ell_k/2}^{\ell_k/2}\mathcal{W}(\phi_{1,k}(t),\phi_{2,k}(t))+|\phi_{1,k}'(t)|^2+2|\phi_{2,k}'(t)|^2\,dt\leq K_0 + 2^{-k}\,. 
\end{equation} 
Without loss of generality, we may assume that $\phi_{2,k}'$ is Lipschitz continuous. 
In the remaining of this step we shall drop the subscript $k$ for simplicity. 
For each $i=1,\ldots,m$ we fix some bounded open interval $J'_i\subset \R$ satisfying \eqref{intervlimsup}, and we consider for $n$ large enough the 
coefficients $\{\alpha^n_{i\pm}\}$ as in~\eqref{notgamsup}, and such that \eqref{bandeconvex} holds.

Let $h_n\to 0^+$ and $\eps_n\to 0^+$ be arbitrary sequences such that $h_n/\eps_n\to 0$,  and define  
\begin{equation}\label{Phi_def}
\Phi(t):=\int_{-\ell/2}^t\phi_{1}(s)\,ds-c\qquad\text{with }\;c:=\frac{1}{2}\int_{-\ell/2}^{\ell/2}\phi_{1}(s)\,ds\,.
\end{equation}
We  set  for $i=1,\ldots,m$
and  $x\in (\alpha^n_{i-},\alpha^n_{i+})\times\R\times I$, 
$$w^i_{n}(x):=(-1)^{i+1}\Phi\left((-1)^{i+1}\frac{x_1-\alpha_{i}}{\eps_n}\right)+
\frac{h_n}{\eps_n}x_3 \phi_{2}\left((-1)^{i+1}\frac{x_1-\alpha_{i}}{\eps_n}\right)
+\left(1+(-1)^{i}\right)c\,.$$
Then we have
\begin{equation}\label{bdcondlayersub1}
w_{n}^i(\alpha^n_{i-},x_2,x_3)=\begin{cases}
\ds \Phi(-\ell/2) +\frac{h_n}{\eps_n}x_3\phi_{2}(-\ell/2) & \text{if $i$ is odd}\,,\\[6pt]
\ds \Phi(\ell/2)+\frac{h_n}{\eps_n}x_3\phi_{2}(\ell/2)  & \text{if $i$ is even}\,,
\end{cases}
\end{equation}
and
\begin{equation}\label{bdcondlayersub2}
w_{n}^i(\alpha^n_{i+},x_2,x_3)=\begin{cases}
\ds\Phi(\ell/2)+\frac{h_n}{\eps_n}x_3\phi_{2}(\ell/2) & \text{if $i$ is odd}\,,\\[6pt]
\ds\Phi(-\ell/2)+\frac{h_n}{\eps_n}x_3\phi_{2}(-\ell/2)+4c & \text{if $i$ is even}\,.
\end{cases}
\end{equation}
Setting $\beta_i^n$ and $\kappa_i$ as in \eqref{notgamsupbis}, we define for $x\in \Omega$, 
$$u_{n}(x):= \begin{cases}
\ol u(x_1) +\eps_n\Phi(\frac{-\ell}{2})+h_nx_3\phi_{2}\big(\frac{-\ell}{2}\big) & \text{for } x_1\leq \alpha^n_{1-}\,,\\[8pt]  
\ol u(\alpha^n_{i-})-\beta^n_{i-1}+\eps_nw^i_{n}(x)+\eps_n\kappa_{i-1}c 
& \text{for } \alpha^n_{i-}< x_1< \alpha^n_{i+}\,,\\[8pt]
 \ol u(x_1)-\beta^n_{i} +\eps_n\Phi\big(\frac{(-1)^{i+1}\ell}{2}\big)+h_nx_3\phi_{2}\big(\frac{(-1)^{i+1}\ell}{2}\big)+\eps_n\kappa_{i}c&\text{for } \alpha^n_{i+}\leq x_1\leq \alpha^n_{(i+1)-}\,,\\[8pt]
\ol u(x_1)-\beta^n_{m} +\eps_n\Phi\big(\frac{(-1)^{m+1}\ell}{2}\big) +h_n x_3\phi_{2}\big(\frac{(-1)^{m+1}\ell}{2}\big)+\eps_n\kappa_{m}c&\text{for } x_1\geq \alpha^n_{m+}\,.
\end{cases}$$
In view of \eqref{bdcondlayersub1}-\eqref{bdcondlayersub2}, and since $\phi_2'(\pm\ell/2)=0$, we have $u_{n}\in H^2(\Omega;\R^3)$. Moreover $u_n$ does not depend 
on $x_2$, 
$$\partial_1 u_{n}(x)=\begin{cases}
\ds \phi_{1} \left((-1)^{i+1}\frac{x_1-\alpha_{i}}{\eps_n}\right)+ (-1)^{i+1}\frac{h_n}{\eps_n}x_3\phi'_{2}\left((-1)^{i+1}\frac{x_1-\alpha_{i}}{\eps_n}\right) & \text{for }\alpha^n_{i-}< x_1< \alpha^n_{i+}\,,\\[8pt]
\bar u'(x_1) & \text{otherwise}\,,
\end{cases}$$
and
$$\frac{1}{h_n}\partial_3 u_{n}(x)=\begin{cases}
\ds \phi_{2}\left((-1)^{i+1}\frac{x_1-\alpha_{i}}{\eps_n}\right) & \text{for }\alpha^n_{i-}< x_1< \alpha^n_{i+}\,,\, i=1,\ldots,m\,,\\[8pt]
\bar b(x_1) & \text{otherwise}\,.
\end{cases}$$
Arguing as in the proof of Theorem~\ref{thm:RS_A'neqB'}, Step 1, 
we derive that $u_n\to u$ in $W^{1,p}(\Omega;\R^3)$, and $\frac{1}{h_n}\partial_3u_n\to b$ in $L^p(\Omega;\R^3)$. Then we estimate 
$$F_{\varepsilon_n}^{h_n}(u_n)\leq \sum_{i=1}^m F_{\varepsilon_n}^{h_n}\big(\eps_nw^i_{n},(\alpha^n_{i-},\alpha^n_{i+})\times J'_i\times I\big)\,.$$
Changing variables and using Fubini's theorem, we obtain  
\begin{multline*}
F_{\varepsilon_n}^{h_n}(\eps_nw^i_{n},(\alpha^n_{i-},\alpha^n_{i+})\times J'_i\times I) \\
=\mathcal{H}^1(J'_i)\int_{-\ell/2}^{\ell/2}\bigg(\int_{I}\mathcal{W}\big(\phi_{1}(t)
+(-1)^{i+1}\frac{h_n}{\varepsilon_n}x_3\phi'_{2}(t),\phi_{2}(t) \big)\,dx_3+|\phi'_1(t)|^2+ 2|\phi^{\prime}_{2}(t)|^2\bigg)\,dt \\ 
+\frac{\mathcal{H}^1(J'_i)h_n^2}{12\eps_n^2}\int_{-\ell/2}^{\ell/2} |\phi^{\prime\prime}_{2}(t)|^2\,dt \,.
\end{multline*}
Since $\mathcal{W}$ is continuous and $h_n/\eps_n\to 0$, we infer that 
$$\lim_{n\to\infty} F_{\varepsilon_n}^{h_n}(\varepsilon_n w^i_{n},(\alpha^n_{i-},\alpha^n_{i+})\times J'_i\times I)=\mathcal{H}^1(J'_i) 
\int_{-\ell/2}^{\ell/2}\mathcal{W}\left(\phi_{1}(t),\phi_{2}(t) \right)+|\phi'_1(t)|^2+ 2|\phi^{\prime}_{2}(t)|^2\,dt\,,$$
which leads to 
$$\limsup_{n\to\infty}\,F_{\varepsilon_n}^{h_n}(u_n)\leq  K_{0} \,\Per_{\omega}(E) +C_02^{-k}\,, $$
for a constant $C_0$ which only depends on $m$ and $\Per_{\omega}(E)$. Then the conclusion follows 
for a suitable diagonal sequence as already pursued in the proof of Theorem~\ref{thm:RS_A'neqB'}. 
\vskip5pt

\noindent{\it Step 2.}  In the case where $A'\not=B'$ and $\partial^\ast E\cap \omega$ is made by infinitely many interfaces, the proof follows from the previous step through a diagonalization argument as in  the proof of Theorem \ref{thm:RS_A'neqB'}. 
\vskip5pt

\noindent{\it Step 3.} We now consider the case $A'=B'\,(= 0)$, and  we proceed as in the proof of Theorem~\ref{thm:RS_A'neqB'} (we refer to it for the notation). 
We may assume that $u = 0$, and 
$b(x) = (1-\chi_E(x'))A_3 + \chi_E(x')B_3$ where $E \subset \omega$ has finite perimeter in $\omega$. We consider a sequence $\{E_k\}$ of smooth 
bounded subset of $\R^2$ such that $\chi_{E_k}\to\chi_E$ in $L^1(\omega)$, and $\lim_k \mathcal{H}^1(\partial E_k\cap \ol{\omega})=\Per_{\omega}(E)$. 
We define $b_k := (1-\chi_{E_k})A_3 + \chi_{E_k}B_3$, 
and the signed distance $d_k$ to $\mathcal{M}^k:=\partial E_k$ as in \eqref{signdist}. Here again we shall drop the subscript $k$ for simplicity.

For $k\in\NN$ arbitrary, we choose $\ell>0$ and $(\phi_{1},\phi_{2}):\R\to\R^{3\times 2}$ of class $C^1$ satisfying 
 $(\phi_{1},\phi_{2})(t)=(0,\bar b_0(t))$ nearby $\{|t|=\ell/2\}$ and \eqref{def_K0_sub}. We may also assume $\phi^\prime_{2}$ to be Lipschitz continuous.  
 Defining $\Phi$ as in \eqref{Phi_def}, we set for $x \in \Omega$,
$$
u_{n}(x)=\begin{cases}
	\ds \eps_n \Phi\bigg(\frac{d(x')}{\eps_n}\bigg)+h_nx_3\phi_2\bigg(\frac{d(x')}{\eps_n}\bigg) 
			& \ds \text{if }|d(x')|<\frac{\ell\eps_n}{2}\,,\\[10pt]
\ds	\eps_n \Phi\left(\frac{\ell}{2}\right)+h_nx_3\phi_2\left(\frac{\ell}{2}\right) 
			& \ds \text{if }d(x')\geq \frac{\ell\eps_n}{2}\,,\\[10pt]
\ds	\eps_n \Phi\left(-\frac{\ell}{2}\right)+h_nx_3\phi_2\left(-\frac{\ell}{2}\right) 
			&\ds  \text{if }d(x')\leq -\frac{\ell\eps_n}{2}\,.
\end{cases}
$$
Then $u_{n}\in H^2(\Omega;\R^3)$, and we compute 
$$
\nabla'u_n(x)=\begin{cases}
\ds \left(\phi_1\bigg(\frac{d(x')}{\eps_n}\bigg)+\frac{h_n}{\eps_n}x_3\phi'_2\bigg(\frac{d(x')}{\eps_n}\bigg)\right) \otimes\nabla d(x') 
& \ds  \text{if } |d(x')|<\frac{\ell\eps_n}{2}\,,\\[8pt]
0 & \text{otherwise} \,,
\end{cases}
$$
and
$$
\frac{1}{h_n}\partial_3 u_n(x)=\begin{cases}
\ds\phi_2\bigg(\frac{d(x')}{\eps_n}\bigg)
& \ds  \text{if $|d(x')|<\frac{\ell\eps_n}{2}$}\,,\\[8pt]
b(x) & \text{otherwise} \,.
\end{cases}
$$
Since $|\nabla d|=1$ $\mathcal{L}^2$-a.e. in $\R^2$, in the set $\{|d| < \ell \eps_n/2\}$ we have
\begin{multline*}
 \left|\nabla_{h_n}^2u_n\right|^2
	= \frac{1}{\eps_n^2}\big|\phi'_1(d/\eps_n)+\frac{h_n}{\eps_n}x_3\phi^{\prime\prime}_2(d/\eps_n)\big|^2+ 2 \big| \phi_2'(d/\eps_n)\big|^2 \\
	+\frac{1}{\eps_n}\bigg(\phi'_1(d/\eps_n)+\frac{h_n}{\eps_n}x_3\phi^{\prime\prime}_2(d/\eps_n)\big)
	\cdot\big(\phi_1\big(d/\eps_n\big)+\frac{h_n}{\eps_n}x_3\phi^{\prime}_2\big(d/\eps_n\big)\bigg)\big(\nabla^2 d\cdot (\nabla d\otimes\nabla d)\big)\\
	+ \big|\phi_1\big(d/\eps_n\big)+\frac{h_n}{\eps_n}x_3\phi^{\prime}_2\big(d/\eps_n\big)\big|^2|\nabla^2d|^2\,.
\end{multline*}
As in the proof of Theorem \ref{thm:RS_A'neqB'}, Step 3, we derive that $u_n\to 0$ in $W^{1,p}(\Omega;\R^3)$, and 
$\frac{1}{h_n}\partial_3 u_n\to b_k$ in~$L^p(\Omega;\R^3)$.  
Then, using the fact that $|\nabla d|=1$ and assumption $(H_5)$, we estimate 
\begin{equation}\label{estisupFun_sub}
F^{h_n}_{\eps_n}(u_n)
	= F^{h_n}_{\eps_n}\bigl(u_n, \Omega \cap \{ |d(x')| < \ell \eps_n/2\} \bigr)
	\leq I_n + C(\eps_n+h^2_n/\eps^2_n)\,,
\end{equation}
with
\begin{multline*}
I_n := \frac{1}{\eps_n}\int_{\Omega \cap \{ |d| <\ell \eps_n/2\}} 
	\mathcal{W}\left( \phi_1\big(d(x')/\eps_n\big)+\frac{h_n}{\eps_n}x_3\phi'_2\big(d(x')/\eps_n\big),\phi_2\big(d(x')/\eps_n\big) \right) \,dx \\
	+ \frac{1}{\eps_n}\int_{\Omega \cap \{ |d| <\ell \eps_n/2\}}  \left|\phi'_1\big(d(x')/\eps_n\big)\right|^2
		+ 2 \left| \phi_2'\big(d(x')/\eps_n\big)\right|^2 \, dx\,,
\end{multline*}
and a constant $C$ independent of $n$. 
Using the Coarea Formula, we derive as in the proof of Theorem~\ref{thm:RS_A'neqB'}, Step 3, that 
$$I_n= \int_{-\ell/2}^{\ell/2} \bigg(
	\int_{I}\mathcal{W}\left( \phi_1(t)+{\ts \frac{h_n}{\eps_n}}x_3\phi'_2(t) , \phi_2(t) \right)dx_3 
	+  \left|\phi'_1(t)\right|^2
		+ 2 \left| \phi_2'(t)\right|^2\bigg) \mathcal{H}^1(\mathcal{M}^k_{\eps_n t} \cap \omega) \, dt\,.$$
Since $\mathcal{W}$ is continuous and $h_n/\eps_n \to 0$, we infer from Fatou's lemma, \eqref{limlenglevs}, \eqref{def_K0_sub} and \eqref{estisupFun_sub} that
$$\limsup_{n\to \infty} \, F^{h_n}_{\eps_n}(u_n) \leq (K_0 + 2^{-k}) \mathcal{H}^1(\mathcal{M}^k\cap \ol\omega)\,.$$ 
Then the conclusion follows 
for a suitable diagonal sequence as already pursued in the proof of Theorem~\ref{thm:RS_A'neqB'}, Step 3.  
\end{proof}

\begin{remark}\label{recseq2D}
Given $\eps_n\to 0^+$, a slight modification of the above arguments yields that for every $(u,b)\in \mathscr{C}$, there is a sequence $\{(u_n,b_n)\}\subset H^2(\omega:\R^3)\times H^1(\omega;\R^3)$ 
such that $(u_n,b_n)\to (u,b)$ in $W^{1,p}(\omega;\R^3)\times L^p(\omega;\R^3)$ and $\lim_n F^0_{\eps_n}(u_n,b_n,\omega)=K_{0} \,\Per_{\omega}(E)$ where 
$(\nabla' u,b)= \bigl(1-\chi_E\bigr)A + \chi_E B$, and $F^0_{\eps_n}$ is defined by \eqref{defF0}. 
\end{remark}

\begin{remark}\label{sepscales}
Let us consider an arbitrary sequence $h_n\to 0^+$ and $\eps>0$ fixed. It is well known (see~\cite{KJ}) that the functionals $\{\mathscr{F}^{h_n}_\eps\}$ $\Gamma$-converge 
for the strong $L^1$-topology to 
$$ \mathscr{F}^{0}_\eps(u,b):=\begin{cases}
F_\eps^0(u,b,\omega) & \text{if $(u,b)\in H^2(\Omega:\R^3)\times H^1(\Omega;\R^3)$ and $\partial_3 u=\partial_3 b=0$}\,,\\
+\infty & \text{otherwise}\,,
\end{cases}$$
where $F^0_{\eps_n}$ is defined by \eqref{defF0}, and we have identified functions $(u,b)$ satisfying $\partial_3 u=\partial_3 b=0$ with functions defined on the mid-surface $\omega$. 
Let us  now consider an arbitrary  sequence $\eps_n\to 0^+$. By Remark \ref{liminf2D} and Remark \ref{recseq2D}, the functionals $\{ \mathscr{F}^{0}_{\eps_n}\}$ in 
turn $\Gamma$-converge for the strong $L^1$-topology to~$\mathscr{F}_{0}$ (compactness follows as in Theorem \ref{thm:compactness} with minor modifications). 
\end{remark}


%
%


\section{$\Gamma$-convergence in the supercritical regime}

This section is essentially devoted to the proof of Theorem \ref{thm:gammalim_gammasup}. The  $\Gamma$-liminf inequality is 
a direct consequence of the results in Section~\ref{gamliminfcrit} once we have proved that under assumption  \eqref{orientationwellssc}, $K_\infty^\star<+\infty$.  
In contrast with the lower inequality, the estimate for the $\Gamma$-$\limsup$ requires a more sophisticated construction based on an homogenization procedure. 
The $\Gamma$-$\liminf$ and $\Gamma$-$\limsup$  inequalities  are stated in 
Theorem \ref{thm:gammaliminfsupcrit} and Theorem \ref{thm:RS_A'neqB'sc} respectively, and the conclusion follows from Lemma \ref{finitekgam}. For $p=2$, $\lambda=0$, 
and under the symmetry assumption on $W$, we obtain the $\Gamma$-convergence of the functionals through Corollary \ref{prop:charK_perscvert}.   
In a last subsection, we consider the situation where the wells $A$ and $B$ are not compatible, and we illustrate some 
rigidity phenomena in Theorems \ref{rigidrank} and \ref{rigidrank2}.

%
%

\subsection[]{The $\Gamma$-$\liminf$ inequality}

We define the constant $K_\infty^\star$ as in \eqref{eq:defKstar1} with $\gamma=+\infty$, {\it i.e.}, 
\begin{multline}\label{defKstarinf}
K_\infty^\star:=\inf \biggl\{\liminf_{n\to\infty}\, F^{h_n}_{\eps_n}(u_n,Q)\,:\, h_n\to0^+ \text{ and } \eps_n \to 0^+  \text{ with } h_n/\eps_n\to \infty,\\ 
\{u_n\} \subset H^{2}(Q;\R^3),\,
\text{$(u_n,\frac{1}{h_n}\partial_3 u_n) \to (u_0, b_0)$  in  $[L^1(Q;\R^3)]^2$} \biggr\}\,.
\end{multline}
We start by proving that if \eqref{orientationwellssc} holds, then $K_\infty^\star$ is finite and strictly positive. 

\begin{lemma}\label{finitekgam}
Assume that $(H_1)-(H_3)$ and \eqref{orientationwellssc} hold for some $\lambda\in\R$. Then
$0<K_\infty^\star<\infty$.  
\end{lemma}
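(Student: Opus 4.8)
The statement is that $0<K_\infty^\star<\infty$ under $(H_1)$–$(H_3)$ and the rank-one structure \eqref{orientationwellssc}. The strict positivity $K_\infty^\star>0$ is the easy half: by Lemma~\ref{rempot} the potential controls $\min(|\xi-A|^p,|\xi-B|^p)$ from below, so any admissible sequence $\{u_n\}$ with $(u_n,\frac1{h_n}\partial_3u_n)\to(u_0,b_0)$ has $\nabla_{h_n}u_n$ approaching a nonconstant $BV$ limit $(\nabla'u_0,b_0)$ which jumps across $\{x_1=0\}$; a standard lower-bound argument of Modica--Mortola type on a single slice, or a direct comparison with the length of the jump set as encoded in Proposition~\ref{gliminfref} (valid for $\gamma=+\infty$ by Remark~\ref{validKinfty}, once finiteness is known), forces $\liminf_n F^{h_n}_{\eps_n}(u_n,Q)\geq c_0>0$ for a constant depending only on $p$, $|A-B|$ and $C_*$. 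I would record this as a quick one-paragraph argument: bound below the energy on $Q$ by $c\int_Q\min(\eps_n^{-1}\mathrm{dist}(\nabla_{h_n}u_n,\{A,B\})^p+\eps_n|\nabla_{h_n}^2u_n|^2)$ and use the elementary 1D optimal-profile inequality (sum of the two terms dominates a positive constant times the total variation of the transition), which is classical.

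The substance is the upper bound $K_\infty^\star<\infty$, i.e. constructing an admissible competitor of bounded energy. The plan is to exploit that under \eqref{orientationwellssc} the wells are rank-one connected \emph{in the bulk} with connection direction $\nu_\lambda=\frac1{\sqrt{1+\lambda^2}}(1,\lambda)$ lying in the $(x_1,x_3)$-plane but \emph{not} vertical. Thus there is a genuine 3D one-dimensional profile: choose $\bar v\in C^2(\R;\R^3)$ with $\bar v'(t)=a$ for $t\ge \ell/2$ and $\bar v'(t)=-a$ for $t\le-\ell/2$ (connecting $A=a\otimes(e_1+\lambda e_3)$ to $B=-A$ along the line spanned by $e_1+\lambda e_3$), finite $\ell>0$, so that $\int_{\R}W(\bar v'(t)\otimes(e_1+\lambda e_3)\ \text{arranged as a matrix})+|\bar v''|^2\,dt<\infty$ — more precisely one wants the profile to interpolate $\nabla v$ between $A$ and $B$. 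The key point is to orient this profile along $\nu_\lambda$ in the \emph{rescaled} geometry: set, for $x\in Q$, $u_n(x):=$ (affine background $u_0+h_nx_3b_0$ type term) $+\,\eps_n\,\bar v\big(\tfrac{1}{\eps_n}(x_1+\lambda h_n x_3)\big)$ up to additive constants matching the prescribed boundary data near $\{x_1=\pm1/2\}$, so that $\nabla_{h_n}u_n$ sees the combination $x_1+\lambda h_n x_3$ whose rescaled gradient is $(1,0,\lambda)=\sqrt{1+\lambda^2}\,\nu_\lambda$ — exactly the bulk connection. Then $W(\nabla_{h_n}u_n)$ is supported in a slab of $x_1$-width $O(\eps_n)$, contributing $O(1)$ after dividing by $\eps_n$, and $|\nabla_{h_n}^2u_n|^2=O(\eps_n^{-2})$ on that slab so $\eps_n|\nabla_{h_n}^2u_n|^2$ also integrates to $O(1)$. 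One checks $(u_n,\frac1{h_n}\partial_3u_n)\to(u_0,b_0)$ in $L^1(Q)$ using $h_n\to0$ and $h_n/\eps_n\to\infty$ (the term $\lambda h_n x_3$ is a small horizontal shift, and $\frac1{h_n}\partial_3 u_n=\lambda\bar v'(\cdot)\to b_0$ appropriately — here one uses $A_3=\lambda a$, $B_3=-\lambda a$ from \eqref{orientationwellssc}). This yields $K_\infty^\star\le\liminf_n F^{h_n}_{\eps_n}(u_n,Q)<\infty$.

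The main obstacle is the last convergence and the matching at the lateral boundary: one must arrange that $\frac1{h_n}\partial_3u_n$ genuinely converges to $b_0(x)=\bar b_0(x_1)$ in $L^1(Q)$, which requires care because the profile depends on $x_1+\lambda h_nx_3$ rather than on $x_1$ alone, and because in the regime $\eps_n\ll h_n$ the ratio $h_n/\eps_n\to\infty$ makes the naive scaling of second derivatives in the $x_3$-direction dangerous (the $\tfrac1{h_n^2}\partial_{33}^2$ entry of $\nabla_{h_n}^2$ hits $\eps_n\bar v''$ with a factor $\lambda^2 h_n^2/\eps_n^2\cdot\eps_n=\lambda^2 h_n^2/\eps_n$, and one needs $h_n^2/\eps_n$ to stay bounded — which is \emph{not} guaranteed merely from $h_n/\eps_n\to\infty$). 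The fix is to let the transition length depend on $n$: smear the profile over an $x_1$-width $\delta_n$ with $\eps_n\ll\delta_n$ and $\delta_n$ chosen so that $h_n^2/(\eps_n\delta_n)\to 0$ as well as $\delta_n\to 0$ (possible since one only needs $\eps_n\ll\delta_n$ and, say, $\delta_n=\max(\sqrt{\eps_n},h_n^2/\eps_n)^{1/2}$-type choices, using $h_n\to 0$); then all four rescaled-Hessian contributions are controlled and the limit identification is straightforward. I would present the construction with a general transition width $\delta_n$ from the outset, verify the three limits ($L^1$ convergence of $u_n$ and of $\frac1{h_n}\partial_3u_n$, and boundedness of $F^{h_n}_{\eps_n}(u_n,Q)$), and invoke $(H_2)$–$(H_3)$ via Lemma~\ref{rempot} to bound $W$ along the competitor, exactly as in the estimate \eqref{estimatchdist}. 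This completes both inequalities and hence the lemma.
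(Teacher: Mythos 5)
Your upper-bound strategy is essentially the paper's: compose a one-dimensional optimal transition profile with the scalar coordinate $x_1+\lambda h_n x_3$, which selects the bulk rank-one connection direction $e_1+\lambda e_3$. (The paper takes $u_n(x)=w_n(x_1+\lambda h_n x_3)$ with $\{w_n\}$ a 1D Modica--Mortola-optimal sequence from \cite{CFL}; your $\eps_n\bar v(\cdot/\eps_n)$ ansatz is the same thing with the $\eps_n$-scale written out.) However, the obstacle you identify — that one would need $h_n^2/\eps_n$ to remain bounded — is spurious, and it comes from a bookkeeping error: you computed $\partial_{33}^2\big(\eps_n\bar v(\tfrac{x_1+\lambda h_n x_3}{\eps_n})\big)=\tfrac{\lambda^2 h_n^2}{\eps_n}\bar v''$ but then forgot to multiply by the $h_n^{-2}$ prefactor that lives inside $\nabla_{h_n}^2$. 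The relevant rescaled entry is
\[
\frac{1}{h_n^2}\,\partial_{33}^2 u_n \;=\; \frac{\lambda^2}{\eps_n}\,\bar v''\Big(\tfrac{x_1+\lambda h_n x_3}{\eps_n}\Big)\,,
\]
so that
\[
\eps_n\Big|\tfrac{1}{h_n^2}\partial_{33}^2 u_n\Big|^2 \;=\; \frac{\lambda^4}{\eps_n}\,\big|\bar v''\big|^2\,,
\]
which upon integrating over the $x_1$-slab of width $O(\eps_n)$ (change variables $s=(x_1+\lambda h_n x_3)/\eps_n$, $dx_1=\eps_n\,ds$) gives $\lambda^4\int|\bar v''|^2\,ds=O(1)$ with no condition on $h_n^2/\eps_n$ whatsoever; the chain-rule factor $h_n^2$ is exactly cancelled by the rescaling. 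In the same way $|\nabla_{h_n}^2 u_n|^2=\eps_n^{-2}(1+\lambda^2)^2|\bar v''|^2$, and Lemma~\ref{rempot} bounds $W(\nabla_{h_n}u_n)=W(\bar v'(\cdot)\otimes(e_1+\lambda e_3))$ by $C_*(1+\lambda^2)^{p/2}\min(|\bar v'-a|^p,|\bar v'+a|^p)$. Your variable-$\delta_n$ ``fix'' is therefore addressing a non-problem, and in fact it would not work as stated: widening the transition to scale $\delta_n\gg\eps_n$ while keeping the $\eps_n^{-1}$ in front of $W$ blows up the potential term like $\delta_n/\eps_n$. Also note that no lateral boundary matching is required here — $K_\infty^\star$ is an infimum over sequences converging in $L^1$ with no boundary constraint; matching is only introduced afterwards in Proposition~\ref{prop:matching1sc}.

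For positivity, be careful about circularity: invoking Proposition~\ref{gliminfref} via Remark~\ref{validKinfty} yields a lower bound by $K_\infty^\star$ itself, so it cannot establish $K_\infty^\star>0$. Your first alternative — a direct Modica--Mortola-type lower bound on a 1D slice — is the right idea and is what the paper actually does: discard the $\partial_3$ entries of $\nabla_{h_n}^2$ and the $x_2$-dependence, use Lemma~\ref{rempot} to bound $W$ from below by $C_*^{-1}\min(|\xi-A|^p,|\xi-B|^p)$, slice in $x_1$, and reduce to the standard scalar estimate that any $v_n\to\bar u_0$ in $L^1(I)$ has $\int_I\big(\tfrac{1}{C_*\eps_n}\min(|v_n'-a|^p,|v_n'+a|^p)+\eps_n|v_n''|^2\big)\,dt$ bounded below by a universal positive constant. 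You should spell out that reduction rather than gesture at it.
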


\begin{proof}
Let us consider arbitrary sequences $\eps_n\to 0^+$, $h_n\to 0^+$ such that $h_n/\eps_n\to\infty$. Observe that under assumption \eqref{orientationwellssc}, we 
have $A=-B=(a,0,\lambda a)$ so that $A-B$ is rank-1 connected. By the results in \cite{CFL}, there exists a sequence $\{w_n\}\subset H^{2}\big((-1,1);\R^3\big)$ such that 
$w_n\to \bar u_0$ in $W^{1,p}\big((-1,1);\R^3\big)$, and 
$$\sup_{n\in\NN}\, \int_{-1}^1\frac{1}{\eps_n}\min\big\{|w_n^\prime-a|^p,|w_n^\prime+a|^p\big\}+\eps_n|w_n^{\prime\prime}|^2\,dt<\infty\,.$$
For $n$ large enough, we consider the sequence $\{u_n\}\subset H^{2}(Q;\R^3)$ defined by $u_n(x):=w_n(x_1+\lambda h_n x_3)$. Then one may check 
that $(u_n,\frac{1}{h_n}\partial_3 u_n) \to (u_0, b_0)$  in  $[L^1(Q;\R^3)]^2$. Using Lemma~\ref{rempot}, we estimate 
$$
F^{h_n}_{\eps_n}(u_n,Q)\leq C \int_{-1}^1\frac{(1+\lambda^2)^{p/2}}{\eps_n}\min\big\{|w_n^\prime-a|^p,|w_n^\prime+a|^p\big\}+\eps_n(1+\lambda^2)^2|w_n^{\prime\prime}|^2\,dt\,,
$$
which shows that $\sup_n F^{h_n}_{\eps_n}(u_n,Q)<\infty$, and thus $K^*_\infty<\infty$. On the other hand, we have 
\begin{multline*}
K_\infty^\star\geq \inf \biggl\{\liminf_{n\to\infty}\,  \int_{Q} \frac{1}{\eps_n} W(\nabla'u_n,b_n)+\eps_n |(\nabla')^2 u_n|^2+2\eps_n |\nabla' b_n|^2\,dx'\,:\, \eps_n \to 0^+ \,,\\ 
\{(u_n,b_n)\} \subset H^{2}(Q;\R^3)\times H^1(Q;\R^3)\,,\,
\text{$(u_n,b_n) \to (u_0, b_0)$  in  $[L^1(Q;\R^3)]^2$} \biggr\}\,.
\end{multline*}
In view of Lemma~\ref{rempot} and \cite{CFL}, we easily infer that 
\begin{align*}
K_\infty^\star& \geq 
 \begin{multlined}[t][14cm]
 \inf \biggl\{\liminf_{n\to\infty}\,  \int_{I} \frac{1}{C_*\eps_n} \min\big\{|v^\prime_n-a|^p, |v^\prime_n+a|^p\big\}+\eps_n |v^{\prime\prime}_n|^2\,dt\,:\, \eps_n \to 0^+ \,,\\ 
\{v_n\} \subset H^{2}(I;\R^3)\,,\,\text{$v_n \to \bar u_0$  in  $L^1(I;\R^3)$} \biggr\} 
\end{multlined} \\
&\geq  \begin{multlined}[t][14cm]
\inf \biggl\{\int_{-L}^L \frac{1}{C_*} \min\big\{|v(t)-a|^p, |v(t)+a|^p\big\}+ |v^{\prime}|^2\,dt\,:\, L>0 \,,\,
\text{$v$ piecewise $C^1$}\,,\\
v(L)=v(-L)=a \biggr\} >0\,,
\end{multlined} 
\end{align*}
and the proof is complete. 
\end{proof}

Thanks to Lemma \ref{finitekgam} and Remark \ref{validKinfty}, we can now reproduce the first step in the proof of Theorem~\ref{thm:gammaliminf} to obtain the following result. 

\begin{theorem}\label{thm:gammaliminfsupcrit}
Assume that $(H_1)-(H_3)$ and  \eqref{orientationwellssc} hold for some $\lambda\in\R$. 
Let $h_n\to0^+$ and $\eps_n\to0^+$ be arbitrary sequences such that $h_n/\eps_n \to \infty$.  
Then, for any $(u,b)\in \mathscr{C}$ and any sequences $\{u_n\}\subset H^2(\Omega;\R^3)$ such that $(u_n,\frac{1}{h_n}\partial_3 u_n)\to (u,b)$ in $[L^1(\Omega;\R^3)]^2$, 
we have  
$$
\liminf_{n\to\infty}\, F^{h_n}_{\eps_n}(u_n) \geq K_\infty^{\star}\, {\rm Per}_{\omega}(E)\,,
$$
where $(\nabla' u,b) (x)= \bigl(1-\chi_E(x')\bigr)A + \chi_E(x') B\,$.
\end{theorem}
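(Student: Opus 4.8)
The plan is to mimic the proof of Theorem~\ref{thm:gammaliminf}, Step~1, almost verbatim, since the statement here is exactly the analogue in the supercritical regime and the only thing that was missing to run that argument is the finiteness of $K_\infty^\star$. That finiteness is now guaranteed by Lemma~\ref{finitekgam} (which also gives strict positivity, but only finiteness is needed for the lower bound), and Remark~\ref{validKinfty} asserts that Proposition~\ref{gliminfref} and Corollary~\ref{cor:concentration} remain valid for $\gamma=+\infty$ with the same proofs once $K_\infty^\star<\infty$ is known.

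\textbf{Step 1: reduction to $A'\neq B'$.} Under assumption \eqref{orientationwellssc} we have $A'=-B'=a\otimes e_1'\neq 0$ (since $a\neq0$), so we are automatically in the case $A'\neq B'$. This is fortunate, because the case $A'=B'$ in the proof of Theorem~\ref{thm:gammaliminf} relied on assumption $(H_5)$ and a blow-up argument with the functional $\mathcal{G}_\gamma$; here neither $(H_5)$ nor that machinery is available nor needed. Thus only Step~1 of the proof of Theorem~\ref{thm:gammaliminf} has to be reproduced.

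\textbf{Step 2: the covering argument.} Given $(u,b)\in\mathscr{C}$, by Theorem~\ref{thm:BJ} we write $(\nabla'u,b)(x)=(1-\chi_E(x'))A+\chi_E(x')B$ with $\partial^\ast E\cap\omega$ of the form \eqref{structredbdAdiffB}, so that $\mathcal{H}^1(\partial^\ast E\cap\omega)=\sum_{i\in\mathscr{I}}\mathcal{H}^1(J_i)<\infty$. Fixing $\delta>0$, one truncates the (possibly infinite) sum to a finite block $i=k^-,\dots,k^+$ with $\mathcal{H}^1(\partial^\ast E\cap\omega)\le\sum_{i=k^-}^{k^+}\mathcal{H}^1(J_i)+\delta$, shrinks each $J_i$ to $J_i'\subset\subset J_i$ losing at most $\delta/(k^+-k^-+1)$ in total length, and chooses $\rho>0$ small so that the cylinders $U_i:=(\alpha_i-\rho,\alpha_i+\rho)\times J_i'\times I$ are pairwise disjoint and compactly contained in $\Omega$; on each $U_i$ the pair $(\nabla'u,b)$ has the elementary jump form \eqref{eq:u01}. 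For an arbitrary sequence $\{u_n\}$ with $(u_n,\frac{1}{h_n}\partial_3 u_n)\to(u,b)$ in $[L^1(\Omega;\R^3)]^2$, superadditivity of the $\liminf$ over the disjoint $U_i$ together with Proposition~\ref{gliminfref} (valid for $\gamma=\infty$ by Remark~\ref{validKinfty}) gives
\[
\liminf_{n\to\infty}F^{h_n}_{\eps_n}(u_n)\ \geq\ \sum_{i=k^-}^{k^+}K_\infty^\star\,\mathcal{H}^1(J_i')\ \geq\ K_\infty^\star\,\mathcal{H}^1(\partial^\ast E\cap\omega)-2\delta K_\infty^\star\,,
\]
and letting $\delta\to0$ yields $\liminf_n F^{h_n}_{\eps_n}(u_n)\geq K_\infty^\star\,\Per_\omega(E)$, which is the claim.

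There is no serious obstacle here: all the heavy lifting has already been done, namely the verification that the scaling lemma (Lemma~\ref{lem:scales}), Proposition~\ref{gliminfref}, and Corollary~\ref{cor:concentration} survive the passage $\gamma\to\infty$ (Remark~\ref{validKinfty}), and the finiteness $K_\infty^\star<\infty$ (Lemma~\ref{finitekgam}). The only point worth a sentence of care is that \eqref{orientationwellssc} genuinely forces $A'\neq B'$, so that the delicate $A'=B'$ branch of Theorem~\ref{thm:gammaliminf} — which used $(H_5)$, unavailable among the present hypotheses $(H_1)$--$(H_3)$ — never arises. Everything else is a transcription of Step~1 of the proof of Theorem~\ref{thm:gammaliminf}.
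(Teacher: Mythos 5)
Your proof is correct and is essentially the paper's own argument: the authors simply remark that Lemma~\ref{finitekgam} gives $K_\infty^\star<\infty$ and Remark~\ref{validKinfty} carries Proposition~\ref{gliminfref} and Corollary~\ref{cor:concentration} over to $\gamma=\infty$, after which Step~1 of the proof of Theorem~\ref{thm:gammaliminf} is reproduced verbatim. Your explicit observation that \eqref{orientationwellssc} forces $A'\neq B'$ (so the $(H_5)$-dependent blow-up branch never arises) is exactly the reason the paper only invokes that first step.
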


%
%

\subsection[]{Lower bound on $K^\star_\infty$ in the case $\lambda=0$}

As a direct consequence of Lemma \ref{rempot}, we have the following elementary property in the case $\lambda=0$. 

\begin{lemma} \label{borninfW}
Assume that $(H_1)-(H_3)$ and \eqref{orientationwellssc} hold with $\lambda=0$. Then there is a constant $C_W>0$ such that 
$W(\xi)\geq C_W |\xi_3|^p$ for all $\xi=(\xi_1,\xi_2,\xi_3)\in\R^{3\times 3}$. 
\end{lemma}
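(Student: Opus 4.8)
\textbf{Proof plan for Lemma \ref{borninfW}.}

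The statement to prove is: under $(H_1)$--$(H_3)$ and \eqref{orientationwellssc} with $\lambda=0$, there is $C_W>0$ with $W(\xi)\geq C_W|\xi_3|^p$ for all $\xi=(\xi_1,\xi_2,\xi_3)\in\R^{3\times 3}$. The key observation is that when $\lambda=0$, \eqref{orientationwellssc} gives $A=-B=(a,0,0)$, so both wells have vanishing third column: $A_3=B_3=0$. Hence $\dist(\xi,\{A,B\})$ already controls $|\xi_3|$, since the third column of both $A$ and $B$ is $0$.

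The plan is to invoke Lemma \ref{rempot}: by $(H_1)$--$(H_3)$ there is $C_*\geq 1$ such that $W(\xi)\geq \frac{1}{C_*}\min(|\xi-A|^p,|\xi-B|^p)$ for all $\xi$. Now for any $3\times 3$ matrix $M=(M_1,M_2,M_3)$ one has $|M|^2 = |M_1|^2+|M_2|^2+|M_3|^2 \geq |M_3|^2$, so $|M|\geq |M_3|$, hence $|M|^p\geq |M_3|^p$. Applying this to $M=\xi-A$ and $M=\xi-B$, and using that the third columns of $A$ and $B$ both equal $0$, we get $|\xi-A|^p\geq |\xi_3-A_3|^p=|\xi_3|^p$ and likewise $|\xi-B|^p\geq|\xi_3|^p$. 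Therefore $\min(|\xi-A|^p,|\xi-B|^p)\geq |\xi_3|^p$, and combining with the lower bound from Lemma \ref{rempot} yields $W(\xi)\geq \frac{1}{C_*}|\xi_3|^p$. Thus the claim holds with $C_W:=1/C_*$.

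There is essentially no obstacle here: the lemma is a one-line consequence of the comparison estimate \eqref{equivpot} together with the fact that $\lambda=0$ forces $A_3=B_3=0$. The only thing to be slightly careful about is the direction of the inequality in passing from the $p$-th power of a Euclidean norm of the full matrix to the $p$-th power of the norm of one column — this is immediate from monotonicity of $t\mapsto t^{p/2}$ on $[0,\infty)$ applied to $|M|^2\geq |M_3|^2$. I would state the proof in two or three sentences and move on.

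\begin{proof}
By \eqref{orientationwellssc} with $\lambda=0$ we have $A=-B=(a,0,0)$, hence $A_3=B_3=0$. For every matrix $M=(M_1,M_2,M_3)\in\R^{3\times3}$ one has $|M|^2=|M_1|^2+|M_2|^2+|M_3|^2\geq |M_3|^2$, so $|M|^p\geq|M_3|^p$. Applying this to $M=\xi-A$ and $M=\xi-B$ and using $A_3=B_3=0$, we obtain
$$\min\big(|\xi-A|^p,|\xi-B|^p\big)\geq \min\big(|\xi_3-A_3|^p,|\xi_3-B_3|^p\big)=|\xi_3|^p\,.$$
By Lemma \ref{rempot}, there exists $C_*\geq1$ such that $W(\xi)\geq \frac{1}{C_*}\min(|\xi-A|^p,|\xi-B|^p)$ for all $\xi\in\R^{3\times3}$. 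Combining the two estimates yields $W(\xi)\geq \frac{1}{C_*}|\xi_3|^p$, so the claim holds with $C_W:=1/C_*$.
\end{proof}
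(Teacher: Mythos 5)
Your proof is correct and is exactly the argument the paper has in mind: the paper states the lemma as ``a direct consequence of Lemma \ref{rempot}'' without spelling out the details, and your two-step chain (the column bound $|\xi-A|^p\geq|\xi_3-A_3|^p=|\xi_3|^p$ since $A_3=B_3=0$ when $\lambda=0$, followed by the lower bound \eqref{equivpot}) is precisely that deduction.
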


In parallel with Proposition~\ref{prop:matching1}, the next propositions will establish that realizing sequences for~$K_\infty^{\star}$ can be first chosen with lateral boundary conditions, 
and then periodic in the vertical direction. 

\begin{proposition}\label{prop:matching1sc}
Assume that $(H_1)-(H_4)$ and \eqref{orientationwellssc}  hold with $\lambda=0$. Then there exist sequences $h_n\to 0^+$, $\eps_n\to 0$, $\{c_n\} \subset \R^3$, and 
$\{g_n\} \subset C^{2}(Q;\R^3)$ such that  $h_n/\var_n\to\infty$, $g_n$ is independent of~$x_2$ (i.e., $g_n(x)=:\hat g_n(x_1,x_3)$),  $c_n \to 0$, 
$g_n \to u_0$ in $W^{1,p}(Q;\R^3)$,  $\frac{1}{h_n}\partial_3 g_n \to  0$ in $L^p(Q;\R^3)$,  
$$ g_n =  u_0 \; \text{ in } Q\cap\{x_1 < -1/4\}\,,\quad  g_n = u_0 + c_n\; \text{ in } Q\cap\{x_1 < -1/4\}\,,$$  
and $\lim_{n} F^{h_n}_{\eps_n}(g_n,Q) = K^\star_\infty$.  
\end{proposition}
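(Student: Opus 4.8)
The plan is to follow the proof of Proposition~\ref{prop:matching1} almost verbatim, exploiting three facts special to the present situation. First, since $\lambda=0$ in \eqref{orientationwellssc} one has $A_3=B_3=0$, so $\bar b_0\equiv 0$ and $b_0\equiv 0$: all the terms ``$h_nx_3b_0$'' occurring in the construction of Proposition~\ref{prop:matching1} simply disappear, and the target $\frac1{h_n}\partial_3 g_n\to b_0$ becomes $\frac1{h_n}\partial_3 g_n\to 0$. Second, by Lemma~\ref{finitekgam} we have $0<K^\star_\infty<\infty$, so by Remark~\ref{validKinfty} both Proposition~\ref{gliminfref} and Corollary~\ref{cor:concentration} are available with $\gamma=+\infty$; in particular the energy of any sequence realizing $K^\star_\infty$ concentrates near $\{x_1=0\}$. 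Third, and crucially, Lemma~\ref{borninfW} provides the vertical coercivity $W(\xi)\geq C_W|\xi_3|^p$ (this is exactly where $\lambda=0$ is used, the wells having vanishing third component), which is what will compensate the scale separation $h_n\gg\eps_n$.

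First I would fix sequences $h_n\to0^+$, $\eps_n\to0^+$ with $h_n/\eps_n\to\infty$ and $\{\tilde u_n\}\subset H^2(Q;\R^3)$ realizing $K^\star_\infty$ (there is no analogue of the rescaling $\eps_n\mapsto h_n/\gamma$ of Proposition~\ref{prop:matching1}, which is meaningless for $\gamma=+\infty$), apply standard regularization to get $\tilde u_n\in C^2(Q;\R^3)$, and use Theorem~\ref{thm:compactness} to upgrade the convergences to $\tilde u_n\to u_0$ in $W^{1,p}$ and $\frac1{h_n}\partial_3\tilde u_n\to 0$ in $L^p$. Then, using Fubini and assumption $(H_4)$ exactly as in Proposition~\ref{prop:matching1}, Step~1, I would pick a slicing level $s_n$ producing a competitor $u_n(x):=\tilde u_n(x_1,s_n,x_3)=:\hat u_n(x_1,x_3)$ independent of $x_2$, still converging to $u_0$ and $0$ in the right topologies and with $\lim_nF^{h_n}_{\eps_n}(u_n,Q)=K^\star_\infty$ (the lower bound being the definition of $K^\star_\infty$, the upper one a consequence of $(H_4)$ and the choice of $s_n$).

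The two matching steps would then be carried out exactly as in Proposition~\ref{prop:matching1}, Steps~2--3: partition $(\frac{1}{12},\frac{1}{6})\times Q'$ into $M_n=[1/\eps_n]$ layers of width $\frac{1}{12M_n}$, use Corollary~\ref{cor:concentration} to select a layer $L_n$ and then a level $t_n$ on which the energy and the relevant $L^p$-norms are $O(\alpha_n)$ for some $\alpha_n\to0$, and glue with the cut-offs $\varphi_n$ (as in \eqref{glutest}) and $\psi_n$ so as to land first on $u_0+\bar u_n(x_3)$ and then on the affine field $u_0+c^+_n$, where $\bar u_n(x_3):=\hat u_n(t_n,x_3)-\bar u_0(t_n)-x_3\int_I\partial_3\hat u_n(t_n,s)\,ds$ and $c^+_n:=\int_I\bar u_n$. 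Checking the analogues of \eqref{grplim1}--\eqref{grplim5bis} is where Lemma~\ref{borninfW} does its work: at the good level it gives $\int_{\{t_n\}\times Q'}|\frac1{h_n}\partial_3 u_n|^p\,d\mathcal{H}^2\leq \frac1{C_W}\int_{\{t_n\}\times Q'}W(\nabla_{h_n}u_n)\,d\mathcal{H}^2\leq C\eps_n\alpha_n$, hence the improved bound $\|\bar u_n'\|_{L^p(I)}^p\leq C\,h_n^p\eps_n\alpha_n$ and likewise $\|\bar u_n-c^+_n\|_{L^p(I)}^p\leq C\,h_n^p\eps_n\alpha_n$; the extra factor $\eps_n$ is exactly what absorbs the $\frac1{\eps_n}$, $\frac1{h_n^2}$, $\frac1{h_n^4}$ weights in $\frac1{\eps_n}W(\nabla_{h_n}\cdot)$ and $\eps_n|\nabla^2_{h_n}\cdot|^2$, so that all five limits vanish by the same computations as in Proposition~\ref{prop:matching1} (the Hessian terms in $\partial^2_{33}u_n$ and $\partial^2_{13}u_n$ are controlled directly by the good-level estimate). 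Finally, as in Proposition~\ref{prop:matching1}, Step~4, I would assemble $g^+_n$ (equal to $u_0+c^+_n$ on $Q\cap\{x_1>1/4\}$), repeat the construction on $(-\frac12,0)\times Q'$ to produce $g^-_n$ (equal to $u_0+c^-_n$ on $Q\cap\{x_1<-1/4\}$), and set $g_n:=g^-_n-c^+_n$, $c_n:=c^-_n-c^+_n$; by construction $g_n=:\hat g_n(x_1,x_3)$ is independent of $x_2$, $g_n\to u_0$ in $W^{1,p}$, $\frac1{h_n}\partial_3 g_n\to 0$ in $L^p$, and $F^{h_n}_{\eps_n}(g_n,Q)\to K^\star_\infty$ by Corollary~\ref{cor:concentration}.

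The main obstacle, and the only genuinely new point with respect to Proposition~\ref{prop:matching1}, is the coupling of the two small parameters in the regime $h_n\gg\eps_n$: the cut-offs live at the horizontal scale $\eps_n$, whereas the out-of-plane slope is normalized at scale $h_n$, so the $x_3$-oscillation $\bar u_n$ introduced by the gluing is a priori too large to control by the energy. What rescues the construction is precisely the vertical coercivity $W\gtrsim|\xi_3|^p$ of Lemma~\ref{borninfW}, available here because $\lambda=0$; with the resulting $\eps_n$-gain on $\partial_3\hat u_n(t_n,\cdot)$ in hand, the remaining estimates are a mechanical adaptation of those already carried out in the critical regime.
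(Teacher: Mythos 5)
Your overall plan is exactly the paper's: invoke Lemma~\ref{finitekgam} and Remark~\ref{validKinfty} to import the machinery of Section~\ref{gamliminfcrit} into the supercritical regime, use $(H_4)$ to slice out the $x_2$-dependence, perform the two-step gluing of Proposition~\ref{prop:matching1}, and exploit the vertical coercivity $W(\xi)\geq C_W|\xi_3|^p$ of Lemma~\ref{borninfW} to tame the scale separation $h_n\gg\eps_n$. However, there is a genuine error in the first matching step: you define
$$\bar u_n(x_3):=\hat u_n(t_n,x_3)-\bar u_0(t_n)-x_3\int_I\partial_3\hat u_n(t_n,s)\,ds\,,$$
transcribing Proposition~\ref{prop:matching1}, whereas the paper deliberately drops the last term and takes $\bar u_n(x_3):=\hat u_n(t_n,x_3)-\bar u_0(t_n)$. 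This is not cosmetic. In the critical regime the linear-in-$x_3$ correction is there so that $\bar u_n'$ has zero mean, which is what feeds the Poincar\'e--Wirtinger bound~\eqref{cossslice}; its side effect is that $u_n-u_0-h_nx_3b_0-\bar u_n$ is no longer zero on $\{x_1=t_n\}$, but there the boundary value has size $|x_3|\,h_n\,\alpha_n^{1/p}\sim\eps_n\alpha_n^{1/p}$ and is absorbed into $C\eps_n^{p+1}\alpha_n$ in~\eqref{eq:Poincare}. Here, by contrast, Lemma~\ref{borninfW} already gives $\|\bar u_n'\|_{L^p(I)}^p\leq Ch_n^p\eps_n\alpha_n$ with no zero-mean normalization, so the correction buys nothing; and it costs. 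With your $\bar u_n$ one has $u_n-u_0-\bar u_n=x_3\int_I\partial_3\hat u_n(t_n,s)\,ds$ on $\{x_1=t_n\}$, so the Poincar\'e estimate for $\int_{L_n}|u_n-u_0-\bar u_n|^p\,dx$ acquires the extra boundary term
$$\frac{C}{M_n}\int_I\Big|x_3\int_I\partial_3\hat u_n(t_n,s)\,ds\Big|^p dx_3
\ \leq\ \frac{C}{M_n}\,h_n^p\eps_n\alpha_n\ \leq\ C\,\eps_n^2h_n^p\alpha_n\,,$$
and after dividing by $\eps_n^{p+1}$ in the analogue of~\eqref{grplim3} you are left with $(h_n/\eps_n)^p\,\eps_n\,\alpha_n$, which you cannot show tends to zero: $h_n/\eps_n\to\infty$ while $\alpha_n\to 0$ at a rate fixed by the realizing sequence and out of your control. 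The paper's choice $\bar u_n(x_3):=\hat u_n(t_n,x_3)-\bar u_0(t_n)$ makes $u_n-u_0-\bar u_n$ vanish identically on $\{x_1=t_n\}$, so Poincar\'e cleanly yields $\int_{L_n}|u_n-u_0-\bar u_n|^p\leq C(1/M_n)^p\int_{L_n}|\partial_1u_n-\partial_1u_0|^p\leq C\eps_n^{p+1}\alpha_n$, which is exactly~\eqref{eq:Poincarescvert}, and the rest of your argument goes through.
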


\begin{proof}{\it Step 1.} 
Since $\lambda=0$ we have $b_0=0$, and in view of Lemma~\ref{finitekgam},  there exist sequences $h_n\to 0^+$, $\eps_n\to0^+$ and 
$\{u_n\} \subset H^{2}(Q;\R^3)$ such that $h_n/\eps_n \to \infty$, 
$(u_n,\frac{1}{h_n}\partial_3 u_n) \to  (u_0,  0)$ in $[L^1(Q;\R^3)]^2$, and
$\lim_{n} F^{h_n}_{\eps_n}(u_n,Q) = K^{\star}_\infty<\infty$. 
Arguing as in the proof of Proposition \ref{prop:matching1}, we may assume 
 that $u_n\in C^{2}(Q;\R^3)$, and that $u_n$ is independent of 
$x_2$, {\it i.e.}, $u_n(x)=:\hat u_n(x_1,x_3)$. 
By Theorem~\ref{thm:compactness},   $u_n \to  u_0$ in~$W^{1,p}(Q;\R^3)$, and $\frac{1}{h_n}\partial_3 u_n \to  0$ in $L^p(Q;\R^3)$. 
\vskip5pt

\noindent{\it Step 2 (first matching).} 
As in the proof of  Proposition~\ref{prop:matching1} we consider a partition of  
$(\frac{1}{12},\frac{1}{6}) \times Q'$ into $M_n := \bigl[ \frac{1}{\eps_n}\bigr]$ layers along the $x_1$-direction. 
By Lemma \ref{finitekgam} and Remark \ref{validKinfty}, we can find such a layer  
$L_n := (\theta_n - \frac{1}{12M_n},\theta_n)\times Q' \subset (\frac{1}{12},\frac{1}{6})\times Q'$
 such that \eqref{eq:layer} holds (with $b_0= 0$). 
Then  select a level $t_n \in \bigl(\theta_n - \frac{1}{12M_n},\theta_n\bigr)$ for which \eqref{eq:tn} holds.  
We consider a cut-off function $\varphi_n\in C^\infty(\R)$ satisfying \eqref{glutest}, and  we  set  for $x \in L_n$,
$$
v_n(x) := \bigl( 1-\varphi_n(x_1)\bigr) \bigl(\bar u_{0}(x_1)+ \bar u_n(x_3)\bigr) + \varphi_n(x_1) u_n(x) \,,
$$
with $\bar u_n(x_3) := \hat u_n(t_n,x_3) - \bar u_0(t_n)$. 

We claim that estimates \eqref{grplim1}, \eqref{grplim2}, \eqref{grplim3}, \eqref{grplim4}, and \eqref{grplim5} still hold (with $b_0=0$). 
First note that \eqref{grplim1} is an easy consequence of  \eqref{eq:layer} and \eqref{eq:tn}. 
In view of Lemma \ref{borninfW}, we infer from  \eqref{eq:tn}~that 
\begin{equation}\label{energsliceubscvert}
\frac{1}{\eps_n}\int_{I}\frac{1}{h_n^{p}}| \bar u'_n(x_3)|^p dx_3+\varepsilon_n\int_{I}\frac{1}{h_n^{4}}| \bar u^{\prime\prime}_n(x_3)|^2 dx_3\leq C\alpha_n\,.
\end{equation}
Combining \eqref{eq:layer} and \eqref{energsliceubscvert} yields \eqref{grplim2}. 
By construction $u_n - u_{0}- \bar u_n=0 $ on  $\{x_1=t_n\}\cap Q $,  
and applying Poincar\'e's inequality we deduce from \eqref{eq:layer},
\begin{equation}\label{eq:Poincarescvert}
\int_{L_n} |u_n- u_{0}- \bar u_n|^p \, dx \leq  C \left(\frac{1}{M_n}\right)^p \int_{L_n} | \partial_1 u_n - \partial_1  u_0|^p dx
\leq C\alpha_n \eps^{p+1}_n\,.
\end{equation}
Using \eqref{glutest}, we may now infer that 
\begin{equation}\label{eq:Poincarescvertbis}
\frac{1}{\eps_n}\int_{L_n} |\nabla' v_n - \nabla' u_0|^p  dx 
\leq \frac{C}{\eps_n}\int_{L_n} |\partial_1 u_n -  \partial_1u_0|^p
+\frac{1}{\eps_n^p}|u_n-u_{0}-\bar u_n|^p dx\leq C\alpha_n\to 0\,.
\end{equation}
Estimates \eqref{grplim2}  and  \eqref{grplim3} being proved, \eqref{grplim4} now follows exactly as in \eqref{estimatchdist}. 

Using again \eqref{glutest}, we estimate 
\begin{multline*}
  \eps_n \int_{L_n} |\nabla_{h_n}^2 v_n|^2 \, dx
    \leq C\bigg(\eps_n \int_{L_n} |\nabla_{h_n}^2 u_n|^2dx 
    +\frac{1}{\eps_n} \int_{L_n}  |\grad' u_n - \grad'  u_0 |^2dx \\
  +  \frac{1}{\eps^3_n} \int_{L_n}  |u_n -u_{0}- \bar u_n|^2 dx
    +\frac{1}{\eps_n} \int_{L_n} |\frac{1}{h_n}\partial_3 u_n|^2dx+\int_{I}\frac{1}{h_n^{2}}| \bar u^{\prime}_n(x_3)|^2 dx_3
    +\varepsilon^2_n\int_{I}\frac{1}{h_n^{4}}| \bar u^{\prime\prime}_n(x_3)|^2 dx_3
           \bigg)\,,
\end{multline*}
Arguing as in the proof of Proposition~\ref{prop:matching1}, Step 2,  \eqref{grplim5} now follows from 
\eqref{eq:layer}, \eqref{energsliceubscvert}, \eqref{eq:Poincarescvert} and \eqref{eq:Poincarescvertbis}  together with H\"older's inequality. 
\vskip5pt

\noindent{\it Step 3 (second matching).} 
Let $\psi_n\in C^\infty(\R)$ be a cut-off function  such that $0\leq \psi_n\leq 1$, $\psi_n(t) = 1$ if $t \leq \theta_n$, $\psi_n(t) = 0$ if $t \geq 1/4$, and satisfying 
$| \psi_n'| +| \psi_n''| \leq C$ for a constant $C$ independent of $n$. 
For $x \in \{\theta_n<x_1<\frac{1}{4}\}\cap Q$, we set  
$$
w_n(x) := u_{0}(x)+ c^+_n + \psi_n(x_1) \bigl( \bar u_n(x_3) - c^+_n\bigr)\,,
$$
where $c^+_n :=\int_{I} \bar u_n\, dx_3\to 0$,  
thanks to \eqref{eq:tn}.  We claim that \eqref{grplim1bis}, \eqref{grplim2bis}, \eqref{grplim4bis}, and \eqref{grplim5bis} hold. 

First \eqref{grplim1bis}  and \eqref{grplim2bis}  are direct consequences of  \eqref{eq:tn} and \eqref{energsliceubscvert} respectively. 
Next we apply \eqref{energsliceubscvert} and Poincar\'e's inequality to derive that 
\begin{align}\label{grplim3bissc} 
 \frac{1}{\eps_n}\int_{\{\theta_n<x_1<\frac{1}{6}\}\cap Q}|\nabla'w_n-\nabla' u_0|^pdx & \leq 
\frac{C}{\eps_n}\int_{I}|\bar u_n(x_3)-c^+_n|^pdx_3 \leq C h_n^p\alpha_n\to 0\,.
\end{align}
To prove \eqref{grplim4bis}, we can argue exactly as  in \eqref{estimatchdist} using  \eqref{grplim2bis} and \eqref{grplim3bissc}. 

We finally obtain in much similar ways that
$$
\eps_n \int_{\bigl(\theta_n,\frac{1}{4}\bigr) \times Q'}  \left|\nabla_{h_n}^2 w_n \right|^2dx
\leq C\eps_n\int_{I} |\ol{u}_n-c^+_n|^2+ \frac{1}{h_n^2}|\bar u'_n|^2
+ \frac{1}{h_n^4}|\bar u^{\prime\prime}_n|^2\,dx_3 
\leq C\alpha_n\to 0\,,
$$
and \eqref{grplim5bis} is proved.
\vskip5pt

\noindent{\it Step 4.}  We conclude the proof as in Proposition~\ref{prop:matching1}, Step 4. We first define $g_n^+$ as in \eqref{defgn+} (with $b_0= 0$), 
and then we repeat the procedure to modify  $g_n^+$ in $(-\frac{1}{2},0)\times Q'$. We omit further details. 
\end{proof}

We now prove that, in the case where $p=2$, $\lambda=0$, and $W$ is symmetric in $\xi_3$,  optimal sequences for $K_\infty^{\star}$ can be modified into $1$-periodic functions  in the 
$x_3$-variable without increasing the energy.

\begin{proposition}[vertical periodicity]\label{transperiodvert}
Assume that $(H_1)-(H_4)$ and \eqref{orientationwellssc} hold with $p=2$, $\lambda=0$, and that $W(\xi',\xi_3)=W(\xi',-\xi_3)$ for every $(\xi',\xi_3)\in\mathbb{R}^{3\times 2}\times\R^3$. 
Then there exist sequences $h_n\to 0^+$, $\eps_n\to 0^+$,  and $\{f_n\} \subset C^{2}(\R^3;\R^3)$ such that  $h_n/\eps_n\to \infty$,  $f_n$ is independent of $x_2$ 
(i.e., $f_n(x)=\hat f_n(x_1,x_3)$),  $f_n \to  u_0$ in $H^1(Q;\R^3)$,  $\frac{1}{h_n}\partial_3 f_n \to  0$ in $L^2(Q;\R^3)$, 
$f_n$ is 1-periodic in the $x_3$-variable,  $ \nabla f_n=\nabla u_{0}$ in $\{|x_1|>  1/4\}$, and 
$$\lim_{n\to\infty} F^{h_n}_{\eps_n}(f_n,Q) = K_\infty^{\star}\,.$$
\end{proposition}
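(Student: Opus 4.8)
The plan is to start from the sequence $\{g_n\}\subset C^2(Q;\R^3)$ produced by Proposition~\ref{prop:matching1sc}, which already satisfies $g_n=u_0$ near $\{x_1<-1/4\}$ and $g_n=u_0+c_n$ near $\{x_1>1/4\}$ (with $c_n\to0$), is independent of $x_2$, converges to $u_0$ in $W^{1,2}$, has $\frac1{h_n}\partial_3 g_n\to0$ in $L^2$, and realizes $\lim_n F^{h_n}_{\eps_n}(g_n,Q)=K_\infty^\star$. Since we may freely subtract the constant $c_n$ on the right part and since all the relevant quantities depend only on $\hat g_n(x_1,x_3)$, the task is to fold $\hat g_n$ (a function on $I\times I$, essentially) into a function that is $1$-periodic in $x_3$ while keeping $\nabla_{h_n}$-gradient and $\nabla_{h_n}^2$-Hessian control. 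First I would select, as in the previous matching steps, a good horizontal slice: by Fubini there is an exceptional set of small measure in the $x_3$-variable to avoid, and by the energy concentration near $\{x_1=0\}$ (Corollary~\ref{cor:concentration} together with Remark~\ref{validKinfty}, valid since $K_\infty^\star<\infty$ by Lemma~\ref{finitekgam}) the bulk of the energy lies in a thin $x_1$-strip around the interface; outside that strip $\nabla g_n$ is close to $\nabla u_0$ in $L^2$.

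The heart of the construction is the vertical periodization. Because $p=2$ and $W(\xi',\xi_3)=W(\xi',-\xi_3)$, we can exploit the reflection symmetry in $x_3$: define, on the period cell $x_3\in(-\tfrac12,\tfrac12)$, a map $\tilde f_n$ that equals a reflected/rescaled copy of $\hat g_n$ so that the trace and the $x_3$-normal derivative match up at $x_3=\pm\tfrac12$, then extend $1$-periodically. Concretely one can first replace $\hat g_n(x_1,x_3)$ on $x_3\in(0,\tfrac12)$ by $\hat g_n(x_1,2x_3)$ (a vertical rescaling by factor $2$, absorbing a bounded factor into the $\frac1{h^2}\partial^2_{33}$ and $\frac1h\partial_3$ terms, which are anyway $o(1)$ here because $h_n/\eps_n\to\infty$ and by Lemma~\ref{borninfW} the vanishing of $W$ forces $\partial_3 g_n$ to be small), and then on $x_3\in(-\tfrac12,0)$ put the even reflection $\hat g_n(x_1,-2x_3)$. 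The $C^1$-matching across $x_3=0$ holds because the reflected field has zero $x_3$-derivative there; the matching across $x_3=\pm\tfrac12$ needs an additional cut-off correction of the type used in Steps~2--3 of Proposition~\ref{prop:matching1sc}, gluing near $x_3=\pm\tfrac12$ to the constant-in-$x_3$ value $u_0(x_1)$ (which is legitimate since $\frac1{h_n}\partial_3 g_n\to0$ makes $g_n$ asymptotically independent of $x_3$ in $L^2$, so the trace on a well-chosen level is $o(1)$-close to its vertical average). One must check this correction costs only $o(1)$ in energy, using Poincar\'e in $x_3$, Hölder, and the symmetry of $W$ exactly as in \eqref{grplim4}--\eqref{grplim5}. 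After this, smooth out the corners by a further standard regularization so that $f_n\in C^2(\R^3;\R^3)$, preserving $1$-periodicity and $\nabla f_n=\nabla u_0$ near $\{|x_1|=\tfrac12\}$.

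Once $f_n$ is built, the convergences $f_n\to u_0$ in $H^1(Q;\R^3)$ and $\frac1{h_n}\partial_3 f_n\to0$ in $L^2(Q;\R^3)$ follow from the corresponding properties of $g_n$ together with the $o(1)$ bounds on all corrections; the energy bound $\limsup_n F^{h_n}_{\eps_n}(f_n,Q)\le K_\infty^\star$ follows by summing the energy on the rescaled-reflected copy (which by the change of variables and the symmetry of $W$ contributes the same as $F^{h_n}_{\eps_n}(g_n, \text{strip})$, up to the harmless bounded factors on the $x_3$-derivatives that tend to $0$) plus the $o(1)$ gluing contributions; the matching lower bound $\liminf_n F^{h_n}_{\eps_n}(f_n,Q)\ge K_\infty^\star$ is automatic from the definition \eqref{defKstarinf} of $K_\infty^\star$, since $f_n$ is an admissible competitor (it is in $H^2(Q;\R^3)$ and $(f_n,\frac1{h_n}\partial_3 f_n)\to(u_0,b_0)=(u_0,0)$ in $[L^1(Q;\R^3)]^2$).

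The main obstacle is the verification that the vertical reflection-and-rescale does not break the $C^2$ (or at least $H^2$) regularity needed for admissibility \emph{and} does not inflate the energy: the delicate point is controlling $\eps_n\int|\nabla_{h_n}^2 f_n|^2$, since $\nabla_{h_n}^2$ contains the weights $\frac1{h_n}$ and $\frac1{h_n^2}$ on derivatives in $x_3$, and the vertical rescaling multiplies $\partial_3$ and $\partial_{33}^2$ by constants. Here is precisely where $p=2$, the symmetry $W(\xi',\xi_3)=W(\xi',-\xi_3)$, and Lemma~\ref{borninfW} ($W(\xi)\ge C_W|\xi_3|^2$) are essential: they guarantee that $\frac1{\eps_n}\int\frac1{h_n^2}|\partial_3 g_n|^2 \le \frac1{C_W}\frac1{\eps_n}\int W(\nabla_{h_n}g_n)$ and $\eps_n\int\frac1{h_n^4}|\partial_{33}^2 g_n|^2$ are both $O(\alpha_n)=o(1)$ on the good slice, so that multiplying by bounded constants keeps them negligible, and the symmetry makes the reflected potential energy equal to the original one. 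I expect the bookkeeping of these corrector estimates, modeled line-by-line on Steps~2--3 of Proposition~\ref{prop:matching1sc}, to be routine but lengthy, and I would state them as a sequence of limits analogous to \eqref{grplim1}--\eqref{grplim5bis} and omit the fully detailed computations, as the paper does elsewhere.
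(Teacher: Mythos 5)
Your plan correctly identifies the ingredients (gluing near $\{|x_3|=1/2\}$, reflection, the $\xi_3$-symmetry of $W$, Lemma~\ref{borninfW}), but the specific construction has gaps that the paper's proof is carefully designed to avoid. The vertical rescaling $\hat g_n(x_1,2x_3)$ is not defined (it needs $\hat g_n$ on $x_3\in(0,1)$, outside $I$), and even after extending it it doubles the third column of the rescaled gradient, $\tfrac1{h_n}\partial_3\mapsto\tfrac2{h_n}\partial_3$. The resulting overhead in $\tfrac1{\eps_n}\int W$ is bounded through Lemma~\ref{borninfW} only by $\tfrac{C}{\eps_n}\int|\tfrac1{h_n}\partial_3 g_n|^2\le C'\tfrac1{\eps_n}\int W(\nabla_{h_n}g_n)=O(1)$ — \emph{not} $o(1)$: in the supercritical regime the optimal profile for $K_\infty$ is genuinely two-dimensional (periodic in the vertical variable), so the $\xi_3$-component does not vanish faster than the rest of the energy. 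The even reflection across $x_3=0$ also fails to be $C^1$, because the two branches have $x_3$-derivatives $\pm2\partial_3\hat g_n(x_1,0)$ and nothing forces $\partial_3\hat g_n(x_1,0)=0$. The paper instead reflects across $\{x_3=1/2\}$, where $\tilde w_m$ has already been made $x_3$-constant, so the reflection is automatically smooth, and it passes from $2$- to $1$-periodicity by the \emph{isotropic} rescaling $f_n(x)=\tfrac12 g^\sharp_n(2x)$ together with halving $\eps_n$: this preserves $\nabla_{h_n}f_n(x)=\nabla_{h_n}g^\sharp_n(2x)$ exactly, so $F^{h_n}_{\eps_n/2}(f_n,Q)=\tfrac14F^{h_n}_{\eps_n}(g^\sharp_n,2Q)$ with no overhead — a cancellation your vertical-only rescaling does not reproduce.

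The more important missing step is that the gluing near $\{|x_3|=1/2\}$ \emph{cannot} be "modeled line-by-line on Steps~2--3 of Proposition~\ref{prop:matching1sc}". Those steps rely on energy concentration near the interface $\{x_1=0\}$ (Corollary~\ref{cor:concentration}), which makes the layer energy near $|x_1|=1/4$ vanish. There is no analogue in the $x_3$-direction: for $h_n/\eps_n\to\infty$ the energy is spread essentially uniformly in $x_3$, and the boundary region $\{1/2-\delta<|x_3|<1/2\}$ carries a share proportional to its thickness. This is exactly what the paper's Step~2 proves, $\limsup_n F^{h_n}_{\eps_n}\le 2\delta K^\star_\infty$ via the rescaling $u_n(x',(1-2\delta)x_3)$, and it is only an $O(\delta)$ bound. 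That is why a second small parameter $m$ is introduced, the $1/m$-strip is subdivided into $[h_n/\eps_n]$ thin horizontal strips of thickness $\sim\eps_n/(mh_n)$, a good pair of strips and a good level $t_{m,n}$ are selected, and a diagonal passage in $(m,n)$ is performed. In addition, gluing to $u_0(x_1)$ as you propose does not work: the cut-off contributes a term of size $\tfrac{m}{\eps_n}|g_n-\text{trace}|$, and only when the trace is $\hat g_n(\cdot,t_{m,n})$ does Poincar\'e across the thin strip give $|g_n-\hat g_n(\cdot,t_{m,n})|\lesssim\tfrac{\eps_n}{m}|\tfrac1{h_n}\partial_3 g_n|$, reducing the gluing cost to the same order as the pre-existing terms; $|g_n-u_0|$ enjoys no such smallness and the estimate would blow up.
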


\begin{proof}
{\it Step 1.} We claim that it suffices to find sequences  $h_n\to 0^+$, $\eps_n\to 0^+$,  and 
$\{g^\sharp_n\} \subset C^{2}(\R^3;\R^3)$ such that  $h_n/\eps_n\to \infty$,  $g^\sharp_n(x)=:\hat g^\sharp_n(x_1,x_3)$, 
$\nabla g^\sharp_n=\nabla u_{0}$ in  $\{|x_1|>  1/4\}$, $g^\sharp_n$ is 2-periodic in $x_3$,  
$g^\sharp_n \to  u_0$ in $H^{1}(Q;\R^3)$, $\frac{1}{h_n}\partial_3g^\sharp_n \to  0$ in $L^2(Q;\R^3)$, and 
$
\limsup_{n} F^{h_n}_{\eps_n}(g^\sharp_n,2Q) \leq 4 K_\infty^{\star}$.  
Indeed, if the claim holds we set $f_n(x):= \frac{1}{2}g^\sharp_n(2x)$ for $x\in\R^3$. 
Then $f_n \to  u_0$ in $H^{1}(Q;\R^3)$,  $\frac{1}{h_n}\partial_3 f_n \to  0$ in~$L^2(Q;\R^3)$. By definition of $K^\star_\infty$, a change of variables yields 
 $$K_\infty^{\star} \leq\liminf_{n\to\infty} F^{h_n}_{\frac{\eps_n}{2}}(f_n,Q)\leq \limsup_{n\to\infty} F^{h_n}_{\frac{\eps_n}{2}}(f_n,Q)= 
\limsup_{n\to\infty} \frac{1}{4} F^{h_n}_{\eps_n}(g^\sharp_n,2Q) \leq  K_\infty^{\star}\,,
$$
and thus $\{f_n\}$ satisfies the requirements (with $\eps_n/2$ instead of $\eps_n$). 
\vskip5pt

\noindent{\it Step 2.} Let $h_n\to 0^+$ and $\eps_n\to0^+$ satisfying $h_n/\eps_n \to \infty$.  Consider an arbitrary sequence $\{u_n\} \subset H^{2}(Q;\R^3)$ such that 
$(u_n, \frac{1}{h_n}\partial_3 u_n) \to  (u_0,b_0)$ in $[L^1(Q;\R^3)]^2$, and $\lim_{n} F^{h_n}_{\eps_n}(u_n,Q) = K^{\star}_\infty$. 
We claim that for any $0<\delta<1/2$, we have 
$$\limsup_{n\to\infty} F^{h_n}_{\eps_n}\left(u_n,Q'\times\big((1/2-\delta,1/2)\cup(-1/2,-1/2+\delta)\big)\right) \leq 2\delta K^{\star}_\infty\,.$$
This is of course equivalent to the following inequality,
\begin{equation}\label{equivineq}
\liminf_{n\to\infty}F^{h_n}_{\eps_n}\left(u_n,Q'\times (-1/2+\delta,1/2-\delta)\right)\geq (1-2\delta)K_\infty^{\star} \,,
\end{equation}
that we prove by rescaling. For $x\in Q$, we set $v_n(x):=u_n(x',(1-2\delta)x_3)$ and $\tilde h_n:=(1-2\delta)h_n$. Then $\tilde h_n/\eps_n\to\infty$ 
and $(v_n, \frac{1}{\tilde h_n}\partial_3 v_n) \to  (u_0,b_0)$ in $[L^1(Q;\R^3)]^2$. Therefore, 
$$K_\infty^{\star} \leq \liminf_{n\to\infty}F^{h_n}_{\eps_n}(v_n,Q) = \liminf_{n\to\infty}\frac{1}{1-2\delta}F^{h_n}_{\eps_n}\left(u_n,Q'\times (-1/2+\delta,1/2-\delta)\right)\,,$$
and \eqref{equivineq} follows. 

\vskip5pt
\noindent{\it Step 3.} Consider the sequences $\{h_n\}$, $\{\eps_n\}$, and $\{g_n\}\subset C^{2}(Q;\R^3)$ given by Proposition~\ref{prop:matching1sc}, and 
let us fix  $m\in\NN$ arbitrarily  large. 
We infer from Step 2 (with $\delta=1/m$) that
\begin{equation}\label{enprestrip}
\limsup_{n\to\infty} F^{h_n}_{\eps_n}\left(g_n, Q\cap\left\{\frac{1}{2}-\frac{1}{m}<|x_3|<\frac{1}{2}\right\} \right) \leq \frac{2}{m}K_\infty^{\star}\,.
\end{equation}
Next we divide $Q'\times (\frac{1}{2}-\frac{1}{m},\frac{1}{2})$ into $[\frac{h_n}{\eps_n}]$ thin horizontal strips $R^+_{m,n,i}$ of width $\frac{1}{m}[\frac{h_n}{\eps_n}]^{-1}$, {\it i.e.}, 
$$R^{+}_{m,n,i}:= Q' \times\left(\frac{1}{2}-\frac{i}{m}\left[\frac{h_n}{\eps_n}\right]^{-1},\frac{1}{2}-\frac{i-1}{m}\left[\frac{h_n}{\eps_n}\right]^{-1}\right)$$
for $i=1,\ldots,[\frac{h_n}{\eps_n}]$. We proceed symmetrically in the set $Q' \times (-\frac{1}{2},
-\frac{1}{2}+\frac{1}{m})$, and we denote by $R^-_{m,n,i}$ the resulting strips.  
Applying Lemma~\ref{borninfW}, we infer from \eqref{enprestrip} that for $n$ large enough,
\begin{multline*}
\sum_{i=1}^{[\frac{h_n}{\eps_n}]}\int_{R^-_{m,n,i}\cup R^+_{m,n,i}} \bigg(\frac{1}{\eps_n}W(\nabla_{h_n}g_n)+\eps_n\left|\nabla_{h_n}^2g_n\right|^2
+|\nabla'g_n-\nabla' u_0|^2 \\
+\frac{C_W}{\eps_n}\big|\frac{1}{h_n}\partial_3g_n\big|^2+|g_n- u_0|^2\bigg)\,dx\leq \frac{4}{m}K_\infty^{\star}\,.
\end{multline*}
where we also have used the fact that $\|g_n- u_0\|_{H^{1}(Q)}\to 0$. Now consider a pair of 
strips $(R^-_{m,n,i_0},R^+_{m,n,i_0})$ with $i_0=i_0(m,n)$ satisfying
\begin{multline}\label{energstripR}
\int_{R^-_{m,n,i_0}\cup R^+_{m,n,i_0}} \bigg(\frac{1}{\eps_n}W(\nabla_{h_n}g_n)+\eps_n\left|\nabla_{h_n}^2g_n\right|^2
+|\nabla'g_n-\nabla' u_0|^2 \\
+\frac{C_W}{\eps_n}\big|\frac{1}{h_n}\partial_3g_n\big|^2+|g_n- u_0|^2\bigg)\,dx\leq \frac{4}{m}\left[\frac{h_n}{\eps_n}\right]^{-1}K_\infty^{\star}\,, 
\end{multline}
and we shall write for simplicity $R^\pm_{m,n}:=R^\pm_{m,n,i_0}$ (respectively).     
 Then we choose a level 
$$t_{m,n}\in \left(\frac{1}{2}-\frac{i_0-1/2}{m}\left[\frac{h_n}{\eps_n}\right]^{-1},\frac{1}{2}-\frac{i_0-1}{m}\left[\frac{h_n}{\eps_n}\right]^{-1}\right)$$
for which
\begin{multline}\label{ennivtmn}
\int_{Q\cap \{|x_3|=t_{m,n}\}} \bigg(\frac{1}{\eps_n}W(\nabla_{h_n}g_n)+\eps_n\left|\nabla_{h_n}^2g_n\right|^2
+|\nabla'g_n-\nabla' u_0|^2 \\
+\frac{C_W}{\eps_n}\big|\frac{1}{h_n}\partial_3g_n\big|^2+|g_n- u_0|^2\bigg)\,d\mathcal{H}^2
\leq 8 K_\infty^{\star}\,.
\end{multline}

Let $\varphi_{m,n}:\R\to [0,1]$ be a smooth cut-off function such that 
$\varphi_{m,n}(t)=0$  for $t>t_{m,n}$,  $\varphi_{m,n}(t)=1$ for $t<t_{m,n}-\frac{1}{2m}\left[\frac{h_n}{\eps_n}\right]^{-1}$, and
\begin{equation}\label{esticutoffmn}
\frac{\eps_n}{m h_n}|\varphi'_{m,n}|+\frac{\eps_n^2}{m^2h_n^2}|\varphi''_{m,n}|\leq C\,, 
\end{equation}
for a constant $C$ independent of $m$ and $n$. We define for $x\in Q$, 
$$w_{m,n}(x):= \varphi_{m,n}(x_3) g_n(x)+\big(1-\varphi_{m,n}(x_3)\big)\hat g_n(x_1,t_{m,n})\,.$$
We shall prove in Step 4 below that 
\begin{equation}\label{convnormwnm}
\limsup_{m\to\infty}\,\limsup_{n\to\infty}\,\|w_{m,n}- u_0\|_{H^{1}(Q)}+\big\|\frac{1}{h_n}\partial_3w_{m,n}\big\|_{L^2(Q)} = 0\,,
\end{equation}
and
\begin{equation}\label{limsupenwnm}
\limsup_{m\to\infty}\,\limsup_{n\to\infty} \, F_{\eps_n}^{h_n}(w_{m,n},Q)\leq K^\star_\infty\,.
\end{equation}
Assuming for the moment that \eqref{convnormwnm} and \eqref{limsupenwnm} hold, 
we find a diagonal sequence $n_m\to+\infty$ such that setting $\eps_m:=\eps_{n_m}$, $h_m:=h_{n_m}$, and $w_m:=w_{m,n_m}$, we have 
$w_{m}\to u_0$ in $H^{1}(Q;\R^3)$, $\frac{1}{h_m}\partial_3w_{m}\to 0$ in $L^2(Q;\R^3)$, and 
$\limsup_{m} F_{\eps_m}^{h_m}(w_{m},Q)\leq K^\star_\infty$.  We now repeat this construction in the strip $R^-_{m,n}$, and we write $\tilde w_m$ the resulting function.  

Since $\tilde w_m$ is independent of $x_3$ in a neighborhood of $\{|x_3|=1/2\}\cap Q$, we may first reflect $\tilde w_m$ across the hyperplane $\{x_3=1/2\}$ 
setting for $\frac{1}{2}\leq x_3 \leq\frac{3}{2}$,  $\tilde w_m(x',x_3):=w_m(x',1-x_3)$, and then we extend $\tilde w_m$ by periodicity to all values of $x_3$. The resulting 
function $\tilde w_m$ belongs to $C^2(Q'\times\R;\R^3)$. Since  $\nabla \tilde w_m=\nabla u_0$ in $\{|x_1|>1/4\}$, we can   extend linearly $\tilde w_m$ in $x_1$, 
and constantly in~$x_2$. We finally set for $x\in\R^3$, $g_m^\sharp(x):=\tilde  w_m\big(x', x_3-\frac{1}{2}\big)$. Since $W(\xi',-\xi_3)=W(\xi',\xi_3)$ for all $\xi\in\R^{3\times 3}$, we find that 
$$F^{h_m}_{\eps_m}(g^\sharp_m,2Q) = 4 F^{h_m}_{\eps_m}(\tilde w_m,Q)\,,$$
so that  the function $g_m^\sharp$ satisfies all the requirements of Step 1. 
\vskip5pt 

\noindent{\it Step 4.} We now complete the proof by showing that \eqref{convnormwnm} and \eqref{limsupenwnm} do hold. To this purpose we shall write 
$$L^+_{m,n}:=Q'\times \left(\frac{1}{2}-\frac{i_0}{m}\left[\frac{h_n}{\eps_n}\right]^{-1},\frac{1}{2}\right)\,. $$
We first estimate 
\begin{align}
\nonumber F_{\eps_n}^{h_n}(w_{m,n},Q)&=
\begin{multlined}[t][11cm]
F_{\eps_n}^{h_n}(g_{n},Q\setminus L^+_{m,n})+ F_{\eps_n}^{h_n}(w_{m,n},R^+_{m,n})\\
+\int_{L^+_{m,n}\setminus R^+_{m,n}}\frac{1}{\eps_n}W\big(\partial_1\hat g_n(x_1,t_{m,n}),0,0\big)+ \eps_n\big|\partial^2_1\hat g_n(x_1,t_{m,n})\big|^2\,dx
\end{multlined} \\
\label{step41}&\leq 
\begin{multlined}[t][11cm]
F_{\eps_n}^{h_n}(g_{n},Q)+ F_{\eps_n}^{h_n}(w_{m,n},R^+_{m,n}) \\
+\frac{1}{m}\int_{-1/2}^{1/2} \frac{1}{\eps_n}W\big(\partial_1\hat g_n(x_1,t_{m,n}),0,0\big)+ \eps_n\big|\partial^2_1\hat g_n(x_1,t_{m,n})\big|^2\,dx_1\,.
\end{multlined} 
\end{align}
By Lemma~\ref{rempot} and Lemma~\ref{borninfW}, we have 
$$W\big(\partial_1\hat g_n(x_1,t_{m,n}),0,0\big)\leq C\left( W\big(\nabla_{h_n}g_n(x',t_{m,n})\big) +\big|\frac{1}{h_n}\partial_3 g_n(x',t_{m,n})\big|^2\right)\leq C W\big(\nabla_{h_n}g_n(x',t_{m,n})\big)\,, $$
so that \eqref{ennivtmn} yields
\begin{multline}\label{step42}
 \frac{1}{m}\int_{-1/2}^{1/2} \frac{1}{\eps_n}W\big(\partial_1\hat g_n(x_1,t_{m,n}),0,0\big)+ \eps_n\big|\partial^2_1\hat g_n(x_1,t_{m,n})\big|^2\,dx_1 \\
 \leq  \frac{C}{m}\int_{Q\cap \{x_3=t_{m,n}\}} \frac{1}{\eps_n}W(\nabla_{h_n}g_n)+\eps_n\left|\nabla_{h_n}^2g_n\right|^2\,d\mathcal{H}^2 
  \leq \frac{C}{m}\,.
 \end{multline}
 Similarly, we infer from \eqref{ennivtmn} that 
 \begin{align}
\nonumber \int_{Q} |w_{m,n}- & u_0|^2+\big|\frac{1}{h_n}\partial_3 w_{m,n}\big|^2\,dx \\
\nonumber &\leq  
\begin{multlined}[t][11cm]
\int_{Q} |g_{n}-u_0|^2+\big|\frac{1}{h_n}\partial_3 g_{n}\big|^2\,dx 
+\int_{R^+_{m,n}} |w_{m,n}-u_0|^2+\big|\frac{1}{h_n}\partial_3 w_{m,n}\big|^2\,dx\\ 
+\frac{1}{m}\int_{-1/2}^{1/2}  |\hat g_{n}(x_1,t_{m,n})-\bar u_0(x_1)|^2\,dx_1
\end{multlined} \\
\label{step43} &\leq \int_{Q} |g_{n}-u_0|^2+\big|\frac{1}{h_n}\partial_3 g_{n}\big|^2\,dx 
+\int_{R^+_{m,n}} |w_{m,n}-u_0|^2+\big|\frac{1}{h_n}\partial_3 w_{m,n}\big|^2\,dx +\frac{C}{m}\,.
\end{align}
In view of \eqref{step41}, \eqref{step42}, \eqref{step43}, and Theorem~\ref{thm:compactness}, to prove \eqref{convnormwnm} and \eqref{limsupenwnm} it suffices to show that 
for every $m\in\NN$ large enough,  
\begin{equation}\label{step4vannorm}
\lim_{n\to\infty}\int_{R^+_{m,n}} |w_{m,n}-u_0|^2+\big|\frac{1}{h_n}\partial_3 w_{m,n}\big|^2\,dx=0\,,
\end{equation}
and
\begin{equation}\label{step4vanen}
\lim_{n\to\infty} F_{\eps_n}^{h_n}(w_{m,n},R^+_{m,n}) =0\,.
\end{equation}

We start with the proof of \eqref{step4vanen}. Writing for $x\in R^+_{m,n}$, 
$$\partial_1 w_{m,n}(x)= \partial_1 g_n(x)  +\big(1-\varphi_{m,n}(x_3)\big)\big(\partial_1\hat g_n(x_1,t_{m,n}) -\partial_1 g_n(x)\big)\,,$$
and 
$$ \frac{1}{h_n}\partial_3 w_{m,n}(x)= \frac{1}{h_n}\partial_3 g_n(x)  -\big(1-\varphi_{m,n}(x_3)\big) \frac{1}{h_n}\partial_3 g_n(x) 
+\frac{\varphi^\prime_{m,n}(x_3)}{h_n}\big(g_n(x)-\hat g_n(x_1,t_{m,n})\big)\,, $$
we derive from Lemma~\ref{rempot}, Lemma~\ref{borninfW}, and \eqref{esticutoffmn} that 
\begin{multline}\label{pointestiW}
W\big(\nabla_{h_n}w_{m,n}(x)\big)\leq C\bigg( W\big(\nabla_{h_n} g(x)\big) \\
+|\partial_1 g_n(x)-\partial_1\hat g_n(x_1,t_{m,n})|^2 
+\frac{m^2}{\eps_n^2}\big|g_n(x)-\hat g_n(x_1,t_{m,n})\big|^2\bigg) \,.
\end{multline}
Using Poincar\'e's inequality and \eqref{energstripR}, we estimate
\begin{equation}\label{step4poinc1}
\frac{m^2}{\eps_n^3}\int_{R^+_{m,n}} \big|g_n(x)-\hat  g_n(x_1,t_{m,n})\big|^2\,dx\leq C\frac{1}{\eps_n}\int_{R^+_{m,n}} \big|\frac{1}{h_n}\partial_3 g_n(x)\big|^2\,dx\leq \frac{C}{m} 
\frac{\eps_n}{h_n}\,,
\end{equation}
and 
\begin{equation}\label{step4poinc2}
\frac{1}{\eps_n}\int_{R^+_{m,n}} |\partial_1 g(x)-\partial_1\hat g_n(x_1,t_{m,n})|^2\, dx\leq C\frac{\eps_n}{m^2h_n^2}\int_{R^+_{m,n}}|\partial^2_{13} g(x)|^2\,dx\leq \frac{C}{m^3} \frac{\eps_n}{h_n}  \,.
\end{equation}
In view of \eqref{pointestiW}, we have thus obtained 
$$\frac{1}{\eps_n}\int_{R^+_{m,n}}W(\nabla_{h_n}w_{m,n})\,dx \leq C\left(\frac{1}{\eps_n}\int_{R^+_{m,n}}W(\nabla_{h_n}g_{n})\,dx + 
\frac{\eps_n}{h_n}\right)\leq C
\frac{\eps_n}{h_n}\,\mathop{\longrightarrow}\limits_{n\to\infty}0\,.$$
Then, straightforward computations using \eqref{esticutoffmn} yield
\begin{multline*}
\big|\nabla^2_{h_n}w_{m,n}(x)\big|^2\leq C\bigg(\big|\nabla^2_{h_n}g_{n}(x)\big|^2+
\big|\partial^2_1\hat g_{n}(x_1,t_{m,n})\big|^2+\frac{m^2}{\eps_n^2} \big|\partial_1 g_n(x)-\partial_1\hat g_n(x_1,t_{m,n}) \big|^2\\
+\frac{m^2}{\eps_n^2}\big|\frac{1}{h_n}\partial_{3} g_{n}(x)\big|^2+\frac{m^4}{\eps^4_n}\big|g_n(x)-\hat g_n(x_1,t_{m,n})\big|^2\bigg)\,.
\end{multline*}
Combining \eqref{energstripR}, \eqref{ennivtmn}, \eqref{step4poinc1}, and \eqref{step4poinc2}, we deduce that 
$$\eps_n\int_{R^+_{m,n}}\big|\nabla^2_{h_n}w_{m,n}\big|^2\,dx\leq C m \frac{\eps_n}{h_n}\,\mathop{\longrightarrow}\limits_{n\to\infty}0\,,$$
which completes the proof of \eqref{step4vanen}. 

Using   \eqref{energstripR}, \eqref{ennivtmn}, and \eqref{step4poinc1}, we finally estimate 
\begin{multline*}
\int_{R^+_{m,n}} |w_{m,n}-u_0|^2+\big|\frac{1}{h_n}\partial_3 w_{m,n}\big|^2\,dx\leq C\bigg(\int_{R^+_{m,n}}
 |g_{n}-u_0|^2+\big|\frac{1}{h_n}\partial_3 g_{n}\big|^2 \,dx \\
+ \frac{m^2}{\eps_n^2} \int_{R^+_{m,n}}\big|g_n(x)-\hat  g_n(x_1,t_{m,n})\big|^2 \,dx 
 +\frac{\eps_n}{h_n} \int_{Q\cap\{x_3=t_{m,n}\}} |g_n-u_0|^2\,d\mathcal{H}^2 \bigg) \leq C\frac{\eps_n}{h_n}\,,
 \end{multline*}
and \eqref{step4vannorm} is proved. 
\end{proof}

\begin{corollary}\label{prop:charK_perscvert}
Assume that $(H_1)-(H_4)$ and \eqref{orientationwellssc} hold with $p=2$, $\lambda=0$, and that $W(\xi',\xi_3)=W(\xi',-\xi_3)$ for every $\xi=(\xi',\xi_3)\in\mathbb{R}^{3\times 3}$. 
Then $K^\star_\infty\geq K_\infty\,$. 
\end{corollary}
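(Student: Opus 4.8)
The plan is to use the vertically periodic, laterally pinned sequences produced by Proposition \ref{transperiodvert}, and to turn one such sequence into a competitor for the variational problem defining $K_\infty$. Recall that in the supercritical formula the admissible fields $v\in C^2(\R^2;\R^3)$ are required to be $1$-periodic in the direction orthogonal to $\nu_\lambda$, to satisfy $\nabla v=(A_1,A_3)$ near $\{y\cdot\nu_\lambda=\ell/2\}$ and $\nabla v=(B_1,B_3)$ near $\{y\cdot\nu_\lambda=-\ell/2\}$, and the energy is $(1+\lambda^2)^{1/2}\int_{Q'_\lambda}\big(\ell\,\mathcal{W}(\nabla v)+\tfrac1\ell|\nabla^2 v|^2\big)\,dy$. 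Since here $\lambda=0$ we have $\nu_\lambda=e'_1$, $Q'_\lambda=Q'$, $b_0=0$, and $(A_1,A_3)=(a,A_3)$, $(B_1,B_3)=(-a,B_3)=(-a,-A_3)$, so the pinning conditions match exactly the properties $\nabla f_n=\nabla u_0$ in $\{|x_1|>1/4\}$ (which forces $\nabla f_n=(a,A_3)$ for $x_1>1/4$ and $\nabla f_n=(-a,-A_3)$ for $x_1<-1/4$, after possibly subtracting a constant on the negative side — note $f_n$ is already pinned to $u_0$ on both sides).

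The key steps are as follows. First, take the sequences $h_n\to0^+$, $\eps_n\to0^+$ with $h_n/\eps_n\to\infty$ and $\{f_n\}\subset C^2(\R^3;\R^3)$ given by Proposition \ref{transperiodvert}: $f_n(x)=\hat f_n(x_1,x_3)$ is independent of $x_2$, $1$-periodic in $x_3$, equals $u_0$ (up to constants) for $|x_1|>1/4$, and $\lim_n F^{h_n}_{\eps_n}(f_n,Q)=K^\star_\infty$. Second, perform the change of variables adapted to the definition of $K_\infty$: set $\ell_n:=h_n/\eps_n\to\infty$ and, for $y=(y_1,y_2)\in\R^2$, define
\begin{equation*}
v_n(y):=\frac{1}{\eps_n}\,\hat f_n\!\left(\eps_n y_1,\frac{\eps_n}{h_n}\,y_2\right).
\end{equation*}
Because $\hat f_n$ is $1$-periodic in its second argument, $v_n$ is $h_n/\eps_n$-periodic in $y_2$; rescaling $y_2$ by $\ell_n$ if desired makes it $1$-periodic, but in fact one should rather rescale so that the period in the $y_2$-direction becomes $1$: more precisely set $v_n(y):=\frac{1}{\eps_n}\hat f_n(\eps_n y_1,y_2)$ with $y_2$ ranging over a unit interval after using that $\partial_3$ enters the rescaled Hessian with a factor $1/h_n$ and $h_n x_3$ with $x_3\in I$ spans an interval of length $h_n$, which after division by $\eps_n$ has length $\ell_n$; then rescale $y_2\mapsto y_2/\ell_n$ to normalize. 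Third, compute: a direct (if slightly tedious) change-of-variables shows $\nabla v_n(y)=(\partial_1\hat f_n,\tfrac{1}{h_n}\partial_3\hat f_n)(\eps_ny_1,\cdot)$ at the appropriate points, so $\nabla v_n=(a,A_3)$ near $\{y_1=\ell_n/2\}$ and $\nabla v_n=(-a,-A_3)$ near $\{y_1=-\ell_n/2\}$, $v_n$ is $1$-periodic in $y_2$, and
\begin{equation*}
(1+\lambda^2)^{1/2}\int_{Q'}\Big(\ell_n\,\mathcal{W}(\nabla v_n)+\frac{1}{\ell_n}|\nabla^2 v_n|^2\Big)\,dy=F^{h_n}_{\eps_n}(f_n,Q),
\end{equation*}
using $(H_4)$–$(H_5)$ to replace $W(\nabla_{h_n}f_n)$ by $\mathcal{W}(\nabla f_n)$ (since $f_n$ is $x_2$-independent its "second column" of the scaled gradient vanishes) and $\lambda=0$ so $(1+\lambda^2)^{1/2}=1$. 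Fourth, since each $v_n$ (with $\ell=\ell_n$) is admissible in the infimum defining $K_\infty$, we get $K_\infty\le F^{h_n}_{\eps_n}(f_n,Q)$ for every $n$; letting $n\to\infty$ yields $K_\infty\le K^\star_\infty$, which is the claim.

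The main obstacle I expect is purely bookkeeping: getting the scaling of the $x_3$/$y_2$ variable exactly right so that (i) periodicity is $1$ in the correct direction, (ii) the factor $\ell$ versus $1/\ell$ lands on the right term ($\mathcal{W}$ versus $|\nabla^2 v|^2$), and (iii) the vertical pieces $\tfrac1{h_n}\partial_3$, $\tfrac1{h_n^2}\partial^2_{33}$ of $\nabla_{h_n}^2$ transform into the ordinary $\partial_2$, $\partial^2_{22}$ of $\nabla^2 v_n$ with no leftover powers of $h_n/\eps_n$. This is entirely analogous to the computation in Corollary \ref{prop:charK_per} (where the same mechanism produced $K^\star_\gamma\ge K_\gamma$), the only genuinely new input being that Proposition \ref{transperiodvert} supplies the vertical periodicity needed to make $v_n$ periodic in $y_2$ — without it one would only obtain a competitor on an unboundedly wide strip rather than on $Q'$ with periodic boundary conditions. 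Once the change of variables is set up correctly, no limiting or compactness argument is required: the inequality $K_\infty\le F^{h_n}_{\eps_n}(f_n,Q)$ holds termwise, and the conclusion follows by passing to the limit.
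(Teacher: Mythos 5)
Your overall strategy — take the $x_3$-periodic, $x_2$-independent, laterally pinned sequence $\{f_n\}$ from Proposition \ref{transperiodvert} and rescale it into a competitor for the infimum defining $K_\infty$ — is indeed the paper's strategy. But the rescaling you propose does not yield an admissible competitor, and the problem is not "pure bookkeeping": the situation is structurally different from Corollary \ref{prop:charK_per}. With $v_n(y):=\frac{1}{\eps_n}\hat f_n(\eps_ny_1,\frac{\eps_n}{h_n}y_2)$ and $\ell_n:=h_n/\eps_n$, the pinning $\nabla f_n=\nabla u_0$ on $\{|x_1|>1/4\}$ translates into $\nabla v_n=(\pm a,0)$ only on $\{|y_1|>1/(4\eps_n)\}$; since $\ell_n/2=h_n/(2\eps_n)\ll 1/(4\eps_n)$, the boundary condition near $\{y_1=\pm\ell_n/2\}$ fails, so $v_n$ is not admissible for the value of $\ell$ you chose. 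Furthermore $v_n$ has period $h_n/\eps_n$ in $y_2$, which is not a divisor of $1$, so the $1$-periodicity constraint is also violated; your two suggested repairs are either circular (rescaling $y_2\mapsto y_2/\ell_n$ brings you back to the same map) or destroy the factor $\tfrac{1}{h_n}$ in $\partial_2v_n$. The root of the difficulty is that in the formula for $K_\infty$ the integration domain is fixed to $Q'$ and $\ell$ is a free parameter, whereas in the critical formula \eqref{formulacrit} the domain $\ell I\times\gamma I$ itself scales with $\ell$; consequently you cannot dilate $y_1$ by $1/\eps_n$ here as Corollary \ref{prop:charK_per} does.

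The correct scaling keeps $y_1\approx x_1$ (factor close to $1$, not $1/\eps_n$) and takes $y_2\approx h_nx_3$, so that the transition layer lands inside $Q'$ while $\ell$ alone carries the divergence $1/\eps_n$. Set $N_n:=[1/h_n]$, $\rho_n:=1/(N_nh_n)\to 1$, $\ell_n:=1/(\rho_n\eps_n)$, and
$$v_n(y):=\rho_n\,\hat f_n\!\left(\frac{y_1}{\rho_n},\frac{y_2}{\rho_nh_n}\right).$$
Then $v_n$ has period $\rho_nh_n=1/N_n$ in $y_2$, which divides $1$ since $N_n\in\NN$ (this is the one genuinely nonobvious twist: rounding $1/h_n$ to an integer so the period fits); $\nabla v_n=(\pm a,0)$ on $\{|y_1|>\rho_n/4\}$ and in particular near $\{y_1=\pm\ell_n/2\}$; and a direct change of variables gives
$$\int_{Q'}\ell_n\,\mathcal{W}(\nabla v_n)+\frac{1}{\ell_n}|\nabla^2 v_n|^2\,dy=\rho_nh_n\,F^{h_n}_{\eps_n}(f_n,Q'\times N_nI)=F^{h_n}_{\eps_n}(f_n,Q)\,,$$
using $\rho_nh_nN_n=1$ together with the $1$-periodicity of $f_n$ in $x_3$ and the vanishing of the energy density on $\{|x_1|>1/4\}$. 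Letting $n\to\infty$ gives $K_\infty\leq\lim_n F^{h_n}_{\eps_n}(f_n,Q)=K^\star_\infty$. Without the $\approx 1$ scaling in $y_1$ and the integer rounding $N_n$, the argument does not close; these are the two concrete gaps in your sketch.
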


\begin{proof} 
We consider the sequences $h_n\to 0^+$, $\eps_n\to 0^+$, and $\{f_n\} \subset C^{2}(\R^3;\R^3)$ given by Proposition~\ref{transperiodvert}. We define 
$ N_n:=[\frac{1}{h_n}]$, $\rho_n:=\frac{1}{N_nh_n}$, and $\ell_n:=\frac{1}{\rho_n\eps_n}$ ($[\cdot]$ still denotes the integer part). 
Recalling that $f_n(x)=\hat f_n(x_1,x_3)$, we define for $y\in\R^2$, 
$$ v_n(y):=\rho_n\hat f_n\left(\frac{y_1}{\rho_n},\frac{y_2}{\rho_n h_n}\right)\,.$$
Then $v_n$ is $1/N_n$-periodic in the $y_2$-variable, and  
$\nabla v_n(y)=(\bar u_0(y_1),0)$ in  $\{|y_1| > \frac{\rho_n}{4}\}$. 
Since $v_n$ is $1/N_n$-periodic in $y_2$, and $N_n$ being an integer, we deduce that $v_n$ is also $1$-periodic in $y_2$. Moreover,  
since $\rho_n\to 1$, we have for $n$ large enough 
\begin{equation}\label{cpadutou}
\nabla v_n(y)=(\bar u_0(y_1),0)\text{ in }\{|y_1|>1/3\}\,.
\end{equation}
Hence, 
$$\int_{Q'} \ell_n\,\mathcal{W}(\nabla v_n)+\frac{1}{\ell_n}|\nabla^2 v_n|^2\,dy \geq K_\infty\,.$$
Changing variables, using \eqref{cpadutou} and the $1$-periodicity in $x_3$ of $f_n$, we compute
$$\int_{Q'} \ell_n\,\mathcal{W}(\nabla v_n)+\frac{1}{\ell_n}|\nabla^2 v_n|^2\,dy=\rho_n h_nF^{h_n}_{\eps_n}(f_n,Q'\times N_n I)
= F^{h_n}_{\eps_n}(f_n,Q)\mathop{\longrightarrow}\limits_{n\to\infty} K_\infty^\star\, ,$$
which completes the proof. 
\end{proof}

%
%

\subsection[]{The $\Gamma$-$\limsup$ inequality}

The next theorem provides the announced upper bound for the $\Gamma-\limsup$ of the functionals $\{F^{h}_{\eps}\}$ when $\eps\ll h$, 
and thus completing the proof of Theorem \ref{thm:gammalim_gammasup}.

\begin{theorem}\label{thm:RS_A'neqB'sc}
Assume that $(H_1)-(H_4)$ and \eqref{orientationwellssc} hold for some $\lambda\in\R$.  Let $\eps_n\to 0^+$ and $h_n\to 0^+$ be arbitrary 
sequences such that $h_n/\eps_n \to \infty$. Then, for every $(u,b) \in \mathscr{C}$,  
there exists a sequence $\{u_n\} \subset H^{2}(\Omega;\R^3)$ such that $u_n\to u$ in $W^{1,p}(\Omega;\R^3)$, $\frac{1}{h_n}\partial_3 u_n \to b$ in $L^p(\Omega;\R^3)$, and 
\begin{equation}\label{gamlimsupsupcrit}
\limsup_{n\to\infty}\, F^{h_n}_{\eps_n}(u_n) \leq  K_{\infty} \,\Per_{\omega}(E) \,, 
\end{equation}
where $(\nabla' u,b) (x)= \bigl(1-\chi_E(x')\bigr)A + \chi_E(x') B$.
\end{theorem}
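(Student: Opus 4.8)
The plan is to reduce the construction to the optimal profile problem defining $K_\infty$ by a three-scale approximation: first approximate a general $(u,b)\in\mathscr{C}$ by configurations with finitely many flat interfaces (for $A'\neq B'$) or smooth interfaces (for $A'=B'$) exactly as in Theorem~\ref{thm:RS_A'neqB'}, then glue, near each interface, a rescaled copy of a near-optimal periodic profile $v$ for $K_\infty$. The new feature compared to the critical regime is that the competitor $v$ in the formula for $K_\infty$ is $1$-periodic in the direction orthogonal to $\nu_\lambda$; since $\eps_n\ll h_n$, many periods of $v$ (of size $\eps_n$) fit inside the thin-film thickness $h_n$, so the periodic profile must be tiled in the $x_3$-variable. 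Concretely, given $\ell>0$ and $v\in C^2(\R^2;\R^3)$ admissible for $K_\infty$ with $\tfrac{1}{\ell(1+\lambda^2)^{1/2}}\int_{Q'_\lambda}\ell\,\mathcal{W}(\nabla v)+\tfrac{1}{\ell}|\nabla^2 v|^2\,dy\leq K_\infty+2^{-k}$ (after rescaling the cube), one sets, near an interface located at $\{x\cdot\nu_\lambda=\text{const}\}$ in the $(x_1,x_3)$-plane scaled by $h_n$, a function of the form $u_n(x)\approx \eps_n\, v\!\big(\tfrac{x_1}{\eps_n}+\lambda\tfrac{h_n x_3}{\eps_n},\,\tfrac{h_n x_3}{\eps_n}\big)$ wrapped to be $h_n$-periodic in $x_3$ — i.e. one reinterprets the vertical variable $x_3\in h_nI$ as the periodicity variable for $v$, using $h_n/\eps_n\to\infty$ so that $[h_n/\eps_n]$ full periods are used and the truncation error at the top/bottom faces is negligible (this is precisely the mechanism isolated in Proposition~\ref{transperiodvert}). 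The factor $(1+\lambda^2)^{1/2}$ appears because the connection direction is $\nu_\lambda$ rather than $e_1$, so the effective interface width along $x_1$ is stretched accordingly.

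\textbf{Key steps, in order.} (1) \emph{Reduction to finitely many planar interfaces when $A'\neq B'$}: by Theorem~\ref{thm:BJ}, $u=\bar u(x_1)$, $b=\bar b(x_1)$ with $(\bar u',\bar b)\in BV(\{(a,A_3),(-a,-A_3)\})$; truncate the (possibly infinite) family of interfaces to the first $m$, incurring an error controlled by $\Per_\omega(E)-\sum_{i=1}^m\mathcal H^1(J_i)$, exactly as in Theorem~\ref{thm:RS_A'neqB'}, Steps~1--2. (2) \emph{Interface layer construction}: near each $\alpha_i$, insert a strip of width $O(\eps_n)$ in which $u_n$ equals an affinely-shifted, vertically-tiled rescaling of the near-optimal $v$; choosing $v$ so that $\nabla v=(A_1,A_3)$ resp.\ $(B_1,B_3)$ near the two faces $\{y\cdot\nu_\lambda=\pm\ell/2\}$ guarantees $H^2$-matching with the bulk affine pieces, after adding the cumulative constants $\beta_i^n,\kappa_i$ as in Theorem~\ref{thm:RS_A'neqB'}. (3) \emph{Energy estimate}: change variables $y=(\tfrac{x_1-\alpha_i}{\eps_n}+\lambda\tfrac{h_n x_3}{\eps_n},\tfrac{h_n x_3}{\eps_n})$ and use Fubini plus the periodicity of $v$ to get, per interface, $F^{h_n}_{\eps_n}\leq \mathcal H^1(J_i')\big((1+\lambda^2)^{1/2}\int_{Q'_\lambda}\ell\,\mathcal W(\nabla v)+\tfrac1\ell|\nabla^2 v|^2\big)+o(1)$, summing to $K_\infty\Per_\omega(E)+C_0 2^{-k}$; the discrepancy between $[h_n/\eps_n]$ and $h_n/\eps_n$ and the top/bottom truncation contribute $o(1)$ since $\eps_n/h_n\to0$. (4) \emph{Convergence}: $u_n\to u$ in $W^{1,p}$ and $\tfrac1{h_n}\partial_3 u_n\to b$ in $L^p$ because $v$, $\nabla v$ are bounded and the layers shrink. (5) \emph{Diagonalization}: let $k=k_n\to\infty$ slowly, extract $u_n:=u_{n,k_n}$ as in Theorem~\ref{thm:RS_A'neqB'}, Step~1, to conclude \eqref{gamlimsupsupcrit}. (6) \emph{Case $A'=B'$}: then $u\equiv 0$ and $b=(1-\chi_E)A_3+\chi_E B_3$; approximate $E$ by smooth sets $E_k$ with $\mathcal H^1(\partial E_k\cap\overline\omega)\to\Per_\omega(E)$ (Lemma~4.3 of \cite{AFMT}), use the signed distance $d_k$ as in Theorem~\ref{thm:RS_A'neqB'}, Step~3, and set $u_n(x)=\eps_n v(d_k(x')/\eps_n + \text{(tiling in }x_3),\,h_nx_3/\eps_n)$ wrapped $h_n$-periodically; the Coarea Formula plus upper semicontinuity of $t\mapsto\mathcal H^1(\mathcal M^k_t\cap\overline\omega)$ and $h_n/\eps_n\to\infty$ give the bound $(K_\infty+2^{-k})\mathcal H^1(\mathcal M^k\cap\overline\omega)$, and another diagonalization finishes.

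\textbf{Main obstacle.} The delicate point is step (2)--(3): arranging the vertical tiling so that the glued field is genuinely $C^2$ (hence in $H^2$) across the horizontal faces $\{x_3=\pm h_n/2\}$ and across the top/bottom of each tiling period, \emph{without} spoiling the energy bound. Because $h_n/\eps_n$ need not be an integer, the naive $h_n$-periodic wrapping of $v$ leaves a ``remainder strip'' near the faces; as in the proof of Proposition~\ref{transperiodvert}, one must select, via an averaging/pigeonhole argument over $[h_n/\eps_n]$ sub-strips, a good level $t_{m,n}$ where the slice energy is $O(1)$, then damp $v$ to a constant profile there using a cut-off whose derivatives are controlled by $(\eps_n/h_n)^{-1}$-independent constants after the rescaling; Lemma~\ref{borninfW} (valid only for $\lambda=0$) would make this transparent, but for general $\lambda$ one instead exploits that the connection is rank-one so the one-dimensional profile $w_n$ from \cite{CFL} — composed with $x\mapsto x_1+\lambda h_n x_3$ — already furnishes an admissible $v$, and the tiling is along level sets of $x_1+\lambda h_n x_3$, which requires tracking the geometric factor $(1+\lambda^2)^{1/2}$ carefully through every change of variables. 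Handling this gluing cleanly, together with the interaction between the $x_2$-width intervals $J_i'$ and the convexity of $\omega$ (so that \eqref{bandeconvex} still applies), is where the real work lies; the rest is bookkeeping parallel to Sections~4 and~5.
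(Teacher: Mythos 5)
Your overall outline for the case $A'\neq B'$ is correct and matches the paper: reduce to finitely many planar interfaces via Theorem~\ref{thm:BJ}, insert a rescaled near-optimal periodic competitor $v$ for $K_\infty$ in a thin strip near each interface with the affine bulk matched via the constants $\beta_i^n,\kappa_i$, estimate by a change of variables, Fubini, and periodicity, then diagonalize. Two remarks, the second of which points to a substantive misconception.

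First, step (6) never arises: under \eqref{orientationwellssc} one has $A'=-B'=a\otimes e_1'$ with $a\neq 0$, so $A'\neq B'$ always. Also note your change of variables $y=\big(\tfrac{x_1-\alpha_i}{\eps_n}+\lambda\tfrac{h_n x_3}{\eps_n},\tfrac{h_n x_3}{\eps_n}\big)$ double-counts the $\lambda h_n x_3$ term; the correct substitution is $y=\big(\pm(x_1-\alpha_i)/(\ell\eps_n),\,\pm h_n x_3/(\ell\eps_n)\big)$, after which $y\cdot\nu_\lambda$ is proportional to $(x_1-\alpha_i)+\lambda h_n x_3$, i.e.\ to $x\cdot\nu_n$, as it should be.

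Second, and more importantly, your \emph{Main obstacle} paragraph identifies a problem that does not exist and invokes the wrong tool to solve it. There is no $C^2$-gluing issue at the horizontal faces $\{x_3=\pm\tfrac12\}$: the thin-film domain is a product, not a torus, so the recovery sequence is simply the smooth function $\eps_n v$ evaluated at a scaled affine function of $x$ inside each tilted strip, and it is automatically $H^2$; the only matching conditions are at the lateral strip boundaries $\{x\cdot\nu_n=\alpha^n_{i\pm}\}$, handled by the affine behavior of $v$ near $\{y\cdot\nu_\lambda=\pm\ell/2\}$ as in \eqref{bdlayersc}--\eqref{bdlayer+sc}. The periodicity of $v$ enters only through the energy estimate: after Fubini, each slice $\Theta_i^n\cap\{y\cdot\nu_\lambda=t\}$ is covered by at most $N_n=[h_n\sqrt{1+\lambda^2}/(\ell\eps_n)]+1$ periods, and $\ell\eps_n N_n/(h_n\sqrt{1+\lambda^2})\to 1$ absorbs the ``$+1$''. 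No pigeonhole selection of a good horizontal slice, no damping cut-off, and no remainder strip are needed. You also cite Proposition~\ref{transperiodvert} as ``the mechanism'' here, but that proposition belongs to the \emph{lower bound} side: it modifies an optimal sequence for $K_\infty^\star$ to be periodic in $x_3$ so as to produce a competitor for $K_\infty$ (yielding $K^\star_\infty\geq K_\infty$ in Corollary~\ref{prop:charK_perscvert}), and it is available only under the extra hypotheses $p=2$, $\lambda=0$, symmetry of $W$ in $\xi_3$. The $\Gamma$-$\limsup$ construction is simpler and requires none of this machinery. If you were to implement the pigeonhole-plus-cut-off scheme you describe, you would be doing a harder argument than necessary (and it would need Lemma~\ref{borninfW}, hence $\lambda=0$), when in fact the direct construction works for every $\lambda$.
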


\begin{proof}
We first introduce some useful notation. For a given a sequence $h_n\to0^+$, we define
$$\nu_n:=\frac{e_1+\lambda h_n e_3}{\sqrt{1+\lambda^2h_n^2}}\in\mathbb{S}^2 \quad\text{and}\quad \nu^\perp_n:=\frac{-\lambda h_n e_1 +e_3}{\sqrt{1+\lambda^2h_n^2}}\in\mathbb{S}^2\,. $$
We recall that $Q^\prime_{\lambda}$ denotes the unit cube of $\R^2$ centered at the origin 
with two faces orthogonal to the unit vector $\nu_\lambda=\frac{1}{\sqrt{1+\lambda^2}}(1,\lambda)$. 
\vskip3pt

By Theorem \ref{thm:BJ} and \eqref{orientationwellssc},  $\partial^\ast E\cap \omega$ is of the form \eqref{structredbdAdiffB}. We assume 
that $\partial^\ast E\cap \omega$ is made by finitely many interfaces, {\it i.e.}, $\mathscr{I}=\{1,\ldots,m\}$ in \eqref{structredbdAdiffB}. 
The proof for infinitely many interfaces follows from a diagonalization argument as in  the proof of Theorem \ref{thm:RS_A'neqB'}. 
Then 
$u(x)=\bar u(x_1) $ for some function $\bar u$ that we may assume to be as in the proof of Theorem~\ref{thm:RS_A'neqB'}, Step 1 (we refer to it for the notation). Then   
\eqref{orientationwellssc} yields~$b(x)=\lambda \bar u'(x_1)$.

Let us now consider for each $k\in\NN$, some $\ell_k>0$ and some function $v_k\in C^{2}(\R^2;\R^3)$ 1-periodic in the direction $\nu_\lambda^\perp:=\frac{1}{\sqrt{1+\lambda^2}}(-\lambda,1)$, satisfying 
  $\nabla v_k(y)=\pm(a,\lambda a)$ nearby $\{y\cdot\nu_\lambda=\pm1/2\}$ respectively, and such that 
$$\int_{Q'_{\lambda}}\ell_k\mathcal{W}(\nabla v_k)+\frac{1}{\ell_k}|\nabla^2v_k|^2\,dy\leq \frac{K_\infty + 2^{-k}}{\sqrt{1+\lambda^2}}\,. $$ 
Without loss of generality we may assume that 
\begin{equation}\label{fixbdvksc}
v_k(y)=\begin{cases}
\ds \sqrt{1+\lambda^2}\, (y\cdot\nu_\lambda)a+c_k & \text{nearby $\{y\cdot\nu_\lambda=1/2\}$}\,,\\[5pt]
\ds -\sqrt{1+\lambda^2}\, (y\cdot\nu_\lambda)a-c_k &\text{ nearby $\{y\cdot\nu_\lambda=-1/2\}$}\,,  
\end{cases}
\end{equation}
for some constant $c_k\in\R^3$. From now on we drop the subscript $k$ for simplicity.

Let $\eps_n\to0^+$ and $h_n\to 0^+$ be  arbitrary sequences such that $h_n/\eps_n\to+\infty$. 
Again we  choose for each index $i=1,\ldots,m$, an  bounded open interval $J'_i\subset \R$ such that $J_i\subset\!\subset J'_i$ and $\mathcal{H}^1(J'_i\setminus J_i)\leq 2^{-k}$. 
We write 
$$\alpha^n_{i\pm}:=\frac{1}{\sqrt{1+\lambda^2h_n^2}}\left(\alpha_i\pm\frac{\ell \eps_n\sqrt{1+\lambda^2}}{2}\right)\,, $$
and we consider integers $n$ large enough in such a way that $\alpha^n_{i+}<\alpha^n_{(i+1)-}$ for every $i$, and  for which~\eqref{bandeconvex} holds.  
We define the transition layers as follows: 
for $i=1,\ldots,m$ and for $x\in \R^3$, we set 
$$w^i_{n}(x):=(-1)^{i+1}v\left((-1)^{i+1}\frac{x_1-\alpha_{i}}{\ell\eps_n}\,, (-1)^{i+1}\frac{h_n x_3}{\ell \eps_n}\right)
+\left(1+(-1)^{i}\right)\left(\frac{1}{2}\sqrt{1+\lambda^2}\,a+c \right)\,.$$
Then \eqref{fixbdvksc} yields 
\begin{equation}\label{bdlayersc}
w_{n}^i(x)=
\frac{1}{2}\sqrt{1+\lambda^2}\,a -(-1)^{i+1} c \quad\text{ on } \left\{x\cdot\nu_n=\alpha^n_{i-}\right\}\,,
\end{equation}
and
\begin{equation}\label{bdlayer+sc}
w_{n}^i(x)=\left(\frac{1}{2}\sqrt{1+\lambda^2}\,a +(-1)^{i+1} c \right)+2(1+(-1)^i)c \quad\text{ on } \left\{x\cdot\nu_n=\alpha^n_{i+}\right\}\,.
\end{equation}
Setting
$$\beta^n_i:=\sum_{j=1}^i\bar u\left(\alpha^n_{j+}\sqrt{1+\lambda^2h_n^2}\right)-\bar u\left(\alpha^n_{j-}\sqrt{1+\lambda^2h_n^2}\right)\,,$$
with $\beta_0^n:=0$ and $\kappa_i$ as in \eqref{notgamsupbis}, we define for $n$ large enough and $x\in\Omega$, 
$$u_{n}(x):= \begin{cases}
\bar u(x_1+\lambda h_nx_3) +\ell \eps_n\big(\frac{a}{2}\sqrt{1+\lambda^2}-c\big) & \text{for } x\cdot\nu_n\leq \alpha^n_{1-}\,,\\[8pt]
\ds  \bar u\left(\alpha^n_{i-}\sqrt{1+\lambda^2h_n^2}\right)-\beta^n_{i-1}+\ell \eps_n\big(w^i_{n}(x)+\kappa_{i-1}c \big)
& \text{for } \alpha^n_{i-}< x\cdot\nu_n< \alpha^n_{i+}\,,\\[8pt]
 \bar u(x_1+\lambda h_nx_3)-\beta^n_{i} +\ell \eps_n\big(\frac{a}{2}\sqrt{1+\lambda^2}+((-1)^{i+1}+\kappa_i)c \big)&\text{for } 
\alpha^n_{i+}\leq x\cdot\nu_n\leq \alpha^n_{(i+1)-}\,,\\[8pt]
 \bar u(x_1+\lambda h_n x_3)-\beta^n_{m} +\ell \eps_n\big(\frac{a}{2}\sqrt{1+\lambda^2}+((-1)^{m+1}+\kappa_m)c \big) &
 \text{for } x\cdot\nu_n\geq \alpha^n_{m+}\,.
\end{cases}$$
Using \eqref{bdlayersc}-\eqref{bdlayer+sc} one may check that $u_{n}$ and $\nabla u_{n}$ are continuous across  
each interface $\{x\cdot\nu_n= \alpha^n_{i\pm}\}$, and thus  $u_{n}\in H^2(\Omega;\R^3)$. In addition  $\partial_2 u_{n}\equiv 0$, and 
\begin{equation}\label{structgradresclsupcrit}
\left(\partial_1 u_{n},\frac{1}{h_n}\partial_3 u_{n}\right)(x)=\begin{cases}
\ds \nabla v \left((-1)^{i+1}\frac{x_1-\alpha_{i}}{\ell \eps_n}\,, (-1)^{i+1}\frac{h_nx_3}{\ell \eps_n}\right) &
 \text{for }\alpha^n_{i-}< x\cdot\nu_n< \alpha^n_{i+}\,,\\[8pt]
\ds \big(\bar u'(x_1+\lambda h_n x_3),\lambda \bar u'(x_1+\lambda h_nx_3)\big) & \text{otherwise}\,.
\end{cases}
\end{equation}
Then one observes that the maps $x\in\Omega\mapsto \bar u(x_1+\lambda h_nx_3) $ and $x\in\Omega\mapsto \bar u'(x_1+\lambda h_nx_3) $ converge to $u$ and $b$ in 
$W^{1,p}(\Omega;\R^3)$ and in $L^p(\Omega;\R^3)$ respectively as $n\to\infty$ (here we also use the fact that $b=\lambda \bar u'$). On the other hand,  $v$ and $\nabla v $ are 
bounded in $\{|y\cdot\nu_\lambda|\leq 1/2\}$ by periodicity in the direction~$\nu_{\lambda}^\perp$, and $|\beta_i^n|\leq C\eps_n$ for a constant $C$ independent of $i$ and $n$ by the Lipschitz continuity of~$\bar u$. 
Hence  $u_{n}\to u$ in $W^{1,p}(\Omega;\R^3)$ and $\frac{1}{h_n}\partial_3u_{n}\to b$ in $L^p(\Omega;\R^3)$.

By \eqref{bandeconvex} we have for $n$ large, 
$$\Omega\cap\left\{\alpha^n_{i-}< x\cdot\nu_n< \alpha^n_{i+}\right\}\subset \left\{x\in\R^3 : \alpha^n_{i-}< x\cdot\nu_n< \alpha^n_{i+}\,,\,|x_3|<1/2\,,\,x_2\in J'_i\right\}=:\Omega_i^n \,,$$
Using  \eqref{structgradresclsupcrit} we estimate for $n$ large enough, 
\begin{equation} \label{limsupapproxsc}
F^{h_n}_{\eps_n}(u_{n})
\leq \sum_{i=1}^mF^{h_n}_{\eps_n}\left(\ell \eps_n w^i_{n},\Omega_i^n\right)\,,
\end{equation}
and it remains to estimate each term of the sum in the right-hand side of \eqref{limsupapproxsc}. 

Changing variables, one obtains
\begin{equation}\label{changevarsc}
F^{h_n}_{\eps_n}\left(\ell \eps_n w^i_{n},\Omega_i^n\right)=\frac{\ell \eps_n}{h_n}\mathcal{H}^1(J'_i)\int_{\Theta_i^n}\ell \,\mathcal{W}(\nabla v)+\frac{1}{\ell }\left|\nabla^2v \right|^2\,dy\,,
\end{equation}
where $\Theta_i^n:=\{y\in\R^2: |y\cdot\nu_\lambda|<1/2\,,\,|y_2|<h_n/(2\ell \eps_n)\}$. Notice that for every $t\in(-\frac{1}{2},\frac{1}{2})$, we have
\begin{multline*}
\Theta_i^n\cap \{y\cdot\nu_\lambda=t\}=\{y\cdot\nu_\lambda=t\}\cap\left\{|y\cdot\nu^\perp_\lambda+\lambda t|<\frac{h_n\sqrt{1+\lambda^2}}{2\ell \eps_n}\right\} \\
\subset \{y\cdot\nu_\lambda=t\}\cap\left\{|y\cdot\nu^\perp_\lambda+\lambda t|<\frac{N_n}{2}\right\} \,,
\end{multline*}
with $N_n:=\left[\frac{h_n\sqrt{1+\lambda^2}}{\ell \eps_n}\right]+1$. 
Using Fubini's theorem and the periodicity of $v$, we estimate
\begin{align}
\nonumber \int_{\Theta_i^n}\ell \,\mathcal{W}(\nabla v )+\frac{1}{\ell}\left|\nabla^2v \right|^2\,dy & =\int_{-\frac{1}{2}}^{\frac{1}{2}}\left(\int_{\Theta_i^n\cap \{y\cdot\nu_\lambda=t\}}
\ell \,\mathcal{W}(\nabla v )+\frac{1}{\ell }\left|\nabla^2v \right|^2\,d\mathcal{H}^1\right)\,dt \\
\nonumber& \leq \int_{-\frac{1}{2}}^{\frac{1}{2}}\left(\int_{ \{y\cdot\nu_\lambda=t\}\cap\left\{|y\cdot\nu^\perp_\lambda+\lambda t|<\frac{N_n}{2}\right\}  }
\ell\, \mathcal{W}(\nabla v )+\frac{1}{\ell}\left|\nabla^2v\right|^2\,d\mathcal{H}^1\right)\,dt \\
\label{estimcubesup}& \leq N_n \int_{Q'_{\lambda}}\ell \,\mathcal{W}(\nabla v )+\frac{1}{\ell }\left|\nabla^2v \right|^2\,dy \,. 
\end{align}
Combining \eqref{changevarsc} with \eqref{estimcubesup} yields 
$$F^{h_n}_{\eps_n}\left(\ell \eps_n w^i_{n},\Omega_i^n\right)\leq \frac{\ell \eps_nN_n}{h_n\sqrt{1+\lambda^2}}\left(K_\infty+2^{-k}\right)\mathcal{H}^1(J'_i)\,.$$
Summing up over $i$ this last inequality, and passing to the limit $n\to+\infty$ in \eqref{limsupapproxsc} leads to 
$$\limsup_{n\to\infty}F^{h_n}_{\eps_n}(u_{n})\leq K_\infty  \,\Per_{\omega}(E) +C_02^{-k}\,,$$
for a constant $C_0$ independent of $k$. Then the conclusion follows 
for a suitable diagonal sequence as already pursued in the proof of Theorem~\ref{thm:RS_A'neqB'}, Step 3.  
\end{proof}

\begin{remark}\label{sepscalessc}
Let us consider an arbitrary sequence $\eps_n\to 0^+$ and $h>0$ fixed, and assume for simplicity that $W(\xi) =\dist\big(\xi,\{A,B\}\big)^p$. If \eqref{orientationwellssc} holds, we can apply the results in \cite{CFL} to infer that the functionals  $\{{\bf E}_{\eps_n}(\ \cdot\ ,\Omega_h)\}$ (defined in \eqref{mod1}) $\Gamma$-converge 
for the strong $L^1(\Omega_h)$-topology to 
$$
{\bf E}_{0}({\bf u},\Omega_h):=\begin{cases}
K_* {\rm Per}_{\Omega_h}(\{\grad {\bf u}=B\}) & \text{if ${\bf u}\in W^{1,1}(\Omega_h;\R^3)$, and $\nabla{\bf u}\in BV(\Omega_h;\{A,B\})$}\,,\\
+\infty & \text{otherwise}\,,
\end{cases}
$$
with $K_*=K_\infty/ \sqrt{1+\lambda^2} \,$. 
Let us now consider the rescaling $u(x)={\bf u}(x_1,x_2,x_3/h)$, and define the functional $F^h_0:L^1(\Omega;\mathbb{R}^3)\to [0,\infty]$ by 
$$F^h_0(u):=\frac{1}{h}{\bf E}_{0}({\bf u},\Omega_h) \,.$$
By \cite{BJ,CFL} (see also Theorem~\ref{thm:BJ}), if ${\bf u}$ has finite energy, then the set $F:=\{\grad {\bf u}=B\}$ is layered perpendicularly to the vector $\nu_\lambda$. Setting $E:=F\cap \omega$, we  easily obtain  
that ${\rm Per}_{\Omega_h}(F)=h\sqrt{1+\lambda^2}{\rm Per}_{\omega}(E)+o(h)$, and thus 
$$
F_0^h(u)= K_\infty {\rm Per}_{\omega}(E)+o(1)\,.
$$
It is then straightforward to show that the family $\{F_0^h\}$ $\Gamma$-converges for the strong $L^1$-topology to $\mathscr{F}_\infty$ as $h\to 0^+$.   

%
\end{remark}

%
%

\subsection[]{Some rigidity properties}

For $\eps\ll h$ we expect  the thin film to behave like a three dimensional sample by separation of scales, so that sequences with uniformly bounded energy 
should have trivial limits under suitable assumptions on $A$ and $B$. The first situation we consider is when $A'=B'$ (so $A$ and $B$ are rank-one connected). Indeed, in this case if we first perform the asymptotic $\eps\to0$
(see Remark~\ref{sepscalessc}), the 
limiting configurations $u$ with finite energy must satisfy $\nabla u=\chi_{K}(x_3)A+(1-\chi_{K}(x_3))B$ for some finite set $K\subset I$, and the $\Gamma$-limit  is proportional to $\frac{1}{h}{\rm Card}(K)\mathcal{L}^2(\omega)$, see \cite{CFL}. 
This latter energy can be bounded with respect to $h$ only if ${\rm Card}(K)=0$ for $h$ small, and it formally explain the expected rigidity effect. We have rigorously proved this fact only in the case where 
$\eps$ is sufficiently small relative to $h$ as stated in the following theorem.

\begin{theorem}\label{rigidrank}
Assume $(H_1)-(H_3)$ and \eqref{orientationwells} hold with $A'=B'$. Let $h_n \to 0^+$ and $\eps_n \to 0^+$ be arbitrary sequences such that $\sup_n \eps_n/h_n^p < \infty$. 
Then, for any  $\{u_n\} \subset H^{2}(\Omega;\R^3)$ such that $\sup_n F_{\eps_n}^{h_n}(u_n) < \infty$, 
there exist a subsequence (not relabeled) and $\xi_0 \in \{A,B\}$ such that
$\grad_{h_n} u_n \to \xi_0$ in $L^p(\Omega;\R^{3\times 3})$.
\end{theorem}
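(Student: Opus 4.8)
The plan is to combine the compactness already proved in Theorem~\ref{thm:compactness} with a rigidity argument for gradient Young measures supported on the incompatible wells, exploiting the smallness of $\eps_n$ relative to $h_n^p$ to transfer the three-dimensional rigidity of \cite{BJ} to the rescaled gradients $\nabla_{h_n}u_n$. First I would apply Theorem~\ref{thm:compactness}: up to a subsequence, $u_n-\med_\Omega u_n\to u$ in $W^{1,p}(\Omega;\R^3)$, $\frac{1}{h_n}\partial_3u_n\to b$ in $L^p(\Omega;\R^3)$, and $(u,b)\in\mathscr{C}$, so that $(\nabla'u,b)=(1-\chi_E)A+\chi_E B$ for a set $E\subset\omega$ of finite perimeter, with $\partial_3 u=\partial_3 b=0$. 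Since here $A'=B'$ (so $A'=B'=0$ under \eqref{orientationwells}), the horizontal part gives no information and all the content is in $b$; by Theorem~\ref{thm:BJ}, Step~2, $b(x)=b(x')$ with $b=(1-\chi_E)A_3+\chi_E B_3$. The claim amounts to showing $E$ is trivial, i.e. $\mathcal{L}^2(E)\in\{0,\mathcal{L}^2(\omega)\}$, which would force $b\equiv A_3$ or $b\equiv B_3$ and hence $\nabla_{h_n}u_n\to\xi_0$ strongly in $L^p$ (strong convergence of the full rescaled gradient already follows from the Young measure argument in Step~3 of the compactness proof, once one knows the limit is constant).

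The heart of the matter is to rule out a nontrivial interface. The key point is that $\eps_n\leq C h_n^p$ controls the \emph{unscaled} Hessian: indeed $\int_\Omega|\nabla^2 u_n|^2\,dx \leq \int_\Omega|\nabla_{h_n}^2 u_n|^2\,dx \leq \eps_n^{-1} F_{\eps_n}^{h_n}(u_n)\leq C\eps_n^{-1}$, which is too weak, but the mixed and horizontal second derivatives are better: $\int_\Omega|\partial^2_{ij}u_n|^2 \leq h_n^{k} \int_\Omega |\nabla_{h_n}^2u_n|^2$ where $k\in\{0,2,4\}$ according to how many indices equal $3$. I would instead argue at the level of the rescaled maps. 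Consider $v_n(x):=u_n(x_1,x_2,h_nx_3)$ on the unscaled thin domain $\Omega_{h_n}$ — or better, keep working on $\Omega$ and observe that $\nabla_{h_n}u_n = \nabla v_n$ in the natural identification, so $W(\nabla v_n)\to 0$ in $L^1$ and $\{\nabla v_n\}$ is bounded in $L^p$ and generates a gradient Young measure supported in $\{A,B\}$. Because $A-B$ has rank $>1$ (as $A'=B'$ but $A_3\neq B_3$, the difference $A-B=(0,0,A_3-B_3)$ has rank one... ) — wait: $A-B = \big(0,\,A_3-B_3\big)$ in the $\R^{3\times2}\times\R^3$ splitting, so $\mathrm{rank}(A-B)=1$, hence $A$ and $B$ \emph{are} rank-one connected in the bulk. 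The rigidity here is therefore not the classical incompatibility rigidity but a \emph{thin-film} one: the rank-one direction is purely vertical ($e_3$), and a genuine bulk interface normal to $e_3$ would cost, after rescaling, an energy blowing up like $h_n^{-1}$ unless absent.

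Accordingly I would make this precise by a slicing-in-$x_3$ estimate. For $\mathcal{L}^1$-a.e. $x_3$, the slice satisfies a Modica–Mortola type lower bound in the $x_3$-variable: using $(H_2)$–$(H_3)$ and Lemma~\ref{rempot}, the quantity $\int_I \frac{1}{\eps_n}W(\nabla_{h_n}u_n)+\eps_n\frac{1}{h_n^4}|\partial^2_{33}u_n|^2\,dx_3$ along vertical fibers controls, via Young's inequality and the change of variable $t=x_3/h_n$ conceptually, a number of transitions of $\frac{1}{h_n}\partial_3 u_n$ between neighborhoods of $A_3$ and $B_3$; each such transition contributes at least $c\,h_n^{-1}$ (because the available coefficient on $|\partial^2_{33}u_n|^2$ is $\eps_n/h_n^4$ while the well-distance term carries $1/\eps_n$, giving a geodesic cost $\sim 1/h_n^2$ per unit $x_3$-length times the $O(h_n)$ layer width... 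I would track the exact power). Since $F_{\eps_n}^{h_n}(u_n)$ is bounded and $h_n\to0$, for $n$ large there can be no such transition, so on a.e. vertical fiber $\frac{1}{h_n}\partial_3 u_n$ stays near a single well; by continuity of the limit and $\partial_3 b=0$ this well is the same for a.e. fiber, giving $b$ constant and the result. The main obstacle I expect is getting the bookkeeping of powers of $h_n$ and $\eps_n$ right in this vertical slicing so that the per-interface cost genuinely blows up under the hypothesis $\eps_n/h_n^p$ bounded (with $p\geq2$); this is where the precise exponent $p$ in the hypothesis enters and must be matched against the $1/h_n^2$ scaling of the Hessian penalty, and it is the reason the theorem is only stated for $\eps_n$ small relative to $h_n^p$ rather than in the full supercritical regime.
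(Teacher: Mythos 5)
Your argument has a genuine gap at its final step, and the mechanism you invoke is not the one the hypothesis $\sup_n \eps_n/h_n^p<\infty$ actually activates. Your vertical-slicing estimate, as sketched, rules out transitions of $\frac{1}{h_n}\partial_3 u_n$ along vertical fibers -- but those are already absent in the limit, since $(u,b)\in\mathscr{C}$ forces $\partial_3 b=0$. The thing that must be ruled out is a \emph{horizontal} interface, i.e. a jump of $b$ across a curve in $\omega$. When you write ``by continuity of the limit and $\partial_3 b=0$ this well is the same for a.e.\ fiber'', you are asserting exactly what needs proof: $b$ is only in $BV(\omega;\{A_3,B_3\})$, not continuous, and nothing in the per-fiber Modica--Mortola estimate prevents neighboring fibers from selecting different wells. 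You also misattribute the role of the exponent $p$: it has nothing to do with matching the $1/h_n^2$ scaling of the Hessian penalty.

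The paper's proof is a short regularity bootstrap that does address the horizontal direction. Since $A'=B'=0$ under \eqref{orientationwells}, Lemma~\ref{rempot} gives $W(\xi)\ge \frac{1}{C_*}(|\xi'|^p+\min\{|\xi_3-A_3|^p,|\xi_3-B_3|^p\})$, so $\int_\Omega|\nabla' u_n|^p\le C\eps_n$. Setting $v_n:=u_n/h_n$, this becomes $\int_\Omega|\nabla' v_n|^p\le C\,\eps_n/h_n^p\le C'$, which is precisely where the hypothesis enters, and $\partial_3 v_n=\frac{1}{h_n}\partial_3 u_n$ is already bounded and converges strongly to $b$. Thus $v_n-\med_\Omega v_n$ is bounded in $W^{1,p}$ and (up to a subsequence) converges weakly to some $v\in W^{1,p}(\Omega;\R^3)$ with $\partial_3 v=b(x')$. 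Hence $v(x)=b(x')x_3+w(x')$ with $w=\int_I v\,dx_3\in W^{1,p}(\omega;\R^3)$, so $b(x')x_3=v-w\in W^{1,p}$, which forces $b\in W^{1,p}(\omega;\{A_3,B_3\})$; a Sobolev function with values in a two-point set on a connected domain is constant. This is the mechanism you were groping for but did not capture: the smallness of $\eps_n$ relative to $h_n^p$ promotes $b$ from $BV$ to $W^{1,p}$ \emph{in the horizontal variables}, annihilating any interface of positive length in one stroke. A fiber-by-fiber Modica--Mortola estimate cannot replace this, because it never compares different fibers.
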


\begin{proof} By Theorem~\ref{thm:compactness}, we can find a subsequence such that $\nabla_{h_n}u\to (\nabla'u,b)$  in $L^p(\Omega;\R^{3\times 3})$ 
for some $(u,b)\in\mathscr{C}$. Since $A'=B'$, $\nabla'u$ is constant, and we only have to prove that $b$ is constant. 

By \eqref{orientationwells} we have $A'=B'=0$, and thus Lemma \ref{rempot} yields
$$
W(\xi) \geq \frac{1}{C_*} \big(|\xi'|^p + \min \big\{ |\xi_3 - A_3|^p,|\xi_3-B_3|^p\big\}\big)\,,\quad\forall \xi\in\R^{3\times3}\,,
$$
Setting $v_n := \frac{1}{h_n}u_n$, we deduce that
$$
\sup_{n\in\NN} \int_{\Omega} \frac{h_n^p}{\eps_n} |\grad' v_n|^p + \frac1{\eps_n}\min \big\{ |\partial_3 v_n - A_3|^p,|\partial_3 v_n-B_3|^p\big\} \, dx < \infty\,.
$$
Hence $\{\grad v_n\}$ is bounded in $L^p(\Omega;\R^3)$. By Poincar\'e's inequality, there exists a further subsequence (not relabeled) 
such that $ v_n-\,\med_{\Omega}v_n\, \rightharpoonup v$ weakly in $W^{1,p}(\Omega;\R^3)$ for some $v \in W^{1,p}(\Omega;\R^3)$. 
But since $\partial_3 v_n =\frac{1}{h_n}\partial u_n$,  we have 
$ \partial_3 v_n \to b$ strongly in $L^p(\Omega;\R^3)$. 
Hence $\partial_3\big(v(x) - b(x') x_3\big)=0$, and we can argue as in Theorem~\ref{thm:BJ}, Step 2, to prove that $v(x) = b(x') x_3 + w(x')$
for some function $w\in BV(\omega;\R^3)$. Integrating this equality in $x_3$ over the interval~$I$ yields $w(x')=\int_I v(x',x_3)\,dx_3$ a.e. in $\omega$. It obviously implies that 
$w \in W^{1,p}(\omega;\R^3)$. Since $b(x')x_3=v(x)-w(x')$, we conclude that $b \in W^{1,p}(\omega;\{A_3,B_3\})$, and thus $b$ must be constant.
\end{proof}

The other case where one can expect rigidity is when $A$ and $B$ are not rank-one connected, and thus not compatible in the bulk \cite{BJ}.  
We will show that rigidity occurs at least for some particular potentials $W$ as a consequence of a two-wells rigidity 
estimate due to Chaudhuri \& M\"uller~\cite{CMrig} (see~\cite{FJM} for single well rigidity). The class of double-well potentials we consider is as  follows. 
For simplicity we will assume that 
\begin{equation}\label{spewell}
A=I_d\,, \quad \text{and}\quad B={\rm diag}(\theta_1,1,\theta_2)\,,
\end{equation}
for some $\theta_1,\theta_2 \in\R$ 
satisfying 
\begin{equation}\label{Matos}
\theta_i>0\quad i=1,2\,,\quad \text{and}\quad (1-\theta_1)(1-\theta_2)>0\,.
\end{equation}
Here $I_d$ denotes the $3\times 3$ identity matrix.  
The second assumption in \eqref{Matos} corresponds to the strong incompatibility condition between $A$ and $B$ in the sense of  
Matos \cite{Mat} (see also \cite{CMrig,CM}). Noticing that $A'$ and $B'$ are rank-one connected,  we shall consider  continuous 
potentials $W:\R^{3\times 3}\to [0,\infty)$ such that  $(H_1)-(H_3)$ hold with $p=2$. 
\vskip5pt

Using the rigidity estimate of \cite{CMrig} and an argument similar to \cite{CM}, we have  obtained the following~result. 

\begin{theorem}\label{rigidrank2}
Assume $(H_1)-(H_3)$ hold with $p=2$, \eqref{spewell}, and \eqref{Matos}. Let $h_n \to 0^+$~and $\eps_n \to 0^+$ be~arbitrary sequences such that 
$ \eps_n/h_n\to \infty$. Then, for any  sequence $\{u_n\} \subset H^{2}(\Omega;\R^3)$ such that~$\sup_n F_{\eps_n}^{h_n}(u_n) < \infty$, 
there exist a subsequence (not relabeled) and $\xi_0 \in \{A,B\}$ such that
$\grad_{h_n} u_n \to \xi_0$ in $L^2(\Omega;\R^{3\times 3})$.
\end{theorem}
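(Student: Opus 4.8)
The plan is to combine the compactness result of Theorem~\ref{thm:compactness} with the geometric rigidity estimate for two incompatible wells of Chaudhuri \& M\"uller. First I would invoke Theorem~\ref{thm:compactness} to extract a subsequence (not relabeled) and a pair $(u,b) \in \mathscr{C}$ such that $\nabla_{h_n} u_n \to (\nabla' u, b)$ strongly in $L^2(\Omega;\R^{3\times 3})$, with $(\nabla' u, b)(x) = (1-\chi_E(x'))A + \chi_E(x') B$ for some set $E \subset \omega$ of finite perimeter. Since $A' \ne B'$ here (as $A = I_d$ and $B = \mathrm{diag}(\theta_1,1,\theta_2)$ with $\theta_1 \ne 1$ by \eqref{Matos}), Theorem~\ref{thm:BJ} tells us $E$ is layered perpendicularly to $e'_1$, i.e. $\partial^* E \cap \omega$ is a union of vertical segments $\{\alpha_i\} \times J_i$. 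The goal is to show $E$ is trivial: either $\chi_E \equiv 0$ (so $\xi_0 = A$) or $\chi_E \equiv 1$ (so $\xi_0 = B$).

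The key quantitative input is that $\sup_n F^{h_n}_{\eps_n}(u_n) < \infty$ together with $(H_3)$ gives $\eps_n^{-1} \int_\Omega \mathrm{dist}^2(\nabla_{h_n} u_n, \{A,B\}) \, dx \leq C$ away from the wells, hence in particular $\int_\Omega \mathrm{dist}^2(\nabla_{h_n} u_n, \{A,B\}) \, dx \leq C\eps_n$, and also $\eps_n \int_\Omega |\nabla^2_{h_n} u_n|^2 \, dx \leq C$. The idea, following \cite{CM}, is to apply the Chaudhuri--M\"uller rigidity estimate on the unscaled film $\Omega_{h_n}$ (or on mesoscopic cubes). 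Under the strong incompatibility condition \eqref{Matos}, their estimate asserts that for a Lipschitz domain $U$, $\inf_{F \in \{A,B\}} \int_U |\nabla \mathbf{v} - F|^2 \, dx \leq C(U) \int_U \mathrm{dist}^2(\nabla \mathbf{v}, \{A,B\}) \, dx$ — i.e. \emph{no} interpolating gradient fields exist with small bulk energy, the cost being comparable to the total distance-squared integral, with \emph{no} perimeter term. The plan is: cover $\omega$ by a controlled family of mesoscopic squares $Q'_{\delta}(x'_j)$ of side $\delta$, pull back to cylinders $Q'_\delta(x'_j) \times I$ in $\Omega$, undo the rescaling to get genuine thin subdomains of $\Omega_{h_n}$, and apply the rigidity estimate on each. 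Since the right-hand side integrals sum to $O(\eps_n) \to 0$, on each fixed square the gradient $\nabla' u$ (and $b$) must be $L^2$-close to a single well; a connectedness/overlap argument across adjacent squares then forces the \emph{same} well everywhere, so $E$ is trivial.

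The main obstacle is handling the scaling correctly: the rigidity estimate of \cite{CMrig} has a constant depending on the domain geometry, and thin domains degenerate. One must therefore either apply the estimate to cubes of size comparable to $h_n$ (where the constant is scale-invariant after rescaling, but then one needs the \emph{rescaled} energy control, and the condition $\eps_n / h_n \to \infty$ is exactly what makes $\eps_n |\nabla^2_{h_n} u_n|^2$-control translate into the needed $W^{1,2}$-type bound on a rescaled map), or use an anisotropic version of the estimate. This is precisely where $\eps_n / h_n \to \infty$ enters, in contrast to Theorem~\ref{rigidrank} where $\eps_n / h_n^p$ bounded was used for the rank-one case — here the $H^2$ penalization at scale $\eps_n$ is strong enough relative to $h_n$ to prevent the formation of any transition layer of width $\gg h_n$, and the Chaudhuri--M\"uller estimate rules out layers of width $\lesssim h_n$. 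I would carry out the argument on cubes $Q'_{h_n}(x'_j) \times h_n I \subset \Omega_{h_n}$ in the original variables, apply \cite[Theorem~??]{CMrig} to $\mathbf{u}_n$ there, sum over $j$, and conclude that $\mathrm{dist}_{L^2}(\nabla \mathbf{u}_n, \{A,B\} \text{ constant on } \Omega_{h_n}) \to 0$, which upon rescaling gives $\nabla_{h_n} u_n \to \xi_0$ for a fixed $\xi_0 \in \{A,B\}$.

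Finally I would note that the strong incompatibility hypothesis \eqref{Matos} is used twice: once to guarantee $A$ and $B$ are not rank-one connected (so that the bulk problem has the rigidity property at all), and once in the quantitative form needed for the Chaudhuri--M\"uller estimate; the restriction to the diagonal form \eqref{spewell} and $p = 2$ is purely to be able to quote their result verbatim. The passage from "$\nabla' u$ and $b$ are each $L^2$-close to a well on every mesoscopic square, consistently" to "$E$ is $\mathcal{L}^2$-trivial in $\omega$" is elementary given the layered structure from Theorem~\ref{thm:BJ} — in fact, since $(\nabla' u, b)$ takes only the two values $A, B$ and the rigidity estimate forbids \emph{any} jump of finite energy, $E$ must have zero perimeter in $\omega$, hence (as $\omega$ is connected) $E = \emptyset$ or $E = \omega$ up to null sets.
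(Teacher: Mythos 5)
Your high-level strategy agrees with the paper's: invoke Theorem~\ref{thm:compactness} to pass to a limit $(u,b)\in\mathscr{C}$, then apply the Chaudhuri--M\"uller two-well rigidity estimate on cubes of side $\sim h_n$ (where, after rescaling $v_n^a(y)=u_n(a+y',y_3/h_n)$, one has $\nabla v_n^a=\nabla_{h_n}u_n$ and the constant is universal), to get a per-cube element $R^a_n$ of $K:=SO(3)\cup SO(3)B$ with $\int_{Q'\times\frac12 I}|\nabla_{h_n}u_n-R_n|^2\,dx\leq C\int_Q\mathrm{dist}^2(\nabla_{h_n}u_n,K)\,dx\leq C\eps_n\to0$. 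However, there are two substantive problems with the way you propose to finish.

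First, the role of the scaling hypothesis is misattributed. You claim the condition on $\eps_n/h_n$ ``is exactly what makes $\eps_n|\nabla^2_{h_n}u_n|^2$-control translate into the needed $W^{1,2}$-type bound on a rescaled map.'' This is not so: the Chaudhuri--M\"uller estimate is a pure $H^1$-statement requiring only a bound on $\int\mathrm{dist}^2(\nabla_{h_n}u_n,K)$, and the Hessian term plays no role in the rigidity part of the argument (it enters only through Theorem~\ref{thm:compactness}). The scaling condition is actually used in the \emph{counting} step: with cubes of side $\sim h_n$, the number of ``bad'' cubes on which $\int_{S_{a,n}\times I}\mathrm{dist}^2(\nabla_{h_n}u_n,K)\,dx\geq\delta h_n^2$ is at most $C\eps_n/(\delta h_n^2)$, and one needs this to be $o(1/h_n)$ so that the total length of bad-cube edges, $\lesssim h_n\cdot\mathrm{Card}(\mathscr{A}_0^n)$, vanishes. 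This requires $\eps_n/h_n\to 0$ (and indeed the theorem statement's ``$\eps_n/h_n\to\infty$'' appears to be a typo for ``$h_n/\eps_n\to\infty$'', which is the supercritical regime of this section).

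Second, the consistency-across-squares step, which you compress to ``a connectedness/overlap argument across adjacent squares then forces the same well everywhere,'' is the heart of the proof and needs an actual mechanism. The paper's argument splits the cubes into a high-energy class $\mathscr{A}_0^n$ and two low-energy classes $\mathscr{A}_1^n$ ($R^a_n\in SO(3)$), $\mathscr{A}_2^n$ ($R^a_n\in SO(3)B$); then proves by contradiction — applying Chaudhuri--M\"uller to the union of two adjacent cubes and using that $\delta$ is small compared with $\mathrm{dist}^2(SO(3),SO(3)B)$ — that two adjacent low-energy cubes cannot lie in different classes, so the interface between $G_1^n$ and $G_2^n$ must be contained in $\partial G_0^n$, whose length vanishes by the counting estimate. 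Hence one of $G_1^n,G_2^n$ fills $Q'$ in the limit, and since $(\nabla'u,b)\in\{A,B\}$ a.e.\ with $B\notin SO(3)$, the limit is a single well. Without this adjacency/counting mechanism there is no way, from per-cube rigidity alone, to exclude a macroscopic interface: each cube can sit near $K$, the global issue is whether they choose the same connected component of $K$. Also note a small imprecision: the Chaudhuri--M\"uller theorem pins $\nabla v$ to a single element of $K=SO(3)\cup SO(3)B$, not to $\{A,B\}$; it is only through the Young-measure information from compactness that one recovers $A$ or $B$ in the limit.
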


\begin{proof}
By Theorem  \ref{thm:compactness}, there is a subsequence such that $u_n - \med_{\Omega} u_n \, dx \to u$ in $H^{1}(\Omega;\R^3)$ and
 $\frac{1}{h_n} \partial_3 u_n \to b$ in $L^2(\Omega;\R^3)$ for some $(u,b)\in\mathscr{C}$. 
To prove the announced result, it suffices to prove that for an arbitrary open set $\mathcal{O}\subset \omega$,  $(\nabla'u,b)$ is constant in $\mathcal{O}\times I$. Without loss of 
generality we may assume that $\mathcal{O}=Q'$ the unit cube of $\R^2$. We proceed as follows. 
\vskip5pt

\noindent {\it Step 1.} First we infer from Lemma \ref{rempot} that 
$$W(\xi)\geq \frac{1}{C_*} \min\left\{ \min_{R\in SO(3)} |\xi-R|^2, \min_{R\in SO(3)} |\xi-RB|^2\right\} = \frac{1}{C_*}{\rm dist}^2\big(\xi,K\big)\qquad \forall \xi\in\R^{3\times 3}\,,$$
where $K:=SO(3)\cup SO(3) B$. From this estimate we deduce that  
\begin{equation}\label{energielevelSO3}
\frac{1}{h_n} \int_{Q}{\rm dist}^2\big(\nabla_{h_n} u_n,K\big)\,dx \leq C\frac{\eps_n}{h_n}\,.
\end{equation}
Setting $M_n:=[\frac{2}{h_n}]$, we now divide $Q'$ into $M_n^2$ squares $S_{a,n}$ of the form
$$S_{a,n}=a+M_n^{-1}Q'\quad\text{with}\quad a\in \mathscr{A}^n:= M_n^{-1}\ZZ^2\cap Q'\,,$$
so that  $Q'=\cup_{a\in \mathscr{A}^n} S_{a,n}$ up to a set of $\mathcal{L}^2$-measure zero. Then for each 
$a\in \mathscr{A}^n$, we define the rescaled map $v_n^a:M_n^{-1}Q\to\R^3$ by 
$v_n^a(y):=u_n\left(a+y',\frac{y_3}{h_n}\right)$. 
By \cite[Theorem 2]{CMrig}, there exists a universal constant $C_{\rm univ}$ such that  for each $a\in \mathscr{A}^n$ we can 
find $R^a_n\in K$ satisfying 
$$\int_{M_n^{-1} Q} |\nabla v^a_n-R^a_n|^2\,dy\leq C_{\rm univ} \int_{M_n^{-1} Q}{\rm dist}^2\big(\nabla v^a_n,K\big)\,dy\,.$$
Scaling back, we derive that 
\begin{equation}\label{ineqova}
\int_{S_{a,n}\times \frac{1}{2} I } |\nabla_{h_n} u_n-R^a_n|^2\,dx\leq C_{\rm univ} \int_{S_{a,n}\times I }{\rm dist}^2\big(\nabla_{h_n} u_n,K\big)\,dx
\qquad\forall a\in \mathscr{A}^n\,.
\end{equation}
Defining the piecewise constant map $R_n:Q'\to K$ by $R_n(x'):=R^a_n$ for $x'\in S_{a,n}$, and adding the previous inequalities in \eqref{ineqova} leads to 
$$\int_{Q'\times \frac{1}{2} I } |\nabla_{h_n} u_n-R_n(x')|^2\,dx\leq C_{\rm univ} \int_{Q}{\rm dist}^2\big(\nabla_{h_n} u_n,K\big)\,dx\leq C\eps_n\mathop{\longrightarrow}
\limits_{n\to\infty} 0\,,$$
thanks to \eqref{energielevelSO3}. Since $\nabla_{h_n} u_n \to (\nabla' u,b)$ in $L^2(\Omega;\R^{3\times 3})$, 
we conclude that $R_n\to  (\nabla' u,b)$ in $L^2(Q';\R^{3\times 3})$. 
\vskip5pt

\noindent{\it Step 2.} Let $\delta>0$ be a small parameter to be chosen. We divide $\mathscr{A}^n$ into the following classes, 
$$\mathscr{A}_0^n:=\left\{a\in \mathscr{A}^n\,:\, \int_{S_{a,n}\times I }{\rm dist}^2\big(\nabla_{h_n} u_n,K\big)\,dx\geq \delta h_n^2  \right\}\,, $$
$\mathscr{A}_1^n:=\big\{a\in \mathscr{A}^n\setminus\mathscr{A}_0^n \,:\, R^a_n \in SO(3)\big\}$, and  $\mathscr{A}_2^n:=\big\{a\in \mathscr{A}^n\setminus\mathscr{A}_0^n \,:\, R^a_n \in SO(3)B\big\}$. 
We observe that \eqref{energielevelSO3} yields 
\begin{equation}\label{card}
{\rm Card}(\mathscr{A}_0^n)\leq \frac{1}{\delta h_n^2}\int_{Q}{\rm dist}^2\big(\nabla_{h_n} u_n,K\big)\,dx=o(1/h_n)\,,
\end{equation}
where ${\rm Card}$ denotes the counting measure. Next we consider the sets 
$$G_0^n:=\bigcup_{a\in \mathscr{A}_0^n}S_{a,n}\,,\quad G_1^n:=\bigcup_{a\in \mathscr{A}_1^n}S_{a,n}\,,\quad G_2^n:=\bigcup_{a\in \mathscr{A}_2^n}S_{a,n} \,,$$
so that $Q'=G_0^n\cup G_1^n \cup G_2^n$ up to a set of $\mathcal{L}^2$-measure zero. 

Now, we shall enumerate the edges $\Gamma^j_a$ ($j=1,2,3,4$) of a square  $S_{a,n}$ according the counterclockwise sense, $\Gamma^1_a$ being the bottom edge.  
We observe that each boundary $\partial G_i^n$ is polyhedral and 
made by the edges $\Gamma^j_a$ (of length $M_n^{-1}$) of some squares $S_{a,n}$ with $a\in \mathscr{A}_i^n$, that we 
call {\it boundary squares}. For $i=0,1,2$ and $j=1,2,3,4$, we set 
$$\mathscr{B}_i^n:=\big\{a\in \mathscr{A}_i^n : S_{a,n} \text{ is a {\it boundary square}}\big\}\, \quad\text{and}\quad \mathscr{E}_{i,j}^n:=
\big\{a\in \mathscr{B}_i^n\,:\, \Gamma_a^j\subset \partial G_i^n \cap Q'\big\}\,.$$
We claim that if $\delta>0$ is chosen small enough, then for every $n\in \NN$ large enough, and for $i=1,2$, $j=1,2,3,4$,
\begin{equation}\label{contactedge}
\Gamma_a^j\subset \partial G_0^n \qquad \forall a\in  \mathscr{E}_{i,j}^n\,.
\end{equation}
We shall prove \eqref{contactedge} in the next 
step. Assuming that \eqref{contactedge} is true, we estimate for $i=1,2$, 
$$
\mathcal{H}^1(\partial G_i^n \cap Q')  = \sum_{j=1}^4 \sum_{a\in  \mathscr{E}_{i,j}^n}  \mathcal{H}^1(\Gamma_a^j)  \leq 4 h_n {\rm Card}(\mathscr{A}_0^n)
\mathop{\longrightarrow}\limits_{n\to\infty} 0\,,
$$
thanks to \eqref{card}. Therefore, we can extract a subsequence such that for $i=1,2$, either $\mathcal{L}^2(Q'\setminus G_i^n)\to 0$ or  
$\mathcal{L}^2(G_i^n)\to 0$. Since $\mathcal{L}^2(G_0^n)\to 0$ by \eqref{card}, and $Q'=G_0^n\cup G_1^n \cup G_2^n$, we must have 
 $\mathcal{L}^2(Q'\setminus G_1^n)\to 0$ or $\mathcal{L}^2(Q'\setminus G_2^n)\to 0$. Without loss of generality, we may assume that 
 $\mathcal{L}^2(Q'\setminus G_1^n)\to 0$. Then we estimate 
$$\int_{Q'}{\rm dist}^2\big((\nabla'u,b),SO(3)\big)\,dx' \leq \int_{Q'}\big|(\nabla'u,b)-R_n\big|^2\,dx +C\mathcal{L}^2(Q'\setminus G_1^n)\mathop{\longrightarrow}
\limits_{n\to\infty} 0\,,$$
which yields $(\nabla'u,b)(x')\in SO(3)\cap \{I_d,B\}$ for $\mathcal{L}^2$-a.e. $x'\in Q'$. Since $B\not\in SO(3)$, we finally conclude that~$(\nabla'u,b)\equiv I_d$ in $Q'$. 
\vskip5pt

\noindent{\it Step 3.} It remains to prove \eqref{contactedge}. We argue by contradiction. Without loss of 
generality, we may assume that there exists $a\in \mathscr{E}_{1,1}^n$ such that $\Gamma_a^1\not\subset \partial G_0^n$. Since 
$\Gamma_a^1\subset Q'$, we have $\tilde a:=a-(0,M_n^{-1})\in  M_n^{-1}\ZZ^2\cap Q'$, and $\tilde a\not \in \mathscr{A}_0^n\cup \mathscr{A}_1^n$.  
Thus $\tilde a\in \mathscr{A}_2^n$. As in Step 1, we can apply \cite[Theorem 2]{CMrig} to find $\tilde R^{a}_n\in K$ such that 
\begin{align*}
\frac{1}{h_n^2}\int_{(S_{a,n}\cup S_{\tilde a,n})\times \frac{1}{2}I}\big|\nabla_{h_n}u_n-\tilde R^{a}_n\big|^2\,dx & \leq 
\frac{\tilde C_{\rm univ}}{h_n^2}\int_{(S_{a,n}\cup S_{\tilde a,n})\times I} {\rm dist}^2\big(\nabla_{h_n} u_n,K\big)\,dx \\
&\leq 2 \max\{C_{\rm univ},\tilde C_{\rm univ}\} \delta\,,
\end{align*}
for some universal constant $\tilde C_{\rm univ}$. Then we have 
$$|R^a_n- \tilde R^{a}_n|^2\leq  \frac{16}{h_n^2}\int_{(S_{a,n})\times \frac{1}{2}I}\big|\nabla_{h_n}u_n-R^{a}_n\big|^2+ \big|\nabla_{h_n}u_n-\tilde R^{a}_n\big|^2\,dx 
\leq 32 \max\{C_{\rm univ},\tilde C_{\rm univ}\} \delta\,.$$
We proceed similarly to get $|R^{\tilde a}_n- \tilde R^{a}_n|^2\leq 32 \max\{C_{\rm univ},\tilde C_{\rm univ}\} \delta$, and we obtain a contradiction 
whenever $\delta < [32 \max\{C_{\rm univ},\tilde C_{\rm univ}\}]^{-1}{\rm dist}^2\big(SO(3),SO(3)B\big)$. 
\end{proof}


 \section*{}

{\it\bf Aknowledgements.} 
{\small The authors thank J.-F. Babadjian, I. Fonseca, and G. Leoni for  their advices   
during the preparation of this work. 
Part of this work was done while B. Galv\~ao-Sousa was postdoctoral fellow at McMaster University, and it was partially supported by {\it Funda\c{c}\~{a}o para a Ci\^{e}ncia e a Tecnologia} under grant PRAXIS XXI SFRH/BD/8582/2002, and by 
the {\it Center for Nonlinear Analysis (CNA)} under the National Science Fundation Grant No. 0405343. The  
research of V. Millot was partially supported by the Agence Nationale de la Recherche under Grant ANR-10-JCJC 0106.}



\end{document}